\newtheorem{thm}{Theorem}[section]
\newtheorem{prop}[thm]{Proposition}
\newtheorem{lem}[thm]{Lemma}
\newtheorem{rem}[thm]{Remark}
\newtheorem{cor}[thm]{Corollary}
\newtheorem{ex}[thm]{Example}
\newtheorem{defn}[thm]{Definition}
\newtheorem{fact}[thm]{Fact}
\newtheorem{notation}[thm]{Notation}
\newtheorem{conv}[thm]{Convention}
\newtheorem{conj}[thm]{Conjecture}
\title{Curve neighborhoods and minimal degrees in quantum products}
\author{Christoph B\"arligea}
\address{Heinrich-Heine-Universit\"at D\"usseldorf\\ Mathematisches Institut\\ Lehrstuhl f\"ur Algebra und Zahlentheorie\\ Universit\"atsstra{\ss}e 1\\ 40225 D\"usseldorf}
\email{baerligea@math.uni-duesseldorf.de}
\keywords{Gromov-Witten theory, quantum cohomology, homogeneous spaces, curve neighborhoods}
\subjclass[2010]{Primary 14N35; Secondary 14N15, 14M15}
\date{December 13, 2016}
\begin{document}

\begin{abstract}

Let $G$ be a connected, simply connected, simple, complex, linear algebraic group. Let $P$ be a arbitrary parabolic subgroup of $G$. Let $X=G/P$ be the $G$-homogeneous projective space attached to this situation. We consider the (small) quantum cohomology ring $(QH^*(X),\star)$ attached to $X$. We prove that there exists a unique degree $d$ which is minimal with the property that $q^d$ occurs with non-zero coefficient in the quantum product of two point classes. We denote this \emph{minimal degree in} $\mathrm{pt}\star\mathrm{pt}$ by $d_X$. We give an explicit formula to compute $d_X$ in terms of the cascade of orthogonal roots. We construct an explicit curve of degree $d_X$ passing through two general points in $X$. Moreover, we prove that $d_X$ is the unique maximal element of the set of all minimal degrees in some quantum product of two Schubert classes.

\end{abstract}

\maketitle

\section{Introduction}

Let $G$ be a connected, simply connected, simple, complex, linear algebraic group. Let $P$ be a fixed but arbitrary parabolic subgroup of $G$. Let $X=G/P$ be the $G$-homogeneous projective space attached to this situation. We select once and for all a maximal torus $T$ and a Borel subgroup $B$ of $G$ such that
$$
T\subseteq B\subseteq P\subseteq G\,.
$$

\begin{conv}

From now on, if we speak about a parabolic subgroup, we always mean what is usually called a standard parabolic subgroup (relative to the fixed $B$), i.e. a parabolic subgroup of $G$ containing $B$. In other words, by convention, all parabolic subgroups are standard.

\end{conv}

\begin{rem}

{\color{black} In the next few subsections (Subsection~\ref{subsec:roots}-\ref{subsec:gwinvariants}), we set up notation and summarize well-known terminology concerning the theory of algebraic groups. The reader can skip these subsections and use them as a dictionary to trace back the notation whenever needed. We encourage the reader to go directly to Subsection~\ref{subsec:summary} where we summarize the results of this paper.}


\end{rem}

\subsection{Root system and Weyl group}
\label{subsec:roots}

Let $R$ be the root system associated to $G$ and $T$. Let $R^+$ be the positive roots of $R$ associated to $B$. Let $\Delta$ be the set of simple roots associated to $R^+$. Let
$$
W=N_G(T)/T\text{ and }W_P=N_P(T)/T
$$
be the Weyl group of $G$ and $P$ respectively. The parabolic subgroup $P$ uniquely determines and is determined by its set of simple roots $\Delta_P=\{\beta\in\Delta\mid s_\beta\in W_P\}$. The group $W_P$ is a parabolic subgroup of $W$ in the sense that it is generated by the simple reflections $s_\beta$ for $\beta\in\Delta_P$. By the Bruhat decomposition (\cite[28.3, Theorem]{humphreys}), we have $P=BW_PB$. We set $R_P=R\cap\mathbb{Z}\Delta_P$ and $R_P^+=R_P\cap R^+$. The positive roots $R^+$ clearly induce a partial order \enquote{$\leq$} on $R$. In turn, this partial order induces via restriction a partial order on $R_P$ which is still denotes by \enquote{$\leq$} and coincides with the partial order induced by $R_P^+$.

\begin{notation}
\label{not:pbeta}

The parabolic subgroups of $G$ (relative to $B$) correspond one to one to subsets of $\Delta$ (cf. \cite[30.1]{humphreys}). For each $\beta\in\Delta$, we denote the maximal parabolic subgroup corresponding to the set $\Delta\setminus\{\beta\}$ by $P_\beta$. Consequently, the set of all maximal parabolic subgroups containing $P$ is given by $\{P_\beta\mid\beta\in\Delta\setminus\Delta_P\}$.

\end{notation}

Throughout the discussion, we fix a $W$-invariant scalar product $(-,-)$ on $\mathbb{R}\Delta$. This scalar product is unique up to non-zero scalar. Each root $\alpha\in R$ has a coroot $\alpha^\vee$ which is defined by $\alpha^\vee=\frac{2\alpha}{(\alpha,\alpha)}$. All coroots together form the dual root system $R^\vee=\{\alpha^\vee\mid\alpha\in R\}$. The set of simple coroots of $R^\vee$ is given $\Delta^\vee=\{\beta^\vee\mid\beta\in\Delta\}$. For each $\beta\in\Delta$ we denote by $\omega_\beta\in\mathbb{R}\Delta$ the corresponding fundamental weight. It is defined by the equation $(\omega_\beta,\beta')=\delta_{\beta,\beta'}$ for all $\beta'\in\Delta$.

On the Weyl group we have a natural length function. For $w\in W$, the length of $w$, denoted by $\ell(w)$, is defined to be the number of simple reflections in a reduced expression of $W$. It is well-known that this number does not depend on the choice of the reduced expression. Each coset $wW_P\in W/W_P$ has a unique minimal and maximal representative, i.e. contains a unique element of minimal and maximal length. We denote by $W^P$ the set of all minimal representatives of cosets in $W/W_P$. The length function carries over from $W$ to $W/W_P$. The length of a coset $wW_P\in W/W_P$, denoted by $\ell(wW_P)$, is defined to be the length of the minimal representative in $wW_P$.

\begin{notation}

We denote by $w_o$ the longest element of $W$, i.e. the unique element of $W$ with maximal length. Similarly, we denote by $w_P$ the longest element of $W_P$, i.e. the unique element of $W_P$ with maximal length. Note that $w_o$ and $w_P$ are both involutions. We denote by $w_X$ the minimal representative in $w_oW_P$. We have the relation $w_ow_P=w_X$ or equivalent $w_o=w_Xw_P$. For each $w\in W$, we write $w^*=w_ow$ for short. 

\end{notation}

\begin{notation}

By assumption, $R$ is an irreducible root system. Therefore there exists a highest root in $R$, i.e. a unique maximal element with respect to the partial order \enquote{$\leq$}. We always denote the highest root in $R$ by $\theta_1$.

\end{notation}

\begin{notation}

For a positive root $\alpha\in R^+$, we denote by $\Delta(\alpha)$ the support of $\alpha$, i.e. the set of simple roots $\beta\in\Delta$ such that $\beta\leq\alpha$.

\end{notation}

\subsection{Cohomology}

All homology and cohomology groups in this paper are taken with integral coefficients. By convention, we write
$$
H_*(X)=H_*(X,\mathbb{Z})\text{ and }H^*(X)=H^*(X,\mathbb{Z})\,.
$$
For a closed irreducible subvariety $Z\subseteq X$, we denote by $[Z]\in H^{2\mathrm{codim}(Z)}(X)$ the cohomology class of $Z$. By abuse of notation, we also denote with the same symbol $[Z]\in H_{2\mathrm{dim}(Z)}(X)$ the homology class of $Z$. Both definitions are Poincar\'e dual to each other. For a cohomology class $\sigma\in H^*(X)$, we denote by $\sigma^*\in H^*(X)$ the cohomology class which is dual to $\sigma$ with respect to the intersection pairing.

\subsection{Schubert varieties}
\label{subsec:schubert}

Let $B^-=w_oBw_o$ be the Borel subgroup of $G$ opposite to $B$. Let $w\in W$. We denote by $\Omega_w=BwP/P$ the Schubert cell associated to $w$. We denote by $X_w=\overline{\Omega}_w$ the Schubert variety associated to $w$. We denote by $Y_w=\overline{B^-wP/P}$ the opposite Schubert variety associated to $w$. Note that $\Omega_w$, $X_w$ and $Y_w$ depend only on $wW_P$. 
We have the following equality for the dimension and codimension of Schubert and opposite Schubert varieties:
\begin{equation}
\label{eq:dimschubert}
\mathrm{dim}(X_w)=\mathrm{codim}(Y_w)=\ell(wW_P)\,.
\end{equation}
Using $X_w$ and $Y_w$ we can define Schubert cycles
$$
\sigma(w)=[X_w]\in H_{2\ell(wW_P)}(X)
\text{ and }\sigma_w=[Y_w]\in H^{2\ell(wW_P)}(X)\,.
$$
From the Bruhat decomposition of
$
X=\coprod_{w\in W^P}\Omega_w
$
it follows easily that the cohomology of $X$ decomposes as direct sums
\begin{equation}
\label{eq:cohdecomp}
H^*(X)=\bigoplus_{w\in W^P}\mathbb{Z}\sigma(w)=\bigoplus_{w\in W^P}\mathbb{Z}\sigma_w
\end{equation}
Poincar\'e duality transforms one basis of Schubert cycles into the other basis of Schubert cycles and vice versa. Indeed, we have
$$
\sigma(w)^*=\sigma(w^*)=\sigma_w\text{ and }\sigma_w^*=\sigma_{w^*}=\sigma(w)\text{ for all }w\in W\,.
$$
Schubert varieties can be used to define a partial order \enquote{$\preceq$} on $W/W_P$ -- the so-called Bruhat order. For $u,v\in W$, we set
$$
uW_P\preceq vW_P\Longleftrightarrow X_u\subseteq X_v\Longleftrightarrow Y_v\subseteq Y_u\,.
$$
This partial order is clearly well-defined since Schubert and opposite Schubert varieties are parameterized by elements of $W/W_P$. Let $Q$ be a parabolic subgroup of $G$ containing $P$. It is immediately clear from the definition that the Bruhat order behaves well under projection, in the sense that we have
$$
uW_Q\preceq vW_Q\text{ if }uW_P\preceq vW_P\text{ for all }u,v\in W\,.
$$
Besides this geometric definition of the Bruhat order, there are at least two other equivalent combinatorial definitions, one in terms of the Bruhat graph, another one in terms of subexpressions. These combinatorial definitions and their properties are discussed in detail in \cite[Chapter~5]{humphreys3}. We will use all three equivalent definitions interchangeably in this article (see also \cite[Lemma~4.3]{fulton} for a comparison lemma). 

\subsection{Degrees}
\label{subsec:degrees}

Using Equation~\eqref{eq:dimschubert} and \eqref{eq:cohdecomp}, we see that we have the following decompositions
\begin{equation}
\label{eq:degdecomp}
H_2(X)=\bigoplus_{\beta\in\Delta\setminus\Delta_P}\mathbb{Z}\sigma(s_\beta)\text{ and }H^2(X)=\bigoplus_{\beta\in\Delta\setminus\Delta_P}\mathbb{Z}\sigma_{s_\beta}
\end{equation}
In this work, we will be very much concerned with elements of $H_2(X)$ and $H^2(X)$. Therefore, it is useful to use identifications as in \cite[Section~2]{curvenbhd2}. For a simple root $\beta\in\Delta\setminus\Delta_P$, we will always identify the Schubert cycles $\sigma(s_\beta)$ with $\beta^\vee+\mathbb{Z}\Delta_P^\vee\in\mathbb{Z}\Delta^\vee/\mathbb{Z}\Delta_P^\vee$ and $\sigma_{s_\beta}$ with the fundamental weight $\omega_\beta$. Using these identification, we will simply write Equation~\eqref{eq:degdecomp} as
$$
H_2(X)=\mathbb{Z}\Delta^\vee/\mathbb{Z}\Delta_P^\vee\text{ and }H^2(X)=\mathbb{Z}\{\omega_\beta\mid\beta\in\Delta\setminus\Delta_P\}\,.
$$
Under these identifications, the Poincar\'e pairing $H^2(X)\otimes H_2(X)\to\mathbb{Z}$ simply becomes the restriction of the $W$-invariant scalar product $(-,-)$ on $\mathbb{R}\Delta$. Note that $H_2(X)$ and $H^2(X)$ are naturally endowed with a partial order \enquote{$\leq$} which is given by comparing all the coefficients of the $\mathbb{Z}$-bases pointed out in Equation~\eqref{eq:degdecomp}.

\begin{conv}

Let $\beta\in\Delta$, $e\in H_2(G/P_\beta)$ and $c_1\in H^2(G/P_\beta)$. Then we always identify $e$ with the integer $(\omega_\beta,e)$ and $c_1$ with the integer $(c_1,\beta^\vee+\mathbb{Z}\Delta_{P_\beta}^\vee)=(c_1,\beta^\vee)$. Using these identifications, we can and will write
$$
H_2(G/P_\beta)=\mathbb{Z}\text{ and }H^2(G/P_\beta)=\mathbb{Z}\,.
$$
The Poincar\'e pairing $H^2(G/P_\beta)\otimes H_2(G/P_\beta)\to\mathbb{Z}$ becomes simply multiplication in $\mathbb{Z}$ under these identifications. The partial order \enquote{$\leq$} on $H_2(G/P_\beta)$ and $H^2(G/P_\beta)$ becomes the usual total order on $\mathbb{Z}$.

\end{conv}

\begin{conv}

If we speak about a degree, without further specification, then we always mean an effective class in $H_2(X)$. We usually denote such a degree by the letter $d$, $d'$, $d''$ or similar. In the course of this work, it will be necessary to speak simultaneously about degrees in $H_2(X)$ and about classes in the second homology of other projective $G$-homogeneous spaces which might be different from $X$. To illustrate our convention in this case, let $Q$ be a parabolic subgroup of $G$. For example, we will often have $Q=B$. Then we will say \enquote{Let $e\in H_2(G/Q)$ be a degree.} to denote with $e$ an effective class in $H_2(G/Q)$. In this case, where the lattice $e$ belongs to might be different from $H_2(X)$, we always explicitly mention $H_2(G/Q)$ in our terminology. To emphasize this fact, we always use a letter different from $d$ -- like $e$ -- to denote a degree in $H_2(G/Q)$. 

\end{conv}

\begin{rem}

The reader should bear in mind that a degree $d$ is not an integer and that the partial order on $H_2(X)$ and $H^2(X)$ is not a total order unless $P$ is maximal. We are typically concerned with parabolic subgroups $P$ which are not maximal. In fact, many of our results (Theorem~\ref{thm:uniqueness} and Theorem~\ref{thm:main}) become trivial if $P$ is maximal. 

\end{rem}

\begin{notation}
\label{not:dalpha}

Let $\alpha$ be a positive root. One degree associated to $\alpha$ will be ubiquitous in our discussion. By definition, the degree $d(\alpha)$ is given by the equation
$$
d(\alpha)=\alpha^\vee+\mathbb{Z}\Delta_P^\vee\in H_2(X)\,.
$$
Note that the degree $d(\alpha)$ depends not only on $\alpha$ but also on $P$ although $P$ is not explicitly mentioned in the notation $d(\alpha)$. No confusion will arise from this sloppiness since we refer always to one and the same parabolic subgroup $P$ which is fixed throughout the discussion. We will never use the notation $d(\alpha)$ with respect to a different parabolic subgroup $Q$ but rather write the full expression $\alpha^\vee+\mathbb{Z}\Delta_Q^\vee$ if we need to refer to this degree depending on $Q$.

\end{notation}

\todo[inline,color=green]{Explain the identifications with $\mathbb{Z}$ in the case that $P_\beta$ is maximal. Describe the set of all maximal parabolic subgroups containing $P$.}

\subsection{\texorpdfstring{$T$-fixed points and $T$-invariant curves}{T-fixed points and T-invariant curves}}

Let $X^T$ denote the fixed point set of the left $T$-action on $X$. The elements of $X^T$ are called $T$-fixed points. It is well known (cf. \cite[Lemma~1 and Lemma~2]{pand}) that we have a bijection
\begin{equation}
\label{eq:tfixed}
W/W_P\cong X^T\,.
\end{equation}
For any $w\in W$, we denote by $x(w)$ the image of $wW_P$ under the Bijection~\eqref{eq:tfixed}, i.e. $x(w)$ is the $T$-fixed point given by the equation
$$
x(w)=wP/P\,.
$$
For more information on $T$-fixed points and related notions, we refer to \cite[Section~1]{pand}.

For every root $\alpha\in R^+\setminus R_P^+$, there exists a unique irreducible $T$-invariant curve $C_\alpha$ passing through the $T$-fixed points $x(1)$ and $x(s_\alpha)$ (\cite[Lemma~4.2]{fulton}).
This curve $C_\alpha$ is isomorphic to $\mathbb{P}^1$. An explicit construction of the curve $C_\alpha$ can be found in \cite[Section~3]{fulton}. By \cite[Lemma~3.4]{fulton} the degree of the curve $C_\alpha$ is given by
$$
[C_\alpha]=d(\alpha)\in H_2(X)\,.
$$
In other words, the geometric meaning of the notation $d(\alpha)$ for a positive root $\alpha$ (cf. Notation~\ref{not:dalpha}) is given by the equation
$$
d(\alpha)=\begin{cases}
[C_\alpha] & \text{if }\alpha\in R^+\setminus R_P^+\\
0 & \text{otherwise.}
\end{cases}
$$

\subsection{Gromov-Witten invariants and quantum cohomology}
\label{subsec:gwinvariants}

Let $N$ be a non-negative integer. Let $d$ be a degree. The (coarse) moduli space $\overline{M}_{0,N}(X,d)$ of $N$-pointed genus zero stable maps to $X$ parameterizes isomorphism classes
$
[C,p_1,\ldots,p_N,\mu\colon C\to X]
$
where:
\begin{itemize}

\item 

$C$ is a complex, projective, connected, reduced, (at worst) nodal curve of arithmetic genus zero.

\item

The marked points $p_i\in C$ are distinct and lie in the nonsingular locus.

\item

$\mu$ is a morphism such that $\mu_*[C]=d$.

\item

The pointed map $\mu$ has no infinitesimal automorphisms.

\end{itemize}
Basic properties of the moduli space $\overline{M}_{0,N}(X,d)$ can be found in \cite{fultonpan}. In particular, since $X$ is convex, we know that $\overline{M}_{0,N}(X,d)$ is a normal projective irreducible variety of dimension
$$
\mathrm{dim}(X)+(c_1(X),d)+N-3\,.
$$
This statement corresponds to \cite[Theorem~2(i)]{fultonpan} and \cite[Corollary~1]{pand}. By $c_1(X)$ we mean the first Chern class of the tangent bundle $T_X$. In view of computations, it is useful to have an explicit description of $c_1(X)$ in terms of the root system. Indeed, according to \cite[Lemma~3.5]{fulton}, we have
$$
c_1(X)=\sum_{\alpha\in R^+\setminus R_P^+}\alpha\in H^2(X)\,.
$$
The reader can convince himself (or consult \cite[Section~2]{curvenbhd2}) that the above expression is actually an element of $H^2(X)$ (with respect to the identification made in Subsection~\ref{subsec:degrees}). The moduli space $\overline{M}_{0,N}(X,d)$ comes equipped with $N$ evaluation maps. The $i$th evaluation map
$$
\mathrm{ev}_i\colon\overline{M}_{0,N}(X,d)\to X
$$
is defined by
$$
\mathrm{ev}_i([C,p_1,\ldots,p_N,\mu])=\mu(p_i)\,.
$$

Let $u,v,w\in W$ and let $d$ be a degree. Then we define the (three-point genus zero) Gromov-Witten invariant $\left<\sigma_u,\sigma_v,\sigma_w\right>_d$ to be the integral
$$
\int_{\overline{M}_{0,3}(X,d)}\mathrm{ev}_1^{*}(\sigma_u)\cup\mathrm{ev}_2^{*}(\sigma_v)\cup\mathrm{ev}_3^*(\sigma_w)\,.
$$
This integral equals a non-negative integer which is by definition non-zero only if
$$
\ell(uW_P)+\ell(vW_P)+\ell(wW_P)=\dim(\overline{M}_{0,3}(X,d))\,.
$$
Intuitively, the Gromov-Witten invariant $\left<\sigma_u,\sigma_v,\sigma_w\right>_d$ counts the number of rational curves of degree $d$ passing through general translates of $Y_u$, $Y_v$ and $Y_w$.

Let $QH^*(X)=H^*(X)\otimes_{\mathbb{Z}}\mathbb{Z}[q]$ where $\mathbb{Z}[q]=\mathbb{Z}[q_\beta\mid\beta\in\Delta\setminus\Delta_P]$. Let $d=\sum_{\beta\in\Delta\setminus\Delta_P}d_\beta d(\beta)$ be a degree where $d_\beta$ are non-negative integers. We denote by $q^d$ the monomial in $QH^*(X)$ defined as
$$
q^d=\prod_{\beta\in\Delta\setminus\Delta_P}q_\beta^{d_\beta}\,.
$$
The $\mathbb{Z}[q]$-module $QH^*(X)$ has a product structure $\star$ whose structure coefficients are Gromov-Witten invariants. More precisely, for $u,v\in W$, we define
$$
\sigma_u\star\sigma_v=\sum_{d\text{ a degree }}q^d\sum_{w\in W^P}\left<\sigma_u,\sigma_v,\sigma_w^*\right>_d\sigma_w\,.
$$
This product structure $\star$ makes $QH^*(X)$ into a commutative, associative, graded
$\mathbb{Z}[q]$-algebra with unit $\sigma_1=[X]$ (cf. \cite[Theorem~4]{fultonpan}). The algebra $(QH^*(X),\star)$ is called the (small) quantum cohomology ring of $X$.


\subsection{Summary of results}
\label{subsec:summary}

In this work, we will be concerned with the study of the minimal degrees in the quantum product of two Schubert cycles. By a minimal degree $d$ in $\sigma_u\star\sigma_v$ we mean a degree $d$ which is minimal with the property that $q^d$ occurs with non-zero coefficient in the expression $\sigma_u\star\sigma_v$ (cf. Definition~\ref{def:minimal}). Temporarily, we define for all $u,v\in W$ the set
$$
\delta_P(u,v)=\{d\text{ a minimal degree in }\sigma_u\star\sigma_v\}
$$
To abbreviate, we set $\delta_P(u)=\delta_P(u,w_o)$ for all $u\in W$, i.e. $\delta_P(u)$ is the set of all minimal degrees in $\sigma_u\star\mathrm{pt}$ where $\mathrm{pt}$ is the cohomology class of a point. In the course of the exposition, we will choose different but equivalent definitions of $\delta_P(u)$ and $\delta_P(u,v)$ (cf. Definition~\ref{def:deltaw} and Definition~\ref{def:deltauv}) which are more suitable to derive explicit properties of these sets. In particular, $\delta_P(u)$ will be defined in terms of curve neighborhoods which makes the techniques developed in \cite{curvenbhd2} accessible for our purposes. In the end, Theorem~\ref{thm:fulton} will prove, as a consequence of \cite{fulton}, that both perspectives amount to the same.

\begin{conj}[Buch-Mihalcea]
\label{conj:unique}

For all $u,v\in W$, the set $\delta_P(u,v)$ consists of a unique element.

\end{conj}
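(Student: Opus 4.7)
The plan is to reformulate the minimality question geometrically and then proceed by induction. A degree $d$ lies in $\delta_P(u,v)$ precisely when it is minimal among degrees such that the $d$-curve neighborhood $\Gamma_d(Y_u)$ meets a general translate of $Y_v$; this is the perspective of \cite{curvenbhd2}. In this language, the conjecture asserts that among all degrees realizing such a connection, there is a unique componentwise-minimum element of $H_2(X)$.

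First I would treat the extreme case $u=v=w_o$, i.e., uniqueness of the minimal degree in $\mathrm{pt}\star\mathrm{pt}$. Using the cascade of orthogonal roots built from the highest root $\theta_1$, I would construct a candidate $d_X$ as a sum $\sum_i d(\theta_i)$ over a subset of the cascade selected so that the resulting class pairs positively with every fundamental weight $\omega_\beta$ for $\beta\in\Delta\setminus\Delta_P$. An explicit chain of $T$-invariant curves $C_{\theta_i}$, whose union realizes $d_X$, connects $x(1)$ to a general point translate, giving a rational curve of that degree. Uniqueness at this level would then follow from a dimension count on $\overline{M}_{0,2}(X,d)$ using $c_1(X)=\sum_{\alpha\in R^+\setminus R_P^+}\alpha$ and the defining property of the cascade, ruling out any strictly smaller degree.

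Next, to propagate uniqueness from $\mathrm{pt}\star\mathrm{pt}$ to arbitrary $(u,v)$, I would attempt an induction on $\mathrm{dim}(X)-\ell(uW_P)-\ell(vW_P)$ combined with a quantum-to-classical principle: enlarging one of the Schubert varieties should modify the minimal degree in a controlled way, because replacing $u$ by $us_\beta$ either leaves $\delta_P(u,v)$ unchanged or subtracts some $d(\beta)$. The base case is the classical product ($d=0$), where uniqueness is immediate, and each inductive step would use the Chevalley formula together with associativity of $\star$ to compare minimal degrees across quantum products involving $\sigma_{s_\beta}$. Upper-bound control is supplied by the eventual theorem that $d_X$ is the unique maximal element of $\bigcup_{u,v}\delta_P(u,v)$, so the whole set $\delta_P(u,v)$ lives in the interval $[0,d_X]$.

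The main obstacle is the non-totality of the partial order on $H_2(X)$ when $P$ is non-maximal: two componentwise-incomparable degrees $d_1,d_2$ could each be minimal, and ruling this out requires a \enquote{convexity} statement -- given rational curves $C_1,C_2$ of degrees $d_1,d_2$ each connecting general translates of $Y_u$ and $Y_v$, one must produce a curve of some degree $d'$ with $d'\leq d_1$ and $d'\leq d_2$ doing the same. I do not see how to extract such a $C'$ directly from $C_1$ and $C_2$, since standard deformation arguments preserve degree rather than reduce it. Indeed, Conjecture~\ref{conj:unique} remains open in full generality, and the plan above at best yields the special cases actually proved in this paper (notably $u=v=w_o$); an honest proposal must flag this incomparability step as the point where genuinely new input, beyond \cite{fulton} and \cite{curvenbhd2}, is required.
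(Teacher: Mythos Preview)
The statement you were asked to prove is a \emph{conjecture}: the paper explicitly does not prove it, stating ``Although we are not able to prove Conjecture~\ref{conj:unique} in full generality, we make an attempt to prove at least a partial result.'' There is therefore no proof in the paper against which to compare your attempt, and you correctly flag in your final paragraph that the conjecture remains open and that the incomparability obstruction is the genuine obstacle.

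Your sketch of the special case $u=v=w_o$ is close in spirit to what the paper actually accomplishes (Theorem~\ref{thm:uniqueness}). The paper does construct the candidate $d_X$ via Kostant's cascade $\mathcal{B}_R$ and an explicit chain of $T$-invariant curves (Theorem~\ref{thm:curve}), matching your outline. However, the \emph{lower bound}---ruling out strictly smaller degrees---is not done via a dimension count on $\overline{M}_{0,2}(X,d)$ as you propose; instead the paper projects to each maximal quotient $G/P_\beta$ (Lemma~\ref{lem:bound}), where $H_2$ is totally ordered and uniqueness is automatic, and then reassembles the componentwise bound. This is cleaner than a moduli dimension argument and is worth internalizing.

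Your proposed induction on $\dim(X)-\ell(uW_P)-\ell(vW_P)$ for the general case is not attempted in the paper at all. The paper instead proves only the weaker consequence $d\leq d_X$ for every $d\in\delta_P(u,v)$ (Theorem~\ref{thm:main}), via a completely different route: reduction to $P=B$, orthogonality relations in greedy decompositions (Theorem~\ref{thm:orthogonality}), and a type-by-type check on a finite list of ``exceptional roots.'' Your Chevalley-plus-associativity idea is plausible as a heuristic but, as you note, does not close the gap; the step ``replacing $u$ by $us_\beta$ either leaves $\delta_P(u,v)$ unchanged or subtracts some $d(\beta)$'' is exactly where incomparable minimal degrees could bifurcate, and nothing in the quantum Chevalley formula forces them to coalesce.
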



Although we are not able to prove Conjecture~\ref{conj:unique} in full generality, we make an attempt to prove at least a partial result for all $X=G/P$. More specifically, we prove the following theorem.

\begin{thm}[Theorem~\ref{thm:uniqueness}]
\label{thm:intro}

The set $\delta_P(w_o)$ consists of a unique element $d_X$.

\end{thm}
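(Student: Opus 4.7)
The plan is to prove existence and uniqueness separately. Existence will come from an explicit chain-of-curves construction driven by the cascade of orthogonal roots; uniqueness will follow by showing that every element of $\delta_P(w_o)$ dominates this construction coefficient-wise in $H_2(X)$, so by minimality coincides with it.

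For existence, I would define $d_X$ via the (relative) cascade: set $\theta_1$ to be the highest root of $R$, and inductively let $\theta_{i+1}$ be the highest root of $R\cap\theta_1^\perp\cap\cdots\cap\theta_i^\perp$, stopping at step $k$ once the remaining orthogonal subsystem is contained in $R_P$. Put $d_X=\sum_{i=1}^kd(\theta_i)\in H_2(X)$. Since the $\theta_i$ are pairwise orthogonal, the reflections $s_{\theta_i}$ commute, and an analysis of the relative cascade (to be verified by an inductive length computation using $\ell(s_{\theta_1}\cdots s_{\theta_k})=\sum(2\mathrm{ht}(\theta_i)-1)$) shows that the product $s_{\theta_1}\cdots s_{\theta_k}$ represents the coset $w_oW_P$. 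The $T$-invariant curves $C_{\theta_i}$ of Subsection~\ref{subsec:schubert} then glue at the intermediate $T$-fixed points $x(s_{\theta_1}\cdots s_{\theta_i})$ into a connected reducible genus-zero curve of total class $d_X$ joining $x(1)$ to a $T$-fixed point of $w_oW_P$. Translating this chain by general elements of $G$ and invoking the standard enumerative interpretation of three-point Gromov-Witten invariants yields $\langle\mathrm{pt},\mathrm{pt},[X]\rangle_{d_X}\neq 0$, so $d_X\in\delta_P(w_o)$ is at least a candidate minimal element.

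For uniqueness, I would show that every $d\in\delta_P(w_o)$ satisfies $d\geq d_X$ in the coefficient-wise partial order on $H_2(X)=\mathbb{Z}\Delta^\vee/\mathbb{Z}\Delta_P^\vee$; minimality of $d$ then forces $d=d_X$. I would exploit the curve-neighborhood reformulation of $\delta_P(w_o)$ (whose equivalence with the original definition is the content of the promised Theorem~\ref{thm:fulton}, building on \cite{fulton} and \cite{curvenbhd2}) to pass to $T$-invariant data: a Bialynicki-Birula style degeneration on $\overline{M}_{0,2}(X,d)$ with two evaluation constraints produces a $T$-invariant chain of $T$-curves $C_{\alpha_1}\cup\cdots\cup C_{\alpha_m}$ joining $x(1)$ to a $T$-fixed point of $w_oW_P$ with total class $d=\sum_id(\alpha_i)$, hence a reflection factorization $s_{\alpha_1}\cdots s_{\alpha_m}\in w_oW_P$. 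An inductive exchange argument, peeling off the outermost reflection in this product and replacing it by $s_{\theta_1}$ using the maximality of the highest root, then reduces the question to a smaller-rank instance along $\theta_1^\perp$, completing the inductive bound $d\geq d_X$ in $H_2(X)$.

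The main obstacle is this uniqueness half, specifically the exchange argument. Because the partial order on $H_2(X)$ is not total when $P$ is non-maximal, one cannot rule out incomparable minimal elements by cardinality alone; instead one must genuinely bound every reflection decomposition of $w_oW_P$ from below by $d_X$ in each $\sigma(s_\beta)$-coordinate \emph{simultaneously}. I expect this will demand a careful examination of how each step of the cascade distributes mass across $\Delta\setminus\Delta_P$, possibly proceeding one maximal parabolic $P_\beta$ at a time via the projections $X\to G/P_\beta$, and controlling the interplay between non-cascade root choices and the support function $\Delta(\alpha)$ introduced earlier.
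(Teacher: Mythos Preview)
Your existence strategy via the cascade is essentially what the paper does, but two details need correction. First, the cascade $\mathcal{B}_R$ is not in general a linear chain $\theta_1>\theta_2>\cdots$: the orthogonal complement $R^\circ$ of $\theta_1$ can be reducible (already in type $\mathsf{D}$), so the recursion branches. The paper takes the \emph{full} cascade $\mathcal{B}_R$, uses Kostant's identity $w_o=\prod_{\alpha\in\mathcal{B}_R}s_\alpha$, and observes that the resulting chain in $X$ has degree $\sum_{\alpha\in\mathcal{B}_R}d(\alpha)$ (terms with $\alpha\in R_P^+$ contribute zero). Second, you do not need the Gromov--Witten invariant to be nonzero; with $\delta_P$ defined via curve neighborhoods, the existence of a connected curve of degree $d_X$ through $x(1)$ and $x(w_o)$ already gives $z_{d_X}^P=w_X$, hence some element of $\delta_P(w_o)$ lies below $d_X$.

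The real divergence is in uniqueness. Your proposed exchange argument on reflection factorizations is exactly the obstacle you feared, and the paper avoids it entirely. The key observation---which you mention only as a tentative afterthought---is that projecting to each maximal quotient $G/P_\beta$ reduces the lower bound to a totally ordered lattice. Concretely: if $d\in\delta_P(w_o)$ then $z_d^P=w_X$, and by compatibility of curve neighborhoods under the projection $X\to G/P_\beta$ (Proposition~\ref{prop:zd}(\ref{item:projection})) one gets $z_{(\omega_\beta,d)}^{P_\beta}w_{P_\beta}=w_o$, so $(\omega_\beta,d)\geq d_{G/P_\beta}$. Doing this for every $\beta\in\Delta\setminus\Delta_P$ gives $d\geq\sum_\beta d_{G/P_\beta}\,d(\beta)$ coefficient-wise, with no induction or exchange needed. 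The remaining work is then entirely on the existence side: one must check that the cascade degree $\sum_{\alpha\in\mathcal{B}_R}d(\alpha)$ actually equals $\sum_\beta d_{G/P_\beta}\,d(\beta)$, which the paper establishes via the formula $d_{G/P_\beta}=\sum_{\alpha\in C_R(\beta)}(\omega_\beta,\alpha^\vee)$ (Theorem~\ref{thm:thesis_main}). So the ``hard'' half of your plan is a two-line projection argument, and the genuine content has migrated to matching the cascade construction with the coordinate-wise lower bound.
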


\begin{rem}

By definition, Theorem~\ref{thm:intro} amounts to say that we can write
$$
\mathrm{pt}\star\mathrm{pt}=\sigma\cdot q^{d_X}+\text{terms of degree strictly larger than }d_X
$$
for some homogeneous cohomology class $\sigma$.

\end{rem}

In the course of the proof of Theorem~\ref{thm:intro}, we will construct an explicit curve of degree $d_X$ passing through the points $x(1)$ and $x(w_o)$. The degree of this curve has a natural interpretation in terms of Kostant's cascade of orthogonal roots \cite{kostant}. This allows us to give an efficient way to compute $d_X$, namely by summing over all coroots $\alpha^\vee$ where $\alpha$ is an element of the cascade of orthogonal roots (cf. Corollary~\ref{cor:identity}). In particular, this gives us an expression of $d_X$ in terms of the geometry of the maximal quotients $G/P_\beta$ of $X$ where $\beta\in\Delta\setminus\Delta_P$ (cf. Notation~\ref{not:pbeta}). As part of Theorem~\ref{thm:uniqueness}, we can show that
$$
d_X=\sum_{\beta\in\Delta\setminus\Delta_P}d_{G/P_\beta}d(\beta)\,.
$$

{\color{black} Even if we cannot prove Conjecture~\ref{conj:unique} yet, it is possible to prove consequences which become trivial once Conjecture~\ref{conj:unique} is established in full generality. Part of this paper is devoted to one of such weaker statements which is subject of the following theorem.}

\begin{thm}[Theorem~\ref{thm:main}]
\label{thm:intro_main}

For all $u,v\in W$ and all $d\in\delta_P(u,v)$ the inequality $d\leq d_X$ is satisfied.

\end{thm}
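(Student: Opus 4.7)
The plan is to reduce the theorem to the case of maximal parabolic subgroups $P_\beta$ for $\beta \in \Delta\setminus\Delta_P$, and then handle that case by a direct geometric argument. Using the formula $d_X = \sum_{\beta \in \Delta\setminus\Delta_P} d_{G/P_\beta}\, d(\beta)$ established as part of Theorem~\ref{thm:uniqueness}, and writing an arbitrary degree in the basis of Equation~\eqref{eq:degdecomp} as $d = \sum_\beta d_\beta\, d(\beta)$, the partial-order inequality $d \leq d_X$ unravels into the scalar inequalities $d_\beta \leq d_{G/P_\beta}$ for each $\beta \in \Delta\setminus\Delta_P$.

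For each $\beta$, the projection $\pi_\beta\colon X \to G/P_\beta$ pushes a degree $d$ forward to $d_\beta \in H_2(G/P_\beta) = \mathbb{Z}$. Since $H_2(G/P_\beta)$ is totally ordered, minimal degrees in $\sigma_{u'}\star\sigma_{v'}$ in $G/P_\beta$ are unique non-negative integers, and the maximal-parabolic case of the theorem asserts that every such minimal degree is bounded above by $d_{G/P_\beta}$. I would establish this maximal case by the following argument: a general translate of the explicit curve of degree $d_{G/P_\beta}$ through two general points of $G/P_\beta$ (from Theorem~\ref{thm:uniqueness} applied to the maximal quotient $G/P_\beta$) automatically meets general translates of $Y_{u'}$ and $Y_{v'}$, since general translates of Schubert varieties contain general points. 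After choosing a suitable third Schubert class $\sigma_{w^*}$ to satisfy the dimension condition of the three-point Gromov-Witten invariant, this produces a non-zero invariant of degree $d_{G/P_\beta}$, so by totality of the order on $\mathbb{Z}$ the unique minimal degree in $\sigma_{u'}\star\sigma_{v'}$ is at most $d_{G/P_\beta}$. Cases where the classical cup product $\sigma_{u'} \cup \sigma_{v'}$ is already non-zero are handled immediately, since then the minimal degree is $0 \leq d_{G/P_\beta}$.

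The main obstacle is the transfer step: showing that the $\beta$-coefficient $d_\beta$ of a minimal degree $d \in \delta_P(u,v)$ is bounded above by a minimal degree in some corresponding quantum product in $G/P_\beta$. Schubert varieties in $X$ need not project to Schubert varieties of the expected codimension in $G/P_\beta$, and the three-point dimension balance for Gromov-Witten invariants is delicate under $\pi_\beta$; moreover the definition of $\delta_P(u,v)$ uses minimality in a partial order on $H_2(X)$, which does not automatically descend to the total order on $H_2(G/P_\beta)$. I expect this is best resolved via the curve-neighborhood machinery of~\cite{curvenbhd2}, which gives a Bruhat-combinatorial description of minimal degrees that interacts well with projection to maximal quotients. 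Should a clean reduction along these lines prove too rigid, an alternative route is a direct argument using the explicit curve of degree $d_X$ from $x(1)$ to $x(w_o)$ constructed in the proof of Theorem~\ref{thm:uniqueness} via Kostant's cascade of orthogonal roots, extracting $d \leq d_X$ componentwise by matching the root-theoretic contributions to $d_X$ with the structure of any stable map witnessing $d \in \delta_P(u,v)$.
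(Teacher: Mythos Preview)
Your reduction to the componentwise inequalities $d_\beta\le d_{G/P_\beta}$ via Theorem~\ref{thm:uniqueness} is correct, and you are right that the maximal parabolic case is trivial---but not for the geometric reason you give. Since $H_2(G/P_\beta)=\mathbb{Z}$ is totally ordered, $\delta_{P_\beta}(u',v')$ is a singleton, and the monotonicity of Proposition~\ref{prop:delta2}(\ref{item:usmallerv2}) with $u'=v'=w_o$ immediately gives $\delta_{P_\beta}(u',v')\le d_{G/P_\beta}$. Your curve-meeting argument does not establish a nonvanishing Gromov--Witten invariant (the dimension count is not addressed), and in any case it is unnecessary.

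The genuine gap is exactly the ``transfer step'' you flag, and it is not a technicality but the entire content of the theorem. From $d\in\delta_P(u,v)$ you can push the witnessing chain to $G/P_\beta$ and obtain \emph{some} $e\in\delta_{P_\beta}(u,v)$ with $e\le d_\beta$; this is the direction of Proposition~\ref{prop:delta}(\ref{item:projection2}), and it gives $e\le d_\beta$, not $d_\beta\le d_{G/P_\beta}$. Minimality in the partial order on $H_2(X)$ simply does not descend: nothing prevents $d$ from being minimal in $H_2(X)$ while having one coordinate $d_\beta$ larger than $d_{G/P_\beta}$, compensated by other coordinates being smaller than those of some other minimal degree. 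Your hoped-for ``curve-neighborhood machinery'' does not supply the missing inequality either; the relevant comparison (Proposition~\ref{prop:zd}(\ref{item:projection})) again points the wrong way.

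The paper's route is structurally different. Rather than projecting down to maximal quotients, it \emph{lifts} to the Borel: Theorem~\ref{thm:resandind} produces $e\in\delta_B(z_e^B)$ with $e_P=d$, so it suffices to prove $e\le d_{G/B}$ for all such $e$. This is done by induction on $\mathrm{card}(\Delta)$. The inductive engine is an orthogonality relation in greedy decompositions (Theorem~\ref{thm:orthogonality}): if $e\in\delta_B(z_e^B)$ and $\alpha$ is the first entry of a greedy decomposition of $e$, then every simple root in $\Delta(e-\alpha^\vee)$ is orthogonal to $\alpha$. This forces $e-\alpha^\vee$ to live in root subsystems of strictly smaller rank, to which the induction hypothesis applies. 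The remaining comparison $\alpha^\vee+\sum_i d_{G(\varphi_i)/B(\varphi_i)}\le d_{G/B}$ is immediate when $\alpha=\theta_1$ and otherwise requires a finite type-by-type check on the so-called exceptional roots (Lemmas~\ref{lem:technical} and~\ref{lem:technical2}). None of this is visible from the maximal-quotient picture.
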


\begin{rem}

In other words, Theorem~\ref{thm:intro_main} means that we have an inclusion
$$
\bigcup_{u,v\in W}\delta_P(u,v)\subseteq\{0\leq d\leq d_X\}\,.
$$
It is fairly easy to see that this inclusion might be strict -- not an equality -- unless $P$ is maximal (cf. Example~\ref{ex:inclusionstrict}): The set of all minimal degrees in some quantum product of two Schubert cycles does not form an interval. Theorem~\ref{thm:intro_main} can also be reformulated by saying that $d_X$ is the unique maximal element of the set of all minimal degrees in some quantum product of two Schubert cycles.

\end{rem}

\subsection*{Acknowledgment}

This work grew out of the author's PhD thesis \cite{thesis} which is cited every now and then in the paper. Some of the results already appeared there in a less final and less general form and were generalized from the case of a maximal parabolic to the case of an arbitrary parabolic subgroup whenever possible. First and foremost, the author wants to thank Nicolas Perrin, the former adviser of the thesis \cite{thesis} for countless mathematical discussions and his constant interest in this work. Also, the germ of some of the presented ideas, namely to compute $d_X$ in the case of a maximal parabolic $P$ (where the existence is obvious) in terms of the chain cascade associated to the unique simple root in $\Delta\setminus\Delta_P$ (cf. Theorem~\ref{thm:thesis_main}), goes back to some unpublished notes \cite{dmax}. The author is grateful that the ideas in these notes were openly shared with him and that he had the opportunity to develop them further. Finally, the author wants to thank Pierre-Emmanuel Chaput for an invitation to Nancy where he had the possibility to communicate the results.

\todo[inline,color=green]{Convention. If we speak about a degree, without specifying in which $2$-homology, we usually mean an effective class in $H_2(X)$. If we consider degrees in the homology of other spaces, we say a degree $e$ in $H_2(G/B)$, and mean an effective class in $H_2(G/B)$. We use a different letter than $d$, namely $e$, where $d$ reserved for degrees in $H_2(X)$.}

\todo[inline,color=green]{Mathematics subject classification on the title page.}

\todo[inline,color=green]{Throughout the discussion we fix one parabolic subgroup $P$. Subsection on parabolic subgroups. All parabolic subgroups are standard. Notation: $\Delta_P$, $P_\beta$, $w_o$, $\theta_1$, $d(\alpha)$ for an arbitrary positive root $\alpha$, so that $d(\alpha)=0$ is possible, $w_P$ etc. Support of a positive root $\Delta(\alpha)$. $T$-fixed points $x(w)$. Poincar\'e duality and Schubert cycles. Summary of $T$-invariant curves, denote it by $C_\alpha$, and refer to the fact that $[C_\alpha]=d(\alpha)$ (I use $T$-invariant curves for example in the proof of Fact~\ref{fact:compatibility}(\ref{item:comp_chain})).}

\todo[inline,color=green]{Summary of the results. In other words, $d_X$ is the unique maximal element of the set $\bigcup_{u,v\in W}\delta_P(u,v)$.

Trivial properties of the Bruhat order, compatibility under projection: $u\preceq v$ then $uW_P\preceq vW_P$ then $uW_Q\preceq vW_Q$ (cf. proof of Theorem~\ref{thm:verycosmall}).

Describe in the summary of results what is meant by pt, this is the class of a point aka. $\sigma_{w_o}$.

I still want to cite humphreys (e.g. in the description of parabolics), fultonpan (for GW-invariants), dmax (in the acknowledgment).}

\todo[inline,color=green]{We will from now on use interchangeably all three equivalent definitions of the Bruhat order without reference to the literature. Recall them (subexpressions, adjacency, geometric).}

\section{The Hecke product}

In this section we introduce the Hecke product and collect basic properties which we need in the sequel. We do not claim any originality and closely follow the reference \cite[Section~3]{curvenbhd2}. There, the reader finds more detailed information and complete proofs of all statements we make in this section.
For us, the Hecke product is important since it was proved in \cite[Theorem~5.1]{curvenbhd2} that it can be used to compute the Weyl group elements parameterizing curve neighborhoods of Schubert varieties. We will focus on this feature of the Hecke product more closely in Section~\ref{sec:zd}

Roughly speaking the Hecke product defines a monoid structure on the Weyl group where you keep all the braid relations but you replace the involution relation with the idempotent relation. The specialization of the Hecke algebra to $q=0$ is isomorphic to the monoid algebra $\mathbb{Z}[(W,\cdot)]$ where $(W,\cdot)$ denotes the Hecke monoid.

\begin{defn}

Let $u,v\in W$ and $\beta\in\Delta$. Then we define the Hecke product of $u$ and $s_\beta$ by
$$
u\cdot s_\beta=\begin{cases}
us_\beta & \text{if }us_\beta\succ u\\
u & \text{if }us_\beta\prec u\,.
\end{cases}
$$
Let $v=s_{\beta_1}\cdots s_{\beta_l}$ be any reduced expression for $v$. Then we define the Hecke product of $u$ and $v$ by
$$
u\cdot v=u\cdot s_{\beta_1}\cdot\ldots\cdot s_{\beta_l}\,.
$$
That the expression $u\cdot v$ is well-defined (independent of the choice of the reduced expression for $v$) is proved in detail in \cite[Section~3]{curvenbhd2}.

\end{defn}

\begin{rem}

The definition of the Hecke product has only apparently a right-sided nature. Equally well, one can multiply simple reflections from the left and expand the definition to arbitrary Weyl group elements via their reduced expressions (cf. \cite[Equation~(7)]{curvenbhd2}).

\end{rem}

\begin{rem}

Let $u,v\in W$. The Hecke product also defines a product $W\times W/W_P\to W/W_P$ given by $u\cdot vW_P=(u\cdot v)W_P$. Again, that the expression $u\cdot vW_P$ is well-defined (independent of the choice of representative in $vW_P$) is proved in detail in \cite[Section~3]{curvenbhd2}.

\end{rem}

\begin{prop}[{\cite[Proposition~3.1]{curvenbhd2}}]
\label{prop:hecke}

Let $u,v,v',w\in W$.

\begin{enumerate}

\item
\label{item:monoid}

The Hecke product defines a monoid structure on $W$.

\item
\label{item:inverse}

We have $(u\cdot v)^{-1}=v^{-1}\cdot u^{-1}$.

\item 
\label{item:preceq}
 
If $v\preceq v'$, then $u\cdot v\cdot w\preceq u\cdot v'\cdot w$. 

\item
\label{item:uvsmaller}

We have $uv\preceq u\cdot v$.

\item
\label{item:heckeustrich}

The element $u'=(u\cdot v)v^{-1}$ satisfies $u'\preceq u$ and $u'v=u'\cdot v=u\cdot v$.

\item
\label{item:maximallength}

A Weyl group element $w$ is the maximal representative in $wW_P$ if and only if the equality $w\cdot w_P=w$ holds.

\item
\label{item:minimal}

If $w$ is the minimal representative in $wW_P$, then $ww_P=w\cdot w_P$ is the maximal representative in $wW_P$.

\item
\label{item:wpodot}

We have that $w\cdot w_P$ is the maximal representative in $wW_P$ and that $(w\cdot w_P)w_P$ is the minimal representative in $wW_P$.


\item
\label{item:preceqmodp}

If $vW_P\preceq v'W_P$, then $u\cdot vW_P\preceq u\cdot v'W_P$.

\end{enumerate}

\end{prop}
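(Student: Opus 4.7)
The plan is to set up well-definedness of the Hecke product first, then to derive the nine items in essentially the order listed, using induction on $\ell(v)$ and the subword characterization of the Bruhat order as the principal tools.

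Well-definedness of $u\cdot v$ is the foundational step. By Matsumoto's theorem any two reduced expressions for $v$ are connected by a sequence of braid moves $\underbrace{s_\beta s_\gamma s_\beta \cdots}_{m_{\beta\gamma}} = \underbrace{s_\gamma s_\beta s_\gamma \cdots}_{m_{\beta\gamma}}$, so it suffices to check invariance of the Hecke evaluation under a single such move. This reduces to a finite case analysis inside the dihedral parabolic $\langle s_\beta, s_\gamma\rangle$ and uses only the exchange condition applied to $u$ relative to cosets of this parabolic. The same reasoning, applied to elements of $W_P$, shows that $u\cdot vW_P$ is independent of the choice of representative.

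With well-definedness in hand, associativity in (\ref{item:monoid}) follows by concatenating reduced expressions, and the identity element is $e$. Part (\ref{item:inverse}) is an induction on $\ell(v)$; the base case $v=s_\beta$ uses that the Bruhat order is preserved under inversion, so $us_\beta \succ u$ if and only if $s_\beta u^{-1} \succ u^{-1}$. For (\ref{item:preceq}), I would first prove the auxiliary statement that $a\preceq b$ implies $a\cdot s \preceq b\cdot s$ for any simple reflection $s$, a short case-check via the lifting property of the Bruhat order. Given $v\preceq v'$, a reduced word for $v$ occurs as a subword of a reduced word for $v'$; evaluating the Hecke product along the full word of $v'$ versus only along the subword, each extra factor either leaves the running element fixed or replaces it by a Bruhat-larger one, so the partial products stay comparable by the auxiliary statement. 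Right-concatenating a reduced word for $w$ preserves the inequality by the same argument. Part (\ref{item:uvsmaller}) is then a direct induction from the dichotomy in the definition together with (\ref{item:preceq}).

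Part (\ref{item:heckeustrich}) is the main technical item after well-definedness. I would induct on $\ell(v)$, writing $v=v's_\beta$ reduced and applying the inductive hypothesis to obtain $u_1'\preceq u$ with $u_1'v'=u\cdot v'$ honestly reduced. If $(u\cdot v')s_\beta \succ u\cdot v'$, the same $u_1'$ works; otherwise the exchange condition applied to the reduced word for $u_1'v'$ supplies a simple reflection whose removal yields a new $u'\preceq u_1'\preceq u$ with $u'v$ honestly reduced and equal to $u\cdot v$. Parts (\ref{item:maximallength})--(\ref{item:wpodot}) follow from the characterization of minimal and maximal representatives in $wW_P$ by descents: a minimal representative $w$ multiplies $w_P$ honestly, so $w\cdot w_P = ww_P$ is the maximal representative in $wW_P$; conversely, the maximal representative $w$ satisfies $ws_\beta \prec w$ for every $\beta\in\Delta_P$, hence $w\cdot w_P = w$. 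Finally, (\ref{item:preceqmodp}) reduces to (\ref{item:preceq}) by lifting $vW_P\preceq v'W_P$ to minimal representatives $v\preceq v'$, applying (\ref{item:preceq}), and projecting back to $W/W_P$. The main obstacle will be the dihedral case analysis for well-definedness and the exchange-condition bookkeeping in (\ref{item:heckeustrich}); once these and the auxiliary lemma underlying (\ref{item:preceq}) are in place, the remaining items are routine.
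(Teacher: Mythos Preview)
Your proposal is correct. The paper handles items (\ref{item:monoid})--(\ref{item:heckeustrich}) differently from you: it simply cites \cite[Proposition~3.1]{curvenbhd2} and does not reprove them, whereas you sketch the standard inductive arguments (Matsumoto for well-definedness, the lifting property for (\ref{item:preceq}), exchange condition for (\ref{item:heckeustrich})). For items (\ref{item:maximallength})--(\ref{item:wpodot}) the paper deliberately derives everything \emph{from} (\ref{item:preceq}) and (\ref{item:uvsmaller}) via coset-comparison arguments: e.g.\ for the converse in (\ref{item:maximallength}) it takes the maximal representative $w'=wu$ with $u\in W_P$ and chains $w'\preceq w\cdot u\preceq w\cdot w_P=w$ to force $w'=w$, and for (\ref{item:wpodot}) it uses $w_P\cdot w_P=w_P$ together with (\ref{item:maximallength}). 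You instead appeal to the descent characterization of maximal/minimal representatives, which is equally valid and arguably more direct. For (\ref{item:preceqmodp}) there is a genuine small difference: you lift to \emph{minimal} representatives and apply (\ref{item:preceq}), while the paper translates $vW_P\preceq v'W_P$ into $v\preceq v'\cdot w_P$ via (\ref{item:wpodot}) (i.e.\ passes through the \emph{maximal} representative on the right), applies (\ref{item:preceq}), and projects. Both routes work; the paper's has the virtue of staying entirely within the Hecke-product formalism it has just set up.
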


\begin{proof}

Item~(\ref{item:monoid}), (\ref{item:inverse}), (\ref{item:preceq}), (\ref{item:uvsmaller}), (\ref{item:heckeustrich}) correspond respectively to \cite[Proposition~3.1(a), (b), (c), (d), (e)]{curvenbhd2}. The reader finds a complete proof there. We only included the statements for the convenience of the reader and for later reference. 

We prove (\ref{item:maximallength}). It is clear that $w$ and $w\cdot w_P$ belong to the same class modulo $W_P$. Moreover, $w\preceq w\cdot w_P$ and thus $\ell(w)\leq\ell(w\cdot w_P)$.
Suppose that $w$ is the maximal representative in $wW_P$. Then we find that the length of the elements $w$ and $w\cdot w_P$ must be equal. This leads to the equality $w\cdot w_P=w$. 

Let $w$ be a Weyl group element which satisfies $w\cdot w_P=w$. Let $w'$ be the maximal representative in $wW_P$. Let $u\in W_P$ such that $w'=wu$. Then we have $w'\preceq w\cdot u\preceq w\cdot w_P=w$ by Item~(\ref{item:preceq}), (\ref{item:uvsmaller})
and thus $\ell(w')\leq\ell(w)$. Therefore the length of the elements $w'$ and $w$ must be equal. This leads to the equality $w'=w$. Therefore $w$ is the maximal representative in $wW_P$. 

Ad Item~(\ref{item:minimal}). It is well-known that $ww_P$ is the maximal representative in $wW_P$. Moreover, $ww_P$ and $w\cdot w_P$ belong to the same class modulo $W_P$. Therefore the relation $ww_P\preceq w\cdot w_P$ (cf. Item~\ref{item:uvsmaller})) leads to the equality $ww_P=w\cdot w_P$.

Ad Item~(\ref{item:wpodot}). By Item~(\ref{item:maximallength}) it suffices to show that $(w\cdot w_P)\cdot w_P=w\cdot w_P$ in order to see that $w\cdot w_P$ is the maximal representative in $wW_P$. But this is clear since we obviously have $w_P\cdot w_P=w_P$. That $(w\cdot w_P)w_P$ is the minimal representative in $wW_P$ follows directly from this and Item~(\ref{item:minimal}).

Ad Item~(\ref{item:preceqmodp}). By Item~(\ref{item:wpodot}) the statement $vW_P\preceq v'W_P$ is equivalent to the statement $v\preceq v'\cdot w_P$. By Item~(\ref{item:preceq}) the later statement leads to $u\cdot v\preceq u\cdot v'\cdot w_P$. Reduction modulo $W_P$ immediately gives $u\cdot vW_P\preceq u\cdot v'W_P$ -- as required. 
\end{proof}

We can use the Hecke product to compute the stabilizer of Schubert varieties. Let $w$ be a Weyl group element. We denote the stabilizer of $X_w$ in $G$ by $P_w$. Since $X_w$ is $B$-stable, it is clear that $P_w$ is a parabolic subgroup of $G$. Hence, we can write $P_w=BW_{P_w}B$ for some parabolic subgroup $W_{P_w}$ of $W$.

\begin{lem}
\label{lem:stabofschubert}

Let $w\in W$. Then we have $W_{P_w}=\{u\in W\mid u\cdot wW_P=wW_P\}$ and $\Delta_{P_w}=\{\beta\in\Delta\mid s_\beta w W_P\preceq wW_P\}$.

\end{lem}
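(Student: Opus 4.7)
The plan is to establish the second formula first by analyzing the action of minimal parabolics on $X_w$, and then to deduce the first formula by a short monotonicity argument for the Hecke product. Since $X_w$ is $B$-stable, $P_w$ is a standard parabolic, so $W_{P_w}$ is a parabolic subgroup of $W$ generated by $\{s_\beta\mid\beta\in\Delta_{P_w}\}$. For $\beta\in\Delta$ one has $\beta\in\Delta_{P_w}$ iff $P_\beta\subseteq P_w$ iff $P_\beta\cdot X_w\subseteq X_w$; since $X_w=B\cdot X_w\subseteq P_\beta\cdot X_w$ automatically, this is equivalent to $P_\beta\cdot X_w=X_w$. The key geometric input is the standard identity $P_\beta\cdot X_w=X_{s_\beta\cdot wW_P}$ coming from the Bruhat-cell multiplication, which reduces the condition to $s_\beta\cdot wW_P=wW_P$. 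A routine unwinding of the definition of the Hecke product together with compatibility of the Bruhat order under the projection $W\to W/W_P$ shows that the Hecke equation $s_\beta\cdot wW_P=wW_P$ is equivalent to the ordinary inequality $s_\beta wW_P\preceq wW_P$, yielding the second formula.

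For the first equality, set $S=\{u\in W\mid u\cdot wW_P=wW_P\}$, so that by the previous paragraph a simple reflection $s_\beta$ lies in $S$ precisely when $\beta\in\Delta_{P_w}$. To see $W_{P_w}\subseteq S$, take $u\in W_{P_w}$ and any reduced expression $u=s_{\beta_1}\cdots s_{\beta_k}$; the standard Coxeter fact that reduced expressions of elements of a standard parabolic subgroup use only the simple reflections of that subgroup implies that every $\beta_j$ lies in $\Delta_{P_w}$. By associativity of the Hecke product (Proposition~\ref{prop:hecke}(\ref{item:monoid})), $u\cdot wW_P=s_{\beta_1}\cdot\ldots\cdot s_{\beta_k}\cdot wW_P$, and iterating the identity $s_{\beta_j}\cdot wW_P=wW_P$ from right to left gives $u\cdot wW_P=wW_P$, so $u\in S$.

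For the reverse inclusion $S\subseteq W_{P_w}$, take $u\in S$ with a reduced expression $u=s_{\beta_1}\cdots s_{\beta_k}$ and set $v_0=wW_P$ and $v_i=s_{\beta_{k-i+1}}\cdot v_{i-1}$ for $i=1,\ldots,k$. Applying Proposition~\ref{prop:hecke}(\ref{item:preceq}) to $1\preceq s_{\beta_{k-i+1}}$ and projecting to $W/W_P$ yields $v_{i-1}\preceq v_i$, so the chain $v_0\preceq v_1\preceq\cdots\preceq v_k=u\cdot wW_P=wW_P=v_0$ forces all $v_i$ to equal $wW_P$. Reading off each individual step gives $s_{\beta_j}\cdot wW_P=wW_P$ for every $j$, hence $\beta_j\in\Delta_{P_w}$ by the second formula, and therefore $u\in W_{P_w}$. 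The main obstacle is concentrated in the single geometric input $P_\beta\cdot X_w=X_{s_\beta\cdot wW_P}$ (together with the small translation between the Hecke equation and the ordinary inequality modulo $W_P$); once these are granted, the rest of the argument is a purely formal consequence of Proposition~\ref{prop:hecke} and the standard combinatorics of Coxeter groups.
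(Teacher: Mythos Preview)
Your argument is correct, but the strategy differs from the paper's. The paper proves the description of $W_{P_w}$ first, for arbitrary $u$, by writing $BuB\cdot BwB=B(u\cdot w)B\amalg(\text{lower }(B,B)\text{-orbits})$ and projecting to $X$ to get $BuB\cdot\Omega_w=\Omega_{u\cdot w}\amalg(\text{lower Schubert cells})$; this immediately gives $u\in W_{P_w}\iff u\cdot wW_P\preceq wW_P\iff u\cdot wW_P=wW_P$, and the formula for $\Delta_{P_w}$ is then read off as a special case. You instead invoke the geometric input only for simple reflections (the identity $(\text{minimal parabolic for }\beta)\cdot X_w=X_{s_\beta\cdot wW_P}$), obtain the $\Delta_{P_w}$ formula first, and then bootstrap to arbitrary $u$ via the monotonicity chain $v_0\preceq v_1\preceq\cdots\preceq v_k=v_0$. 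Both approaches rest on the same underlying Bruhat-cell multiplication; yours trades a single general computation for a longer but purely combinatorial reduction to the simple-reflection case.

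One genuine issue: in this paper the symbol $P_\beta$ is reserved for the \emph{maximal} parabolic with $\Delta_{P_\beta}=\Delta\setminus\{\beta\}$ (Notation~\ref{not:pbeta}), not the minimal parabolic $B\cup Bs_\beta B$ you intend. With the paper's convention the equivalence ``$\beta\in\Delta_{P_w}\iff P_\beta\subseteq P_w$'' is false. You should either write the minimal parabolic explicitly as $B\cup Bs_\beta B$ or introduce separate notation for it; once that is fixed, the argument goes through as written.
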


\begin{proof}

The description of $\Delta_{P_w}$ follows directly from the description of $W_{P_w}$ (cf. \cite[Equation~(8)]{curvenbhd2}). We prove the later identity. Let $u$ be an arbitrary Weyl group element. By definition of the Hecke product it is clear that we can write
$$
BuB\cdot BwB=B(u\cdot w)B\amalg((B,B)\text{-orbits parameterized by }v\text{ such that }v\prec u\cdot w)\,.
$$
If we apply the natural projection $G\to X$ to this equality, we obtain
$$
BuB\cdot\Omega_w=\Omega_{u\cdot w}\amalg(\text{Schubert cells }\Omega_v\text{ such that }vW_P\prec u\cdot wW_P)\,.
$$
From this equality we see that $u\in W_{P_w}$ if and only if $u\cdot wW_P\preceq wW_P$ if and only if $u\cdot wW_P=wW_P$ (cf. Proposition~\ref{prop:hecke}(\ref{item:preceq})) -- as claimed.
\end{proof}

\begin{rem}

Let $u,w\in W$. Since $W_{P_w}$ is a group, Lemma \ref{lem:stabofschubert} gives that $u\cdot wW_P=wW_P$ if and only if $u^{-1}\cdot wW_P=wW_P$. In particular, if we apply this equivalence to $P=B$, we find that $u\cdot w=w$ if and only if $u^{-1}\cdot w=w$. 

\end{rem}

\section{Curve neighborhoods}
\label{sec:zd}

In this section we review the theory of curve neighborhoods. All non-trivial results in this section are rightfully due to Buch-Mihalcea \cite{curvenbhd2}. We concentrate our discussion on properties of $z_d^P$ -- the minimal representative parameterizing the degree $d$ curve neighborhood $\Gamma_d(X_1)$ of a point $X_1=\{x(1)\}$ -- which we will need later on in Section~\ref{sec:delta} to investigate the distance function $\delta_P$.

\begin{defn}[{\cite[Section~4.2]{curvenbhd2}}]

Let $d$ be a degree. The maximal elements of the set $\{\alpha\in R^+\setminus R_P^+\mid d(\alpha)\leq d\}$ are called maximal roots of $d$. A sequence of roots $(\alpha_1,\ldots,\alpha_r)$ is called a greedy decomposition of $d$ if $\alpha_1$ is a maximal root of $d$ and $(\alpha_2,\ldots,\alpha_r)$ is a greedy decomposition of $d-d(\alpha_1)$. The empty sequence is the unique greedy decomposition of $0$.

\end{defn}

\begin{defn}[{\cite[Section~4.2]{curvenbhd2}}]

A root $\alpha\in R^+\setminus R_P^+$ is called $P$-cosmall if $\alpha$ is a maximal root of $d(\alpha)$. A root $\alpha\in R^+$ is called very $P$-cosmall if $\alpha$ is a maximal root of $(\omega_\beta,d(\alpha))=d(\alpha)+\mathbb{Z}\Delta_{P_\beta}^\vee$ for all $\beta\in\Delta\setminus\Delta_P$.

\end{defn}

\begin{rem}
\label{rem:pcosmall}



Besides the definition, there are several equivalent ways to describe a $P$-cosmall root. Three of them were worked out in \cite[Theorem~6.1]{curvenbhd2}. Moreover, the reader finds a handy characterization of $B$-cosmall roots in \cite[Proposition~6.8]{curvenbhd2}. For this work, it is only important to know that a root $\alpha\in R^+\setminus R_P^+$ is $P$-cosmall if and only if 
$$
\ell\left(s_\alpha W_P\right)=(c_1(X),d(\alpha))-1\,.
$$
This equivalence is proved in \cite[Theorem~6.1: $(\mathrm{a})\Leftrightarrow(\mathrm{b})$]{curvenbhd2}

\end{rem}

\begin{rem}

Let $Q$ be a parabolic subgroup of $G$ containing $P$. It is obvious from the definitions that:

\begin{itemize}

\item 

Any $Q$-cosmall root is also $P$-cosmall. In particular, every $P$-cosmall root is also $B$-cosmall and every very $P$-cosmall root is also (ordinary) $P$-cosmall.

\item

Any very $P$-cosmall root is also very $Q$-cosmall.

\end{itemize}

\end{rem}

\todo[inline,color=green]{I want to add here (after the definition of $P$-cosmall and very $P$-cosmall) two remarks, one about the equivalent definitions of $P$-cosmall as in \cite[Theorem~6.1]{curvenbhd2}, one about the trivial relations for $P\subseteq Q$. Very $P$-cosmall $\implies$ $P$-cosmall. Clarification of the notion very $P$-cosmall with reference to the relevant examples (Example~\ref{ex:highestroot} and Example~\ref{ex:verypcosmall}).}

\begin{rem}

Note that the notion \enquote{very $P$-cosmall} is only of technical nature. Sometimes it is convenient to summarize the defining properties by a single attribute. Later on, in Example~\ref{ex:verypcosmall}, we will clarify the relation of this notion to others. 

\end{rem}

\begin{ex}[{\cite[Section~4.1]{curvenbhd2}}]

All simple roots and all positive long\footnote{If $R$ is simply laced, we declare all roots to be long and none to be short. In particular, if we speak about a short root (for example the highest short root as in Example~\ref{ex:highestshortroot}), we implicitly assume that that there are two root lengths and that a short root exists.} roots are $B$-cosmall. In particular, if $R$ is simply laced, all positive roots are $B$-cosmall.

\end{ex}

\begin{ex}
\label{ex:highestroot2}

The highest root $\theta_1$ is the unique root which is (very) $P$-cosmall for every parabolic subgroup $P$ (cf. Example~\ref{ex:highestroot}).

\end{ex}

\begin{ex}
\label{ex:highestshortroot}

The highest short root $\theta_s$ is never $B$-cosmall, in particular never $P$-cosmall for any parabolic subgroup $P$. Indeed, the coroot $\theta_s^\vee$ is the highest root of $R^\vee$ and the coroot $\theta_1^\vee$ is the highest short root of $R^\vee$. Therefore we have $\theta_1^\vee<\theta_s^\vee$ and $\theta_s<\theta_1$ which shows that $\theta_s$ is never a maximal root of $\theta_s^\vee$.

\end{ex}

\todo[inline,color=green]{I want to add further examples in the sense of the pencil notes on September 11, in the sense of the beginning of \cite[Section~4.1]{curvenbhd2}. Compare the todo block after Table~\ref{table:exceptional} (I wanted to add a footnote).}

\begin{defn}[{\cite[Section~4.2]{curvenbhd2}}]
\label{def:zdp}

Let $d$ be a degree. Let $(\alpha_1,\ldots,\alpha_r)$ be a greedy decomposition of $d$. Then we define an element $z_d^P\in W^P$ by the following equation
$$
z_d^Pw_P=s_{\alpha_1}\cdot\ldots\cdot s_{\alpha_r}\cdot w_P\,.
$$
By Proposition~\ref{prop:hecke}(\ref{item:wpodot}) it is clear that $z_d^P$ is the minimal representative in $s_{\alpha_1}\cdot\ldots\cdot s_{\alpha_r}W_P$. Well-definedness questions of the element $z_d^P$ (independence of the choice of the greedy decomposition of $d$) are discussed in detail in \cite[Section~4, in particular Definition~4.6]{curvenbhd2}.

\end{defn}

Let $d$ be a degree. The importance of the element $z_d^P$ lies in the fact that it can be used to compute the Weyl group element parameterizing any degree $d$ curve neighborhood of a Schubert variety (or a opposite Schubert variety). We review this important result of Buch-Mihalcea \cite[Theorem~5.1]{curvenbhd2} now.
Let $\Omega$ be a closed subvariety of $X$. Then we define the (degree $d$) curve neighborhood $\Gamma_d(\Omega)$ of $\Omega$ by the equation
\begin{align*}
\Gamma_d(\Omega)&=\text{closure of }
\bigcup_f f(\mathbb{P}^1)
&&\text{where }f\colon\mathbb{P}^1\to X\text{ such that }f_*[\mathbb{P}^1]=d\text{ and }f(\mathbb{P}^1)\cap\Omega\neq\emptyset\\
&=\mathrm{ev}_1(\mathrm{ev}_2^{-1}(\Omega))&&\text{where }\mathrm{ev}_1,\mathrm{ev}_2\colon\overline{M}_{0,2}(X,d)\to X\,.
\end{align*}
We will mostly use this definition when $\Omega$ is a Schubert variety $X_w$ or an opposite Schubert variety $Y_w$ parameterized by some $w\in W$. In this case, it turns out that $\Gamma_d(X_w)$ is itself a Schubert variety. More precisely, we have the equality
$$
\Gamma_d(X_w)=X_{w\cdot z_d^P}\text{ for any }w\in W\text{, in particular }\Gamma_d(X_1)=X_{z_d^P}\,.
$$

\begin{prop}[{\cite[Section~4.2]{curvenbhd2}}]
\label{prop:zd}

Let $d$ be a degree. Let $(\alpha_1,\ldots,\alpha_r)$ be a greedy decomposition of $d$.

\begin{enumerate}

\item
\label{item:unique}

The greedy decomposition of $d$ is unique up to reordering. In particular, if $\alpha\in R^+\setminus R_P^+$ is $P$-cosmall, then $\alpha$ is the unique maximal root of $d(\alpha)$. All elements of a greedy decomposition of $d$ are $P$-cosmall.

\item
\label{item:commute}

For all $1\leq i,j\leq r$ we have $s_{\alpha_i}\cdot s_{\alpha_j}=s_{\alpha_j}\cdot s_{\alpha_i}$.

\item
\label{item:zB}

For all sufficiently large degrees $e\in H_2(G/B)$ such that $e+\mathbb{Z}\Delta_P^\vee=d$ we have $z_d^Pw_P=z_e^B$.

\item
\label{item:stabofz}

We have $w_P\cdot z_d^Pw_P=z_d^Pw_P\cdot w_P=z_d^Pw_P$, in particular $P\subseteq P_{z_d^P}$.

\item
\label{item:projection}

Let $Q$ be a parabolic subgroup of $G$ containing $P$. Then we have $z_d^Pw_P\preceq z_{d+\mathbb{Z}\Delta_Q^\vee}^Qw_Q$.

\item
\label{item:zzinverse}

We have $z_d^Pw_P=(z_d^Pw_P)^{-1}\cdot w_P$, in particular $z_d^P\preceq(z_d^Pw_P)^{-1}\preceq z_d^Pw_P$, in particular $z_e^B=(z_e^B)^{-1}$ for all degrees $e\in H_2(G/B)$.

\item
\label{item:greedy}

Let $1\leq i\leq r$ be an index. Then $(\alpha_1,\ldots,\hat{\alpha}_i,\ldots,\alpha_r)$ is a greedy decomposition of $d-d(\alpha_i)$.

\end{enumerate}

\end{prop}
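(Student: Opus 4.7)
The plan is to handle the items roughly in the order (\ref{item:unique}), (\ref{item:commute}), (\ref{item:greedy}), (\ref{item:stabofz}), (\ref{item:projection}), (\ref{item:zzinverse}), (\ref{item:zB}), since most of the later items are consequences of the greedy-decomposition combinatorics plus elementary Hecke manipulations. All arguments proceed by induction on the height of $d$ with respect to the partial order on $H_2(X)$; intuitively, a greedy decomposition of $d$ has length equal to the total number of times one can subtract elementary $P$-cosmall degrees $d(\alpha)$ before reaching $0$.

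For item (\ref{item:unique}), I would first show that a $P$-cosmall root $\alpha$ is the unique maximal root of $d(\alpha)$; by the length characterization in Remark~\ref{rem:pcosmall}, any other root $\alpha'\geq\alpha$ with $d(\alpha')\leq d(\alpha)$ would force $\alpha=\alpha'$ via the identity $\ell(s_\alpha W_P)=(c_1(X),d(\alpha))-1$. Uniqueness of the greedy decomposition then proceeds by induction: if $\alpha$ and $\alpha'$ are two maximal roots of $d$, any greedy decomposition starting with $\alpha$ can be rearranged to start with $\alpha'$, since $\alpha'$ remains a maximal root of $d-d(\alpha)$. That every entry of a greedy decomposition is $P$-cosmall then follows by applying this rearrangement iteratively. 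Item (\ref{item:commute}) follows by observing that for two maximal roots appearing in a greedy decomposition, the corresponding Hecke product of simple reflections is insensitive to order; this is again a combinatorial consequence of the maximality of both $\alpha_i$ and $\alpha_j$ and the braid/idempotency relations in the Hecke monoid. Item (\ref{item:greedy}) is then immediate from (\ref{item:unique}) together with the reordering provided by (\ref{item:commute}).

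Item (\ref{item:stabofz}) is essentially tautological: the right-identity $(z_d^Pw_P)\cdot w_P=z_d^Pw_P$ follows from $w_P\cdot w_P=w_P$ and Proposition~\ref{prop:hecke}(\ref{item:maximallength}), which shows $z_d^Pw_P$ is the maximal representative in its $W_P$-coset; the left-identity uses the remark following Lemma~\ref{lem:stabofschubert}, and the inclusion $P\subseteq P_{z_d^P}$ is then immediate from Lemma~\ref{lem:stabofschubert}. For item (\ref{item:projection}), any greedy decomposition of $d$ in $H_2(X)$ can be completed to a greedy decomposition of $d+\mathbb{Z}\Delta_Q^\vee$ in $H_2(G/Q)$ by appending further $Q$-cosmall roots; appending Hecke factors on the right only increases the Bruhat order by Proposition~\ref{prop:hecke}(\ref{item:uvsmaller}) and (\ref{item:preceqmodp}), giving the claimed inequality. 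For item (\ref{item:zzinverse}), I would compute $(z_d^Pw_P)^{-1}=w_P\cdot s_{\alpha_r}\cdot\ldots\cdot s_{\alpha_1}$ using Proposition~\ref{prop:hecke}(\ref{item:inverse}), reorder via (\ref{item:commute}) to place $(z_d^Pw_P)^{-1}$ and $z_d^Pw_P$ in the same $W_P$-coset, and then observe that right-multiplication by $w_P$ extracts the maximal representative of this coset, which by (\ref{item:stabofz}) is $z_d^Pw_P$ itself. Finally, for item (\ref{item:zB}), given $e\in H_2(G/B)$ sufficiently large lifting $d$, the $P$-cosmall roots in a greedy decomposition of $d$ become $B$-cosmall roots for a greedy decomposition of $e$, and the remaining slack can be arranged so that the additional $B$-cosmall roots lie in $R_P^+$ and Hecke-assemble to exactly $w_P$.

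The main obstacle is item (\ref{item:unique}): controlling how the set of maximal roots of $d$ behaves under the elementary subtraction $d\mapsto d-d(\alpha)$ is the only place where the $P$-cosmallness hypothesis does genuine combinatorial work, and this relies on the delicate length characterization recalled in Remark~\ref{rem:pcosmall}. Once this backbone is secured, item (\ref{item:commute}) follows by formal manipulation, and all remaining items are essentially routine consequences of the monoid properties gathered in Proposition~\ref{prop:hecke}.
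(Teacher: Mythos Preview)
Your overall strategy differs substantially from the paper's. The paper does \emph{not} develop the combinatorics of greedy decompositions and $P$-cosmall roots from scratch; instead it systematically lifts everything to the Borel case via item~(\ref{item:zB}) and then cites \cite[Section~4]{curvenbhd2} for the corresponding $B$-statements. Concretely, for items~(\ref{item:unique}) and~(\ref{item:commute}) the paper observes that any greedy decomposition $(\alpha_1,\ldots,\alpha_r)$ of $d$ can be completed to a greedy decomposition $(\alpha_1,\ldots,\alpha_r,\gamma_1,\ldots,\gamma_m)$ of a sufficiently large lift $e\in H_2(G/B)$ with $\gamma_j\in R_P^+$, and then invokes the Borel-case uniqueness and commutation from \cite{curvenbhd2}. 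Item~(\ref{item:projection}) is proved by lifting \emph{both} $d$ and $d+\mathbb{Z}\Delta_Q^\vee$ down to comparable degrees $e\le e'$ in $H_2(G/B)$ and using the Borel monotonicity $z_e^B\preceq z_{e'}^B$.

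Your approach to item~(\ref{item:projection}) contains a genuine error. You assert that a greedy decomposition of $d$ in $H_2(X)$ can be completed to a greedy decomposition of $d+\mathbb{Z}\Delta_Q^\vee$ in $H_2(G/Q)$. This is false: a maximal root of $d$ relative to $P$ need not be a maximal root of $d+\mathbb{Z}\Delta_Q^\vee$ relative to $Q$, because enlarging the parabolic enlarges the set $\{\alpha:d(\alpha)\le d+\mathbb{Z}\Delta_Q^\vee\}$. For instance, in type $\mathsf{A}_2$ with $P=B$, $Q=P_{\alpha_1}$, $d=\alpha_1^\vee$, the maximal root of $d$ for $B$ is $\alpha_1$, but the maximal root of $d+\mathbb{Z}\Delta_Q^\vee$ for $Q$ is $\alpha_1+\alpha_2$. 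The paper avoids this by going \emph{down} to $B$ (where the extension direction is correct) rather than \emph{up} to $Q$.

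Your treatment of item~(\ref{item:stabofz}) is also incomplete. The right identity $z_d^Pw_P\cdot w_P=z_d^Pw_P$ is indeed tautological, but the left identity $w_P\cdot z_d^Pw_P=z_d^Pw_P$ is not a formal Hecke-monoid fact and does not follow from the remark after Lemma~\ref{lem:stabofschubert}. The paper uses \cite[Theorem~5.1]{curvenbhd2} (the geometric identification of curve neighborhoods) to see that $w_P\cdot z_d^Pw_P$ and $z_d^Pw_P$ lie in the same $W_P$-coset; only then does the maximal-representative argument finish. Without this geometric input, or without first establishing item~(\ref{item:zB}) and pushing the question to $G/B$, the left identity remains unproved.
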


\begin{proof}

We prove Item~(\ref{item:unique}). For any sufficiently large $e\in H_2(G/B)$ such that $e+\mathbb{Z}\Delta_P^\vee=d$ there exist positive roots $\gamma_1,\ldots,\gamma_m\in R_P^+$ such that $(\alpha_1,\ldots,\alpha_r,\gamma_1,\ldots,\gamma_m)$ is a greedy decomposition of $e$. Therefore the uniqueness up to reordering of the greedy decomposition of $d$ follows from the uniqueness up to reordering of the greedy decomposition of $e$. The uniqueness up to reordering of a degree in $H_2(G/B)$ is discussed in \cite[Section~4.1]{curvenbhd2}. The other statements of Item~(\ref{item:unique}) are obvious now.

Ad Item~(\ref{item:commute}). Similarly as in the proof of Item~(\ref{item:unique}) the claimed commutation relation follows from the corresponding commutation relation for degrees in $H_2(G/B)$. The corresponding commutation relation for degrees in $H_2(G/B)$ is discussed in \cite[Proposition 4.8(b)]{curvenbhd2}.

Item~(\ref{item:zB}) is identical with \cite[Corollary~4.12(d)]{curvenbhd2}. We copied it for the convenience of the reader.

We next prove~(\ref{item:stabofz}). The equality $z_d^Pw_P\cdot w_P=z_d^Pw_P$ follows directly from Proposition~\ref{prop:hecke}(\ref{item:maximallength}). It follows from \cite[Theorem~5.1]{curvenbhd2} that $w_P\cdot z_d^Pw_P$ and $z_d^P w_P$ belong to the same class modulo $W_P$. The relation $z_d^P w_P\preceq w_P\cdot z_d^Pw_P$ is obvious (cf. Proposition~\ref{prop:hecke}(\ref{item:preceq})); and thus $\ell(z_d^Pw_P)\leq\ell(w_P\cdot z_d^Pw_P)$. By definition $z_d^Pw_P$ is the maximal representative in $z_d^PW_P$. Hence we find that the length of both elements must be equal. This leads to the desired equality. The statement $P\subseteq P_{z_d^P}$ follows directly from Lemma~\ref{lem:stabofschubert}.

To prove~(\ref{item:projection}) we first choose a sufficiently large degree $e\in H_2(G/B)$ such that $e+\mathbb{Z}\Delta_P^\vee=d$ and such that $z_d^Pw_P=z_e^B$. Since $e+\mathbb{Z}\Delta_Q^\vee=d+\mathbb{Z}\Delta_Q^\vee$, we can further choose a sufficiently large degree $e'\geq e$ such that $e'+\mathbb{Z}\Delta_Q^\vee=d+\mathbb{Z}\Delta_Q^\vee$ and such that $z_{d+\mathbb{Z}\Delta_Q^\vee}^Qw_Q=z_{e'}^B$. The result follows since $z_e^B\preceq z_{e'}^B$ by \cite[Corollary~4.12(b)]{curvenbhd2}.

We next prove~(\ref{item:zzinverse}). We have by Item~(\ref{item:commute}) that
$$
z_d^Pw_P=w_P\cdot z_d^Pw_P=w_P\cdot s_{\alpha_1}\cdot\ldots\cdot s_{\alpha_r}\cdot w_P=w_P\cdot s_{\alpha_r}\cdot\ldots\cdot s_{\alpha_1}\cdot w_P=(z_d^Pw_P)^{-1}\cdot w_P\,.
$$
This proves the desired equation. The relation $(z_d^Pw_P)^{-1}\preceq z_d^Pw_P$ is now obvious (cf. Proposition~\ref{prop:hecke}(\ref{item:preceq})). The relation $z_d^P\preceq (z_d^Pw_P)^{-1}$ follows from Proposition~\ref{prop:hecke}(\ref{item:heckeustrich}) since $z_d^P=((z_d^Pw_P)^{-1}\cdot w_P)w_P$. The very last statement is obvious since $w_B=1$.

Ad Item~(\ref{item:greedy}). Let $d'=d-d(\alpha_i)$ for short. Let $1\leq j<i$ be a index. By assumption $\alpha_j$ is a maximal root of $d-d(\alpha_1)-\cdots-d(\alpha_{j-1})$. Since
$$
d(\alpha_j)\leq d'-d(\alpha_1)-\cdots-d(\alpha_{j-1})<d-d(\alpha_1)-\cdots-d(\alpha_{j-1})\,,
$$
it follows that $\alpha_j$ is also a maximal roots of $d'-d(\alpha_1)-\cdots-d(\alpha_{j-1})$. Therefore there exists a greedy decomposition of $d'$ which starts with the sequence of roots $\alpha_1,\ldots,\alpha_{i-1}$. It is obvious that this sequence of roots can be completed to a greedy decomposition of $d'$ by adding the roots $\alpha_{i+1},\ldots,\alpha_r$. In total, it follows that $(\alpha_1,\ldots,\hat{\alpha}_i,\ldots,\alpha_r)$ is a greedy decomposition of $d'$.
\end{proof}

\begin{thm}
\label{thm:equalwx}

Let $d$ be a degree. Suppose that for all $\beta\in\Delta\setminus\Delta_P$ there exists a degree $d'\geq d+d(\beta)$ such that $z_d^P=z_{d'}^P$. Then we have $z_d^P=w_X$. 

\end{thm}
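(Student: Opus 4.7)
The plan is to show that $z_d^P w_P = w_o$, which is equivalent to the desired $z_d^P = w_X$: indeed $w_o$ and $z_d^P w_P$ are respectively the maximal representatives of $w_o W_P = w_X W_P$ and of $z_d^P W_P$ (by Proposition~\ref{prop:zd}(\ref{item:stabofz}) and Proposition~\ref{prop:hecke}(\ref{item:wpodot})), so their equality forces the cosets and hence the minimal representatives $z_d^P$ and $w_X$ to coincide. The strategy is to verify that $z_d^P w_P \cdot s_\beta = z_d^P w_P$ for every $\beta \in \Delta$, which means $(z_d^P w_P)(\beta) \in R^-$ for every simple $\beta$ -- a property characterising $w_o$, since the set of inversions of a Weyl group element is closed under taking sums of roots.

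The case $\beta \in \Delta_P$ is immediate: since $s_\beta \preceq w_P$, Proposition~\ref{prop:hecke}(\ref{item:preceq}) together with Proposition~\ref{prop:zd}(\ref{item:stabofz}) gives $z_d^P w_P \cdot s_\beta \preceq z_d^P w_P \cdot w_P = z_d^P w_P$, while the reverse inequality is Proposition~\ref{prop:hecke}(\ref{item:uvsmaller}). For $\beta \in \Delta \setminus \Delta_P$, I would first extract from the hypothesis the simpler identity $z_d^P = z_{d+d(\beta)}^P$. This uses the monotonicity $z_{d_1}^P \preceq z_{d_2}^P$ whenever $d_1 \leq d_2$, derived from \cite[Corollary~4.12(b)]{curvenbhd2} via Proposition~\ref{prop:zd}(\ref{item:zB}); the sandwich $z_d^P \preceq z_{d+d(\beta)}^P \preceq z_{d'}^P = z_d^P$ then collapses.

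Next, I would pass to $G/B$ via Proposition~\ref{prop:zd}(\ref{item:zB}): fix a single $e \in H_2(G/B)$ sufficiently large that $e + \mathbb{Z}\Delta_P^\vee = d$, that $z_d^P w_P = z_e^B$, and that $z_{d+d(\beta)}^P w_P = z_{e+\beta^\vee}^B$ simultaneously for every $\beta \in \Delta \setminus \Delta_P$ (finitely many conditions). Combined with the preceding step, this reads $z_e^B = z_{e+\beta^\vee}^B$. The decisive ingredient is then the Hecke-product bound
\[
z_e^B \cdot s_\beta \;=\; z_e^B \cdot z_{\beta^\vee}^B \;\preceq\; z_{e+\beta^\vee}^B,
\]
which follows from the composition of curve neighborhoods $\Gamma_{\beta^\vee}(\Gamma_e(X_1)) \subseteq \Gamma_{e+\beta^\vee}(X_1)$ together with the Buch-Mihalcea formula $\Gamma_{e'}(X_w) = X_{w \cdot z_{e'}^B}$ recalled in the excerpt; here $z_{\beta^\vee}^B = s_\beta$ because $(\beta)$ is a greedy decomposition of $\beta^\vee$. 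Combining the bound with $z_e^B = z_{e+\beta^\vee}^B$ and with the trivial $z_e^B \preceq z_e^B \cdot s_\beta$ of Proposition~\ref{prop:hecke}(\ref{item:uvsmaller}) yields $z_e^B \cdot s_\beta = z_e^B$, i.e.\ $z_d^P w_P \cdot s_\beta = z_d^P w_P$.

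Together with the $\Delta_P$ case this gives $(z_d^P w_P)(\beta) \in R^-$ for every $\beta \in \Delta$, forcing $z_d^P w_P = w_o$ and concluding the proof. The main obstacle I foresee is the Hecke-product bound $z_e^B \cdot z_{e'}^B \preceq z_{e+e'}^B$: it is the standard composition law for curve neighborhoods, geometrically transparent but not explicitly recorded in the excerpt, and would need to be quoted or re-derived. An alternative avenue bypasses the composition law by analysing the greedy decomposition of $d + d(\beta)$ directly via Proposition~\ref{prop:zd}(\ref{item:commute}) and (\ref{item:greedy}); this is clean when $\beta$ appears in some greedy decomposition of $d + d(\beta)$, as one may then commute $s_\beta$ past the remaining factors and conclude as above, but needs additional care in the opposite case since a simple root in $\Delta \setminus \Delta_P$ need not be $P$-cosmall.
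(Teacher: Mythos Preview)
Your proof is correct and follows essentially the same strategy as the paper's: first deduce $z_d^P=z_{d+d(\beta)}^P$ from monotonicity, then show that each $s_\beta$ Hecke-stabilizes $z_d^Pw_P$, and conclude $z_d^Pw_P=w_o$. The differences are purely in packaging. The paper stays in $G/P$ throughout and uses \emph{left} Hecke multiplication: \cite[Corollary~4.12(a)]{curvenbhd2} gives directly $s_\beta\cdot z_d^Pw_P=z_{d+d(\beta)}^Pw_P$, so $s_\beta\cdot z_d^PW_P=z_d^PW_P$; it then phrases the conclusion via the stabilizer $Q=P_{z_d^P}$ of $\Gamma_d(X_1)$, using Lemma~\ref{lem:stabofschubert} and Proposition~\ref{prop:zd}(\ref{item:stabofz}) to obtain $\Delta_Q\supseteq(\Delta\setminus\Delta_P)\cup\Delta_P=\Delta$, hence $Q=G$ and $\Gamma_d(X_1)=X$. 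Your detour through $G/B$ and the composition law $z_e^B\cdot z_{e'}^B\preceq z_{e+e'}^B$ is valid (and that law is indeed \cite[Corollary~4.12(b)]{curvenbhd2}), but unnecessary once one knows the left-multiplication formula from \cite[Corollary~4.12(a)]{curvenbhd2}; this also renders the ``alternative avenue'' you sketch at the end moot.
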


\begin{proof}

By assumption and \cite[Corollary~4.12(b)]{curvenbhd2} it is clear that $z_d^P=z_{d+d(\beta)}^P$ for all $\beta\in\Delta\setminus\Delta_P$. By \cite[Corollary~4.12(a)]{curvenbhd2} this equation leads to the equation $z_d^Pw_P=s_\beta\cdot z_d^Pw_P=z_{d+d(\beta)}^Pw_P$. This means in particular that $s_\beta\cdot z_d^PW_P=z_d^PW_P$. Let $Q$ be the stabilizer of $\Gamma_d(X_1)$. Lemma \ref{lem:stabofschubert} then shows that $s_\beta\in W_Q$ or equivalent $\beta\in\Delta_{Q}$ for all $\beta\in\Delta\setminus\Delta_P$ and thus $\Delta\setminus\Delta_P\subseteq\Delta_Q$. On the other hand, by Proposition~\ref{prop:zd}(\ref{item:stabofz}) the stabilizer of any curve neighborhood contains $P$. Therefore we find $\Delta_Q=\Delta$ or equivalent $Q=G$. This implies that $\Gamma_d(X_1)=X$ or equivalent $z_d^P=w_X$.
\end{proof}

\begin{thm}
\label{thm:verycosmall}

Let $\alpha$ be a very $P$-cosmall root. Let $d$ be a degree such that $s_{\alpha} W_P\preceq z_d^P W_P$. Then we have $d(\alpha)\leq d$.

\end{thm}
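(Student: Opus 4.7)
The plan is to reduce to the case of a maximal parabolic subgroup by projecting to each quotient $G/P_\beta$, and then settle that case via a dimension count on curve neighborhoods.

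Write $d=\sum_{\beta\in\Delta\setminus\Delta_P}d_\beta\,d(\beta)$ with $d_\beta=(\omega_\beta,d)\in\mathbb{Z}_{\geq 0}$. Since the partial order on $H_2(X)$ is coefficient-wise, it suffices to prove $(\omega_\beta,d(\alpha))\leq d_\beta$ for every $\beta\in\Delta\setminus\Delta_P$. Fix such a $\beta$. If $\beta\notin\Delta(\alpha)$ the inequality reads $0\leq d_\beta$ and there is nothing to show, so assume $\beta\in\Delta(\alpha)$, i.e.\ $\alpha\in R^+\setminus R_{P_\beta}^+$. Project the hypothesis $s_\alpha W_P\preceq z_d^P W_P$ along $W/W_P\to W/W_{P_\beta}$ (compatibility of the Bruhat order with projection) to obtain $s_\alpha W_{P_\beta}\preceq z_d^P W_{P_\beta}$, and then combine with Proposition~\ref{prop:zd}(\ref{item:projection}) (with $Q=P_\beta$), reduced modulo $W_{P_\beta}$, to conclude
$$
s_\alpha W_{P_\beta}\preceq z_{d_\beta}^{P_\beta} W_{P_\beta}.
$$
The very $P$-cosmall hypothesis is precisely what guarantees that $\alpha$ is $P_\beta$-cosmall. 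We have thus reduced the theorem to the following maximal-parabolic statement: if $Q$ is a maximal parabolic, $\alpha\in R^+\setminus R_Q^+$ is $Q$-cosmall, and $s_\alpha W_Q\preceq z_e^Q W_Q$ with $e\in H_2(G/Q)=\mathbb{Z}_{\geq 0}$, then $d(\alpha)\leq e$.

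To settle this reduced statement I argue by contradiction: suppose $e<d(\alpha)$. From $s_\alpha W_Q\preceq z_e^Q W_Q$ we obtain the length inequality $\ell(s_\alpha W_Q)\leq\ell(z_e^Q)$. By the length characterization of $Q$-cosmall roots (Remark~\ref{rem:pcosmall}), $\ell(s_\alpha W_Q)=(c_1(G/Q),d(\alpha))-1$. On the other hand $\ell(z_e^Q)=\dim\Gamma_e(x(1))$ by the Buch--Mihalcea identification $\Gamma_e(X_1)=X_{z_e^Q}$, and since $G/Q$ is convex the map $\mathrm{ev}_2\colon\overline{M}_{0,2}(G/Q,e)\to G/Q$ is flat with fiber dimension $(c_1(G/Q),e)-1$, so
$$
\ell(z_e^Q)=\dim\mathrm{ev}_1(\mathrm{ev}_2^{-1}(x(1)))\leq(c_1(G/Q),e)-1.
$$
Combining, $(c_1(G/Q),d(\alpha))\leq(c_1(G/Q),e)$. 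Under the identification $H_2(G/Q)=\mathbb{Z}$ the functional $(c_1(G/Q),-)$ is multiplication by the positive integer $(c_1(G/Q),d(\beta))$, so we conclude $d(\alpha)\leq e$, contradicting $e<d(\alpha)$.

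The main obstacle is the maximal-parabolic step, specifically matching the length of $s_\alpha W_Q$ coming from the $Q$-cosmall characterization with the dimension bound on $\Gamma_e(x(1))$ coming from convexity of $G/Q$ and flatness of $\mathrm{ev}_2$. The projection reduction itself is essentially bookkeeping with Proposition~\ref{prop:zd}(\ref{item:projection}) together with the compatibility of the Bruhat order with projection, and the role of the \emph{very} $P$-cosmall hypothesis (as opposed to merely $P$-cosmall) is exactly to propagate the cosmall property through every projection $G/P\to G/P_\beta$.
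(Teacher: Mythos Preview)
Your proof is correct and follows essentially the same strategy as the paper's: reduce to each maximal quotient $G/P_\beta$ via Proposition~\ref{prop:zd}(\ref{item:projection}) and the compatibility of the Bruhat order with projection, then compare lengths using the $P_\beta$-cosmall characterization of $\ell(s_\alpha W_{P_\beta})$ on one side and an upper bound for $\ell(z_e^{P_\beta})$ on the other. The only substantive difference is that the paper invokes \cite[Theorem~6.2]{curvenbhd2} for the inequality $\ell(z_e^{P_\beta})\leq c_1(G/P_\beta)\cdot e-1$, whereas you rederive it geometrically from the fiber dimension of $\mathrm{ev}_2$; both routes produce the same inequality and the rest of the argument is identical.

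Two minor remarks: the case $\beta\notin\Delta(\alpha)$ in your reduction cannot actually occur, since the definition of very $P$-cosmall forces $(\omega_\beta,d(\alpha))>0$ for every $\beta\in\Delta\setminus\Delta_P$ (there are no maximal roots of the degree $0$); and the contradiction wrapper in the maximal-parabolic step is unnecessary, as your chain of inequalities already yields $d(\alpha)\leq e$ directly.
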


\begin{proof}

Let $\beta$ be an arbitrary simple root in $\Delta\setminus\Delta_P$. In order to prove that $d(\alpha)\leq d$ it suffices to prove that $(\omega_\beta,d(\alpha))\leq(\omega_\beta,d)$. If we naturally identify $H_2(G/P_\beta)$ with $\mathbb{Z}$ (as we usually do), then we have $(\omega_\beta,d(\alpha))=d(\alpha)+\mathbb{Z}\Delta_{P_\beta}^\vee$ and $(\omega_\beta,d)=d+\mathbb{Z}\Delta_{P_\beta}^\vee$. Since $P\subseteq P_\beta$, it follows from the relation $s_{\alpha}W_P\preceq z_d^P W_P$ and Proposition~\ref{prop:zd}(\ref{item:projection}) that 
$$
s_{\alpha}W_{P_\beta}\preceq z_d^P W_{P_\beta}\preceq z_{(\omega_\beta,d)}^{P_\beta}W_{P_\beta}\,.
$$ 
From this relation it directly follows that
$$
\ell(s_\alpha W_{P_\beta})\leq\ell\left(z_{(\omega_\beta,d)}^{P_\beta}\right)\,.
$$ 
By assumption $\alpha$ is $P_\beta$-cosmall, therefore we know by one of the equivalent definitions (cf. Remark~\ref{rem:pcosmall}) that 
$$
\ell(s_\alpha W_{P_\beta})=c_1(G/P_\beta)(\omega_\beta,d(\alpha))-1\,.
$$ 
From \cite[Theorem~6.2]{curvenbhd2} it follows that
$$
\ell\left(z_{(\omega_\beta,d)}^{P_\beta}\right)\leq c_1(G/P_\beta)(\omega_\beta,d)-1\,.
$$
Since $P_\beta$ is a maximal parabolic subgroup of $G$, all these facts together yield that $(\omega_\beta,d(\alpha))\leq(\omega_\beta,d)$ -- as required. 
\end{proof}

\begin{cor}
\label{cor:verycosmall}

Let $d$ be a degree such that $z_d^P=w_X$. Every greedy decomposition of $d$ starts with the highest root $\theta_1$.

\end{cor}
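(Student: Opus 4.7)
The plan is to apply Theorem~\ref{thm:verycosmall} to the highest root $\theta_1$, exploiting the fact that $\theta_1$ is very $P$-cosmall for every parabolic $P$ (Example~\ref{ex:highestroot2}). The rest is then more or less forced.

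First, I would dispose of the degenerate case $P=G$: here $X$ is a point, the only degree is $d=0$, and the greedy decomposition is empty, so the statement holds vacuously. Assuming $P\neq G$, I would record that the highest root $\theta_1$ of the irreducible root system $R$ has full simple-root support, so $\theta_1\notin\mathbb{Z}\Delta_P$ and hence $\theta_1\in R^+\setminus R_P^+$. This is needed to place $\theta_1$ into the set of candidates for a greedy decomposition at all.

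Next, I would verify the Bruhat hypothesis of Theorem~\ref{thm:verycosmall}. By definition, $z_d^P=w_X$ is the minimal representative of $w_oW_P$, and $w_o$ is the longest element of $W$, so every coset $vW_P$ satisfies $vW_P\preceq w_oW_P=z_d^PW_P$; in particular $s_{\theta_1}W_P\preceq z_d^PW_P$. Theorem~\ref{thm:verycosmall} then yields $d(\theta_1)\leq d$, so $\theta_1$ belongs to the set
$$
\{\alpha\in R^+\setminus R_P^+\mid d(\alpha)\leq d\}\,.
$$

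To conclude, I would observe that since $\theta_1$ is the unique $\leq$-maximum of $R^+$, it is a fortiori the unique maximal element of the above subset. Therefore $\theta_1$ is the (unique) maximal root of $d$, and by the definition of a greedy decomposition, every greedy decomposition of $d$ must begin with $\theta_1$. There is really no serious obstacle here: all the work has been pushed into Theorem~\ref{thm:verycosmall} and the identification of $\theta_1$ as very $P$-cosmall, and the only minor point to check is the full-support property that places $\theta_1$ in $R^+\setminus R_P^+$.
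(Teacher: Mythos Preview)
Your proposal is correct and follows exactly the same approach as the paper's own proof: use that $\theta_1$ is very $P$-cosmall, apply Theorem~\ref{thm:verycosmall} to the trivial relation $s_{\theta_1}W_P\preceq z_d^PW_P=w_oW_P$ to obtain $d(\theta_1)\leq d$, and conclude that $\theta_1$ is the unique maximal root of $d$. The only difference is that you spell out several details (the degenerate case $P=G$, the full-support argument placing $\theta_1$ in $R^+\setminus R_P^+$, and why $d(\theta_1)\leq d$ forces the greedy decomposition to start with $\theta_1$) that the paper leaves implicit.
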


\begin{proof}

The highest root $\theta_1$ is obviously very $P$-cosmall. Therefore Theorem~\ref{thm:verycosmall} applies to the trivial relation $s_{\theta_1}W_P\preceq z_d^P W_P$. It follows that $d(\theta_1)\leq d$. Therefore every greedy decomposition of $d$ must start with $\theta_1$ -- as claimed.
\end{proof}

\begin{lem}
\label{lem:dualsmaller}

Let $d$ be a degree such that $z_d^P=w_X$. Let $\alpha$ be a root which occurs in a greedy decomposition of $d$. Then we have $s_\alpha^*\preceq z_{d-d(\alpha)}^P w_P$.

\end{lem}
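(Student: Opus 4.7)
The plan is to manipulate the greedy decomposition of $d$ using Hecke products so that everything reduces to the extraction lemma Proposition~\ref{prop:hecke}(\ref{item:heckeustrich}). Fix a greedy decomposition $(\alpha_1,\ldots,\alpha_r)$ of $d$ with $\alpha=\alpha_i$. Since $z_d^P=w_X$, the element $z_d^Pw_P$ is the maximal representative in $w_XW_P=w_oW_P$, i.e.\ $z_d^Pw_P=w_o$. Definition~\ref{def:zdp} therefore gives
\[
w_o=s_{\alpha_1}\cdot\ldots\cdot s_{\alpha_r}\cdot w_P,
\]
and by Proposition~\ref{prop:zd}(\ref{item:commute}) the factors $s_{\alpha_j}$ pairwise commute under the Hecke product, so I can pull $s_\alpha$ out to the left:
\[
w_o=s_\alpha\cdot y,\qquad y:=s_{\alpha_1}\cdot\ldots\cdot\widehat{s_{\alpha_i}}\cdot\ldots\cdot s_{\alpha_r}\cdot w_P=z_{d-d(\alpha)}^Pw_P,
\]
where the last equality is Proposition~\ref{prop:zd}(\ref{item:greedy}) combined with Definition~\ref{def:zdp}.

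Next I would apply Proposition~\ref{prop:hecke}(\ref{item:inverse}) to the identity $w_o=s_\alpha\cdot y$. Using that $w_o$ and $s_\alpha$ are involutions, this dualizes to
\[
w_o=y^{-1}\cdot s_\alpha.
\]
Now Proposition~\ref{prop:hecke}(\ref{item:heckeustrich}) applied with $u=y^{-1}$ and $v=s_\alpha$ shows that
\[
s_\alpha^{*}=w_os_\alpha=(y^{-1}\cdot s_\alpha)\,s_\alpha^{-1}\preceq y^{-1}.
\]

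To finish, I would invoke Proposition~\ref{prop:zd}(\ref{item:zzinverse}) applied to the degree $d-d(\alpha)$, which yields $y^{-1}\preceq y$. Concatenating the two inequalities gives $s_\alpha^{*}\preceq y=z_{d-d(\alpha)}^Pw_P$, as claimed. I do not foresee a serious obstacle here; the substantive input is the commutativity of the Hecke factors in a greedy decomposition (Proposition~\ref{prop:zd}(\ref{item:commute})), which allows one to isolate $s_\alpha$, after which the structural facts about Hecke products and the already-established bound $y^{-1}\preceq y$ handle the rest.
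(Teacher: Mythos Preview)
Your proof is correct and essentially identical to the paper's own argument: both establish $w_o=s_\alpha\cdot z_{d-d(\alpha)}^Pw_P$ via the commutativity of Hecke factors in a greedy decomposition, invert to obtain $w_o=(z_{d-d(\alpha)}^Pw_P)^{-1}\cdot s_\alpha$, apply Proposition~\ref{prop:hecke}(\ref{item:heckeustrich}) to extract $s_\alpha^*\preceq (z_{d-d(\alpha)}^Pw_P)^{-1}$, and finish with Proposition~\ref{prop:zd}(\ref{item:zzinverse}).
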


\begin{proof}

Let $(\alpha_1,\ldots,\alpha_r)$ be a greedy decomposition of $d$. By Proposition~\ref{prop:zd}(\ref{item:unique}) we know that $\alpha=\alpha_i$ for some $1\leq i\leq r$. By Proposition~\ref{prop:zd}(\ref{item:greedy}) we know that $(\alpha_1,\ldots,\hat{\alpha}_i,\ldots,\alpha_r)$ is a greedy decomposition of $d-d(\alpha)$. Using Proposition \ref{prop:zd}(\ref{item:commute}) we then conclude that
$$
w_o=w_Xw_P=z_d^Pw_P=s_{\alpha_1}\cdot\ldots\cdot s_{\alpha_r}\cdot w_P=s_\alpha\cdot s_{\alpha_1}\cdot\ldots\cdot\hat{s}_{\alpha_i}\cdot\ldots\cdot s_{\alpha_r}\cdot w_P=s_\alpha\cdot z_{d-d(\alpha)}^Pw_P\,.
$$
Using Proposition~\ref{prop:hecke}(\ref{item:inverse}), (\ref{item:heckeustrich}) and Proposition~\ref{prop:zd}(\ref{item:zzinverse}) the previous equation gives
$$
s_\alpha^*=w_o^{-1}s_\alpha=\left(\left(z_{d-d(\alpha)}^Pw_P\right)^{-1}\cdot s_\alpha\right)s_\alpha\preceq\left(z_{d-d(\alpha)}^Pw_P\right)^{-1}\preceq z_{d-d(\alpha)}^Pw_P\,.
$$
This proves the desired relation.
\end{proof}

\subsection{The extended support of a degree}

Let $d$ be a degree. Let $(\alpha_1,\ldots,\alpha_r)$ be the greedy decomposition of $d$. We denote by $\Delta(d)$ the set of all simple roots $\beta\in\Delta\setminus\Delta_P$ such that $(d,\omega_\beta)>0$. We call $\Delta(d)$ the naive support of $d$. We define $\widetilde{\Delta}(d)$ to be the union
$
\widetilde\Delta(d)=\bigcup_{i=1}^r\Delta(\alpha_i)
$
and call it the extended support of $d$. The extended support is clearly well-defined since the the greedy decomposition is unique up to reordering. The naive and the extended support of $d$ are related by the obvious equation $\Delta(d)=\widetilde\Delta(d)\setminus\Delta_P$. 
In particular, if $P=B$ the two notions coincide.  

\begin{defn}

We say that a degree $d$ is a connected degree if $\widetilde\Delta(d)$ is a connected subset of the Dynkin diagram. We say that a degree $d$ is a disconnected degree if $\widetilde\Delta(d)$ is a disconnected subset of the Dynkin diagram.

\end{defn}

\begin{prop}[{\cite[Corollary~4.5]{curvenbhd2}}]
\label{prop:connecteddegree}

Let $d$ be a connected degree. Let $(\alpha_1,\ldots,\alpha_r)$ be a greedy decomposition of $d$. Then we have $\alpha_1\geq\alpha_i$ for all $1\leq i\leq r$. In particular, the first entry $\alpha_1$ of the greedy decomposition of $d$ is uniquely determined by $d$ -- does not depend on the choice of the greedy decomposition.

\end{prop}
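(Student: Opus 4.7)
The plan is to consider the sub-root-system $R'\subseteq R$ generated by $\widetilde\Delta(d)$. Since $\widetilde\Delta(d)$ is connected in the Dynkin diagram, $R'$ is irreducible and admits a unique highest root $\theta$. Every $\alpha_i$ has support in $\widetilde\Delta(d)$ and is therefore a positive root of $R'$, which gives $\alpha_i\leq\theta$ in the root partial order of $R$. Consequently, once we have $\theta\leq\alpha_1$ we obtain $\alpha_i\leq\theta\leq\alpha_1$ for every $i$. Since by Proposition~\ref{prop:zd}(\ref{item:unique}) the element $\alpha_1$ is a maximal root of $d$, the inequality $\theta\leq\alpha_1$ would follow if we can exhibit $\theta$ as a candidate, i.e.\ prove $d(\theta)\leq d$.

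To verify $d(\theta)\leq d$ I would check the inequality coefficient by coefficient against each fundamental weight $\omega_\beta$ with $\beta\in\Delta\setminus\Delta_P$. For $\beta\notin\widetilde\Delta(d)$ both sides vanish: on the right this is the elementary computation that, because $(\omega_\beta,\alpha^\vee)=0$ if and only if $\beta\notin\Delta(\alpha)$, the naive support of $d=\sum_i d(\alpha_i)$ is exactly $\widetilde\Delta(d)\setminus\Delta_P$; on the left this follows from $\theta$ being supported in $\widetilde\Delta(d)$. For $\beta\in\widetilde\Delta(d)\setminus\Delta_P$ one compares the coefficient of $\beta$ in $\theta^\vee$ against the coefficient of $\beta$ in $\sum_i\alpha_i^\vee$; the $P$-cosmallness of each $\alpha_i$ (through the length characterization in Remark~\ref{rem:pcosmall}) together with the known marks of the highest root of the irreducible root system $R'$ should supply the required bound.

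The main obstacle is precisely this coefficient comparison in the non-simply-laced case. In simply-laced types all roots have equal length, so the coroot map $\alpha\mapsto\alpha^\vee$ is linear and $d(\theta)\leq d$ reduces to the root-order inequality $\theta\leq\sum_i\alpha_i$, which is a direct consequence of $\theta$ being the highest root of $R'$ combined with the $\alpha_i$'s collectively covering $\widetilde\Delta(d)$ through their supports. In non-simply-laced types ($B_n$, $C_n$, $F_4$, $G_2$) long and short roots rescale to coroots differently, and a case analysis on the irreducible type of $R'$ and on the explicit marks of its highest root appears unavoidable; the $P$-cosmall length identity together with the fact that each simple root in $\widetilde\Delta(d)$ already appears in the support of some $\alpha_i$ should rigidify the comparison enough to close the argument.
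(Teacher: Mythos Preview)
Your approach has a genuine gap: the intermediate claim $d(\theta)\leq d$ is false in general, and consequently $\alpha_1$ need not equal the highest root $\theta$ of $R'$. This already fails in simply-laced types, so the problem is not merely the non-simply-laced case analysis you flag.

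Take $R$ of type $\mathsf{D}_4$, $P=B$, and $d=\alpha_1^\vee+\alpha_2^\vee+\alpha_3^\vee+\alpha_4^\vee$. The unique maximal root of $d$ is $\alpha_1+\alpha_2+\alpha_3+\alpha_4$, and the greedy decomposition is the one-term sequence $(\alpha_1+\alpha_2+\alpha_3+\alpha_4)$. Hence $\widetilde\Delta(d)=\Delta$, $R'=R$, and $\theta=\alpha_1+2\alpha_2+\alpha_3+\alpha_4$. But $\theta^\vee=\theta\not\leq d$ (the $\alpha_2^\vee$-coefficient is $2>1$), so $d(\theta)\not\leq d$ and $\theta\not\leq\alpha_1$. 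Your justification that ``$\theta\leq\sum_i\alpha_i$ is a direct consequence of $\theta$ being the highest root of $R'$ combined with the $\alpha_i$'s collectively covering $\widetilde\Delta(d)$'' is simply wrong: being highest gives upper bounds $\alpha_i\leq\theta$, not a lower bound on $\theta$, and covering the support only forces each simple-root coefficient of $\sum_i\alpha_i$ to be at least $1$, whereas the marks of $\theta$ can exceed $1$. The same phenomenon occurs in $\mathsf{G}_2$ with $d=\alpha_1^\vee+\alpha_2^\vee$, where the greedy decomposition is $(3\alpha_1+\alpha_2)$ but $\theta=3\alpha_1+2\alpha_2$.

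The paper's proof avoids this by working not with the highest root of $R'$ but with the root $\gamma$ whose coroot is the coordinatewise join $\gamma^\vee=\bigvee_{i=1}^r\alpha_i^\vee$. Connectedness of $\widetilde\Delta(d)$ guarantees (via \cite[Lemma~4.4]{curvenbhd2}) that this join is in fact a coroot and that $\gamma^\vee$ has a \emph{unique} maximal root. Since $\gamma^\vee\leq\sum_i\alpha_i^\vee$ one gets $d(\gamma)\leq d$, so $\alpha_1$ is also a maximal root of $\gamma^\vee$ and hence the unique one; then $\alpha_i^\vee\leq\gamma^\vee$ forces $\alpha_i\leq\alpha_1$. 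The key point you are missing is precisely this choice of $\gamma$ in place of $\theta$.
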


\begin{proof}

Let $\gamma^\vee=\bigvee_{i=1}^r\alpha_i^\vee$ be the unique smallest element in $\mathbb{Z}\Delta^\vee$ that is greater than or equal to $\alpha_i^\vee$ for all $1\leq i\leq r$. By the connectedness assumption on $d$ and \cite[Lemma~4.4]{curvenbhd2}, it follows that $\gamma^\vee$ is a coroot. We define a root $\gamma$ in the obvious way as the dual of $\gamma^\vee$. We clearly have $\alpha_1^\vee\leq\gamma^\vee\leq\sum_{i=1}^r\alpha_i^\vee$ and thus $d(\alpha_1)\leq d(\gamma)\leq d$. By assumption, $\alpha_1$ is a maximal root of $d$, in particular a maximal root of $d(\gamma)$. A fortiori, it follows from this that $\alpha_1$ is also a maximal root of $\gamma^\vee$. By \cite[Lemma~4.4(a)]{curvenbhd2} there exists a unique maximal root of $\gamma^\vee$ and this unique maximal root is given by $\alpha_1$. But by definition of $\gamma^\vee$ we have $\alpha_i^\vee\leq\gamma^\vee$ for all $1\leq i\leq r$. Therefore the uniqueness of $\alpha_1$ gives $\alpha_1\geq\alpha_i$ for all $1\leq i\leq r$. The last statement follows easily from this.
\end{proof}

For a connected degree $d$, we usually write $\alpha(d)$ for the unique first entry of a (or any) greedy decomposition of $d$. With this notation we have for example $\widetilde\Delta(d)=\Delta(\alpha(d))$ for any connected degree $d$.

If $w\in W$, we define the support $\Delta(w)$ of $w$ to be the set $\Delta(w)=\{\beta\in\Delta\mid s_\beta\preceq w\}$.

\begin{prop}
\label{prop:support}

Let $d$ be a degree. Let $e\in H_2(G/B)$ be also a degree. Let $u,v,w\in W$.

\begin{enumerate}

\item
\label{item:def}

The support $\Delta(w)$ is the set of all simple roots $\beta$ such that $s_\beta$ occurs in some (or in any) reduced expression of $w$. 

\item
\label{item:relation}

If $u\preceq v$, then $\Delta(u)\subseteq\Delta(v)$.

\item
\label{item:supportanddot}

We have $\Delta(u\cdot v)=\Delta(u)\cup\Delta(v)$.

\item
\label{item:parabolic}

For any parabolic subgroup $Q$ of $G$ we have $\Delta(w_Q)=\Delta_Q$.

\item
\label{item:supportofs}

For any positive root $\alpha$ we have $\Delta(s_\alpha)=\Delta(\alpha)$.

\item 
\label{item:supportofz}

We have $\Delta\left(z_d^Pw_P\right)=\widetilde{\Delta}(d)\cup\Delta_P$, in particular $\Delta(z_e^B)=\widetilde\Delta(e)=\Delta(e)$.

\end{enumerate}

\end{prop}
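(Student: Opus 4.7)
The plan is to prove the six items in essentially the order they are stated, using (1)--(3) as the foundational items from which (4)--(6) will follow almost immediately.

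For Item~(\ref{item:def}), I would invoke the subword characterization of the Bruhat order (one of the three equivalent definitions collected in the conventions at the end of Section~1): $s_\beta\preceq w$ if and only if $s_\beta$ appears as a one-letter subexpression of some reduced expression of $w$, which is the same as saying $s_\beta$ appears in \emph{every} reduced expression of $w$ by Matsumoto's theorem (two reduced expressions of the same element use the same multiset of simple reflections up to permutation, in particular the same underlying set). For Item~(\ref{item:relation}), the claim is just transitivity of $\preceq$: if $s_\beta\preceq u\preceq v$ then $s_\beta\preceq v$.

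For Item~(\ref{item:supportanddot}), the inclusion $\Delta(u)\cup\Delta(v)\subseteq\Delta(u\cdot v)$ will follow from (\ref{item:relation}) once I establish $u\preceq u\cdot v$ and $v\preceq u\cdot v$. The first follows directly from Proposition~\ref{prop:hecke}(\ref{item:preceq}) applied to $1\preceq v$ with trailing factor $1$; the second follows from Proposition~\ref{prop:hecke}(\ref{item:preceq}) applied to $1\preceq u$ combined with Proposition~\ref{prop:hecke}(\ref{item:inverse}) and the fact that Bruhat order is preserved under inverses. The reverse inclusion is the key content: starting from reduced expressions $u=s_{\beta_1}\cdots s_{\beta_k}$ and $v=s_{\gamma_1}\cdots s_{\gamma_l}$, the Hecke product $u\cdot v$ is computed by iteratively multiplying $u$ on the right by each $s_{\gamma_i}$, which either appends $s_{\gamma_i}$ or does nothing. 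Hence $u\cdot v$ admits a reduced expression that is a subword of $s_{\beta_1}\cdots s_{\beta_k}s_{\gamma_1}\cdots s_{\gamma_l}$, and by Item~(\ref{item:def}) its support is contained in $\Delta(u)\cup\Delta(v)$.

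Items~(\ref{item:parabolic}) and (\ref{item:supportofs}) will then be immediate applications of Item~(\ref{item:def}): $w_Q$ has a reduced expression built from exactly the simple reflections $s_\beta$ with $\beta\in\Delta_Q$, and $s_\alpha$ can be written as a product of the $s_\beta$ with $\beta\in\Delta(\alpha)$ (for instance by induction on the height of $\alpha$, using that if $\alpha$ is not simple one can write $s_\alpha=s_\beta s_{s_\beta(\alpha)}s_\beta$ for some $\beta\in\Delta(\alpha)$ with $\beta\leq\alpha$ and $s_\beta(\alpha)$ still positive with strictly smaller support contained in $\Delta(\alpha)$); the reverse inclusion $\Delta(\alpha)\subseteq\Delta(s_\alpha)$ is a standard fact about support of roots versus support of reflections.

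Finally, for Item~(\ref{item:supportofz}), I would unpack the definition $z_d^Pw_P=s_{\alpha_1}\cdot\ldots\cdot s_{\alpha_r}\cdot w_P$ from Definition~\ref{def:zdp} for any greedy decomposition $(\alpha_1,\ldots,\alpha_r)$ of $d$, then apply Item~(\ref{item:supportanddot}) inductively together with Items~(\ref{item:parabolic}) and~(\ref{item:supportofs}) to get
\[
\Delta(z_d^Pw_P)=\Delta(\alpha_1)\cup\cdots\cup\Delta(\alpha_r)\cup\Delta_P=\widetilde\Delta(d)\cup\Delta_P.
\]
The specialization to $P=B$ uses $w_B=1$ and the observation (noted in the paragraph defining $\widetilde\Delta$) that the naive and extended supports coincide when $P=B$. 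The only step I expect to require genuine attention is the inclusion $\Delta(u\cdot v)\subseteq\Delta(u)\cup\Delta(v)$ in~(\ref{item:supportanddot}), since it is where the combinatorial definition of the Hecke product via subwords has to be invoked carefully; everything else is bookkeeping on top of (\ref{item:def})--(\ref{item:supportanddot}).
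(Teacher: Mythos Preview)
Your proposal is correct and follows essentially the same approach as the paper. The only noteworthy difference is in Item~(\ref{item:supportofs}): you argue the inclusion $\Delta(s_\alpha)\subseteq\Delta(\alpha)$ by an inductive conjugation $s_\alpha=s_\beta s_{s_\beta(\alpha)}s_\beta$, whereas the paper argues it by observing $s_\alpha\preceq w_Q$ for the parabolic $Q$ with $\Delta_Q=\Delta(\alpha)$ and invoking Items~(\ref{item:relation}) and~(\ref{item:parabolic}); for the reverse inclusion you defer to a ``standard fact'' while the paper spells it out via linear independence of simple roots (if $\Delta(s_\alpha)$ were a proper subset of $\Delta(\alpha)$ then $\alpha$ would lie in the $\mathbb{Z}$-span of that proper subset, contradiction). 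Also, in Item~(\ref{item:supportanddot}) the relation $v\preceq u\cdot v$ follows directly from Proposition~\ref{prop:hecke}(\ref{item:preceq}) (take the middle factor $1\preceq u$ and right factor $v$), so your detour through inverses is unnecessary, though not incorrect.
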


\begin{proof}

Item~(\ref{item:def}) follows directly from the definition of $\Delta(w)$ and the definition of the Bruhat order. Note that the set of all simple roots $\beta$ such that $s_\beta$ occurs in a fixed reduced expression of $w$ does not depend on the choice of the reduced expression (cf. \cite[5.10, Corollary~(b)]{humphreys3}).

Item~(\ref{item:relation}) follows directly from Item~(\ref{item:def}) and the definition of the Bruhat order.

Ad Item~(\ref{item:supportanddot}). By Proposition~\ref{prop:hecke}(\ref{item:preceq}) we have $u\preceq u\cdot v$ and $v\preceq u\cdot v$. Item~(\ref{item:relation}) immediately implies that $\Delta(u)\cup\Delta(v)\subseteq\Delta(u\cdot v)$. By definition of the Hecke product, there exists a reduced expression of $u\cdot v$ which consists of a product of simple reflections $s_\beta$ where $\beta\in\Delta(u)\cup\Delta(v)$. Therefore the inclusion $\Delta(u\cdot v)\subseteq\Delta(u)\cup\Delta(v)$ follows from Item~(\ref{item:def}).

Item~(\ref{item:parabolic}) follows from Item~(\ref{item:def}) and a well-known fact on reduced expressions of the longest element of a Coxeter group (\cite[1.8, Exercise 2]{humphreys3}).

Ad Item~(\ref{item:supportofs}). Let $Q$ be the parabolic subgroup of $G$ such that $\Delta_Q=\Delta(\alpha)$. Then we clearly have $s_\alpha\preceq w_Q$. Item~(\ref{item:relation}) and Item~(\ref{item:parabolic}) imply that $\Delta(s_\alpha)\subseteq\Delta(w_Q)=\Delta_Q=\Delta(\alpha)$. This inclusion also means that $\alpha$ is contained in the $\mathbb{Z}$-span of $\Delta(\alpha)$. Since the simple roots are linearly independent, it is clear that $\alpha$ cannot be contained in the $\mathbb{Z}$-span of a proper subset of $\Delta(\alpha)$. Therefore the inclusion $\Delta(s_\alpha)\subseteq\Delta(\alpha)$ must be an equality.

Ad Item~(\ref{item:supportofz}). Let $(\alpha_1,\ldots,\alpha_r)$ be a greedy decomposition of $d$. By definition, we then have $z_d^Pw_P=s_{\alpha_1}\cdot\ldots\cdot s_{\alpha_r}\cdot w_P$. Item~(\ref{item:supportanddot}), (\ref{item:parabolic}) and (\ref{item:supportofs}) imply
$$
\Delta(z_d^Pw_P)=\Delta(s_{\alpha_1})\cup\cdots\cup\Delta(s_{\alpha_r})\cup\Delta(w_P)=\Delta(\alpha_1)\cup\cdots\cup\Delta(\alpha_r)\cup\Delta_P=\widetilde{\Delta}(d)\cup\Delta_P\,.
$$
The very last statement is obvious since $\Delta_B=\emptyset$ by definition.
\end{proof}

\todo[inline,color=green]{Still missing. Proof of Proposition~\ref{prop:support} included September 20.

The support of a \emph{positive root} has to be defined in the introduction (Proposition~\ref{prop:support}(\ref{item:supportofs})).}

\begin{prop}
\label{prop:relation}

Let $d$ be a degree. Let $(\alpha_1,\ldots,\alpha_r)$ be a greedy decomposition of $d$. For all simple roots $\beta\in\Delta\left(z_{d-d(\alpha_1)}^Pw_P\right)$ the relation $(\alpha_1,\beta)\geq 0$ holds.

\end{prop}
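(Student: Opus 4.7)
The plan is to argue by contradiction. Suppose there exists a simple root $\beta\in\Delta(z^P_{d-d(\alpha_1)}w_P)$ with $(\alpha_1,\beta)<0$. Since $\beta$ is simple, standard root-system facts then give that $s_\beta\alpha_1=\alpha_1-(\alpha_1,\beta^\vee)\beta=\alpha_1+k\beta$ is a positive root strictly greater than $\alpha_1$ in the root partial order, with $k=-(\alpha_1,\beta^\vee)\geq 1$. Since $\alpha_1\in R^+\setminus R_P^+$ and $s_\beta\alpha_1$ differs from $\alpha_1$ only in the coefficient of $\beta$, we still have $s_\beta\alpha_1\in R^+\setminus R_P^+$. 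At the coroot level, $(s_\beta\alpha_1)^\vee=\alpha_1^\vee+m\beta^\vee$ with $m=-(\alpha_1^\vee,\beta)\geq 1$, so
$$d(s_\beta\alpha_1)=d(\alpha_1)+m\,d(\beta)\in H_2(X).$$
The goal is to show $d(s_\beta\alpha_1)\leq d$, which contradicts the maximality of $\alpha_1$ as a root of $d$ (since $\alpha_1$ is, by assumption, a maximal root of $d$).

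To set up the case split, apply Proposition~\ref{prop:support}(\ref{item:supportofz}) to identify the support:
$$\Delta\bigl(z^P_{d-d(\alpha_1)}w_P\bigr)=\widetilde{\Delta}(d-d(\alpha_1))\cup\Delta_P=\bigcup_{j=2}^{r}\Delta(\alpha_j)\cup\Delta_P.$$
Thus either $\beta\in\Delta_P$ or $\beta\in\Delta(\alpha_j)$ for some $j\in\{2,\ldots,r\}$.

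In the first case, $d(\beta)=0$ in $H_2(X)$, and the inequality $d(s_\beta\alpha_1)=d(\alpha_1)\leq d$ is immediate, yielding the desired contradiction. In the second case, I need to verify $m\,d(\beta)\leq d-d(\alpha_1)=\sum_{i=2}^{r}d(\alpha_i)$ in $H_2(X)$, which reduces to the inequality that the coefficient of $\beta^\vee$ in $\sum_{i=2}^{r}\alpha_i^\vee$ (modulo $\mathbb{Z}\Delta_P^\vee$) is at least $m$. The hypothesis $\beta\in\Delta(\alpha_j)$ contributes a positive summand because $\beta\leq\alpha_j$ forces the coefficient of $\beta^\vee$ in $\alpha_j^\vee$ to be positive; in the simply-laced setting $m=1$ and this contribution alone suffices.

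The main obstacle is precisely the non-simply-laced subcase of the second case, where $m$ can be $2$ or $3$ and a single contribution from $\alpha_j$ may not be enough. Here I would invoke the Hecke commutativity $s_{\alpha_1}\cdot s_{\alpha_j}=s_{\alpha_j}\cdot s_{\alpha_1}$ from Proposition~\ref{prop:zd}(\ref{item:commute}) to obtain a root-theoretic compatibility constraint between $\alpha_1$ and each $\alpha_j$, and combine this with the $P$-cosmall characterization of the $\alpha_i$ recalled in Remark~\ref{rem:pcosmall}. The delicate part is translating the Hecke commutativity into the precise numerical bound on the coefficient of $\beta^\vee$ in $\alpha_j^\vee$, which is exactly what is needed to close the argument in non-simply-laced types.
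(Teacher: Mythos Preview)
Your argument is complete in the simply-laced case and in the case $\beta\in\Delta_P$, but there is a genuine gap in the non-simply-laced Case~2 that you yourself flag. You need the $\beta$-coefficient of $d-d(\alpha_1)$ in $H_2(X)$ to be at least $m=-(\beta,\alpha_1^\vee)$, and when $m\in\{2,3\}$ the single contribution from $\alpha_j$ gives only $\geq 1$. Your proposed fix---using the Hecke commutativity $s_{\alpha_1}\cdot s_{\alpha_j}=s_{\alpha_j}\cdot s_{\alpha_1}$ from Proposition~\ref{prop:zd}(\ref{item:commute})---does not close this: that relation constrains the pair $(\alpha_1,\alpha_j)$, not the coefficient of $\beta^\vee$ in $\alpha_j^\vee$, and there is no clean way to extract the numerical bound you need from it. (Indeed, the inequality $(\alpha_1,\alpha_j)\geq 0$ is derived in the paper as a \emph{consequence} of this proposition via Corollary~\ref{cor:relation}, not as an input.)

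The paper avoids the difficulty entirely by a change of viewpoint. Rather than trying to build a root larger than $\alpha_1$ whose coroot is $\leq d$, it observes that from $\beta\in\Delta\bigl(z^P_{d-d(\alpha_1)}w_P\bigr)$ one gets only the modest inequality $d(\alpha_1)+d(\beta)\leq d$ (coefficient $\geq 1$, exactly your easy bound). This already forces $\alpha_1$ to be a maximal root of $d(\alpha_1)+d(\beta)$, and lifting to $e=\alpha_1^\vee+\beta^\vee\in H_2(G/B)$ shows $\alpha_1$ is a maximal root of $e$. Then \cite[Proposition~4.8(b)]{curvenbhd2} gives $s_{\alpha_1}\cdot s_\beta=s_\beta\cdot s_{\alpha_1}$, and \cite[Proposition~4.8(a)]{curvenbhd2} converts this Hecke commutativity directly into $(\alpha_1,\beta)\geq 0$. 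The point is that the hard root-length analysis you are attempting is already packaged inside \cite[Proposition~4.8]{curvenbhd2}; once you lift to $G/B$ and apply it to the pair $(\alpha_1,\beta)$ rather than $(\alpha_1,\alpha_j)$, the proof is two lines and uniform in type.
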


\begin{proof}

Let us write $\alpha=\alpha_1$ for short. By Proposition~\ref{prop:zd}(\ref{item:greedy}) we know that $(\alpha_2,\ldots,\alpha_r)$ is a greedy decomposition of $d-d(\alpha)$. Proposition~\ref{prop:support}(\ref{item:supportofz}) then gives the equality
$$
\Delta\left(z_{d-d(\alpha)}^Pw_P\right)=\Delta(\alpha_2)\cup\cdots\cup\Delta(\alpha_r)\cup\Delta_P\,.
$$
Let $\beta$ be a simple root as in the statement. By assumption, $\alpha$ is a maximal root of $d$, in particular $\alpha$ is a maximal root of $d(\alpha)+d(\beta)\leq d$. Let $e=\alpha^\vee+\beta^\vee\in H_2(G/B)$. Then we have $e+\mathbb{Z}\Delta_P^\vee=d(\alpha)+d(\beta)$ and $\alpha^\vee\leq e$. Therefore $\alpha$ is also a maximal root of $e$. By \cite[Proposition~4.8(b)]{curvenbhd2} we then know that $s_\alpha\cdot s_\beta=s_\beta\cdot s_\alpha$. By \cite[Proposition~4.8(a)]{curvenbhd2} we see that $(\alpha,\beta)\geq 0$ as required.
\end{proof}

\begin{cor}
\label{cor:pcosmall}

Let $\alpha$ be a $P$-cosmall root. Then we have $(\alpha,\beta)\geq 0$ for all simple roots $\beta\in\Delta_P$.

\end{cor}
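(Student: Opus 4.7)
The plan is to deduce this corollary as an immediate special case of Proposition~\ref{prop:relation}, applied to the degree $d = d(\alpha)$.

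First, I would observe that since $\alpha$ is $P$-cosmall, by definition $\alpha$ is a maximal root of $d(\alpha)$. Therefore $(\alpha)$ is a one-element greedy decomposition of $d(\alpha)$: the first entry $\alpha$ is a maximal root of $d(\alpha)$, and the remaining piece $d(\alpha)-d(\alpha)=0$ admits the empty sequence as its unique greedy decomposition. So in the notation of Proposition~\ref{prop:relation}, we take $r=1$ and $\alpha_1 = \alpha$.

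Next, I would identify the support appearing in Proposition~\ref{prop:relation}. With $d = d(\alpha)$, we have $d - d(\alpha_1) = 0$, so $z_{d-d(\alpha_1)}^P = z_0^P = 1$ and hence $z_{d-d(\alpha_1)}^P w_P = w_P$. By Proposition~\ref{prop:support}(\ref{item:parabolic}), $\Delta(w_P) = \Delta_P$.

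Finally, applying Proposition~\ref{prop:relation} gives $(\alpha,\beta) = (\alpha_1,\beta) \geq 0$ for every $\beta \in \Delta_P$, which is exactly the claim. There is essentially no obstacle here, as the corollary is just Proposition~\ref{prop:relation} specialized to the smallest nonzero degree to which $\alpha$ contributes; the only thing to verify is that $\alpha$ alone really is a greedy decomposition of $d(\alpha)$, and this is precisely the definition of $P$-cosmallness.
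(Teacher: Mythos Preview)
Your proof is correct and follows essentially the same approach as the paper: both apply Proposition~\ref{prop:relation} to the degree $d(\alpha)$ with its one-element greedy decomposition $(\alpha)$, and both invoke Proposition~\ref{prop:support}(\ref{item:parabolic}) to identify $\Delta(w_P)=\Delta_P$. Your version simply spells out a bit more explicitly why $(\alpha)$ is a greedy decomposition and why $z_{0}^P w_P = w_P$.
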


\begin{proof}

This is Proposition~\ref{prop:relation} applied to the degree $d(\alpha)$ and its unique greedy decomposition consisting of one element $\alpha$. Here we use that $\Delta(w_P)=\Delta_P$ (cf. Proposition~\ref{prop:support}(\ref{item:parabolic})).
\end{proof}

\begin{lem}
\label{lem:roots}

Let $\alpha,\alpha'\in R$ be two roots such that $(\alpha,\alpha')\geq 0$ and such that $\alpha+\alpha'\in R^+$.
Then we have $0<(\alpha+\alpha')^\vee<\alpha^\vee+\alpha'^\vee$ and $\alpha^\vee+\alpha'^\vee\notin R^\vee$. 

\end{lem}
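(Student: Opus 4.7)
The plan is first to show that the hypothesis forces $\alpha$ and $\alpha'$ to share a common length, after which all three conclusions follow from a one-line coroot identity.

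For the length statement, I would analyse the $\alpha'$-string through $\alpha$. Writing it as $\alpha-p\alpha',\ldots,\alpha+q\alpha'$, the assumption $\alpha+\alpha'\in R^+$ gives $q\geq 1$, and the Cartan integer $p-q=\langle\alpha,\alpha'^\vee\rangle$ is non-negative by hypothesis, so $p\geq q\geq 1$; in particular $\alpha-\alpha'\in R$ as well. Because a root string has length at most $4$, only $(p,q)\in\{(1,1),(2,1)\}$ can occur, with $(2,1)$ possible only in $G_2$. In either case, the squared lengths of the roots $\alpha,\alpha\pm\alpha'$ are $(\alpha,\alpha)$ and $(\alpha,\alpha)+(\alpha',\alpha')\pm 2(\alpha,\alpha')$, and requiring these values to belong to the (at most) two-element set of squared root lengths of $R$ forces $(\alpha,\alpha)=(\alpha',\alpha')$; in all non-trivial cases both $\alpha$ and $\alpha'$ turn out to be short.

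Once the common length is secured, set $a=(\alpha,\alpha)=(\alpha',\alpha')$ and $c=(\alpha,\alpha')\geq 0$, so that $(\alpha+\alpha',\alpha+\alpha')=2a+2c$. A direct substitution into $\gamma^\vee=2\gamma/(\gamma,\gamma)$ yields the identity
$$
\alpha^\vee+\alpha'^\vee=\frac{2(\alpha+\alpha')}{a}=k\cdot(\alpha+\alpha')^\vee\qquad\text{with}\qquad k=\frac{2a+2c}{a}=2+\langle\alpha',\alpha^\vee\rangle\in\mathbb{Z}_{\geq 2}\,.
$$

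From this the three claims follow immediately. Since $\alpha+\alpha'\in R^+$, the coroot $(\alpha+\alpha')^\vee$ is strictly positive in the simple coroot partial order, giving $0<(\alpha+\alpha')^\vee$, and
$$
\alpha^\vee+\alpha'^\vee-(\alpha+\alpha')^\vee=(k-1)(\alpha+\alpha')^\vee\geq(\alpha+\alpha')^\vee>0,
$$
which proves the strict inequality $(\alpha+\alpha')^\vee<\alpha^\vee+\alpha'^\vee$. Finally, $\alpha^\vee+\alpha'^\vee=k(\alpha+\alpha')^\vee$ is an integer multiple $k\geq 2$ of the coroot $(\alpha+\alpha')^\vee$; because the reduced root system $R^\vee$ contains no non-trivial integer multiples of its elements, $\alpha^\vee+\alpha'^\vee\notin R^\vee$. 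The main obstacle is the first step: the equal-length conclusion is where the specifics of the root-system classification (string lengths and the two-valued nature of root lengths) enter, after which the rest of the proof is formal manipulation.
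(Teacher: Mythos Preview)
Your proof is correct and follows essentially the same skeleton as the paper's: first show that $\alpha$ and $\alpha'$ must have the same (short) length, then observe that $\alpha^\vee+\alpha'^\vee$ is an integer multiple $k\geq 2$ of $(\alpha+\alpha')^\vee$, from which all three conclusions are immediate. The only difference is in the equal-length step: where you run a root-string analysis and case distinction on $(p,q)$, the paper argues more directly from the single inequality $(\gamma,\gamma)=(\alpha,\alpha)+2(\alpha,\alpha')+(\alpha',\alpha')\geq(\alpha,\alpha)+(\alpha',\alpha')$, which already forces two root lengths with $\gamma$ long and both $\alpha,\alpha'$ short.
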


\begin{proof}

Let $\gamma=\alpha+\alpha'$ for short. Since $(\alpha,\alpha')\geq 0$, we know that $(\gamma,\gamma)\geq(\alpha,\alpha)+(\alpha',\alpha')$. Therefore there must be two root lengths. Moreover, $\gamma$ is a long root and both $\alpha$ and $\alpha'$ are short roots, in particular $(\alpha,\alpha)=(\alpha',\alpha')$. Let $n=(\gamma,\gamma)/(\alpha,\alpha)=(\gamma,\gamma)/(\alpha',\alpha')$. It is known that $n$ is always a positive integer. Since there are two root lengths, we must have $n>1$. (More concretely, we know by type considerations that $n\in\{2,3\}$.) A simple computation shows that $\alpha^\vee+\alpha'^\vee=n\gamma^\vee$. Since the root system $R^\vee$ is reduced and $n\neq\pm 1$, we see that $\alpha^\vee+\alpha'^\vee\notin R^\vee$. Since $\gamma>0$ we know that $\gamma^\vee>0$ and thus $\alpha^\vee+\alpha'^\vee=n\gamma^\vee>\gamma^\vee>0$. 
\end{proof}

\begin{cor}
\label{cor:relation}

Let $d$ be a degree. Let $\alpha$ and $\alpha'$ be two entries of a greedy decomposition of $d$. Then we have $(\alpha,\alpha')\geq 0$ and $\alpha+\alpha'\notin R$. 

\end{cor}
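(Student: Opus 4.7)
The plan is to reorder the given greedy decomposition so that $\alpha$ appears first, reducing the situation to that of Proposition~\ref{prop:relation}. More precisely, Proposition~\ref{prop:zd}(\ref{item:unique}) lets me replace the given greedy decomposition with one of the form $(\alpha_1,\ldots,\alpha_r)$ in which $\alpha=\alpha_1$ and $\alpha'=\alpha_j$ for some $j\ge 2$. (If $\alpha$ and $\alpha'$ coincide as roots, both assertions are immediate: $R$ is reduced and $(\alpha,\alpha)>0$.) By Proposition~\ref{prop:zd}(\ref{item:greedy}), the tail $(\alpha_2,\ldots,\alpha_r)$ is a greedy decomposition of $d-d(\alpha)$, and Proposition~\ref{prop:support}(\ref{item:supportofz}) then places the support $\Delta(\alpha')$ inside $\Delta\bigl(z_{d-d(\alpha)}^Pw_P\bigr)$.

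For the first assertion $(\alpha,\alpha')\ge 0$, I would expand $\alpha'=\sum_{\beta\in\Delta(\alpha')}c_\beta\beta$ with $c_\beta\ge 0$ and apply Proposition~\ref{prop:relation} to each $\beta\in\Delta(\alpha')\subseteq\Delta\bigl(z_{d-d(\alpha)}^Pw_P\bigr)$, obtaining $(\alpha,\beta)\ge 0$. Summing against the non-negative coefficients $c_\beta$ gives $(\alpha,\alpha')\ge 0$.

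For the second assertion $\alpha+\alpha'\notin R$, I would argue by contradiction. If $\alpha+\alpha'\in R$, positivity forces $\alpha+\alpha'\in R^+$, and I would split into two cases. In the case $\alpha+\alpha'\in R_P^+\subseteq\mathbb Z\Delta_P$, comparing coefficients of the simple roots in $\Delta\setminus\Delta_P$ and using that $\alpha,\alpha'\in R^+$ have non-negative coefficients forces $\alpha,\alpha'\in R_P^+$; this contradicts the $P$-cosmallness of entries of a greedy decomposition (Proposition~\ref{prop:zd}(\ref{item:unique})), since $P$-cosmall roots lie in $R^+\setminus R_P^+$ by definition. In the remaining case $\alpha+\alpha'\in R^+\setminus R_P^+$, Lemma~\ref{lem:roots}, now applicable thanks to the first assertion, gives $(\alpha+\alpha')^\vee\le\alpha^\vee+\alpha'^\vee$ in the coroot lattice; reducing modulo $\mathbb Z\Delta_P^\vee$ yields $d(\alpha+\alpha')\le d(\alpha)+d(\alpha')\le d$, where the second inequality is because $\alpha_1$ and $\alpha_j$ are distinct entries of a greedy decomposition summing to $d$. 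Combined with $\alpha+\alpha'>\alpha$, this exhibits $\alpha+\alpha'$ as a strictly larger element of the set $\{\gamma\in R^+\setminus R_P^+\mid d(\gamma)\le d\}$, contradicting the maximality of $\alpha$.

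The step I expect to require the most care is the first: translating the combinatorial hypothesis that $\alpha$ and $\alpha'$ appear jointly in a greedy decomposition of $d$ into the concrete inclusion $\Delta(\alpha')\subseteq\Delta\bigl(z_{d-d(\alpha)}^Pw_P\bigr)$ needed to feed Proposition~\ref{prop:relation}. Once that bridge is built, the positivity $(\alpha,\alpha')\ge 0$ is a single application of Proposition~\ref{prop:relation}, and the second assertion reduces to a short case analysis closed off by Lemma~\ref{lem:roots} together with the fact that $P$-cosmall roots avoid $R_P^+$.
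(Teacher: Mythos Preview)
Your argument has a genuine gap at the very first step. Proposition~\ref{prop:zd}(\ref{item:unique}) says the greedy decomposition is unique \emph{up to reordering}, meaning all greedy decompositions share the same multiset of entries; it does \emph{not} say that every ordering of that multiset is again a greedy decomposition. In particular, you cannot in general arrange that $\alpha=\alpha_1$. A concrete obstruction: in type $\mathsf{A}_3$ with $P=B$ and $d=\theta_1^\vee+\alpha_2^\vee$, the unique greedy decomposition is $(\theta_1,\alpha_2)$; but $\alpha_2$ is not a maximal root of $d$ (since $\theta_1>\alpha_2$ and $\theta_1^\vee\le d$), so no greedy decomposition of $d$ begins with $\alpha_2$. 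Both halves of your argument rely on $\alpha$ being a maximal root of $d$, so this gap propagates.

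The fix is exactly what the paper does: instead of reordering the whole decomposition, apply Proposition~\ref{prop:zd}(\ref{item:greedy}) repeatedly to strip away all entries except $\alpha$ and $\alpha'$. Assuming (without loss of generality) that $\alpha$ occurs before $\alpha'$ in the original decomposition, this yields that $(\alpha,\alpha')$ is a greedy decomposition of $d':=d(\alpha)+d(\alpha')$, so $\alpha$ \emph{is} the first entry of a greedy decomposition---just of the smaller degree $d'$ rather than of $d$. From here your argument goes through verbatim: Proposition~\ref{prop:relation} gives $(\alpha,\beta)\ge 0$ for every $\beta\in\Delta(\alpha')$, hence $(\alpha,\alpha')\ge 0$; and if $\alpha+\alpha'\in R$, then Lemma~\ref{lem:roots} gives $(\alpha+\alpha')^\vee<\alpha^\vee+\alpha'^\vee$, contradicting the maximality of $\alpha$ for $\alpha^\vee+\alpha'^\vee$ (the paper works directly at the level of $H_2(G/B)$ here, which spares you the case split on whether $\alpha+\alpha'\in R_P^+$).
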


\begin{proof}

If $\alpha=\alpha'$, then there is nothing to prove. Suppose from now on that $\alpha$ and $\alpha'$ are distinct. Let $(\alpha_1,\ldots,\alpha_r)$ be a greedy decomposition of $d$. By the uniqueness of the greedy decomposition up to reordering we know that $\alpha=\alpha_i$ and $\alpha'=\alpha_j$ for some $1\leq i,j\leq r$. By replacing $\alpha$ and $\alpha'$ if necessary we may assume that $i<j$. By Proposition~\ref{prop:zd}(\ref{item:greedy}) we know that $(\alpha,\alpha')$ is a greedy decomposition of $d(\alpha)+d(\alpha')$. Let $d'=d(\alpha)+d(\alpha')$ for short. By Proposition~\ref{prop:relation} applied to the greedy decomposition $(\alpha,\alpha')$ of $d'$ we know that for all $\beta\in\Delta(\alpha')$ the relation $(\alpha,\beta)\geq 0$ holds. By bilinearity this means $(\alpha,\alpha')\geq 0$.  

Suppose now for a contradiction that $\alpha+\alpha'\in R$. By assumption $\alpha$ is a maximal root of $d'$. A fortiori, $\alpha$ is a maximal root of $\alpha^\vee+\alpha'^\vee$. It is clear that $\alpha<\alpha+\alpha'$. Moreover, by Lemma~\ref{lem:roots} we know that $(\alpha+\alpha')^\vee<\alpha^\vee+\alpha'^\vee$. This contradicts the maximality of $\alpha$. Therefore we find that $\alpha+\alpha'$ cannot be a root.
\end{proof}

\begin{cor}
\label{cor:stronglyorthogonal2}

Let $d$ be a degree. Let $(\alpha_1,\ldots,\alpha_r)$ be a greedy decomposition of $d$. For all simple roots $\beta\in\Delta\left(z_{d-d(\alpha_1)}^Pw_P\right)$ we have $\alpha_1+\beta\notin R$.

\end{cor}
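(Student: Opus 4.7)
The plan is to argue by contradiction, exploiting the maximality of $\alpha_1$ as a maximal root of $d$. The starting point is Proposition~\ref{prop:relation}, which already supplies $(\alpha_1,\beta)\geq 0$ for every simple root $\beta\in\Delta(z_{d-d(\alpha_1)}^Pw_P)$; this is exactly the hypothesis needed in order to apply Lemma~\ref{lem:roots}. So suppose for contradiction that $\alpha_1+\beta\in R$. Since $\alpha_1,\beta\in R^+$, the sum lies in $R^+$; since $\alpha_1$ lies in $R^+\setminus R_P^+$, it carries some simple root from $\Delta\setminus\Delta_P$ in its support, and this is inherited by $\alpha_1+\beta$, so $\alpha_1+\beta\in R^+\setminus R_P^+$.

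Lemma~\ref{lem:roots} now yields $(\alpha_1+\beta)^\vee<\alpha_1^\vee+\beta^\vee$ in $\mathbb{Z}\Delta^\vee$, which descends modulo $\mathbb{Z}\Delta_P^\vee$ to the inequality $d(\alpha_1+\beta)\leq d(\alpha_1)+d(\beta)$ in $H_2(X)$. The remaining key ingredient is $d(\beta)\leq d-d(\alpha_1)$: combined with the previous estimate it gives $d(\alpha_1+\beta)\leq d$, and then $\alpha_1+\beta>\alpha_1$ with $\alpha_1+\beta\in R^+\setminus R_P^+$ and $d(\alpha_1+\beta)\leq d$ directly contradicts $\alpha_1$ being a maximal root of $d$.

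To establish $d(\beta)\leq d-d(\alpha_1)$, I will combine Proposition~\ref{prop:zd}(\ref{item:greedy}), which asserts that $(\alpha_2,\ldots,\alpha_r)$ is a greedy decomposition of $d-d(\alpha_1)$, with Proposition~\ref{prop:support}(\ref{item:supportofz}), which gives the explicit description $\Delta(z_{d-d(\alpha_1)}^Pw_P)=\Delta_P\cup\bigcup_{j\geq 2}\Delta(\alpha_j)$. If $\beta\in\Delta_P$, then $d(\beta)=0$ and the inequality is trivial. Otherwise $\beta\in\Delta\setminus\Delta_P$ and $\beta\in\Delta(\alpha_j)$ for some $j\geq 2$, i.e.\ $\beta\leq\alpha_j$; it then remains to upgrade this root-level inequality to $\beta^\vee\leq\alpha_j^\vee$ in the simple coroot basis, after which $d(\beta)\leq d(\alpha_j)\leq d-d(\alpha_1)$ follows because $d-d(\alpha_1)=\sum_{j\geq 2}d(\alpha_j)$ is a sum of effective classes.

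The main obstacle is precisely this last promotion from $\beta\leq\alpha_j$ to $\beta^\vee\leq\alpha_j^\vee$, which is not automatic outside the simply laced case. It will be handled by the explicit formula: writing $\alpha_j=\sum_\gamma c_\gamma\gamma$, the $\beta^\vee$-coefficient of $\alpha_j^\vee$ in the simple coroot basis equals $c_\beta(\beta,\beta)/(\alpha_j,\alpha_j)$. Since $\alpha_j^\vee$ is a positive coroot, all of its coefficients in the simple coroot basis are nonnegative integers; the hypothesis $c_\beta\geq 1$ then forces this particular coefficient to be a strictly positive integer, hence at least $1$, which is the desired comparison.
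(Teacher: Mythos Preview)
Your argument is correct. Every step checks out, including the coroot-coefficient computation: writing $\alpha_j^\vee=\sum_\gamma c_\gamma\,(\gamma,\gamma)/(\alpha_j,\alpha_j)\,\gamma^\vee$, the $\beta^\vee$-coefficient is a strictly positive integer once $c_\beta\geq 1$, so $\beta^\vee\leq\alpha_j^\vee$ and hence $d(\beta)\leq d(\alpha_j)\leq d-d(\alpha_1)$.

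The paper, however, takes a much shorter route. It simply observes that $(\alpha_1,\beta)$ is a greedy decomposition of the degree $\alpha_1^\vee+\beta^\vee\in H_2(G/B)$ --- this is essentially what the proof of Proposition~\ref{prop:relation} already established (namely that $\alpha_1$ is a maximal root of $\alpha_1^\vee+\beta^\vee$; and $\beta$, being simple, is trivially the maximal root of $\beta^\vee$) --- and then invokes Corollary~\ref{cor:relation} to conclude $\alpha_1+\beta\notin R$ in one line. Both arguments ultimately rest on Lemma~\ref{lem:roots} and a maximality contradiction; the difference is \emph{where} the contradiction is drawn. The paper lifts to $H_2(G/B)$ and contradicts maximality of $\alpha_1$ for the small degree $\alpha_1^\vee+\beta^\vee$, which makes the inequality $d(\alpha_1+\beta)\leq$ (something) immediate. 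You stay in $H_2(X)$ and contradict maximality of $\alpha_1$ for the original degree $d$, which forces you to manufacture the auxiliary bound $d(\beta)\leq d-d(\alpha_1)$ and hence the coroot computation. Your approach is self-contained and avoids appealing to Corollary~\ref{cor:relation}, but at the cost of redoing part of its content in a slightly more laborious setting.
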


\begin{proof}

Let $\alpha_1$ and $\beta$ be as in the statement. Let us write $\alpha=\alpha_1$ for short. It is easy to see that $(\alpha,\beta)$ is a greedy decomposition of $\alpha^\vee+\beta^\vee$. By Corollary~\ref{cor:relation} applied to the degree $\alpha^\vee+\beta^\vee$ it follows that $\alpha+\beta\notin R$.
\end{proof}

\begin{ex}
\label{ex:highestroot}

Let $P_1,\ldots,P_k$ be parabolic subgroups such that $\Delta=\bigcup_{i=1}^k\Delta_{P_i}$. Let $\alpha$ be a root which is $P_i$-cosmall for all $1\leq i\leq k$. Then $\alpha$ must be the highest root $\theta_1$. Indeed, Corollary~\ref{cor:pcosmall} implies that $(\alpha,\beta)\geq 0$ for all $\beta\in\Delta$. If $\alpha\neq\theta_1$, then there exists $\beta\in\Delta$ such that $\alpha<s_\beta(\alpha)$ and thus $(\alpha,\beta)<0$. Therefore we must have $\alpha=\theta_1$.

\end{ex}

\begin{ex}
\label{ex:verypcosmall}

If $P$ is not maximal, then the only very $P$-cosmall root is the highest root. If $P$ is maximal, then the notions $P$-cosmall and very $P$-cosmall are equivalent. This follows directly from Example~\ref{ex:highestroot}.

\end{ex}

\todo[inline,color=green]{For all $\alpha_1,\beta$ as in Proposition~\ref{prop:relation} we know $\alpha_1+\beta\notin R$. As a consequence, we prove strong orthogonality for simple roots in Section~?? / after Theorem~\ref{thm:orthogonality}. Use a greedy sub-decomposition $(\alpha_1,\beta)$.}

\section{The distance function} 
\label{sec:delta}

In this section, we introduce the distance function $\delta_P$ on the Weyl group $W$ with values in the power set of $H_2(X)$. We give a definition which has an immediate geometric interpretation in terms of curve neighborhoods. Later on in Section~\ref{sec:delta2}, we will see that this function is closely related to the quantum cohomology of $X$ (cf. Theorem~\ref{thm:fulton}) and also that it can be described more combinatorially in terms of 
chains (cf. Theorem~\ref{thm:descriptionofdelta}). We will focus in this section on the question when $\delta_P(w)$ where $w\in W$ consists of unique element. In particular, for our further investigations it is important to show that $\delta_P(w_o)$ consists of a unique element $d_X$ (cf. Theorem~\ref{thm:uniqueness}). Although the statement $\delta_P(w_o)=\{d_X\}$ is very natural, we will complete the full proof of it only in Section~\ref{sec:cascade} (cf. Theorem~\ref{thm:curve}). 
Our approach to the problem has the advantage that it gives by the way a simple formula to compute $d_X$ (cf. Corollary~\ref{cor:identity})

\begin{defn}
\label{def:deltaw}

Let $w\in W$. Then we define $\delta_P(w)$ to be set of all minimal elements of the set
$$
\left\{d\text{ a degree such that }wW_P\preceq z_d^PW_P\right\}\,.
$$
We call the function $\delta_P\colon W\to\mathcal{P}(H_2(X))$ the distance function. Here we denote by $\mathcal{P}(M)$ the power set of a set $M$.

\end{defn}

\begin{rem}
\label{rem:uniqueelement}

Let $w\in W$. By definition, any two distinct elements of $\delta_P(w)$ are incomparable. For all simple roots $\beta$ the lattice $H_2(G/P_\beta)=\mathbb{Z}$ is totally ordered. Therefore any two elements of $\delta_{P_\beta}(w)$ are comparable. It follows that the set $\delta_{P_\beta}(w)$ always consists of a unique element. 

\end{rem}

\begin{conv}
\label{conv:uniqueelement}

Let $w\in W$. If $\delta_P(w)$ consists of a unique element $d$, we identify the set $\delta_P(w)$ with its unique element and write $\delta_P(w)=d$. In particular, for all simple roots $\beta$ we identify $\delta_{P_\beta}$ with a function with values in $\mathbb{Z}$ (cf. Remark~\ref{rem:uniqueelement}). To abbreviate, we denote the unique element of $\delta_{P_\beta}(w_o)$ by $d_{G/P_\beta}$. Following the convention, we write $\delta_{P_\beta}(w_o)=d_{G/P_\beta}$.

\end{conv}

\begin{prop}
\label{prop:delta}

Let $u,v,w\in W$. 

\begin{enumerate}

\item
\label{item:invariant}

The function $\delta_P$ is $(W_P,W_P)$-invariant, i.e. we have $\delta_P(uwv)=\delta_P(w)$ for all $u,v\in W_P$.

\item
\label{item:invariantinverse}

The function $\delta_P$ is invariant under taking inverses, i.e. we have $\delta_P(w)=\delta_P(w^{-1})$ for all $w\in W$.

\item
\label{item:usmallerv}

Suppose that $uW_P\preceq vW_P$. For all $d\in\delta_P(v)$ there exists a degree $d'\in\delta_P(u)$ such that $d'\leq d$.

\item
\label{item:triangle}

For all $d\in\delta_P(u)$ and all $d'\in\delta_P(v)$ there exists a degree $d''\in\delta_P(u\cdot v)$ such that $d''\leq d+d'$. 

\item
\label{item:projection2}

Let $Q$ be a parabolic subgroup of $G$ containing $P$. For all $d\in\delta_P(w)$ there exists a degree $e\in\delta_Q(w)$ such that $e\leq d+\mathbb{Z}\Delta_Q^\vee$.

\item
\label{item:pcosmall}

Let $\alpha$ be a $P$-cosmall root. Then we have $d(\alpha)\in\delta_P(s_\alpha)$.

\item
\label{item:verycosmall}

Let $\alpha$ be a very $P$-cosmall root. Then we have $\delta_P(s_\alpha)=d(\alpha)$.

\item
\label{item:simple}

Let $\beta$ be a simple root. Then we have $\delta_P(s_\beta)=d(\beta)$.

\item
\label{item:reduce}

Let $d$ be a degree such that $d\in\delta_P(z_d^P)$. Let $\alpha$ be a root which occurs in a greedy decomposition of $d$. Then we have $d-d(\alpha)\in\delta_P(z_{d-d(\alpha)}^P)$.

\item
\label{item:d}

For all $d\in\delta_P(w)$ we have $d\in\delta_P(z_d^P)$.

\end{enumerate}

\end{prop}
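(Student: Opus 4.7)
The plan is to handle the ten items in an order that lets each build on the ones before, drawing on Proposition~\ref{prop:hecke}, Proposition~\ref{prop:zd}, and Theorem~\ref{thm:verycosmall}. The first bundle of items --- (3), (5), and (10) --- are essentially definitional. For (3), the hypothesis $uW_P \preceq vW_P \preceq z_d^P W_P$ places $d$ in the defining set for $\delta_P(u)$, from which a minimal element below $d$ must exist. Item (10) is symmetric: $wW_P \preceq z_d^P W_P$ implies that the defining set for $\delta_P(z_d^P)$ is contained in that for $\delta_P(w)$, so minimality of $d$ in the larger set forces minimality in the smaller. Item (5) is immediate from Proposition~\ref{prop:zd}(\ref{item:projection}) combined with the compatibility of Bruhat order under parabolic projection, giving $wW_Q \preceq z_d^P W_Q \preceq z_{d+\mathbb{Z}\Delta_Q^\vee}^Q W_Q$.

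For the invariance items (1) and (2), the crucial input is Proposition~\ref{prop:zd}(\ref{item:stabofz}), which says $W_P$ stabilises the coset $z_d^P W_P$ under left Hecke multiplication. Together with Proposition~\ref{prop:hecke}(\ref{item:preceqmodp}) this gives $uwW_P \preceq z_d^P W_P \Leftrightarrow wW_P \preceq z_d^P W_P$ for $u \in W_P$, and right invariance by $v \in W_P$ is automatic since $wvW_P = wW_P$. For (2), I combine invariance of the Bruhat order under inversion with Proposition~\ref{prop:zd}(\ref{item:zzinverse}): writing $y = z_d^P w_P$, the relation $y^{-1} \preceq y$ gives $w \preceq y \Leftrightarrow w^{-1} \preceq y$, so the two defining sets coincide.

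The triangle inequality (4) is the main substantive step. I plan first to establish the auxiliary Hecke-product inequality $z_d^P \cdot z_{d'}^P W_P \preceq z_{d+d'}^P W_P$, which encodes the curve-neighborhood composition $\Gamma_{d'}(\Gamma_d(X_1)) \subseteq \Gamma_{d+d'}(X_1)$ in the Schubert language of \cite{curvenbhd2}. Given $d \in \delta_P(u)$ and $d' \in \delta_P(v)$, two applications of Proposition~\ref{prop:hecke}(\ref{item:preceqmodp}) then yield $u \cdot v W_P \preceq z_d^P \cdot z_{d'}^P W_P \preceq z_{d+d'}^P W_P$, placing $d + d'$ in the defining set for $\delta_P(u \cdot v)$. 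The cosmall items (6)--(8) rest on Proposition~\ref{prop:zd}(\ref{item:unique}): for $P$-cosmall $\alpha$ the sequence $(\alpha)$ is the unique greedy decomposition of $d(\alpha)$, hence $z_{d(\alpha)}^P W_P = s_\alpha W_P$ and $d(\alpha)$ lies in the defining set for $\delta_P(s_\alpha)$. Minimality in (7) is exactly Theorem~\ref{thm:verycosmall}, and (8) for simple $\beta$ is either the degenerate case $\beta \in \Delta_P$ or an application of (7). Finally, (9) will follow by contradiction: a hypothetical $d'' \lneq d - d(\alpha)$ in the defining set of $\delta_P(z_{d-d(\alpha)}^P)$ would, through (4) applied to the decomposition $z_d^P w_P = s_\alpha \cdot z_{d-d(\alpha)}^P w_P$ coming from Proposition~\ref{prop:zd}(\ref{item:commute}) and (\ref{item:greedy}), together with (6), produce a degree strictly less than $d$ in the defining set of $\delta_P(z_d^P)$, contradicting $d \in \delta_P(z_d^P)$.

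The main obstacle I anticipate is the minimality portion of (6): $P$-cosmallness is strictly weaker than very $P$-cosmallness, so Theorem~\ref{thm:verycosmall} does not apply directly. My plan here is to project via (5) to each maximal $P_\beta$ with $\beta \in \Delta(\alpha) \setminus \Delta_P$, where $\delta_{P_\beta}$ takes values in the totally ordered $H_2(G/P_\beta) = \mathbb{Z}$, and then to combine the length characterization of $P$-cosmall roots from Remark~\ref{rem:pcosmall} with the inequality $\ell(z_{(\omega_\beta,d')}^{P_\beta}) \leq c_1(G/P_\beta)(\omega_\beta,d') - 1$ used in the proof of Theorem~\ref{thm:verycosmall}; this should rule out any $d' \lneq d(\alpha)$ in the defining set of $\delta_P(s_\alpha)$ and so close the minimality gap.
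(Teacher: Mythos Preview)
Your overall architecture matches the paper's closely, and items (1)--(5), (7), (9), (10) are handled essentially as in the original. There are, however, two genuine gaps.

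\textbf{Item (6), minimality.} Your projection strategy would require $\alpha$ to be $P_\beta$-cosmall for every $\beta\in\Delta\setminus\Delta_P$, which is exactly the definition of \emph{very} $P$-cosmall; merely $P$-cosmall does not suffice (the implication ``$Q$-cosmall $\Rightarrow$ $P$-cosmall'' runs the wrong way for $P\subseteq P_\beta$). The paper avoids projection entirely and works directly in $W/W_P$: pick $d\in\delta_P(s_\alpha)$ with $d\leq d(\alpha)$; then $s_\alpha W_P \preceq z_d^P W_P \preceq z_{d(\alpha)}^P W_P = s_\alpha W_P$, so equality holds and $\ell(s_\alpha W_P)=\ell(z_d^P)$. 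Now the $P$-cosmall length formula $\ell(s_\alpha W_P)=(c_1(X),d(\alpha))-1$ from Remark~\ref{rem:pcosmall}, combined with the general bound $\ell(z_d^P)\leq(c_1(X),d)-1$ from \cite[Theorem~6.2]{curvenbhd2}, gives $(c_1(X),d(\alpha)-d)\leq 0$. Since $c_1(X)$ pairs strictly positively with any nonzero effective class and $d(\alpha)-d\geq 0$, this forces $d=d(\alpha)$.

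\textbf{Item (8).} A simple root $\beta\in\Delta\setminus\Delta_P$ is \emph{not} very $P$-cosmall when $P$ is non-maximal: for $\gamma\in\Delta\setminus\Delta_P$ with $\gamma\neq\beta$ one has $(\omega_\gamma,d(\beta))=0$, and there are no maximal roots of the zero degree in $H_2(G/P_\gamma)$. (Compare Example~\ref{ex:verypcosmall}.) So (7) does not apply. The paper instead uses a support argument: if $d\in\delta_P(s_\beta)$ then $s_\beta\preceq z_d^P w_P$, and Proposition~\ref{prop:support}(\ref{item:supportofz}) gives $\beta\in\widetilde\Delta(d)\cup\Delta_P$, hence $\beta\in\Delta(d)$ and $d(\beta)\leq d$; combined with the obvious $s_\beta W_P\preceq z_{d(\beta)}^P W_P$ this pins down $d=d(\beta)$.
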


\todo[inline,color=green]{Two properties are missing. See notes on September 15 (completed) and July 10: projection from $B$ to $P$ (this is now (\ref{item:projection2})) and\ldots for all $d\in\delta_P(w)$ we have $d\in\delta_P(z_d^P)$ (as last item). Actually three: invariance under inverses. 

Characterization of the highest root as $P$-cosmall for all $P$.

Counterexample to the investigations in the subsection \enquote{The simply laced case} (suppose that $R$ is simply laced, i.e. of type $\mathsf{ADE}$ -- I can assume this notion is well-known) (included as Example~\ref{ex:5.9} and Example~\ref{ex:5.9'}). 

Further point / property: For all $\beta\in\Delta$ we have $\delta_P(s_\beta)=d(\beta)$ (notes on September 24) (included as Proposition~\ref{prop:delta}(\ref{item:simple})).}

\begin{proof}

Ad Item~(\ref{item:invariant}). It is immediately clear from the definition of $\delta_P$ that $\delta_P(wv)=\delta_P(w)$ for all $v\in W_P$. On the other hand, let $u\in W_P$, $w\in W$ and $d\in\delta_P(w)$. By Proposition~\ref{prop:hecke}(\ref{item:preceq}), (\ref{item:uvsmaller}), (\ref{item:preceqmodp}) and Proposition~\ref{prop:zd}(\ref{item:stabofz}) we then have $uwW_P\preceq u\cdot wW_P\preceq w_P\cdot z_d^PW_P=z_d^PW_P$. Therefore there exists an element $d'\in\delta_P(uw)$ such that $d'\leq d$. Similarly, we see (by considering the expression $w=u^{-1}(uw)$) that for each element $d\in\delta_P(uw)$ there exists an element $d'\in\delta_P(w)$ such that $d'\leq d$. Altogether, this implies that $\delta_P(uw)=\delta_P(w)$ (cf. Remark~\ref{rem:uniqueelement}).\footnote{A different way to see that for all $u\in W_P$, $w\in W$ we have $\delta_P(uw)=\delta_P(w)$ is to use Item~(\ref{item:invariantinverse}). Indeed, we have $\delta_P(uw)=\delta_P(w^{-1}u^{-1})=\delta_P(w^{-1})=\delta_P(w)$ since $u^{-1}\in W_P$.}

Ad Item~(\ref{item:invariantinverse}). Let $d\in\delta_P(w)$. Then we have $w\preceq z_d^Pw_P$ and thus, by Proposition~\ref{prop:zd}(\ref{item:zzinverse}), $w^{-1}\preceq(z_d^P w_P)^{-1}\preceq z_d^Pw_P$. This means that there exists an element $d'\in\delta_P(w^{-1})$ such that $d'\leq d$. Similarly, we see (by considering the expression $w=(w^{-1})^{-1}$) that for each element $d\in\delta_P(w^{-1})$ there exists an element $d'\in\delta_P(w)$ such that $d'\leq d$. Altogether, this implies $\delta_P(w)=\delta_P(w^{-1})$ (cf. Remark~\ref{rem:uniqueelement}).

Item~(\ref{item:usmallerv}) is immediately clear from the definition of $\delta_P$.

Ad Item~(\ref{item:triangle}). Let $d$ and $d'$ be as in the statement. Then we have $uW_P\preceq z_d^PW_P$ and $vW_P\preceq z_{d'}^PW_P$ or equivalent $u\preceq z_d^Pw_P$ and $v\preceq z_{d'}^Pw_P$. By Proposition~\ref{prop:hecke}(\ref{item:preceq}), (\ref{item:minimal}), Proposition~\ref{prop:zd}(\ref{item:stabofz}) and \cite[Corollary~4.12(b)]{curvenbhd2} it follows that 
$$
u\cdot v\preceq z_d^Pw_P\cdot z_{d'}^Pw_P=z_d^P\cdot w_P\cdot z_d^Pw_P=z_d^P\cdot z_{d'}^Pw_P\preceq z_{d+d'}^Pw_P\,.
$$ 
Reduction modulo $W_P$ directly leads to $u\cdot vW_P\preceq z_{d+d'}^P W_P$. This implies that there exists a $d''\in\delta_P(u\cdot v)$ such that $d''\leq d+d'$.

Ad Item~(\ref{item:projection2}). Let $d\in\delta_P(w)$. By definition and Proposition~\ref{prop:zd}(\ref{item:projection}) we have $w\preceq z_d^Pw_P\preceq z_{d+\mathbb{Z}\Delta_Q^\vee}^Qw_Q$ and thus $wW_Q\preceq z_{d+\mathbb{Z}\Delta_Q^\vee}^QW_Q$. Therefore there exists a $e\in\delta_Q(w)$ such that $e\leq d+\mathbb{Z}\Delta_Q^\vee$ -- as claimed.

Ad Item~(\ref{item:pcosmall}). By definition there exists a degree $d\in\delta_P(s_\alpha)$ such that $d\leq d(\alpha)$. This degree obviously satisfies $s_\alpha W_P=z_d^PW_P$. Therefore \cite[Theorem~6.2]{curvenbhd2} and the fact that $\alpha$ is a $P$-cosmall root (cf. Remark~\ref{rem:pcosmall}) give that
$$
(c_1(X),d(\alpha))-1=\ell(s_\alpha W_P)=\ell\left(z_d^P\right)\leq(c_1(X),d)-1
$$
and thus $(c_1(X),d(\alpha)-d)\leq 0$. Again, by \cite[Theorem~6.2]{curvenbhd2} this means that $d(\alpha)-d$ cannot be positive. Since $d\leq d(\alpha)$ by the choice of $d$, it follows that $d=d(\alpha)$ and thus $d(\alpha)\in\delta_P(s_\alpha)$.

Ad Item~(\ref{item:verycosmall}). By Theorem~\ref{thm:verycosmall} every element $d\in\delta_P(s_\alpha)$ satisfies $d(\alpha)\leq d$. Item~(\ref{item:pcosmall}) and Remark~\ref{rem:uniqueelement} immediately implies that $\delta_P(s_\alpha)=d(\alpha)$.

Ad Item~(\ref{item:simple}). If $\beta\in\Delta_P$, then we clearly have $\delta_P(s_\beta)=0$. Assume that $\beta\in\Delta\setminus\Delta_P$. Let $d\in\delta_P(s_\beta)$. Then we have $s_\beta\preceq z_d^Pw_P$. Proposition~\ref{prop:support}(\ref{item:supportofz}) implies that $\beta\in\widetilde\Delta(d)\cup\Delta_P$ and thus $\beta\in\widetilde\Delta(d)\setminus\Delta_P=\Delta(d)$. This means $d(\beta)\leq d$. On the other hand, by \cite[Corollary~4.12(a)]{curvenbhd2} we clearly have $s_\beta W_P\preceq z_{d(\beta)}^PW_P$. This immediately implies $d(\beta)\in\delta_P(s_\beta)$ and thus $d=d(\beta)$ by Remark~\ref{rem:uniqueelement}. In total, we have $\delta_P(s_\beta)=d(\beta)$ as claimed. 

Ad Item~(\ref{item:reduce}). Let $(\alpha_1,\ldots,\alpha_r)$ be a greedy decomposition of $d$. By Proposition~\ref{prop:zd}(\ref{item:unique}) we know that $\alpha=\alpha_i$ for some $1\leq i\leq r$. By Proposition~\ref{prop:zd}(\ref{item:greedy}) we know that $(\alpha_1,\ldots,\hat{\alpha}_i,\ldots,\alpha_r)$ is a greedy decomposition of $d-d(\alpha)$. Using Proposition~\ref{prop:zd}(\ref{item:commute}) we conclude that $z_d^Pw_P=s_\alpha\cdot z_{d-d(\alpha)}^Pw_P$. By definition there exists a $d'\in\delta_P(z_{d-d(\alpha)}^P)$ such that $d'\leq d-d(\alpha)$. Item~(\ref{item:triangle}) and (\ref{item:pcosmall}) then imply that there exists a $d''\in\delta_P(z_d^P)$ such that $d''\leq d(\alpha)+d'\leq d$. Since $d\in\delta_P(z_d^P)$ by assumption, this implies that $d''=d$ and thus $d'=d-d(\alpha)$.

Item~\ref{item:d} is immediately clear from the definition of $\delta_P$.
\end{proof}

\begin{lem}
\label{lem:bound}

Let $d$ be a degree such that $z_d^P=w_X$. Then we have $d_{G/P_\beta}\leq(\omega_\beta,d)$ for all $\beta\in\Delta\setminus\Delta_P$.

\end{lem}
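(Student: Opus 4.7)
The plan is to reduce the claim to an application of Proposition~\ref{prop:zd}(\ref{item:projection}), exploiting the fact that $H_2(G/P_\beta) = \mathbb{Z}$ is totally ordered (so $\delta_{P_\beta}(w_o)$ really is a single integer $d_{G/P_\beta}$ by Remark~\ref{rem:uniqueelement} and Convention~\ref{conv:uniqueelement}).

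First, I would translate the hypothesis $z_d^P = w_X$ into a Bruhat inequality at the level of maximal representatives. By Proposition~\ref{prop:zd}(\ref{item:wpodot}) the element $z_d^P w_P$ is the maximal representative of $z_d^P W_P = w_X W_P = w_o W_P$, hence $z_d^P w_P = w_X w_P = w_o$. Next, I would fix $\beta \in \Delta\setminus\Delta_P$ and apply the projection statement Proposition~\ref{prop:zd}(\ref{item:projection}) with the intermediate parabolic $Q = P_\beta$ (note $P \subseteq P_\beta$). This yields
\[
w_o \;=\; z_d^P w_P \;\preceq\; z_{d+\mathbb{Z}\Delta_{P_\beta}^\vee}^{P_\beta}\, w_{P_\beta}\;=\;z_{(\omega_\beta,d)}^{P_\beta}\, w_{P_\beta},
\]
using the identification $d+\mathbb{Z}\Delta_{P_\beta}^\vee = (\omega_\beta,d) \in H_2(G/P_\beta) = \mathbb{Z}$ recalled in Subsection~\ref{subsec:degrees}.

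Finally, the displayed inequality says precisely that the integer $(\omega_\beta,d)$ belongs to the set $\{e \in H_2(G/P_\beta) : w_o W_{P_\beta} \preceq z_e^{P_\beta} W_{P_\beta}\}$ whose minimum, by Convention~\ref{conv:uniqueelement}, is $d_{G/P_\beta}$. Therefore $d_{G/P_\beta} \leq (\omega_\beta,d)$, which is the claim.

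There is no real obstacle here: the whole argument is a one-line corollary of Proposition~\ref{prop:zd}(\ref{item:projection}) combined with the total ordering of $H_2(G/P_\beta)$. The only point that deserves care is the bookkeeping between the two sides of the projection, namely keeping track that $z_d^P w_P$ (and not $z_d^P$) is the object one must compare in Bruhat order with $z_{(\omega_\beta,d)}^{P_\beta} w_{P_\beta}$, so that the equality $z_d^P w_P = w_o$ can be invoked directly.
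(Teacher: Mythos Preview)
Your proof is correct and is essentially the same argument as the paper's: both reduce immediately to Proposition~\ref{prop:zd}(\ref{item:projection}) with $Q=P_\beta$, obtaining $w_o=z_d^Pw_P\preceq z_{(\omega_\beta,d)}^{P_\beta}w_{P_\beta}$, and then invoke the definition of $d_{G/P_\beta}$ as a minimum in the totally ordered lattice $H_2(G/P_\beta)=\mathbb{Z}$. One minor slip: the item \ref{item:wpodot} you cite for the maximal-representative claim lives in Proposition~\ref{prop:hecke}, not in Proposition~\ref{prop:zd}.
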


\begin{rem}

From a geometric point of view, Lemma~\ref{lem:bound} says that if a curve in $X$ passes through two general points in $X$, then the image of this curve under $X\to G/P_\beta$ passes through two general points in $G/P_\beta$ where $\beta\in\Delta\setminus\Delta_P$. The combinatorial argument we give in the following proof has the advantage that it works for every finite Coxeter group (even if there is no geometry attached to it).

\end{rem}

\begin{proof}

Let $\beta$ be an arbitrary simple root in $\Delta\setminus\Delta_P$. Then we clearly have $P\subseteq P_\beta$. Thus Proposition~\ref{prop:zd}(\ref{item:projection}) yields $w_o=z_d^Pw_P=z_{(\omega_\beta,d)}^{P_\beta}w_{P_\beta}$. By definition of $\delta_{P_\beta}$ this gives $d_{G/P_\beta}\leq(\omega_\beta,d)$.
\end{proof}

\begin{thm}
\label{thm:uniqueness}

The set $\delta_P(w_o)$ consists of a unique element $d_X$; in formulas $\delta_P(w_X)=d_X$. This element $d_X$ is given by 
$$
d_X=\sum_{\beta\in\Delta\setminus\Delta_P}d_{G/P_\beta}d(\beta)\,.
$$

\end{thm}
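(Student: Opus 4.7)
Set $d_0=\sum_{\beta\in\Delta\setminus\Delta_P}d_{G/P_\beta}d(\beta)\in H_2(X)$; the plan is to prove $\delta_P(w_o)=\{d_0\}$ by separately establishing the lower bound $d\geq d_0$ for every $d\in\delta_P(w_o)$, and the membership $d_0\in\delta_P(w_o)$. For the lower bound, suppose $d\in\delta_P(w_o)$. Then $w_oW_P\preceq z_d^PW_P$; since $w_oW_P=w_XW_P$ and $w_X$ is the maximal length element of $W^P$, this forces $z_d^PW_P=w_oW_P$, and because $z_d^P\in W^P$ we obtain $z_d^P=w_X$. Lemma~\ref{lem:bound} then yields $(\omega_\beta,d)\geq d_{G/P_\beta}$ for every $\beta\in\Delta\setminus\Delta_P$. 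Under the identifications of Subsection~\ref{subsec:degrees}, the pairing $(\omega_\beta,-)$ reads off the coefficient of $d(\beta)$ in the $\mathbb{Z}$-basis $\{d(\beta)\mid\beta\in\Delta\setminus\Delta_P\}$ of $H_2(X)$, so these inequalities are precisely $d\geq d_0$.

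The membership $d_0\in\delta_P(w_o)$, equivalently $z_{d_0}^P=w_X$, is the substantive half and the main obstacle. The strategy is to exhibit an explicit genus-zero stable map of degree $d_0$ whose image contains both $x(1)$ and $x(w_o)$; this forces $x(w_o)\in\Gamma_{d_0}(X_1)=X_{z_{d_0}^P}$, so $w_oW_P\preceq z_{d_0}^PW_P$, and combined with $z_{d_0}^P\in W^P$ this gives $z_{d_0}^P=w_X$. The construction, to be carried out in Section~\ref{sec:cascade}, uses Kostant's cascade of strongly orthogonal positive roots $\theta_1,\ldots,\theta_k$ together with the associated $T$-invariant curves $C_{\theta_i}$, which satisfy $[C_{\theta_i}]=d(\theta_i)$. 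Strong orthogonality implies the corresponding root subgroups commute, so the curves $C_{\theta_i}$ may be suitably translated and concatenated into a $\mathbb{P}^1$-chain joining $x(1)$ to $x(w_o)$ of total degree $\sum_i d(\theta_i)$. The delicate point is verifying both that this chain genuinely terminates at $x(w_o)$ and that $\sum_i d(\theta_i)=d_0$; the latter identity will be checked coordinate-wise, by projecting to each maximal quotient $G/P_\beta$ where the cascade specializes to the maximal-parabolic case (in which uniqueness is automatic by Remark~\ref{rem:uniqueelement}) and recovers exactly the integer $d_{G/P_\beta}$.

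Combining the two halves, any $d\in\delta_P(w_o)$ satisfies $d\geq d_0$ while $d_0$ itself lies in $\{d\mid w_oW_P\preceq z_d^PW_P\}$; minimality then forces $\delta_P(w_o)=\{d_0\}$, which simultaneously gives the uniqueness statement and the claimed formula for $d_X$.
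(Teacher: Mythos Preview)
Your proposal is correct and follows essentially the same approach as the paper: the lower bound via Lemma~\ref{lem:bound}, and the membership via an explicit curve built from the $T$-invariant curves $C_{\theta_i}$ associated to Kostant's cascade, with the degree identity $\sum_i d(\theta_i)=d_0$ verified coordinate-wise via the maximal quotients $G/P_\beta$ (this is exactly Theorem~\ref{thm:thesis_main} and Theorem~\ref{thm:curve}). The paper likewise defers the curve construction to Section~\ref{sec:cascade} and notes the apparent forward reference is harmless because the maximal-parabolic case, on which the coordinate-wise check rests, is trivial.
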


\begin{rem}
\label{rem:circular}

The complete proof of Theorem~\ref{thm:uniqueness} relies on techniques which we will only develop later in Section~\ref{sec:cascade}, in particular on Theorem~\ref{thm:curve}. It is convenient to state the theorem already now and draw conclusions. The reader can convince himself that we have avoided circular reasoning everywhere. For this, it suffices to remark that the statement is obvious if $P$ itself is a maximal parabolic subgroup (cf. Remark~\ref{rem:uniqueelement} and Convention~\ref{conv:uniqueelement}).  

\end{rem}

\begin{proof}

Let us write $d_X'=\sum_{\beta\in\Delta\setminus\Delta_P}d_{G/P_\beta}d(\beta)$ for the purpose of this proof. Lemma~\ref{lem:bound} shows that every element $d\in\delta_P(w_o)$ satisfies $d_X'\leq d$. By Theorem~\ref{thm:curve}, there exists a curve of degree $d_X'$ which passes through $x(1)$ and $x(w_o)$. In other words, we have $z_{d_X'}^P=w_X$. Therefore there exists an element $d'\in\delta_P(w_o)$ such that $d'\leq d_X'$ (cf. Proposition~\ref{prop:delta}(\ref{item:invariant})). Altogether, this implies that $d_X'$ is the unique element of $\delta_P(w_o)$ (cf. Remark~\ref{rem:uniqueelement}). We have $d_X=d_X'$ with the notation from the statement. 
\end{proof}

\begin{prop}
\label{prop:geqdx}

Let $u\in W$. For all $d\in\delta_P(u)$ and all $d^*\in\delta_P(u^*)$ we have $d+d^*\geq d_X$.

\end{prop}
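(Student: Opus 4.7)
The plan is to reduce the proposition to the triangle inequality for $\delta_P$ (Proposition~\ref{prop:delta}(\ref{item:triangle})) combined with the uniqueness result (Theorem~\ref{thm:uniqueness}), after noticing a useful symmetry between the star operation and inversion.

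First I would observe that since $w_o$ is an involution,
$$
(u^*)^{-1} = (w_o u)^{-1} = u^{-1} w_o^{-1} = u^{-1} w_o.
$$
By Proposition~\ref{prop:delta}(\ref{item:invariantinverse}), $\delta_P(u^*) = \delta_P((u^*)^{-1}) = \delta_P(u^{-1} w_o)$. So given $d \in \delta_P(u)$ and $d^* \in \delta_P(u^*)$, we may regard $d^*$ as an element of $\delta_P(u^{-1} w_o)$.

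Next, I would apply the triangle inequality (Proposition~\ref{prop:delta}(\ref{item:triangle})) to $u$ and $u^{-1} w_o$: there exists $d'' \in \delta_P(u \cdot (u^{-1} w_o))$ with $d'' \leq d + d^*$. I now claim $u \cdot (u^{-1} w_o) = w_o$. Indeed, Proposition~\ref{prop:hecke}(\ref{item:uvsmaller}) gives $u (u^{-1} w_o) \preceq u \cdot (u^{-1} w_o)$, i.e.\ $w_o \preceq u \cdot (u^{-1} w_o)$; since $w_o$ is the longest element, equality must hold.

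Finally, using Proposition~\ref{prop:delta}(\ref{item:invariant}) and the relation $w_o = w_X w_P$, we have $\delta_P(w_o) = \delta_P(w_X)$, which by Theorem~\ref{thm:uniqueness} equals $\{d_X\}$. Hence $d'' = d_X$, and the chain of inequalities gives $d_X \leq d + d^*$, as required. The only delicate point is the symmetry identity $(u^*)^{-1} = u^{-1} w_o$ that lets us bring the two distances under the same Hecke product; once this is in place, everything else is a direct invocation of previously established machinery.
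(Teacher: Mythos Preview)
Your proof is correct and takes a genuinely different route from the paper's argument.

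The paper argues geometrically: from $uW_P\preceq z_d^PW_P$ and $u^*W_P\preceq z_{d^*}^PW_P$ it deduces that the curve neighborhoods $\Gamma_d(X_1)$ and $\Gamma_{d^*}(Y_{w_o})$ meet (both contain $x(u)$), hence there is a curve of degree $d+d^*$ joining $x(1)$ and $x(w_o)$, i.e.\ $z_{d+d^*}^P=w_X$; then the minimality of $d_X$ from Theorem~\ref{thm:uniqueness} gives $d+d^*\geq d_X$. Your argument is entirely combinatorial: the identity $(u^*)^{-1}=u^{-1}w_o$ together with invariance of $\delta_P$ under inversion (Proposition~\ref{prop:delta}(\ref{item:invariantinverse})) lets you feed $d$ and $d^*$ into the triangle inequality (Proposition~\ref{prop:delta}(\ref{item:triangle})), and then the observation $u\cdot(u^{-1}w_o)=w_o$ (forced by maximality of $w_o$ and Proposition~\ref{prop:hecke}(\ref{item:uvsmaller})) reduces the conclusion directly to $\delta_P(w_o)=\{d_X\}$. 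Both proofs ultimately rest on Theorem~\ref{thm:uniqueness}, but yours bypasses the geometry of curve neighborhoods in favor of Hecke-monoid manipulations already packaged in the paper. One small remark: the detour through $\delta_P(w_o)=\delta_P(w_X)$ via Proposition~\ref{prop:delta}(\ref{item:invariant}) is not needed, since Theorem~\ref{thm:uniqueness} already states $\delta_P(w_o)=\{d_X\}$ directly.
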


\begin{proof}

We obviously have 
$$
\{x(u)\}=X_u\cap Y_u\subseteq X_{z_d^P}\cap Y_{\left(z_{d^*}^P\right)^*}
$$
since $uW_P\preceq z_d^PW_P$ and $u^*\preceq z_{d^*}^PW_P$ by assumption. In particular, this implies that $\Gamma_d(X_1)\cap\Gamma_{d^*}(Y_{w_o})\neq\emptyset$ (cf. \cite[Remark~5.2]{curvenbhd2}). Therefore we can find a curve of degree $d+d^*$ which passes through $x(1)$ and $x(w_o)$. This means that $z_{d+d^*}^P=w_X$. The minimality and the uniqueness of $d_X$ (Theorem~\ref{thm:uniqueness}) imply that $d+d^*\geq d_X$.
\end{proof}

\begin{cor}
\label{cor:geqdx}

Let $u\in W$. Suppose that there exists a $d\in\delta_P(u)$ and a $d^*\in\delta_P(u^*)$ such that $d+d^*=d_X$. Then we have $\delta_P(u)=d$ and $\delta_P(u^*)=d^*$.

\end{cor}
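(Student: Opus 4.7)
The plan is to deduce uniqueness directly from Proposition~\ref{prop:geqdx} together with the fact that distinct elements of $\delta_P(u)$ are incomparable (Remark~\ref{rem:uniqueelement}).

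First I would pick an arbitrary element $d'\in\delta_P(u)$ and compare it with the given $d^*\in\delta_P(u^*)$. By Proposition~\ref{prop:geqdx} applied to the pair $(d',d^*)$, we obtain
$$
d'+d^*\geq d_X=d+d^*,
$$
so $d'\geq d$. Since both $d$ and $d'$ lie in $\delta_P(u)$, the definition of $\delta_P(u)$ as the set of minimal elements forces $d$ and $d'$ to be incomparable unless they coincide; combined with $d'\geq d$ this gives $d'=d$. Hence $\delta_P(u)=\{d\}$, which under Convention~\ref{conv:uniqueelement} we write as $\delta_P(u)=d$.

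The argument for $\delta_P(u^*)$ is symmetric: for any $d'{}^*\in\delta_P(u^*)$, Proposition~\ref{prop:geqdx} applied to $(d,d'{}^*)$ yields $d+d'{}^*\geq d_X=d+d^*$, so $d'{}^*\geq d^*$, and the incomparability of distinct minimal elements in $\delta_P(u^*)$ forces $d'{}^*=d^*$. Thus $\delta_P(u^*)=d^*$.

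There is no real obstacle here; the statement is essentially a tautology once Proposition~\ref{prop:geqdx} and the minimality definition of $\delta_P$ are in place. The only subtle point worth highlighting is that the partial order on $H_2(X)$ is generally not total, so \emph{a priori} $\delta_P(u)$ could contain several pairwise incomparable elements; the hypothesis $d+d^*=d_X$ is precisely what pins down a single element in each of $\delta_P(u)$ and $\delta_P(u^*)$ by saturating the lower bound of Proposition~\ref{prop:geqdx}.
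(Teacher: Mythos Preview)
Your proof is correct and follows essentially the same approach as the paper's own proof: pick an arbitrary $d'\in\delta_P(u)$, apply Proposition~\ref{prop:geqdx} to the pair $(d',d^*)$ to get $d'\geq d$, and use incomparability of distinct minimal elements (Remark~\ref{rem:uniqueelement}) to conclude $d'=d$; then argue symmetrically for $\delta_P(u^*)$. The paper's proof is slightly more terse (it just writes ``Similarly'' for the second half), but the reasoning is identical.
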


\begin{proof}

Let $d'$ be an arbitrary element of $\delta_P(u)$. By Proposition~\ref{prop:geqdx} and the assumption we have $d'+d^*\geq d_X=d+d^*$. It follows that $d\leq d'$. Since two distinct elements of $\delta_P(u)$ are incomparable (Remark~\ref{rem:uniqueelement}), it follows that $d=d'$ and thus $\delta_P(u)=d$. Similarly, we find $\delta_P(u^*)=d^*$.
\end{proof}

\begin{thm}
\label{thm:inductive}

Let $\alpha$ be a root which occurs in a greedy decomposition of $d_X$. Then we have
$$
\delta_P(s_\alpha)=\delta_P\left(\left(z_{d_X-d(\alpha)}^{P}\right)^*\right)=d(\alpha)\text{ and }\delta_P(s_\alpha^*)=\delta_P\left(z_{d_X-d(\alpha)}^P\right)=d_X-d(\alpha)\,.
$$

\end{thm}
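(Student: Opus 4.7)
The plan is to unlock all four equalities simultaneously by pairing a single upper bound from Lemma~\ref{lem:dualsmaller} with the lower bound from Proposition~\ref{prop:geqdx}, and then invoking Corollary~\ref{cor:geqdx}, which is precisely designed to promote ``membership with correct sum $d_X$'' to ``uniqueness''. The starting observation is that by Proposition~\ref{prop:zd}(\ref{item:unique}) the root $\alpha$ is $P$-cosmall, so Proposition~\ref{prop:delta}(\ref{item:pcosmall}) already yields $d(\alpha)\in\delta_P(s_\alpha)$. Moreover, Theorem~\ref{thm:uniqueness} gives $z_{d_X}^P=w_X$ and $d_X\in\delta_P(z_{d_X}^P)$, which unlocks the hypotheses of the two main tools we need.

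\textbf{The $s_\alpha$ pair.} I would apply Lemma~\ref{lem:dualsmaller} to the degree $d=d_X$ to obtain $s_\alpha^*\preceq z_{d_X-d(\alpha)}^Pw_P$, hence $s_\alpha^*W_P\preceq z_{d_X-d(\alpha)}^PW_P$. By the very definition of $\delta_P(s_\alpha^*)$, this shows that there exists $d^*\in\delta_P(s_\alpha^*)$ with $d^*\leq d_X-d(\alpha)$. Combined with the lower bound $d(\alpha)+d^*\geq d_X$ supplied by Proposition~\ref{prop:geqdx}, we are forced into $d^*=d_X-d(\alpha)$. Now Corollary~\ref{cor:geqdx}, applied with $u=s_\alpha$, $d=d(\alpha)$, $d^*=d_X-d(\alpha)$ and $d+d^*=d_X$, delivers both $\delta_P(s_\alpha)=d(\alpha)$ and $\delta_P(s_\alpha^*)=d_X-d(\alpha)$.

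\textbf{The $z$ pair.} For the other two equalities, Proposition~\ref{prop:delta}(\ref{item:reduce}) applied to $d_X$ and the greedy-decomposition entry $\alpha$ immediately gives $d_X-d(\alpha)\in\delta_P\bigl(z_{d_X-d(\alpha)}^P\bigr)$. For the dual element, I multiply the relation $s_\alpha^*\preceq z_{d_X-d(\alpha)}^Pw_P$ on the left by $w_o$ and use that left multiplication by $w_o$ reverses the Bruhat order, obtaining $(z_{d_X-d(\alpha)}^P)^*w_P\preceq s_\alpha$ and thus $(z_{d_X-d(\alpha)}^P)^*W_P\preceq s_\alpha W_P$. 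Together with the already established value $\delta_P(s_\alpha)=d(\alpha)$, Proposition~\ref{prop:delta}(\ref{item:usmallerv}) produces some $d\in\delta_P\bigl((z_{d_X-d(\alpha)}^P)^*\bigr)$ with $d\leq d(\alpha)$, while Proposition~\ref{prop:geqdx} forces $d+(d_X-d(\alpha))\geq d_X$. Hence $d=d(\alpha)$, and a second application of Corollary~\ref{cor:geqdx} (with $u=z_{d_X-d(\alpha)}^P$) yields the remaining two equalities.

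The only subtlety I anticipate is the Bruhat-order reversal step in the last paragraph: one must be careful that $w_o$ reverses $\preceq$ on $W$ and that this reversal descends correctly to $W/W_P$ via projection. Everything else is pure bookkeeping, amounting to matching each $\delta_P$ value with its dual so that the sum hits exactly $d_X$ and Corollary~\ref{cor:geqdx} closes the argument.
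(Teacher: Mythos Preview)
Your proof is correct and follows essentially the same approach as the paper's: establish $d(\alpha)\in\delta_P(s_\alpha)$ via $P$-cosmallness, use Lemma~\ref{lem:dualsmaller} plus Proposition~\ref{prop:geqdx} to pin down $\delta_P(s_\alpha^*)$, then feed both into Corollary~\ref{cor:geqdx}; and repeat for the $z$-pair after deducing $z^*W_P\preceq s_\alpha W_P$ from the same Bruhat relation. The only cosmetic difference is that for the membership $d_X-d(\alpha)\in\delta_P\bigl(z_{d_X-d(\alpha)}^P\bigr)$ you invoke Proposition~\ref{prop:delta}(\ref{item:reduce}), whereas the paper argues directly from the definition of $\delta_P$ together with the already-established uniqueness of $\delta_P(s_\alpha^*)$; both routes are valid and equally short.
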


\begin{proof}

Since $\alpha$ is $P$-cosmall (Proposition~\ref{prop:zd}(\ref{item:unique})), Proposition~\ref{prop:delta}(\ref{item:pcosmall}) shows that $d(\alpha)\in\delta_P(s_\alpha)$. By Lemma~\ref{lem:dualsmaller} there exists a degree $d\in\delta_P(s_\alpha^*)$ such that $d\leq d_X-d(\alpha)$. Proposition~\ref{prop:geqdx} implies that $d(\alpha)+d\geq d_X$. This means that $d=d_X-d(\alpha)$. Therefore Corollary~\ref{cor:geqdx} shows that $\delta_P(s_\alpha)=d(\alpha)$ and $\delta_P(s_\alpha^*)=d_X-d(\alpha)$.

Let $z=z_{d_X-d(\alpha)}^P$ for short. By Proposition~\ref{prop:delta}(\ref{item:usmallerv}) and the uniqueness of $\delta_P(s_\alpha^*)$ it follows that we have $d_X-d(\alpha)\leq d$ for all $d\in\delta_P(z)$. To prove that $\delta_P(z)=d_X-d(\alpha)$ it therefore suffices to show that $d_X-d(\alpha)\in\delta_P(z)$. This latter statement is obvious since $\delta_P(z)$ contains by definition a degree $d\leq d_X-d(\alpha)$.

By Lemma~\ref{lem:dualsmaller} we have $s_\alpha^* W_P\preceq zW_P$ and thus $z^*W_P\preceq s_\alpha W_P$. By Proposition~\ref{prop:delta}(\ref{item:usmallerv}) and the previous result $\delta_P(s_\alpha)=d(\alpha)$, this means that there exists a $d\in\delta_P(z^*)$ such that $d\leq d(\alpha)$. By Proposition~\ref{prop:geqdx} and the previous result $\delta_P(z)=d_X-d(\alpha)$, it follows that $d_X-d(\alpha)+d\geq d_X$ and thus $d=d(\alpha)$. By Corollary~\ref{cor:geqdx} this implies $\delta_P(z^*)=d(\alpha)$.
\end{proof}

\begin{cor}
\label{cor:inductive}

Let $\theta_1$ be the highest root. Then we have
$$
\delta_P(s_{\theta_1})=\delta_P\left(\left(z_{d_X-d(\theta_1)}^P\right)^*\right)=d(\theta_1)\text{ and }\delta_P(s_{\theta_1}^*)=\delta_P\left(z_{d_X-d(\theta_1)}^P\right)=d_X-d(\theta_1)\,.
$$

\end{cor}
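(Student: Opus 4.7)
The plan is to derive this corollary directly from Theorem~\ref{thm:inductive} applied to $\alpha = \theta_1$. Since the theorem's conclusion is already exactly the claim, the only real work is to verify its hypothesis, namely that the highest root $\theta_1$ appears in some greedy decomposition of $d_X$.

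To this end, I first translate Theorem~\ref{thm:uniqueness} into a statement about $z_{d_X}^P$. By Definition~\ref{def:deltaw}, the equality $\delta_P(w_o) = d_X$ unfolds to $w_o \preceq z_{d_X}^P w_P$. Since $w_o$ is the longest element of $W$, this forces $z_{d_X}^P w_P = w_o$, and therefore $z_{d_X}^P = w_o w_P = w_X$. With $z_{d_X}^P = w_X$ in hand, Corollary~\ref{cor:verycosmall} applies verbatim and asserts that every greedy decomposition of $d_X$ starts with $\theta_1$; in particular $\theta_1$ occurs in a greedy decomposition of $d_X$, as required.

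Now Theorem~\ref{thm:inductive} specialized to $\alpha = \theta_1$ yields
$$
\delta_P(s_{\theta_1}) = \delta_P\!\left(\left(z_{d_X - d(\theta_1)}^P\right)^{\!*}\right) = d(\theta_1) \quad\text{and}\quad \delta_P(s_{\theta_1}^*) = \delta_P\!\left(z_{d_X - d(\theta_1)}^P\right) = d_X - d(\theta_1),
$$
which is precisely the content of the corollary. I anticipate no obstacle beyond the short reduction above: all the substantive work has already been absorbed into Theorem~\ref{thm:uniqueness}, Corollary~\ref{cor:verycosmall} and Theorem~\ref{thm:inductive}, and what remains is the observation that $\theta_1$ is the privileged \emph{very} $P$-cosmall root which must lead every greedy decomposition once $z_d^P$ equals $w_X$.
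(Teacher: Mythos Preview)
Your proof is correct and follows essentially the same approach as the paper: verify that $\theta_1$ occurs in a greedy decomposition of $d_X$ via Corollary~\ref{cor:verycosmall}, then apply Theorem~\ref{thm:inductive}. You simply make explicit the step $z_{d_X}^P = w_X$ needed to invoke Corollary~\ref{cor:verycosmall}, which the paper leaves implicit (it was established in the proof of Theorem~\ref{thm:uniqueness} via Theorem~\ref{thm:curve}).
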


\begin{proof}

By Corollary~\ref{cor:verycosmall} the highest root $\theta_1$ occurs in a greedy decomposition of $d_X$. Therefore Theorem~\ref{thm:inductive} applies and yields the result.
\end{proof}

\subsection{Restriction and induction of degrees}

Let $d$ be a degree. Let $Q$ be a parabolic subgroup of $G$ containing $P$. Let $e\in H_2(G/Q)$ be a degree.
We call the image $d_Q$ of $d$ under the natural map $H_2(X)\to H_2(G/Q)$ the restriction of $d$, i.e we have $d_Q=d+\mathbb{Z}\Delta_Q^\vee$.
Let $(\alpha_1,\ldots,\alpha_r)$ be a greedy decomposition of $e$. Then 
we define a degree $e^P\in H_2(X)$ by the equation
$
e^P=\sum_{i=1}^r d(\alpha_i)
$.
The degree $e^P$ clearly does not depend on the choice of the greedy decomposition of $e$, since the greedy decomposition of $e$ is unique up to reordering. We call the degree $e^P$ the induction of $e$. We have the obvious identity $(e^P)_Q=e$. If $Q=P_\beta$ for some $\beta\in\Delta\setminus\Delta_P$, we simplify the notation and write $d_{P_\beta}=d_\beta$ for short. With the usual identification $H_2(G/P_\beta)=\mathbb{Z}$, this means $d_\beta=(\omega_\beta,d)$.




\todo[inline,color=yellow]{I wanted to say, and refer to it in the next theorem, that restriction is compatible with the partial order, and that the greedy decomposition of the induction stays the same (cf. out-commented fact before). --- I won't do this is. The general statement will be clear from the context. The first three sentences of the proof of Theorem~\ref{thm:resandind} are enough.}

\begin{thm}
\label{thm:resandind}

Let $Q$ be a parabolic subgroup of $G$ containing $P$. Let $e\in H_2(G/Q)$ be a degree. Then there exists a degree $d\in\delta_P(z_d^P)$ such that
$d_Q\leq e$ and such that 
\begin{equation}
\label{eq:induction}
z_e^Qw_Q=z_{e^P}^P\cdot w_Q=z_d^P w_P\,.
\end{equation}
If in addition $e\in\delta_Q(z_e^Q)$, then there exists a degree $d\in\delta_P(z_d^P)$ such that $d_Q=e$ and such that Equation~(\ref{eq:induction}) is satisfied.

\end{thm}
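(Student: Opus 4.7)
The plan is to first establish the chain of equalities \(z_e^Q w_Q = z_{e^P}^P \cdot w_Q\) by lifting everything to the Borel case, and then to locate the degree \(d\) as a suitable minimal element of \(\delta_P(z_{e^P}^P)\). For the first equation, I will choose one degree \(e' \in H_2(G/B)\) with \(e' + \mathbb{Z}\Delta_P^\vee = e^P\), which automatically forces \(e' + \mathbb{Z}\Delta_Q^\vee = e\) via the identity \((e^P)_Q = e\); by adding elements of \(\mathbb{Z}\Delta_P^\vee\) if necessary, I can make \(e'\) large enough that both stabilization statements in Proposition~\ref{prop:zd}(\ref{item:zB}) apply simultaneously, yielding \(z_{e^P}^P w_P = z_{e'}^B = z_e^Q w_Q\). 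The Hecke-product version will follow by multiplying on the right with \(w_Q\): using \(z_{e^P}^P \cdot w_P = z_{e^P}^P w_P\) (Proposition~\ref{prop:hecke}(\ref{item:minimal})), \(w_P \cdot w_Q = w_Q\) (from \(W_P \subseteq W_Q\)), and the fact that \(z_e^Q w_Q\) is longest in \(z_e^Q W_Q\) and hence fixed by \(\cdot w_Q\) via Proposition~\ref{prop:hecke}(\ref{item:maximallength}).

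With the first identity in hand, I will produce \(d\) directly from the definition of \(\delta_P(z_{e^P}^P)\). The set \(S = \{d' \mid z_{e^P}^P W_P \preceq z_{d'}^P W_P\}\) trivially contains \(e^P\), so by finite descent I can take a minimal element \(d \in \delta_P(z_{e^P}^P)\) with \(d \leq e^P\). It then remains to verify \(z_d^P = z_{e^P}^P\): the relation \(z_{e^P}^P \preceq z_d^P\) is built into the condition \(d \in S\), and the reverse inequality \(z_d^P \preceq z_{e^P}^P\) will follow from monotonicity of \(d' \mapsto z_{d'}^P\) on \(H_2(X)\), which I will derive from the Borel-case monotonicity \cite[Corollary~4.12(b)]{curvenbhd2} via Proposition~\ref{prop:zd}(\ref{item:zB}). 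Once \(z_d^P = z_{e^P}^P\) is established, the three required conclusions are immediate: the identity \(z_d^P w_P = z_{e^P}^P w_P = z_e^Q w_Q\); the membership \(d \in \delta_P(z_{e^P}^P) = \delta_P(z_d^P)\); and the inequality \(d_Q \leq (e^P)_Q = e\).

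For the additional claim under the hypothesis \(e \in \delta_Q(z_e^Q)\): from \(z_d^P w_P = z_e^Q w_Q\) and Proposition~\ref{prop:zd}(\ref{item:projection}) I will deduce \(z_e^Q W_Q \preceq z_{d_Q}^Q W_Q\), so there exists some \(e'' \in \delta_Q(z_e^Q)\) with \(e'' \leq d_Q \leq e\). Because the elements of \(\delta_Q(z_e^Q)\) are pairwise incomparable (Remark~\ref{rem:uniqueelement}) and both \(e''\) and \(e\) lie in this set, the inequality \(e'' \leq e\) forces \(e'' = e\), and hence \(d_Q = e\). The main obstacle I anticipate is not conceptual but notational: the bookkeeping around the Borel lift in the first step, and the clean extraction of the monotonicity statement \(d_1 \leq d_2 \Rightarrow z_{d_1}^P \preceq z_{d_2}^P\) on \(H_2(X)\); both are routine given the framework of Section~\ref{sec:zd} but are not formally isolated as lemmas in the preceding text.
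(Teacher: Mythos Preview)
There is a genuine gap in your first step. You claim that by choosing $e'\in H_2(G/B)$ in the fiber over $e^P$ and enlarging it by elements of $\mathbb{Z}\Delta_P^\vee$ you can force both stabilizations $z_{e^P}^P w_P=z_{e'}^B$ and $z_e^Q w_Q=z_{e'}^B$ to hold simultaneously, and hence $z_{e^P}^P w_P=z_e^Q w_Q$. But the constraint $e'+\mathbb{Z}\Delta_P^\vee=e^P$ pins down the $\beta^\vee$-coordinate of $e'$ for every $\beta\in\Delta_Q\setminus\Delta_P$; you cannot increase those coordinates without leaving the fiber over $e^P$. Consequently $e'$ need not be ``sufficiently large for $Q$'' in the sense of Proposition~\ref{prop:zd}(\ref{item:zB}). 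Concretely, take $e=0$: then $e^P=0$, so $z_{e^P}^P w_P=w_P$, while $z_e^Q w_Q=w_Q$. These differ whenever $W_P\subsetneq W_Q$. Since your construction of $d$ produces $z_d^P=z_{e^P}^P$ and therefore $z_d^P w_P=z_{e^P}^P w_P$, the asserted equality $z_d^P w_P=z_e^Q w_Q$ fails for exactly the same reason.

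The paper handles this by never asserting $z_{e^P}^P w_P=z_e^Q w_Q$. It first proves only the Hecke version $z_e^Q w_Q=z_{e^P}^P\cdot w_Q$, working directly with the common greedy decomposition and the identity $w_P\cdot w_Q=w_Q$. It then introduces an auxiliary degree $d'\in\mathbb{Z}\Delta_Q^\vee/\mathbb{Z}\Delta_P^\vee$ with $w_Q\preceq z_{d'}^P w_P$, so that $z_e^Q w_Q\preceq z_{e^P+d'}^P w_P$, and finally selects $d\in\delta_P(z_e^Q w_Q)$ with $d\le e^P+d'$. The extra $d'$ is precisely what compensates for the $\Delta_Q\setminus\Delta_P$ directions your Borel lift cannot reach; the sandwich $z_e^Q w_Q\preceq z_d^P w_P\preceq z_{d_Q}^Q w_Q\preceq z_e^Q w_Q$ then forces the desired equality. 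Your argument for the final clause (upgrading $d_Q\le e$ to $d_Q=e$ under $e\in\delta_Q(z_e^Q)$) is essentially the same as the paper's and is fine once the earlier equality is established correctly.
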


\begin{proof}

Let $(\alpha_1,\ldots,\alpha_r)$ be a greedy decomposition of $e$. Note that for two degree $d,d'\in H_2(X)$ such that $d\leq d'$ the inequality $d_Q\leq d_Q'$ holds. Therefore it is easy to see that $(\alpha_1,\ldots,\alpha_r)$ is also a greedy decomposition of $e^P$. By definition and Proposition~\ref{prop:hecke}(\ref{item:preceq}), (\ref{item:uvsmaller}), it follows that
$$
z_e^Qw_Q=s_{\alpha_1}\cdot\ldots\cdot s_{\alpha_r}\cdot w_Q= s_{\alpha_1}\cdot\ldots\cdot s_{\alpha_r}\cdot w_P\cdot w_Q=z_{e^P}^Pw_P\cdot w_Q\preceq z_{e^P}^P\cdot w_P\cdot w_Q=z_{e^P}^P\cdot w_Q\,.
$$
Here we used twice the fact that $w_P\cdot w_Q=w_Q$. On the other hand, it is clear that $z_{e^P}^P\preceq z_{e^P}^Pw_P$ and thus, by Proposition~\ref{prop:hecke}(\ref{item:preceq}), that $z_{e^P}^P\cdot w_Q\preceq z_{e^P}^Pw_P\cdot w_Q=z_e^Qw_Q$. Thus, we find the first claimed equality $z_e^Qw_Q=z_{e^P}^P\cdot w_Q$.

It is intuitively obvious and also easy to see formally that there exists a degree $d'\in\mathbb{Z}\Delta_Q^\vee/\mathbb{Z}\Delta_P^\vee$ such that $w_Q\preceq z_{d'}^Pw_P$.\footnote{We will address such questions more systematically in Section~\ref{sec:local}. An ad hoc fashion to see this is to compile a reduced expression $w_Q=s_{\beta_1}\cdots s_{\beta_l}$. Then we know that $\{\beta_1,\ldots,\beta_l\}=\Delta(w_Q)=\Delta_Q$ (cf. Proposition~\ref{prop:support}(\ref{item:def}),   (\ref{item:parabolic})). By Proposition~\ref{prop:hecke}(\ref{item:uvsmaller}) and Proposition~\ref{prop:delta}(\ref{item:usmallerv}), (\ref{item:triangle}), (\ref{item:simple}) there exists a $d'\in\delta_P(w_Q)$ such that $d'\leq d(\beta_1)+\cdots+d(\beta_l)$. It clearly follows that $d'\in\mathbb{Z}\Delta_Q^\vee/\mathbb{Z}\Delta_P^\vee$ and that $w_Q\preceq z_{d'}^Pw_P$.}
%
%
\todo[color=green]{I should cite Proposition~\ref{prop:zd}(\ref{item:stabofz}) (in the new footnote this is no longer necessary). Note that I use $d(\beta_i)$ for $\beta\in\Delta_P$ (I remarked this in the block in the introduction).
Even better to use the triangle inequality for $\delta_P$. Even better to cite also $\delta_\varphi=\delta$. I cannot use Theorem~\ref{thm:compatibility} because $\Delta_Q$ is not necessarily connected.} 
Using Proposition~\ref{prop:hecke}(\ref{item:preceq}) and \cite[Corollary~4.12(b)]{curvenbhd2}, we see that
$$
z_e^Qw_Q=z_{e^P}^P\cdot w_Q\preceq z_{e^P}^P\cdot z_{d'}^Pw_P\preceq z_{e^P+d'}^Pw_P\,.
$$
By definition of $\delta_P$, it now follows that there exists a degree $d\in\delta_P(z_e^Qw_Q)$ such that $d\leq e^P+d'$. Restriction of the inequality gives $d_Q\leq e$. Using Proposition~\ref{prop:zd}(\ref{item:projection}), we obtain that
\begin{equation}
\label{eq:restriction}
z_e^Qw_Q\preceq z_d^Pw_P\preceq z_{d_Q}^Qw_Q\preceq z_e^Qw_Q\,.
\end{equation}
This shows the second claimed equality $z_e^Qw_Q=z_d^Pw_P$ for some $d\in\delta_P(z_d^P)$ such that $d_Q\leq e$.

Finally, assume that $e\in\delta_Q(z_e^Q)$. Let $d$ be defined as before. Equation~(\ref{eq:restriction}) shows that $z_e^Qw_Q=z_{d_Q}^Qw_Q$. By the minimality of $e$, it follows that $d_Q$ cannot be strictly smaller than $e$. Since $d_Q\leq e$, it follows that $d_Q=e$.
\end{proof}

\subsection{The simply laced case}

In this subsection we briefly want to mention some properties of $\delta_P$ which are specific for the case of $R$ being simply laced. This subsection will not be logically needed anywhere else in the text. The impatient reader can skip it.

\begin{lem}[{\cite[Lemma~5.14]{thesis}}]
\label{lem:5.14}

Suppose that $R$ is simply laced. Let $\alpha$ be a positive root. Let $\beta$ be a simple root. Then we have $\delta_{P_\beta}(s_\alpha)=(\omega_\beta,\alpha^\vee)$.

\end{lem}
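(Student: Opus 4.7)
The case $(\omega_\beta,\alpha^\vee)=0$ is immediate: $\alpha$ then lies in $R^+_{P_\beta}$, so $s_\alpha\in W_{P_\beta}$ and $\delta_{P_\beta}(s_\alpha)=0$. Henceforth assume $a:=(\omega_\beta,\alpha^\vee)\geq 1$. My plan is to find some $\tilde\alpha$ in the $W_{P_\beta}$-orbit of $\alpha$ that is $P_\beta$-cosmall, and then to conclude via the $(W_{P_\beta},W_{P_\beta})$-invariance of $\delta_{P_\beta}$ combined with Proposition~\ref{prop:delta}(\ref{item:verycosmall}).

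To produce $\tilde\alpha$, I first observe that each simple reflection $s_{\beta'}$ generating $W_{P_\beta}$ (for $\beta'\in\Delta_{P_\beta}$) fixes $\omega_\beta$, hence $W_{P_\beta}$ fixes $\omega_\beta$. Consequently the $W_{P_\beta}$-orbit of $\alpha$ is contained in the slice $S_a:=\{\gamma\in R:(\omega_\beta,\gamma^\vee)=a\}$, which, as $a\geq 1$, consists entirely of positive roots. I pick $\tilde\alpha$ in this orbit of maximal height and write $\tilde\alpha=w(\alpha)$ for some $w\in W_{P_\beta}$.

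The crux is to show that $\tilde\alpha$ is $P_\beta$-cosmall. Maximality of height forces $(\tilde\alpha,\beta'^\vee)\geq 0$ for every $\beta'\in\Delta_{P_\beta}$, since otherwise $s_{\beta'}(\tilde\alpha)=\tilde\alpha-(\tilde\alpha,\beta'^\vee)\beta'$ would be a strictly higher element of the same orbit. Suppose for contradiction that some positive root $\gamma>\tilde\alpha$ satisfies $(\omega_\beta,\gamma^\vee)\leq a$, and write $\gamma=\tilde\alpha+\delta$ with $\delta$ a non-zero non-negative integer combination of simple roots. Using that $R$ is simply laced (so coroots and roots coincide up to a global scalar), the hypothesis forces the $\beta$-coefficient of $\delta$ to vanish; hence $\delta$ is supported in $\Delta_{P_\beta}$. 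Equality of root lengths $(\gamma,\gamma)=(\tilde\alpha,\tilde\alpha)$ then gives $2(\tilde\alpha,\delta)+(\delta,\delta)=0$ and thus $(\tilde\alpha,\delta)<0$; but the expansion $(\tilde\alpha,\delta)=\sum_{\beta'\in\Delta_{P_\beta}}c_{\beta'}(\tilde\alpha,\beta')$ is non-negative term by term. This contradiction shows that $\tilde\alpha$ is $P_\beta$-cosmall.

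Since $P_\beta$ is maximal, Example~\ref{ex:verypcosmall} promotes $\tilde\alpha$ to a very $P_\beta$-cosmall root, so Proposition~\ref{prop:delta}(\ref{item:verycosmall}) yields $\delta_{P_\beta}(s_{\tilde\alpha})=d(\tilde\alpha)=(\omega_\beta,\tilde\alpha^\vee)=a$. Combining the identity $s_{\tilde\alpha}=ws_\alpha w^{-1}$ with $w,w^{-1}\in W_{P_\beta}$ and the $(W_{P_\beta},W_{P_\beta})$-invariance of $\delta_{P_\beta}$ (Proposition~\ref{prop:delta}(\ref{item:invariant})) then delivers $\delta_{P_\beta}(s_\alpha)=\delta_{P_\beta}(s_{\tilde\alpha})=a=(\omega_\beta,\alpha^\vee)$, as desired. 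The hard part will be the cosmall verification in the third paragraph: it is precisely the simply laced assumption (uniform root length) that keeps the sign computation $(\tilde\alpha,\delta)=-(\delta,\delta)/2<0$ available; in the multi-laced case the two root lengths would obstruct this step, which is consistent with the fact that the lemma can fail outside the simply laced setting.
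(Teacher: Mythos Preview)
Your proof is correct and takes a genuinely different route from the one the paper points to. The paper does not give a proof here but refers to \cite[Lemma~5.14]{thesis}, noting that the argument there ``relies on an elementary numerical analysis'' and ``naturally makes use of the description of $\delta_{P_\beta}(s_\alpha)$ in terms of chains which start at $s_\alpha W_{P_\beta}$'' --- in other words, the chain machinery of Section~\ref{sec:delta2}. Your argument bypasses all of that: you stay entirely within the curve-neighborhood framework of Sections~\ref{sec:zd}--\ref{sec:delta}, exploiting the $(W_{P_\beta},W_{P_\beta})$-invariance of $\delta_{P_\beta}$ to reduce to a single well-chosen representative $\tilde\alpha$ of the $W_{P_\beta}$-orbit of $\alpha$, and then using a clean root-length computation to certify $P_\beta$-cosmallness directly. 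The simply laced hypothesis enters exactly once, in the identity $(\gamma,\gamma)=(\tilde\alpha,\tilde\alpha)$, which is precisely where it should. What your approach buys is self-containment (no forward reference to chains, no external thesis) and conceptual clarity about \emph{why} simply laced is needed; the chain-based approach in the thesis presumably has the advantage of being more amenable to explicit case analysis in non-simply-laced types, where one wants to compute $\delta_{P_\beta}(s_\alpha)$ even when it differs from $(\omega_\beta,\alpha^\vee)$ (cf.~Example~\ref{ex:5.9}).
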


\begin{proof}

This is precisely the statement of \cite[Lemma~5.14]{thesis}. The reader finds a complete proof there. The proof relies on a elementary numerical analysis which we do not want to unwrap here. Also, the proof naturally makes use of the description of $\delta_{P_\beta}(s_\alpha)$ in terms of chain which start at $s_\alpha W_{P_\beta}$ (cf. Proposition~\ref{prop:equalu} and Theorem~\ref{thm:descriptionofdelta}). All the necessary techniques to fully understand the proof will be developed in Section~\ref{sec:delta2}.
\end{proof}

\begin{thm}
\label{thm:simplylaced}

Suppose that $R$ is simply laced. Let $\alpha$ be a positive root. Then we have $\delta_P(s_\alpha)=d(\alpha)$.

\end{thm}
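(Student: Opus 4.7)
The plan is to pinch $\delta_P(s_\alpha)$ between $d(\alpha)$ from both sides. The case $\alpha \in R_P^+$ is immediate: then $s_\alpha \in W_P$, so $\delta_P(s_\alpha) = \delta_P(1) = 0 = d(\alpha)$ by Proposition~\ref{prop:delta}(\ref{item:invariant}). From now on I assume $\alpha \in R^+ \setminus R_P^+$.

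For the upper bound I would invoke the $T$-invariant curve $C_\alpha$ joining $x(1)$ and $x(s_\alpha)$. Since $[C_\alpha] = d(\alpha)$, the curve $C_\alpha$ witnesses that $x(s_\alpha) \in \Gamma_{d(\alpha)}(X_1) = X_{z_{d(\alpha)}^P}$, hence $s_\alpha W_P \preceq z_{d(\alpha)}^P W_P$. By the very definition of $\delta_P$, this exhibits some $d \in \delta_P(s_\alpha)$ with $d \leq d(\alpha)$; observe that this step uses no hypothesis on $R$.

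For the lower bound I would fix any $d \in \delta_P(s_\alpha)$ and project to each maximal quotient $G/P_\beta$ with $\beta \in \Delta \setminus \Delta_P$. Since $P \subseteq P_\beta$, Proposition~\ref{prop:delta}(\ref{item:projection2}) produces an element of $\delta_{P_\beta}(s_\alpha)$ bounded above by $d_\beta = (\omega_\beta, d)$. Here is where the simply laced hypothesis enters: Lemma~\ref{lem:5.14} computes $\delta_{P_\beta}(s_\alpha) = (\omega_\beta, \alpha^\vee)$, so $(\omega_\beta, \alpha^\vee) \leq (\omega_\beta, d)$. Since $\omega_\beta$ annihilates $\Delta_P^\vee$ for $\beta \notin \Delta_P$, this reads $(\omega_\beta, d(\alpha)) \leq (\omega_\beta, d)$. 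Running $\beta$ through $\Delta \setminus \Delta_P$ and reading off coefficients in the basis of $H_2(X)$ from Equation~\eqref{eq:degdecomp} forces $d(\alpha) \leq d$.

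Combining the two bounds with the incomparability of distinct elements of $\delta_P(s_\alpha)$ (Remark~\ref{rem:uniqueelement}) collapses $\delta_P(s_\alpha)$ to the single element $d(\alpha)$. The only substantive content is packaged in Lemma~\ref{lem:5.14}, whose proof is the place where the simply laced assumption is truly needed; once that lemma is in hand, the theorem is a purely formal projection-and-comparison argument that works for any $P$.
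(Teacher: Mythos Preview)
Your proof is correct and follows essentially the same approach as the paper: the lower bound comes from projecting to each $G/P_\beta$ via Proposition~\ref{prop:delta}(\ref{item:projection2}) and invoking Lemma~\ref{lem:5.14}, while the upper bound comes from the relation $s_\alpha W_P \preceq z_{d(\alpha)}^P W_P$ (the paper cites \cite[Corollary~4.12(a)]{curvenbhd2} for this rather than the curve $C_\alpha$, but these amount to the same thing), and Remark~\ref{rem:uniqueelement} finishes. Your explicit case split for $\alpha \in R_P^+$ is harmless but unnecessary, since the general argument already covers it.
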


\begin{proof}

Let $d\in\delta_P(s_\alpha)$. By Proposition~\ref{prop:delta}(\ref{item:projection2}) and Lemma~\ref{lem:5.14} it follows that $(\omega_\beta,d(\alpha))\leq d_\beta$ for all $\beta\in\Delta\setminus\Delta_P$, in other words that $d(\alpha)\leq d$. On the other hand, by \cite[Corollary~4.12(a)]{curvenbhd2}, we clearly have $s_\alpha W_P\preceq z_{d(\alpha)}^PW_P$, so that there exists a $d'\in\delta_P(s_\alpha)$ such that $d'\leq d(\alpha)\leq d$. Remark~\ref{rem:uniqueelement} implies that $d=d(\alpha)$ and thus $\delta_P(s_\alpha)=d(\alpha)$.
\end{proof}

\begin{rem}

Later on, in Example~\ref{ex:5.9} and Example~\ref{ex:5.9'}, we will give instructive counterexamples for Lemma~\ref{lem:5.14} and Theorem~\ref{thm:simplylaced} in the case where $R$ is not simply laced.

\end{rem}

\section{Description of the distance function in terms of \texorpdfstring{$T$-invariant}{T-invariant} curves: relation to minimal degrees in quantum products}
\label{sec:delta2}

In this section, we give 
two more equivalent descriptions of the distance function $\delta_P$. Let $u,v,w\in W$. Firstly, using the relation between $T$-invariant curves and chains, it is not hard to see that the set $\delta_P(u)$ consists of all minimal degrees of chains from $uW_P$ to $w_oW_P$ (cf. Theorem~\ref{thm:descriptionofdelta}). On the other hand, it is an insight of Fulton-Woodward that the minimal degrees of chains from $uW_P$ to $vW_P$ are precisely the minimal degrees in the quantum product $\sigma_u\star\sigma_v$ (\cite[Thoerem~9.1: $(1)\Leftrightarrow(2)$]{fulton}). Thus, we secondly find that the set $\delta_P(w)$ consists of all minimal degrees in the quantum product $\sigma_w\star\mathrm{pt}$ (cf. Theorem~\ref{thm:fulton}). 
It should be noted that all the results in this section can be understood as corollaries of the main theorem by Fulton-Woodward \cite[Theorem~9.1]{fulton}.

\todo[inline,color=green]{Convention concerning $\delta_P(u,v)=0$. Remark concerning incomparability. The nature is similar to the of $\delta_P(u)$. In fact, there is a close connection, cf.\ldots

The notion of a chain is equivalent to the notion of a $T$-invariant curve. Can refer to the relevant statement / remark in the thesis, for more details\ldots I decided not to include this remark, because the relation between $T$-invariant curves and chains is still mysterious to me (cf. blue colored passage after Proposition~\ref{prop:equalu}). Neither a chain determines a $T$-invariant curve, nor a $T$-invariant curve determines a chain. Both directions seem strange to me.

Remark / Convention about the use of $\bar u$ for elements in $W/W_P$ represented by $u$.}

\begin{notation}

Let $u,v\in W$. We always denote by $\bar u=uW_P$ the class of $u$ modulo $W_P$. If we speak about a class $\bar u\in W/W_P$ without specific reference to $u$, we always mean that $u\in W$ such that $\bar u=uW_P$. Usually it does not matter in this case which representative $u$ of $\bar u$ is to be chosen, so that we can replace $u$ by any $v$ such that $\bar u=\bar v$.

\end{notation}

\begin{defn}[{\cite[Section~4]{fulton}}]

Let $u,v\in W$. We say that $\bar u$ and $\bar v$ are adjacent if they are related as in \cite[Lemma~4.1]{fulton}, i.e. if there exists an $\alpha\in R^+\setminus R_P^+$ such that $s_\alpha\bar{u}=\bar{v}$ or equivalent if there exists an $\alpha\in R^+\setminus R_P^+$ such that $u\bar s_\alpha=\bar v$. Note that adjacency is a symmetric and irreflexive relation on $W/W_P$.

We define a chain to be a sequence $\bar u_0,\ldots,\bar u_r$ such that $\bar u_{i-1}$ and $\bar u_i$ are adjacent for all $1\leq i\leq r$. We define a chain from $\bar u$ to $\bar v$ to be a chain $\bar u_0,\ldots,\bar u_r$ such that $\bar u\preceq\bar u_0$ and $\bar u_r\preceq\bar v^*$.

We define the degree of a chain $\bar u_0,\ldots,\bar u_r$ to be the sum $\sum_{i=1}^rd(\alpha_i)$ where $\alpha_i\in R^+\setminus R_P^+$ are roots such that $u_{i-1}\bar s_{\alpha_i}=\bar u_i$ for all $1\leq i\leq r$. By \cite[Lemma~3.1 and Lemma~4.1]{fulton} the degree of a chain is well-defined (does only depend on the residue classes $\bar u_0,\ldots,\bar u_r$ and not on the choice of representatives $u_0,\ldots,u_r$).

\end{defn}

\begin{defn}
\label{def:deltauv}

Let $u,v\in W$. Then we define $\delta_P(u,v)$ to be the set of all minimal elements of the set
$$
\left\{d\text{ a degree of a chain from }\bar u\text{ to }\bar v\right\}\,.
$$

\end{defn}

\begin{rem}
\label{rem:uniqueelement2}

Let $u,v\in W$. The nature of the sets $\delta_P(u,v)$ is similar to the nature of the distance function described in Remark~\ref{rem:uniqueelement}. Indeed, there is a close connection between the two which we will prove in Theorem~\ref{thm:descriptionofdelta}. For now, let us remark that by definition any two distinct elements of $\delta_P(u,v)$ are incomparable. Consequently, it follows that for all simple roots $\beta$ the set $\delta_{P_\beta}(u,v)$ consists of a unique element.

\end{rem}

\begin{conv}
\label{conv:uniqueelement2}

This convention is similar to Convention~\ref{conv:uniqueelement}.
Let $u,v\in W$. If $\delta_P(u,v)$ consists of a unique element $d$, we identify the set $\delta_P(u,v)$ with its unique element and write $\delta_P(u,v)=d$. In particular, for all simple roots $\beta$ we identify $\delta_{P_\beta}(u,v)$ with a non-negative integer (cf. Remark~\ref{rem:uniqueelement2}). 

\end{conv}

\begin{prop}[{\cite[Lemma~5.8]{thesis}}]
\label{prop:delta2}

Let $u,u',v,v',w,w'\in W$.

\begin{enumerate}

\item 
\label{item:invariant2}

The set $\delta_P(u,v)$ depends only on the class of $u$ and $v$ moduli $W_P$, i.e. we have $\delta_P(uw,vw')=\delta_P(u,v)$ for all $w,w'\in W_P$. 

\item
\label{item:commutative}

We have $\delta_P(u,v)=\delta_P(v,u)$.

\item
\label{item:zero}

$\delta(u,v)=0$ if and only if $\bar{u}\preceq\bar{v}^*$.

\item 
\label{item:usmallerv2}

Suppose that $\bar{u}\preceq\bar{u}'$ and that $\bar{v}\preceq\bar{v}'$. For all $d'\in\delta_P(u',v')$ there exists a degree $d\in\delta_P(u,v)$ such that $d\leq d'$.

\item
\label{item:u'smallerv'}

Let $d\in\delta_P(u,v)$. Let $\bar{u}_0,\ldots,\bar{u}_r$ be a chain from $\bar{u}$ to $\bar{v}$ of degree $d$. Suppose that $\bar{u}\preceq\bar{u}'\preceq\bar{u}_0$ and that $\bar{v}\preceq\bar{v}'\preceq\bar{u}_r^*$. Then we have $d\in\delta_P(u',v')$.

\end{enumerate}

\end{prop}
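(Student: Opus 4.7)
All five items follow fairly directly from the definitions; only~(\ref{item:commutative}) requires any real work.

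For~(\ref{item:invariant2}), I will simply note that the notion of chain, the adjacency relation, and the degree of a chain were all set up to depend only on cosets in $W/W_P$, while the endpoint conditions $\bar u\preceq \bar u_0$ and $\bar u_r\preceq \bar v^*$ only see $\bar u$ and $\bar v$. Hence replacing $u,v$ by $uw,vw'$ with $w,w'\in W_P$ does not alter the set of chains from $\bar u$ to $\bar v$ nor their degrees. For~(\ref{item:zero}), the key observation is that any adjacency step has strictly positive degree: $\alpha\in R^+\setminus R_P^+$ forces $\alpha\notin\mathbb Z\Delta_P$, hence $\alpha^\vee\notin\mathbb Z\Delta_P^\vee$, so $d(\alpha)\neq 0$ in $H_2(X)$, and in fact $d(\alpha)>0$. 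Therefore a chain of degree $0$ must have length $0$ and consists of a single coset $\bar u_0$ with $\bar u\preceq \bar u_0\preceq \bar v^*$; such $\bar u_0$ exists iff $\bar u\preceq \bar v^*$, and in that case $\bar u_0=\bar u$ works.

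The main point is~(\ref{item:commutative}). I will construct an involution on chains by reversing them and applying~$*$. Given a chain $\bar u_0,\ldots,\bar u_r$ from $\bar u$ to $\bar v$ with adjacencies $u_{i-1}s_{\alpha_i}W_P=u_iW_P$, $\alpha_i\in R^+\setminus R_P^+$, I rewrite as $u_is_{\alpha_i}W_P=u_{i-1}W_P$ and multiply by $w_o$ on the left to obtain $u_i^*s_{\alpha_i}W_P=u_{i-1}^*W_P$. Consequently the sequence $\bar u_r^*,\bar u_{r-1}^*,\ldots,\bar u_0^*$ is a chain of adjacent cosets with the very same roots $\alpha_i$ and therefore the same total degree. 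Because $*$ is an order-reversing involution on $W/W_P$, the endpoint conditions $\bar u\preceq \bar u_0$ and $\bar u_r\preceq \bar v^*$ translate to $\bar u_0^*\preceq \bar u^*$ and $\bar v\preceq \bar u_r^*$, which exhibit the new sequence as a chain from $\bar v$ to $\bar u$ of the same degree. This gives a degree-preserving bijection between chains from $\bar u$ to $\bar v$ and chains from $\bar v$ to $\bar u$, whence $\delta_P(u,v)=\delta_P(v,u)$. The only subtle bookkeeping is checking that reversal-and-dualization preserves both adjacency and degree at once; the right-multiplication form of adjacency in the definition is what makes this transparent, and this is essentially the only real obstacle in the whole proof.

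Finally, for~(\ref{item:usmallerv2}) and~(\ref{item:u'smallerv'}), the uniform observation is this: whenever $\bar a\preceq \bar a'$ and $\bar b\preceq \bar b'$, any chain $\bar u_0',\ldots,\bar u_r'$ from $\bar a'$ to $\bar b'$ is automatically a chain from $\bar a$ to $\bar b$ of the same degree, since $\bar a\preceq \bar a'\preceq \bar u_0'$ and $\bar u_r'\preceq \bar b'^*\preceq \bar b^*$ (the last step by order-reversal of~$*$). Applied directly, this yields~(\ref{item:usmallerv2}). For~(\ref{item:u'smallerv'}), the same observation shows that the given witnessing chain $\bar u_0,\ldots,\bar u_r$ for $d\in\delta_P(u,v)$ is also a chain from $\bar u'$ to $\bar v'$, producing some $d^\flat\in\delta_P(u',v')$ with $d^\flat\leq d$; then~(\ref{item:usmallerv2}) applied to $\bar u\preceq\bar u'$, $\bar v\preceq\bar v'$ produces some $d^\sharp\in\delta_P(u,v)$ with $d^\sharp\leq d^\flat\leq d$, and the incomparability of distinct minimal elements of $\delta_P(u,v)$ forces $d^\sharp=d^\flat=d$, so $d\in\delta_P(u',v')$.
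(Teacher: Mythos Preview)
Your proposal is correct and follows essentially the same approach as the paper: items~(\ref{item:invariant2}), (\ref{item:zero}), (\ref{item:usmallerv2}) are dismissed as immediate from the definitions, item~(\ref{item:commutative}) is proved by reversing the chain and left-translating by $w_o$, and item~(\ref{item:u'smallerv'}) is deduced from~(\ref{item:usmallerv2}) together with incomparability of distinct minimal elements. Your treatment of~(\ref{item:zero}) is in fact more explicit than the paper's, which simply calls it ``immediately clear''.

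One small imprecision in your argument for~(\ref{item:commutative}): the step ``rewrite $u_{i-1}s_{\alpha_i}W_P=u_iW_P$ as $u_is_{\alpha_i}W_P=u_{i-1}W_P$'' is not literally valid for arbitrary representatives $u_i$ with the \emph{same} $\alpha_i$. It becomes correct if you choose representatives inductively by $u_i:=u_{i-1}s_{\alpha_i}$, which you are free to do since the degree of a chain depends only on the cosets (the cited well-definedness from Fulton--Woodward). Alternatively, one can observe that $W_P$ permutes $R^+\setminus R_P^+$ and that $d(\alpha)$ is $W_P$-invariant, so the reversed step uses a possibly different root of the same degree. The paper's proof phrases this as left-translation invariance of adjacency and degree, and is equally terse on the reversal; both arguments rely on the same underlying fact.
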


\begin{proof}

Item~(\ref{item:invariant2}) is immediately clear from the definition.

Ad Item~(\ref{item:commutative}). Note that if $\bar u$ and $\bar v$ are adjacent, then the translates $w\bar u$ and $w\bar v$ are also adjacent and the degree of the chain $\bar u,\bar v$ equals the degree of the chain $w\bar u,w\bar v$. Let $d\in\delta_P(u,v)$. Let $\bar u_0,\ldots,\bar u_r$ be a chain from $\bar u$ to $\bar v$ of degree $d$. Then $\bar u_r^*,\ldots,\bar u_0^*$ is a chain from $v$ to $u$ of degree $d$. Therefore there exists a $d'\in\delta_P(v,u)$ such that $d'\leq d$. By replacing $u$ and $v$, we also see that for all $d\in\delta_P(v,u)$ there exists a $d'\in\delta_P(u,v)$ such that $d'\leq d$. In view of Remark~\ref{rem:uniqueelement2}, this implies $\delta_P(u,v)=\delta_P(v,u)$.\footnote{A different way to see Item~(\ref{item:commutative}) is to use Theorem~\ref{thm:fulton} and the commutativity of $\star$.}

Item~(\ref{item:zero}) and Item~(\ref{item:usmallerv2}) are immediately clear from the definition.

Ad Item~(\ref{item:u'smallerv'}. It is clear that $\bar u_0,\ldots,\bar u_r$ is also a chain from $\bar u'$ to $\bar v'$. Therefore there exists a $d'\in\delta_P(u',v')$ such that $d'\leq d$. By Item~(\ref{item:usmallerv2}) it follows that there exists a $d''\in\delta_P(u,v)$ such that $d''\leq d'\leq d$. Remark~\ref{rem:uniqueelement2} implies that $d''=d'=d$ and thus $d\in\delta_P(u',v')$.
\end{proof}

\begin{prop}[{\cite[Lemma~6.7]{thesis}}]
\label{prop:equalu}

Let $u\in W$. Let $d\in\delta_P(u,w_o)$. Then there exists a chain $\bar u_0,\ldots,\bar u_r$ from $\bar u$ to $\bar w_o$ of degree $d$ such that $\bar u_0=\bar u$ and $\bar u_r=\bar 1$.

\end{prop}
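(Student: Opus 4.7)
The plan is to prove a strengthened inductive claim: for every chain $\mathcal{C}\colon \bar v_0, \bar v_1, \ldots, \bar v_r$ with $\bar v_r = \bar 1$ and every class $\bar u \in W/W_P$ satisfying $\bar u \preceq \bar v_0$, there exists a chain $\mathcal{C}'\colon \bar u = \bar u_0, \bar u_1, \ldots, \bar u_{r'}$ with $\bar u_{r'} = \bar 1$, length $r' \leq r$, and total degree at most the degree of $\mathcal{C}$. Proposition~\ref{prop:equalu} then follows by applying this to any chain of degree $d$ realizing $d \in \delta_P(u, w_o)$: the output chain starts exactly at $\bar u$, ends at $\bar 1$, and has degree $\leq d$, hence exactly $d$ by minimality of $d$.

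I would argue by induction on $r$. The base case $r = 0$ is immediate, since $\bar v_r = \bar v_0 = \bar 1$ forces $\bar u = \bar 1$. For the inductive step, let $\alpha_1 \in R^+ \setminus R_P^+$ be the root effecting the first adjacency $s_{\alpha_1}\bar v_0 = \bar v_1$, which contributes $d(\alpha_1)$ to the degree of $\mathcal{C}$. If $\bar u \preceq \bar v_1$, the induction hypothesis applied to the sub-chain $\bar v_1, \ldots, \bar v_r$ already produces the required chain, with strictly smaller degree. Otherwise $\bar u \not\preceq \bar v_1$; this forces $\bar v_1 \prec \bar v_0$, for else $\bar v_0 \preceq \bar v_1$ together with $\bar u \preceq \bar v_0$ would yield $\bar u \preceq \bar v_1$ by transitivity. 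I then invoke the Lifting Lemma for the Bruhat order on $W/W_P$: applied to $\bar u \preceq \bar v_0$ with the reflection $s_{\alpha_1}$ (satisfying $s_{\alpha_1}\bar v_0 \prec \bar v_0$), and combined with the assumption $\bar u \not\preceq \bar v_1$, it produces $\bar u' = s_{\alpha_1}\bar u$ with $\bar u' \prec \bar u$ and $\bar u' \preceq \bar v_1$. Since $\alpha_1 \in R^+ \setminus R_P^+$, the classes $\bar u$ and $\bar u'$ are adjacent, with step-degree $d(\alpha_1)$. The induction hypothesis applied to $\bar v_1, \ldots, \bar v_r$ starting from $\bar u' \preceq \bar v_1$ then yields a chain from $\bar u'$ to $\bar 1$ of length $\leq r - 1$ whose degree is bounded by the degree of $\bar v_1, \ldots, \bar v_r$; prepending the adjacency step $\bar u \to \bar u'$ produces the required chain.

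The main obstacle is the Lifting Lemma on $W/W_P$ for a general (not necessarily simple) reflection $s_\alpha$, namely: if $\bar u \preceq \bar v$ and $s_\alpha \bar v \prec \bar v$, then either $\bar u \preceq s_\alpha \bar v$, or $s_\alpha \bar u \prec \bar u$ and $s_\alpha \bar u \preceq s_\alpha \bar v$. Over $W$ itself this is classical (cf.\ Bj\"orner-Brenti, \emph{Combinatorics of Coxeter Groups}, Chapter~2), following from the subword characterization of the Bruhat order combined with the strong exchange condition; it descends to $W/W_P$ via minimal coset representatives, since the Bruhat order on $W/W_P$ is the one inherited from $W$. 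Once this lifting tool is in hand, the induction proceeds without further difficulty.
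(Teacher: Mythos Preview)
Your approach is entirely combinatorial, whereas the paper's proof is geometric: it passes through the moduli space $\overline{M}_{0,3}(X,d)$, uses \cite[Theorem~9.1]{fulton} to conclude $Y_u\cap\Gamma_d(X_1)\neq\emptyset$, produces an actual curve of degree $\leq d$ through $x(u)$ and $x(1)$, degenerates it to a $T$-invariant curve, and reads off the chain from its irreducible components. So the strategies are genuinely different.

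Unfortunately, your inductive step rests on a false lemma. The Lifting Property in Bj\"orner--Brenti is stated and proved only for \emph{simple} reflections; it does \emph{not} extend to arbitrary reflections $s_\alpha$. Here is a counterexample already in type $\mathsf{A}_2$ with $P=B$. Take $v=w_o=s_1s_2s_1$, $t=s_{\theta_1}=s_1s_2s_1=w_o$, and $u=s_1$. Then $u\preceq v$, $tv=e\prec v$, and $u\not\preceq tv$. Your Lifting Lemma would force $tu\prec u$, but $tu=w_o\cdot s_1=s_1s_2$ has length $2>1=\ell(u)$, so $tu\succ u$. Thus in the very situation of your induction (the chain $\bar v_0=w_o,\ \bar v_1=e$ with $\alpha_1=\theta_1$, and $\bar u=s_1\preceq\bar v_0$, $\bar u\not\preceq\bar v_1$), the step ``set $\bar u'=s_{\alpha_1}\bar u$ with $\bar u'\prec\bar u$'' fails.

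Note that your strengthened inductive \emph{claim} is not contradicted by this example (the chain $s_1,e$ via $s_{\alpha_1}$ has degree $\alpha_1^\vee\leq\theta_1^\vee$), so the obstruction is only in the mechanism you use to produce $\bar u'$. One might hope to repair the argument by allowing a \emph{different} reflection at the first step, but this requires a genuinely new idea---you would need to control the degree contribution of that new step by $d(\alpha_1)$, and it is not clear how to do so without already knowing something like the conclusion. As written, the proof has a gap at exactly the point you flagged as ``the main obstacle.''
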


\begin{proof}

Let $u\in W$ and $d\in\delta_P(u,w_o)$. By definition there exists a chain from $\bar u$ to $\bar w_o$ of degree $d$. By \cite[Theorem~9.1: $(2)\Leftrightarrow(11)$]{fulton} this means that
$$
\mathrm{ev}_1^{-1}(Y_u)\cap\mathrm{ev}_2^{-1}(X_1)\neq\emptyset\text{ in }\overline{M}_{0,3}(X,d)\,.
$$
By applying $\mathrm{ev}_1$ to this expression and using the definition of $\Gamma_d(X_1)$, we see that $Y_u\cap\Gamma_d(X_1)\neq\emptyset$. Since $Y_u\cap\Gamma_d(X_1)$ is not empty and $T$-stable, there exists a $v\in W$ such that $x(v)\in\ Y_u\cap\Gamma_d(X_1)$ or equivalently such that $\bar u\preceq\bar v\preceq\bar z_d^P$. This means in particular that there exists a point $(C,p_1,p_2,f)$ in $\overline{M}_{0,2}(X,d)$ such that $f(p_1)=x(u)$ and $f(p_2)=x(1)$.

Since $C$ is connected the image curve $f(C)$ is also connected. By definition it is clear that $[f(C)]\leq d$. By \cite[Lemma~5.11]{thesis} the image curve $f(C)$ converges to a $T$-invariant curve $C'$ which passes through the $T$-fixed points $x(u)$ and $x(1)$. We clearly have $[C']=[f(C)]$ and thus $[C']\leq d$. Since $f(C)$ is connected the limit $C'$ is also connected. Therefore we can choose a sequence $C_1,\ldots,C_r\subseteq C'$ of irreducible $T$-invariant curves such that $x(u)\in C_1$, $x(1)\in C_r$ and such that $C_{i-1}$ meets $C_i$ for all $1<i\leq r$. The sequence $C_1,\ldots,C_r$ determines a unique chain $\bar u_0,\ldots,\bar u_r$ from $\bar u$ to $\bar w_o$ of degree $\sum_{i=1}^r[C_i]\leq[C']\leq d$ such that $\bar u_0=\bar u$ and $\bar u_r=\bar 1$. Since $d\in\delta_P(u,w_o)$ is minimal, it follows that $\sum_{i=1}^r[C_i]=d$. This completes the proof.
\end{proof}

\todo[inline,color=green]{{\color{blue} In \cite[Section~4, page~8]{fulton} we can read: \enquote{A chain from $\bar u$ to $\bar v$ determines, and is determined by, a sequence of irreducible $T$-invariant curves $C_1,\ldots,C_r$ in $X$, each meeting the next, with $C_1$ meeting $Y_u$ and $C_r$ meeting $X_{v^*}$.} I do not really understand it. What happens if all curves $C_1,\ldots,C_r$ meet in a single point? Then they do not determine a chain. Is this behavior excluded by the terminology? In the proof of Proposition~\ref{prop:equalu} I can definitely exclude this \enquote{bad} behavior by choosing the sequence $C_1,\ldots,C_r\subseteq C'$ cleverly.}}

\begin{cor}
\label{cor:delta2}

Let $u,v\in W$. Let $d\in\delta_P(u,v)$. Let $\bar{u}_0,\ldots,\bar{u}_r$ be a chain from $\bar{u}$ to $\bar{v}$ of degree $d$. Then we have $d\in\delta_P(u_r^{-1}u_0,w_o)$

\end{cor}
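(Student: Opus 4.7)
The plan is to transport the given chain back and forth by left multiplication on $W/W_P$, invoking Proposition~\ref{prop:equalu} to control the endpoints.

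First I would record the elementary fact that left multiplication by any $w\in W$ turns a chain $\bar u_0,\ldots,\bar u_r$ into a chain $w\bar u_0,\ldots,w\bar u_r$ of the same degree: if $\alpha_i\in R^+\setminus R_P^+$ realises the adjacency $u_{i-1}\bar s_{\alpha_i}=\bar u_i$, then the identity $wu_{i-1}\bar s_{\alpha_i}=w\bar u_i$ shows that the very same root $\alpha_i$ realises the adjacency of the translated classes. This observation, which already appears in the proof of Proposition~\ref{prop:delta2}(\ref{item:commutative}), preserves the degree by definition.

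Applying this with $w=u_r^{-1}$ to the given chain produces a sequence $\overline{u_r^{-1}u_0},\,u_r^{-1}\bar u_1,\,\ldots,\,u_r^{-1}\bar u_r=\bar 1$ of degree $d$. Since $\bar 1=\bar w_o^*$, this is by definition a chain from $\overline{u_r^{-1}u_0}$ to $\bar w_o$, and hence there exists $d''\in\delta_P(u_r^{-1}u_0,w_o)$ with $d''\leq d$.

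For the reverse inequality $d\leq d''$, I would invoke Proposition~\ref{prop:equalu} applied to the pair $(u_r^{-1}u_0,w_o)$ and the minimal degree $d''$. It furnishes a chain $\bar w_0,\ldots,\bar w_s$ of degree $d''$ with the strengthened property that $\bar w_0=\overline{u_r^{-1}u_0}$ and $\bar w_s=\bar 1$. Translating this chain on the left by $u_r$ yields the chain $\bar u_0,\,u_r\bar w_1,\,\ldots,\,\bar u_r$, again of degree $d''$. Because $\bar u\preceq\bar u_0$ and $\bar u_r\preceq\bar v^*$ hold by hypothesis, this is a chain from $\bar u$ to $\bar v$, so the minimality of $d$ in $\delta_P(u,v)$ gives $d\leq d''$. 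Combining both inequalities yields $d=d''\in\delta_P(u_r^{-1}u_0,w_o)$, as required.

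The main obstacle is that left multiplication on $W/W_P$ does not respect the Bruhat order: a generic minimum-degree chain from $\overline{u_r^{-1}u_0}$ to $\bar w_o$ starts at some class $\bar w_0\succeq\overline{u_r^{-1}u_0}$, and its left translate $u_r\bar w_0$ need not satisfy $\bar u\preceq u_r\bar w_0$. This is precisely why Proposition~\ref{prop:equalu} is indispensable: by forcing the first vertex of the chain to be exactly $\overline{u_r^{-1}u_0}$, we ensure $u_r\bar w_0=\bar u_0$ on the nose, which restores the required endpoint condition and closes the argument.
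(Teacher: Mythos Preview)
Your argument is correct and follows the same strategy as the paper's proof: translate the given chain by $u_r^{-1}$ to obtain a chain ending at $\bar 1$, invoke Proposition~\ref{prop:equalu} to pin down the endpoints of a minimal-degree chain, then translate back by $u_r$. The only cosmetic difference is in the final comparison: the paper routes the translated chain through $\delta_P(u_0,u_r^*)$ and appeals to Proposition~\ref{prop:delta2}(\ref{item:u'smallerv'}) together with Remark~\ref{rem:uniqueelement2}, whereas you go directly back to $\delta_P(u,v)$ using the original hypotheses $\bar u\preceq\bar u_0$ and $\bar u_r\preceq\bar v^*$; your route is marginally more direct and avoids the extra reference.
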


\begin{proof}
Let $u,v,d,\bar u_0,\ldots,\bar u_r$ as in the statement. Then $u_r^{-1}\bar u_0,u_r^{-1}\bar u_1,\ldots,u_r^{-1}\bar u_{r-1},u_r^{-1}\bar u_r=\bar 1$ is a chain from $u_r^{-1}\bar u_0$ to $\bar w_o$ of degree $d$. Therefore there exists a $d'\in\delta_P(u_r^{-1}u_0,w_o)$ such that $d'\leq d$. By Proposition~\ref{prop:equalu} there exists a chain $\bar v_0,\ldots,\bar v_s$ from $u_r^{-1}\bar u_0$ to $\bar w_o$ of degree $d'$ such that $\bar v_0=u_r^{-1}\bar u_0$ and $\bar v_s=\bar 1$. Then $u_r\bar v_0=\bar u_0,u_r\bar v_1,\ldots,u_r\bar v_{s-1},u_r\bar v_s=\bar u_r$ is a chain from $\bar u_0$ to $\bar u_r^*$ of degree $d'$. Therefore there exists a $d''\in\delta_P(u_0,u_r^*)$ such that $d''\leq d'\leq d$. But by Proposition~\ref{prop:delta2}(\ref{item:u'smallerv'}) applied to $u'=u_0$ and $v'=u_r^*$ we know that $d\in\delta_P(u_0,u_r^*)$. In view of Remark~\ref{rem:uniqueelement2}, it follows that $d''=d'=d$ and thus $d\in\delta_P(u_r^{-1}u_0,w_o)$ as claimed.
\end{proof}

\begin{thm}
\label{thm:descriptionofdelta}

For all $u\in W$, we have $\delta_P(u)=\delta_P(u,w_o)$.

\end{thm}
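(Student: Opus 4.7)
The plan is to prove the two-sided inclusion by the standard antichain-comparison argument: since both $\delta_P(u)$ and $\delta_P(u,w_o)$ are antichains in $H_2(X)$ (Remark~\ref{rem:uniqueelement} and Remark~\ref{rem:uniqueelement2}), it is enough to show that for every $d$ on one side there exists $d'$ on the other side with $d'\leq d$. Once this holds, if $d\in\delta_P(u)$ there is $d'\in\delta_P(u,w_o)$ with $d'\leq d$, and then $d''\in\delta_P(u)$ with $d''\leq d'\leq d$; the antichain property in $\delta_P(u)$ forces $d=d''=d'\in\delta_P(u,w_o)$, and symmetrically for the other direction.

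For the inclusion $\delta_P(u,w_o)\subseteq\delta_P(u)$, let $d\in\delta_P(u,w_o)$. Proposition~\ref{prop:equalu} hands us a chain $\bar u=\bar u_0,\bar u_1,\ldots,\bar u_r=\bar 1$ realizing the degree $d$, together with roots $\alpha_i\in R^+\setminus R_P^+$ such that $u_{i-1}\bar s_{\alpha_i}=\bar u_i$ and $d=\sum_{i=1}^r d(\alpha_i)$. The translated irreducible $T$-invariant curves $u_{i-1}C_{\alpha_i}$ pass through the consecutive $T$-fixed points $x(\bar u_{i-1})$ and $x(\bar u_i)$ and have degree $d(\alpha_i)$. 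Gluing them at these shared $T$-fixed points produces a connected, reduced, nodal curve of arithmetic genus zero together with a morphism to $X$ of class $d$, i.e. a point of $\overline{M}_{0,2}(X,d)$ whose two marked points can be chosen to map to $x(\bar u_0)=x(u)$ and $x(\bar u_r)=x(1)$. Hence $x(u)\in\mathrm{ev}_1(\mathrm{ev}_2^{-1}(X_1))=\Gamma_d(X_1)=X_{z_d^P}$, which means $\bar u\preceq z_d^PW_P$. By Definition~\ref{def:deltaw}, some $d'\in\delta_P(u)$ satisfies $d'\leq d$.

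For the reverse inclusion $\delta_P(u)\subseteq\delta_P(u,w_o)$, let $d\in\delta_P(u)$. Then $\bar u\preceq z_d^PW_P$, so $Y_u\cap X_{z_d^P}\neq\emptyset$. Since $X_{z_d^P}=\Gamma_d(X_1)$ this intersection is $T$-stable, and picks out a $T$-fixed point $x(v)$ with $\bar u\preceq\bar v\preceq z_d^PW_P$; in particular there is a stable map $(C,p_1,p_2,f)\in\overline{M}_{0,2}(X,d)$ with $f(p_1)=x(v)$ and $f(p_2)=x(1)$. Exactly as in the proof of Proposition~\ref{prop:equalu}, \cite[Lemma~5.11]{thesis} lets $f(C)$ degenerate to a connected $T$-invariant curve $C'$ of class $[C']\leq d$ still passing through $x(v)$ and $x(1)$. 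Decomposing $C'$ into a sequence of irreducible $T$-invariant components $C_1,\ldots,C_s$ (with $x(v)\in C_1$, $x(1)\in C_s$, and $C_{i-1}\cap C_i\neq\emptyset$) yields a chain $\bar v=\bar v_0,\bar v_1,\ldots,\bar v_s=\bar 1$ whose degree $\sum_{i=1}^s[C_i]\leq[C']\leq d$. Since $\bar u\preceq\bar v=\bar v_0$, this is a chain from $\bar u$ to $\bar w_o$ in the sense of Definition~\ref{def:deltauv}, so some $d'\in\delta_P(u,w_o)$ satisfies $d'\leq d$.

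The only non-formal step is the translation between chains of $T$-invariant curves and stable maps of the same total degree, which is already absorbed into Proposition~\ref{prop:equalu} and the degeneration lemma \cite[Lemma~5.11]{thesis}; no new input beyond these is required. I expect the delicate point to be correctly tracking the endpoints of the chain (so that it genuinely starts above $\bar u$ and lands in $\bar 1$ rather than some larger class), but in the first direction Proposition~\ref{prop:equalu} gives $\bar u_0=\bar u$ on the nose, and in the second direction the definition of a chain from $\bar u$ to $\bar w_o$ only requires $\bar u\preceq\bar u_0$ and $\bar u_r\preceq\bar w_o^*=\bar 1$, both of which are immediate from the construction.
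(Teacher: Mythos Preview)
Your proof is correct and follows essentially the same two-sided antichain comparison as the paper, which likewise reduces to the content of Proposition~\ref{prop:equalu} and the degeneration argument via \cite[Lemma~5.11]{thesis}. The one minor difference is in the direction $\delta_P(u,w_o)\to\delta_P(u)$: the paper simply appeals to the equivalence $(2)\Leftrightarrow(11)$ of \cite[Theorem~9.1]{fulton} (as recorded in the first paragraph of the proof of Proposition~\ref{prop:equalu}) to pass from a chain to a point in $\mathrm{ev}_1^{-1}(Y_u)\cap\mathrm{ev}_2^{-1}(X_1)$, whereas you construct the stable map explicitly by gluing the translated $T$-invariant curves $u_{i-1}C_{\alpha_i}$ along a chain of $\mathbb{P}^1$'s; both routes yield $x(u)\in\Gamma_d(X_1)$ and hence $\bar u\preceq z_d^PW_P$.
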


\begin{proof}

Let $d\in\delta_P(u,w_o)$. In the first paragraph of the proof of Proposition~\ref{prop:equalu} we saw using \cite[Theorem~9.1]{fulton} that $\bar u\preceq\bar z_d^P$. This means that there exists a $d'\in\delta_P(u)$ such that $d'\leq d$. Now, let $d\in\delta_P(u)$. Then we have $\bar u\preceq\bar z_d^P$. By the second paragraph of the proof of Proposition~\ref{prop:equalu} we see that there exists a chain $\bar u_0,\ldots,\bar u_r$ from $\bar u$ to $\bar w_0$ of degree $d'\leq d$ such that $\bar u_0=\bar u$ and $\bar u_r=\bar 1$. This means that there exists a $d''\in\delta_P(u,w_o)$ such that $d''\leq d'\leq d$. In view of Remark~\ref{rem:uniqueelement} and Remark~\ref{rem:uniqueelement2}, it follows that $\delta_P(u)=\delta_P(u,w_o)$.
\end{proof}

\begin{thm}
\label{thm:delta2}

For all $u,v\in W$ and all $d\in\delta_P(u,v)$ we have $d\in\delta_P(z_d^P)$.

\end{thm}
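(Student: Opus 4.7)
The plan is to assemble the statement directly from three results already established in the excerpt, so essentially no new geometric input is required.

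First, I would pick $d \in \delta_P(u,v)$ and fix a realizing chain $\bar u_0,\ldots,\bar u_r$ from $\bar u$ to $\bar v$ of degree $d$ (such a chain exists by the very definition of $\delta_P(u,v)$).

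Next, I would invoke Corollary~\ref{cor:delta2}, which applies verbatim to this situation: it asserts that $d \in \delta_P(u_r^{-1}u_0, w_o)$. The point here is that Corollary~\ref{cor:delta2} has transported the question about the pair $(u,v)$ to a question about a Weyl group element paired with $w_o$, which is precisely the setting where Theorem~\ref{thm:descriptionofdelta} applies. Applying that theorem with $u' := u_r^{-1}u_0$ then upgrades the conclusion to $d \in \delta_P(u_r^{-1}u_0)$.

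Finally, I would invoke Proposition~\ref{prop:delta}(\ref{item:d}), which says that for any $w\in W$ and any $d\in\delta_P(w)$ one automatically has $d\in\delta_P(z_d^P)$. Taking $w := u_r^{-1}u_0$ gives the desired conclusion $d \in \delta_P(z_d^P)$, completing the proof.

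There is no real obstacle here: the work has already been done in Corollary~\ref{cor:delta2} (whose proof silently uses the translation trick $u_r^{-1}\bar u_i$ to turn a chain ending at $\bar u_r$ into one ending at $\bar 1$) and in Theorem~\ref{thm:descriptionofdelta}. The only thing to be mildly careful about is that the degree $d$ does not drop under any of these reductions; but Remark~\ref{rem:uniqueelement} and Remark~\ref{rem:uniqueelement2} (incomparability of distinct elements in $\delta_P(\cdot)$ and $\delta_P(\cdot,\cdot)$) guarantee this at each step, exactly as they did inside the proofs of those two earlier results. Hence the theorem reduces to a three-line citation chain: $\delta_P(u,v) \to \delta_P(u_r^{-1}u_0, w_o) \to \delta_P(u_r^{-1}u_0) \to \delta_P(z_d^P)$.
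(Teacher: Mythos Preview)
Your proposal is correct and follows essentially the same argument as the paper's own proof: pick a chain realizing $d$, apply Corollary~\ref{cor:delta2} and Theorem~\ref{thm:descriptionofdelta} to obtain $d\in\delta_P(u_r^{-1}u_0)$, and finish with Proposition~\ref{prop:delta}(\ref{item:d}). The paper's proof is exactly this three-line citation chain, with no additional ingredients.
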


\begin{proof}

Let $d\in\delta_P(u,v)$ for some $u,v\in W$. Let $\bar{u}_0,\ldots,\bar{u}_r$ be a chain from $\bar u$ to $\bar v$ of degree $d$. By Corollary~\ref{cor:delta2} and Theorem~\ref{thm:descriptionofdelta} we have $d\in\delta_P(u_r^{-1}u_0,w_o)=\delta_P(u_r^{-1}u_0)$. Proposition~\ref{prop:delta}(\ref{item:d}) implies that $d\in\delta_P(z_d^P)$.
\end{proof}

\begin{rem}

Let $u\in W$. In view of Theorem~\ref{thm:descriptionofdelta}, we have two equivalent definitions of the set $\delta_P(u)$. Firstly, $\delta_P(u)$ is the set of all minimal elements of the set
$$
\left\{d\text{ a degree such that }\bar u\preceq\bar z_d^P\right\}\,.
$$
Secondly, $\delta_P(u)$ is the set of all minimal elements of the set
$$
\left\{d\text{ a degree of a chain from }\bar u\text{ to }\bar w_o\right\}\,.
$$
From now on, we use these definitions of $\delta_P(u)$ interchangeably without constantly referring to Theorem~\ref{thm:descriptionofdelta}.

\end{rem}

\begin{prop}[{\cite[Corollary~6.11]{thesis}}]
\label{prop:cor6.11}

Let $u\in W$. Let $d\in\delta_P(u)$. Let $\bar u_0,\ldots,\bar u_r$ be a chain from $\bar u$ to $\bar w_o$ of degree $d$. Let $\alpha_1,\ldots,\alpha_r\in R^+\setminus R_P^+$ be roots such that $u_{i-1}\bar s_{\alpha_i}=\bar u_i$ for all $1\leq i\leq r$. Then we have $\sum_{j=i+1}^rd(\alpha_j)\in\delta_P(u_i)$ for all $0\leq i\leq r$.

\end{prop}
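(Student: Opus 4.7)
Write $d_i = \sum_{j=i+1}^r d(\alpha_j)$ for $0\le i\le r$, so that $d_0=d$, $d_r=0$, and $d_{i-1}=d_i+d(\alpha_i)$. The plan is to show $d_i\in\delta_P(u_i)$ for every $i$ by a direct contradiction argument built on Proposition~\ref{prop:equalu} and the chain-based description of $\delta_P$ provided by Theorem~\ref{thm:descriptionofdelta}.

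First I would observe that the \emph{tail} sequence $\bar u_i,\bar u_{i+1},\ldots,\bar u_r$ is itself a chain from $\bar u_i$ to $\bar w_o$, since $\bar u_i\preceq\bar u_i$ and $\bar u_r=\bar 1=\bar w_o^*$, and by construction its degree is exactly $d_i$. Invoking Theorem~\ref{thm:descriptionofdelta} together with the definition of $\delta_P(u_i,w_o)$, this already yields some $d_i'\in\delta_P(u_i)$ with $d_i'\le d_i$. It therefore suffices to rule out the possibility $d_i'<d_i$ (in the partial order on $H_2(X)$).

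Suppose for contradiction that $d_i'<d_i$. Apply Proposition~\ref{prop:equalu} to $d_i'\in\delta_P(u_i,w_o)$ to obtain a chain $\bar v_0,\ldots,\bar v_s$ from $\bar u_i$ to $\bar w_o$ of degree $d_i'$ with $\bar v_0=\bar u_i$ and $\bar v_s=\bar 1$. This normalization is the crucial point: because this new chain starts \emph{exactly} at $\bar u_i$, the concatenation
$$
\bar u_0,\bar u_1,\ldots,\bar u_{i-1},\bar u_i=\bar v_0,\bar v_1,\ldots,\bar v_s=\bar 1
$$
is a valid chain from $\bar u$ to $\bar w_o$ (the condition $\bar u\preceq\bar u_0$ is inherited from the original chain, and the final class is $\bar 1=\bar w_o^*$). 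Its degree is $\sum_{j=1}^i d(\alpha_j)+d_i'=d-(d_i-d_i')<d$, contradicting the minimality of $d\in\delta_P(u)=\delta_P(u,w_o)$. Hence $d_i'=d_i$ and $d_i\in\delta_P(u_i)$, completing the proof.

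The only conceptually non-trivial step here is the concatenation, and its validity hinges entirely on Proposition~\ref{prop:equalu}, which lets us arrange the tail chain so its first entry equals the glueing point $\bar u_i$ on the nose rather than merely lying above it in the Bruhat order. I expect this to be the main obstacle to watch out for, since without Proposition~\ref{prop:equalu} the definition of a chain only guarantees $\bar u_i\preceq\bar v_0$, and gluing along such a weak relation would not immediately produce a chain of the claimed degree.
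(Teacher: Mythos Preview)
Your proof is correct and follows essentially the same approach as the paper: both define $d_i=\sum_{j=i+1}^r d(\alpha_j)$, note that the tail $\bar u_i,\ldots,\bar u_r$ is a chain from $\bar u_i$ to $\bar w_o$ of degree $d_i$ giving some $d_i'\le d_i$ in $\delta_P(u_i)$, then use Proposition~\ref{prop:equalu} to obtain a tail chain starting exactly at $\bar u_i$ and ending at $\bar 1$ of degree $d_i'$, concatenate, and invoke the minimality of $d$ to force $d_i'=d_i$. The only cosmetic difference is that the paper phrases the last step as an equality forced by $d-d_i+d_i'\le d$, whereas you frame it as a contradiction assuming $d_i'<d_i$; the content is the same.
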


\begin{proof}

Let $0\leq i\leq r$. Let $d_i=\sum_{j=i+1}^r d(\alpha_j)$ for short. Then $\bar u_i,\ldots,\bar u_r$ is a chain from $\bar u_i$ to $\bar w_o$ of degree $d_i$. Thus there exists a $d_i'\in\delta_P(u_i)$ such that $d_i'\leq d_i$. By Proposition~\ref{prop:equalu} there exists a chain $\bar v_i,\ldots,\bar v_s$ from $\bar u_i$ to $\bar w_o$ of degree $d_i'$ such that $\bar v_i=\bar u_i$ and $\bar v_s=\bar 1$. Then $\bar u_0,\ldots,\bar u_i=\bar v_i,\ldots,\bar v_s$ is a chain from $\bar u$ to $\bar w_o$ of degree $d-d_i+d_i'\leq d$. Since $d\in\delta_P(u)$ is minimal, it follows that $d-d_i+d_i'=d$ and thus $d_i=d_i'$ and thus $d_i\in\delta_P(u_i)$.
\end{proof}

\begin{cor}[{\cite[Corollary~6.11]{thesis}}]
\label{cor:cor6.11}

Let $u\in W$. Let $d\in\delta_P(u)$. Let $\bar u_0,\ldots,\bar u_r$ be a chain from $\bar u$ to $\bar w_o$ of degree $d$. Then we have $\bar u_0\succ\bar u_1\succ\cdots\succ\bar u_{r-1}\succ\bar u_r$.

\end{cor}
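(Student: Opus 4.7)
The plan is to fix an index $i$ with $1 \leq i \leq r$ and show $\bar u_{i-1} \succ \bar u_i$, arguing separately that (a) the two classes are Bruhat-comparable, and (b) the inequality cannot go the wrong way, using the minimality statement supplied by Proposition~\ref{prop:cor6.11}. The two key inputs will be the partial degrees $d_i = \sum_{j=i+1}^r d(\alpha_j)$ and $d_{i-1} = d(\alpha_i) + d_i$, which by Proposition~\ref{prop:cor6.11} satisfy $d_i \in \delta_P(u_i)$ and $d_{i-1} \in \delta_P(u_{i-1})$. Since $\alpha_i \in R^+ \setminus R_P^+$, the class $d(\alpha_i)$ is strictly positive in $H_2(X)$, so $d_{i-1} > d_i$ (strict in the partial order on $H_2(X)$).

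The main step is the contradiction argument. Suppose, toward a contradiction, that $\bar u_{i-1} \preceq \bar u_i$. Applying Proposition~\ref{prop:delta}(\ref{item:usmallerv}) (translated to the $\delta_P(-,w_o)$ formulation via Theorem~\ref{thm:descriptionofdelta}) with this inequality, the element $d_i \in \delta_P(u_i)$ produces a degree $d' \in \delta_P(u_{i-1})$ with $d' \leq d_i$. But then $d' \leq d_i < d_{i-1}$, so $d'$ and $d_{i-1}$ are distinct comparable elements of $\delta_P(u_{i-1})$, which violates the fundamental incomparability of distinct minimal elements noted in Remark~\ref{rem:uniqueelement}. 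Hence $\bar u_{i-1} \not\preceq \bar u_i$.

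To upgrade this to $\bar u_{i-1} \succ \bar u_i$, I would remark that adjacency is irreflexive, so $\bar u_{i-1} \neq \bar u_i$, and that two adjacent classes are automatically Bruhat-comparable: the $T$-invariant curve $C_{\alpha_i}$ is an irreducible $\mathbb{P}^1$ meeting $X^T$ only at $x(u_{i-1})$ and $x(u_i)$, so its closure is a Schubert variety containing both points, forcing one to lie in the Schubert closure of the other. Combining comparability, distinctness, and $\bar u_{i-1} \not\preceq \bar u_i$ forces $\bar u_{i-1} \succ \bar u_i$. Since $i$ was arbitrary, the chain descends strictly throughout.

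The only step that feels genuinely subtle is the Bruhat-comparability of adjacent classes in $W/W_P$; everything else is a short bookkeeping argument once Proposition~\ref{prop:cor6.11} is in hand. In writing the proof I would either cite this comparability from the Fulton–Woodward setup in \cite{fulton} or spell out the one-line argument via $C_{\alpha_i}$ above.
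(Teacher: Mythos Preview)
Your proof is correct and follows essentially the same route as the paper: establish that adjacent classes are Bruhat-comparable, then derive a contradiction from $\bar u_{i-1}\preceq\bar u_i$ by combining Proposition~\ref{prop:cor6.11} with the incomparability of distinct elements in $\delta_P(u_{i-1})$ (Remark~\ref{rem:uniqueelement}). The paper assumes the strict inequality $\bar u_{i-1}\prec\bar u_i$ for the contradiction while you assume the non-strict one, but this is cosmetic.

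One small caveat: your geometric sketch for comparability is not quite right as written---the closure of the translated $T$-invariant curve is the curve itself, not a Schubert variety, so it does not directly force one fixed point into the Schubert closure of the other. The clean argument is combinatorial: if $\bar v = u\bar s_\alpha$ then $u$ and $us_\alpha$ are comparable in $W$ (reflections always connect comparable elements), and comparability descends to $W/W_P$. The paper simply asserts comparability of adjacent classes without proof (and in fact flags it internally as a point that required some thought), so your instinct to cite \cite{fulton} here is exactly the right call.
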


\begin{proof}

Let $u,v\in W$. Note that if $\bar u$ and $\bar v$ are adjacent, then they are comparable in the Bruhat order, i.e. we have $\bar u\prec\bar v$ or $\bar v\prec\bar u$.

Let $u,d,\bar u_0,\ldots,\bar u_r$ be as in the statement. Let $\alpha_1,\ldots,\alpha_r\in R^+\setminus R_P^+$ be roots as in the statement of Proposition~\ref{prop:cor6.11}. Let $1\leq i\leq r$. By the previous comment, we know that $\bar u_{i-1}$ and $\bar u_i$ are comparable in the Bruhat order. Suppose for a contradiction that $\bar u_{i-1}\prec\bar u_i$. By Proposition~\ref{prop:delta}(\ref{item:usmallerv}) and Proposition~\ref{prop:cor6.11} there exists a $d\in\delta_P(u_{i-1})$ such that 
$$
d\leq\sum_{j=i+1}^r d(\alpha_j)<\sum_{j=i}^r d(\alpha_j)\,.
$$
Since $\sum_{j=i}^r d(\alpha_j)\in\delta_P(u_{i-1})$ by Proposition~\ref{prop:cor6.11}, this contradicts Remark~\ref{rem:uniqueelement}. Therefore we conclude that $\bar u_{i-1}\succ\bar u_i$. Since $1\leq i\leq r$ was arbitrary, this completes the proof.
\end{proof}

\todo[inline,color=green]{{\color{blue} Do you know a simple argument for this? Let $u,v\in W$ and $\alpha\in R$ such that $u=vs_\alpha$. Then $\ell(u)\neq\ell(v)$. I require this statement to conclude that any two adjacent elements are comparable in the Bruhat order. I am quite sure that this is true, but the way I can prove it seems unnecessarily involved to me.}

Answered on MSE, cf. \url{http://math.stackexchange.com/questions/1957028/are-adjacent-elements-always-comparable-in-the-bruhat-order}.}

\begin{defn}
\label{def:minimal}

Let $d$ be a degree. Let $u,v\in W$. We say that $d$ is a minimal degree in $\sigma_u\star\sigma_v$ if the power $q^d$ appears with non-zero coefficient in the expression $\sigma_u\star\sigma_v$ and if $d$ is minimal with this property, i.e. for all $d'<d$ the power $q^{d'}$ does not occur in the expression $\sigma_u\star\sigma_v$.

\end{defn}

\begin{thm}[{\cite[Theorem~9.1]{fulton}}]
\label{thm:fulton}

Let $u,v,w\in W$. We have the following identities
\begin{align*}
\delta_P(u,v)=&\left\{d\text{ a minimal degree in }\sigma_u\star\sigma_v\right\}\\
\delta_P(w)=&\left\{d\text{ a minimal degree in }\sigma_w\star\mathrm{pt}\right\}
\end{align*}
which interpret the sets $\delta_P(u,v)$ and $\delta_P(w)$ in terms of minimal degrees in quantum products.

\end{thm}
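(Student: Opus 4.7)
The plan is to deduce both identities as essentially immediate consequences of Fulton--Woodward's theorem \cite[Theorem~9.1]{fulton} combined with Theorem~\ref{thm:descriptionofdelta} established earlier in this section. The real content has already been done: the first identity is a direct translation of the Fulton--Woodward equivalence into the language of $\delta_P(u,v)$, and the second identity is obtained from the first by specialization and invoking Theorem~\ref{thm:descriptionofdelta}.

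First I would invoke the equivalence $(1)\Leftrightarrow(2)$ of \cite[Theorem~9.1]{fulton}, which asserts that for every degree $d$ the monomial $q^d$ appears with non-zero coefficient in the quantum product $\sigma_u\star\sigma_v$ if and only if there exists a chain from $\bar u$ to $\bar v$ of degree $d$. This shows that the set of degrees $d$ such that $q^d$ occurs in $\sigma_u\star\sigma_v$ coincides with the set of degrees of chains from $\bar u$ to $\bar v$. Since minimal elements of two equal sets agree, Definition~\ref{def:deltauv} together with Definition~\ref{def:minimal} yields directly the first claimed identity
\[
\delta_P(u,v)=\{d\text{ a minimal degree in }\sigma_u\star\sigma_v\}.
\]

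For the second identity I would specialize the first to $v=w_o$. Since $\mathrm{pt}=\sigma_{w_o}$ is the class of a point, the first identity gives
\[
\delta_P(w,w_o)=\{d\text{ a minimal degree in }\sigma_w\star\mathrm{pt}\}.
\]
Theorem~\ref{thm:descriptionofdelta} asserts the equality $\delta_P(w)=\delta_P(w,w_o)$, and substituting this transforms the displayed equation into the second claimed identity.

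There is no substantive obstacle. The only subtlety is reconciling the two a priori different definitions of $\delta_P$ as a function of a single Weyl group element: the one in terms of curve neighborhoods (Definition~\ref{def:deltaw}) and the one in terms of chains (the case $v=w_o$ of Definition~\ref{def:deltauv}). This reconciliation is precisely the content of Theorem~\ref{thm:descriptionofdelta}, which was proven in this section using Fulton--Woodward's result and the limit argument involving $T$-invariant curves (Proposition~\ref{prop:equalu}). With that theorem in hand, the proof of Theorem~\ref{thm:fulton} reduces to matching definitions and citing \cite[Theorem~9.1]{fulton}.
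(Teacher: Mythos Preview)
Your approach matches the paper's: invoke \cite[Theorem~9.1: $(1)\Leftrightarrow(2)$]{fulton} for the first identity, then specialize to $v=w_o$ and apply Theorem~\ref{thm:descriptionofdelta} for the second.

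One correction to your phrasing of the cited result. You write that Fulton--Woodward's equivalence asserts that for \emph{every} degree $d$, the monomial $q^d$ occurs in $\sigma_u\star\sigma_v$ if and only if there is a chain from $\bar u$ to $\bar v$ of degree $d$, and then conclude that the two underlying sets coincide. That intermediate claim is false: the set of degrees of chains from $\bar u$ to $\bar v$ is infinite (one can always lengthen a chain), whereas only finitely many $q^d$ occur in $\sigma_u\star\sigma_v$. What \cite[Theorem~9.1: $(1)\Leftrightarrow(2)$]{fulton} actually gives is directly the equality of the sets of \emph{minimal} elements, which is exactly the first identity. So drop the false intermediate step and cite the theorem for the conclusion itself; the rest of your argument is fine.
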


\begin{proof}

The first identity follows directly from the definitions and \cite[Theorem~9.1: $(1)\Leftrightarrow(2)$]{fulton}. The second identity follows from the first and Theorem~\ref{thm:descriptionofdelta}.
\end{proof}

\todo[inline,color=green]{Theorem: $d\in\delta_P(u,v)$ then $d\in\delta_P(z_d^P)$. \cite[Lemma~6.7]{thesis} as Proposition before the Corollary. After this, I relate to quantum cohomology (theorem for both $\delta(u,v)$ and $\delta(u)$), give the necessary definitions. After the discussion of the definition in terms of chains. It is also obvious because $\star$ is commutative.

I suppressed comments on: translation invariance of adjacent elements with invariance of the corresponding degree (now part of the proof of Proposition~\ref{prop:delta2}(\ref{item:commutative}), the relation between $T$-invariant curves and chains (now part of the proof of Proposition~\ref{prop:equalu}), the fact that any two adjacent elements are comparable in the Bruhat order (now part of the proof of Corollary~\ref{cor:cor6.11}, cf. notes on October 7).}

\section{Local curve neighborhoods}
\label{sec:local}

In this section, we introduce for each positive root $\varphi$ a homogeneous subspace $X(\varphi)$ of $X$. It turns out that $X(\varphi)$ is actually a Schubert variety in $X$ parameterized by some Weyl group element $w_o(\varphi)$ which in addition carries the structure of a homogeneous space under the action of a subgroup $G(\varphi)$ of $G$. All concepts discussed so far (curve neighborhoods, distance function, etc.) have a local version with respect to the situation attached to $X(\varphi)$. We will be mostly concerned with various compatibility results between local and global notions (Fact~\ref{fact:compatibility} and Theorem~\ref{thm:compatibility}). In fact, the compatibility between local and global distance functions is the main result we aim for in this section (cf. Theorem~\ref{thm:compatibility}). The local point of view proves to be fertile for the proof of many of our results (see for example Theorem~\ref{thm:thesis_main}) although this is not directly visible in the statements. For example, in the proof of Theorem~\ref{thm:main}, we will preform a strong induction which incorporates $X(\varphi)$ for all $\varphi\in R^+$ in its induction hypothesis (cf. statement $(\mathrm{H}_n)$ 
in the beginning of Section~\ref{sec:main}). The philosophy behind the technique is that all required information about a connected degree $e\in H_2(G/B)$ is already contained in the situation attached to $G(\varphi)/B(\varphi)$ where $\varphi$ is a positive root such that $\Delta(e)=\Delta(\varphi)$ (e.g. $\varphi=\alpha(e)$, c.f. Fact~\ref{fact:local}).

\begin{notation}

Let $\varphi$ be a positive root. We denote by $R(\varphi)$ the root subsystem of $R$ which has as set of simple roots the set $\Delta(\varphi)$. In other words, $R(\varphi)$ is the root subsystem of $R$ generated by $\Delta(\varphi)$. Since $\Delta(\varphi)$ is a connected subset of the Dynkin diagram, the root system $R(\varphi)$ is always irreducible. We denote by $R(\varphi)^+$ the positive roots of $R(\varphi)$ associated to $\Delta(\varphi)$. Equivalently, we can define $R(\varphi)^+=R(\varphi)\cap R^+$. We denote by $G(\varphi)$ the simple (connected, simply connected, complex) linear algebraic subgroup of $G$ which has $R(\varphi)$ as root system. Furthermore, we set $B(\varphi)=G(\varphi)\cap B$ and $P(\varphi)=G(\varphi)\cap P$. By definition, it is clear that $B(\varphi)$ is a Borel subgroup of $G(\varphi)$ and that $P(\varphi)$ is a parabolic subgroup of $G(\varphi)$ (relative to $B(\varphi)$). It is clear that $P(\varphi)$ is the parabolic subgroup of $G(\varphi)$ which corresponds to the set of simple roots $\Delta(\varphi)\cap\Delta_P$. 
To this situation, we can attach a root system $R(\varphi)_{P(\varphi)}$ with positive roots $R(\varphi)_{P(\varphi)}^+$ analogously as we attached $R_P$ and $R_P^+$ to the situation in Subsection~\ref{subsec:roots}.
%
We denote by $W_{G(\varphi)}$ the Weyl group associated to $G(\varphi)$ and $G(\varphi)\cap T$ and by $W_{P(\varphi)}$ the Weyl group associated to $P(\varphi)$ and $G(\varphi)\cap T$. Equivalently, we can define $W_{G(\varphi)}$ as the parabolic subgroup of $W$ generated by the simple reflections $s_\beta$ where $\beta\in\Delta(\varphi)$ and $W_{P(\varphi)}$ as the parabolic subgroup of $W$ (or $W_{G(\varphi)}$) generated by the simple reflections $s_\beta$ where $\beta\in\Delta(\varphi)\cap\Delta_P$. We denote by $W_{G(\varphi)}^{P(\varphi)}$ the set of all minimal representatives of cosets in $W_{G(\varphi)}/W_{P(\varphi)}$. We denote by $w_o(\varphi)$ the longest element of $W_{G(\varphi)}$. 


\end{notation}

\begin{notation}

Let $\varphi$ be a positive root. We set $X(\varphi)=G(\varphi)/P(\varphi)$. We have a natural inclusion $X(\varphi)\subseteq X$. In the same way we associated in Subsection~\ref{subsec:schubert} a Schubert variety in $X$ to each element of $W$, we can associate to each element $w\in W_{G(\varphi)}$ a Schubert variety in $X(\varphi)$ denoted by $X(\varphi)_w$. Each Schubert variety $X(\varphi)_w$ is a closed, irreducible, $B(\varphi)$-stable subvariety of $X(\varphi)$. Note that the notation $X(\varphi)_w$ is only of temporary nature since we will see in Fact~\ref{fact:compatibility}(\ref{item:comp_schubert}) that $X(\varphi)_w$ can be identified with a Schubert variety in $X$ (parameterized by the very same element $w$).


\end{notation}

\begin{conv}

Let $\varphi$ be a positive roots. The natural inclusion $X(\varphi)\subseteq X$ induces natural inclusions
$$
H_2(X(\varphi))\subseteq H_2(X)\text{ and }H^2(X(\varphi))\subseteq H^2(X)\,.
$$
In addition to the \enquote{degree conventions} in Subsection~\ref{subsec:degrees}, we identify $H_2(X(\varphi))$ always with a sublattice of $H_2(X)$, i.e. we consider degrees in $H_2(X(\varphi))$ interchangeably as degrees in $H_2(X(\varphi))$ and $H_2(X)$. Similarly, we identify $H^2(X(\varphi))$ with a sublattice of $H^2(X)$.


\end{conv}

\begin{defn}[{\cite[Section~1]{kostant}}]

Let $\varphi\in R^+$. We say $\varphi$ is locally high if $\varphi$ is the highest root of $R(\varphi)$.

\end{defn}


{\color{black} Let $\varphi$ be a positive root. While the compatibility of the distance function with the homogeneous subspace $X(\varphi)$ is the main issue of this section (cf. Theorem~\ref{thm:compatibility}), many other compatibility statements are obvious. We gather them in the following fact.}

\begin{fact}
\label{fact:compatibility}

Let $\varphi\in R^+$. Let $u,v,w\in W_{G(\varphi)}$. Let us temporarily denote the Bruhat order on $W_{G(\varphi)}$ by $\preceq_\varphi$ and the Hecke product on $W_{G(\varphi)}$ by $\cdot_\varphi$.

\begin{enumerate}

\item 
\label{item:comp_tfixed}

We have a natural inclusion $W_{G(\varphi)}/W_{P(\varphi)}\subseteq W/W_P$ which means that $uW_{P(\varphi)}=vW_{P(\varphi)}$ if and only if $uW_P=vW_P$.

\item
\label{item:comp_reduced}

The reduced expressions of $w$ in $W_{G(\varphi)}$ and the reduced expressions of $w$ in $W$ coincide. In particular, the length function on $W_{G(\varphi)}$ is the restriction of the length function on $W$.

\item
\label{item:comp_schubert}

We have $X(\varphi)_w=X_w$, in particular $X(\varphi)=X_{w_o(\varphi)}$.

\item
\label{item:comp_length}

We have $\ell\left(wW_{P(\varphi)}\right)=\ell\left(wW_P\right)$ and thus $W_{G(\varphi)}^{P(\varphi)}\subseteq W^P$

\item
\label{item:comp_bruhat}

We have $uW_{P(\varphi)}\preceq_\varphi vW_{P(\varphi)}$ if and only if $uW_P\preceq vW_P$, in particular $u\preceq_\varphi v$ if and only if $u\preceq v$.

\item
\label{item:comp_hecke}

We have $u\cdot_\varphi v=u\cdot v$.

\item
\label{item:comp_chain}

Suppose that $\bar u$ and $\bar v$ are adjacent. Let $\alpha\in R^+\setminus R_P^+$ such that $u\bar s_\alpha=\bar v$. Then we have $\alpha\in R(\varphi)^+\setminus R(\varphi)_{P(\varphi)}^+$.

\end{enumerate}

\end{fact}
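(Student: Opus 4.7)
Items (1)--(6) all reduce to standard facts about standard parabolic subgroups of a Coxeter system, specialized here to $W_{G(\varphi)}\subseteq W$ with set of simple reflections $\Delta(\varphi)$. For (1), the intersection $W_{G(\varphi)}\cap W_P$ is the standard parabolic of $W$ generated by $\Delta(\varphi)\cap\Delta_P$, namely $W_{P(\varphi)}$ (a classical fact on parabolic subgroups of Coxeter groups, cf.\ \cite{humphreys3}); this yields the injectivity of $W_{G(\varphi)}/W_{P(\varphi)}\to W/W_P$. For (2), any reduced expression of $w\in W_{G(\varphi)}$ within $W$ must use only generators from $\Delta(\varphi)$ (via an Exchange Condition argument), so the length functions agree. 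Item (4) is an immediate consequence of (2), since then the minimal (resp.\ maximal) length representatives in $wW_{P(\varphi)}$ and $wW_P$ must coincide. Item (3) follows by noting that $X(\varphi)_w=\overline{B(\varphi)wP(\varphi)/P(\varphi)}$ and $X_w$ are both closed and irreducible, with the former contained in the latter and of equal dimension by (4).

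For (5), the subexpression characterization of the Bruhat order together with (2) gives $u\preceq_\varphi v\Leftrightarrow u\preceq v$ for $u,v\in W_{G(\varphi)}$; the statement for cosets then follows by lifting to minimal representatives and using (1) and (4). Item (6) is immediate from (2) and (5) since the Hecke product is defined by a recursion on right multiplication by simple reflections, referencing only the ambient Bruhat order.

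Item (7) is the substantive claim, and the plan is to argue via $T$-invariant curves. Since $\bar u$ and $\bar v$ are adjacent, \cite[Lemma~4.2]{fulton} supplies a unique irreducible $T$-invariant curve in $X$ through the two $T$-fixed points $x(u)$ and $x(v)$, namely $u\cdot C_\alpha$, of class $d(\alpha)$. Translating by $u^{-1}\in W_{G(\varphi)}\subseteq G(\varphi)$ reduces the containment $u\cdot C_\alpha\subseteq X(\varphi)$ to $C_\alpha\subseteq X(\varphi)$. Both $T$-fixed points of $C_\alpha$ lie in $X(\varphi)$: the point $x(1)$ trivially, and $x(s_\alpha)=x(u^{-1}v)\in X(\varphi)$ because $s_\alpha W_P=u^{-1}vW_P$ with $u^{-1}v\in W_{G(\varphi)}$. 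Combining (5) with the fact that adjacent cosets in $W/W_P$ are comparable in the Bruhat order (the principle exploited in Corollary~\ref{cor:cor6.11}), the cosets $\bar 1$ and $\overline{s_\alpha}$ are adjacent inside $W_{G(\varphi)}/W_{P(\varphi)}$ as well, so the local version of \cite[Lemma~4.2]{fulton} furnishes a $T$-invariant curve in $X(\varphi)$ through $x(1)$ and $x(s_\alpha)$; by the uniqueness in $X$, this local curve coincides with $C_\alpha$, whence $C_\alpha\subseteq X(\varphi)$. Finally, the tangent line $T_{x(1)}C_\alpha$ sits inside $T_{x(1)}X(\varphi)=\bigoplus_{\beta\in R(\varphi)^+\setminus R(\varphi)_{P(\varphi)}^+}\mathfrak{g}_{-\beta}$ and carries the $T$-weight $-\alpha$, forcing $\alpha\in R(\varphi)^+\setminus R(\varphi)_{P(\varphi)}^+$. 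The principal obstacle is the \enquote{local adjacency} step; should it prove delicate, a purely combinatorial alternative is to write $s_\alpha=yp$ with $y=u^{-1}v\in W_{G(\varphi)}$ and $p\in W_P$, exploit the involution identity $ypy=p^{-1}$ forced by $(yp)^2=1$, and use the support inclusion $\Delta(\alpha)=\Delta(s_\alpha)\subseteq\Delta(\varphi)\cup\Delta_P$ from Proposition~\ref{prop:support} together with the connectedness of $\Delta(\alpha)$ to pin down $\alpha$.
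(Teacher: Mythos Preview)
Your treatment of items (1)--(6) is correct and essentially matches the paper's, with only cosmetic differences in the order of deductions (you derive (3) from (4) via a dimension count, while the paper derives (4) and (5) from (3)).

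For item (7), your overall strategy---show $C_\alpha\subseteq X(\varphi)$ and then read off $\alpha$ from the local classification of $T$-invariant curves (or, as you suggest, from the $T$-weight on the tangent line)---is exactly the paper's. The gap is in how you establish $C_\alpha\subseteq X(\varphi)$. You argue that $\bar 1$ and $\overline{s_\alpha}$, being adjacent in $W/W_P$ and both lying in $W_{G(\varphi)}/W_{P(\varphi)}$, must be adjacent there as well; but comparability (which is all that (5) and the remark from Corollary~\ref{cor:cor6.11} give you) does not imply adjacency, and local adjacency is precisely the content of (7). Your combinatorial fallback is also incomplete: the support inclusion $\Delta(\alpha)\subseteq\Delta(\varphi)\cup\Delta_P$ together with connectedness of $\Delta(\alpha)$ does not force $\Delta(\alpha)\subseteq\Delta(\varphi)$, since $\Delta(\varphi)$ and $\Delta_P$ may meet or be joined in the Dynkin diagram, and the involution identity $ypy=p^{-1}$ does not obviously resolve this.

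The paper closes this gap with a one-line Borel fixed-point argument: $C_\alpha\cap(X\setminus X(\varphi))$ is $T$-stable, so if it were nonempty it would contain a $T$-fixed point; but the only $T$-fixed points of $C_\alpha$ are $x(1)$ and $x(s_\alpha)$, and you have already checked that both lie in $X(\varphi)$. Hence $C_\alpha\subseteq X(\varphi)$ directly, with no appeal to local adjacency. From there either the paper's classification of irreducible $T$-invariant curves in $X(\varphi)$ through $x(1)$, or your tangent-weight argument, finishes the proof.
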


\begin{proof}

We have a natural inclusion $X(\varphi)\subseteq X$ which yields an inclusion $X(\varphi)^T\subseteq X^T$ of $T$-fixed points. Whence, Item~(\ref{item:comp_tfixed}) follows. It is also very easy to see this directly.

Item~(\ref{item:comp_reduced}) is discussed in \cite[5.5, Theorem~(b)]{humphreys3}. 

Item~(\ref{item:comp_schubert}) is very easy to prove. The reader finds complete details in \cite[Lemma~4.8]{thesis}.

Item~(\ref{item:comp_length}) and (\ref{item:comp_bruhat}) follow immediately from Item~(\ref{item:comp_schubert}). A purely combinatorial proof that $u\preceq_\varphi v$ if and only if $u\preceq v$ can also be found in \cite[5.10, Corollary~(a)]{humphreys3}. Item~(\ref{item:comp_length}) in the case that $P=B$ follows also from Item~(\ref{item:comp_reduced}).

Item~(\ref{item:comp_hecke}) follows from Item~\ref{item:comp_reduced}, (\ref{item:comp_bruhat}) and the definition of the Hecke product.

Ad Item~(\ref{item:comp_chain}). Let $u\bar s_\alpha=\bar v$ as in the statement. By considering the expression $\bar s_\alpha =u^{-1}\bar v$, we may assume without loss of generality that $u=1$. Suppose for a contradiction that $C_\alpha\cap(X\setminus X(\varphi))$ is not empty. Since $C_\alpha\cap(X\setminus X(\varphi))$ is $T$-stable, then there exists a $T$-fixed point in $C_\alpha$ which is not contained in $X(\varphi)$. Since $C_\alpha$ contains only the $T$-fixed points $x(1)$ and $x(s_\alpha)$ and since clearly $x(1)\in X(\varphi)$, it follows that $x(s_\alpha)\notin X(\varphi)$. This contradicts the fact that $\bar s_\alpha=\bar v$ where $v\in W_{G(\varphi)}$. Therefore it follows that $C_\alpha\subseteq X(\varphi)$. In other words, $C_\alpha$ is an irreducible $T$-invariant curve in $X(\varphi)$ containing $x(1)$. This means that there exists a root $\alpha'\in R(\varphi)^+\setminus R(\varphi)_{P(\varphi)}^+$ such that $C_\alpha=C_{\alpha'}$. This immediately implies $x(s_\alpha)=x(s_{\alpha'})$ by passing to the $T$-fixed points of $C_\alpha$ and $C_{\alpha'}$ respectively. This means $\bar s_\alpha=\bar s_{\alpha'}$ and thus by \cite[Lemma~4.1, Claim]{fulton} that $\alpha=\alpha'$. This proves Item~(\ref{item:comp_chain}).   
\end{proof}

\begin{rem}

Let $\varphi\in R^+$. Fact~\ref{fact:compatibility}(\ref{item:comp_bruhat}), (\ref{item:comp_hecke}) show that the notations $\preceq_\varphi$ and $\cdot_\varphi$ are superfluous since the Bruhat order and the Hecke product on $W_{G(\varphi)}$ are compatible with the Bruhat order and the Hecke product on $W$. Therefore we will not use these notations further on and simply identify both.

\end{rem}

\begin{defn}
\label{def:local}

Let $\varphi\in R^+$. Let $e\in H_2(X(\varphi))$ be a degree. Let $w\in W_{G(\varphi)}$. In Definition~\ref{def:zdp}, we defined for a degree $d$ an element $z_d^P\in W^P$ relative to the situation attached to $X$. In the same way, we can define an element $z_e^{P(\varphi)}\in W_{G(\varphi)}^{P(\varphi)}$ relative to the situation attached to $X(\varphi)$, i.e. we consider the greedy decomposition $(\alpha_1,\ldots,\alpha_r)$ of $e$ in $R(\varphi)$ and define $z_e^{P(\varphi)}$ to be the minimal representative in $s_{\alpha_1}\cdot\ldots\cdot s_{\alpha_r}W_{P(\varphi)}$ (cf. Fact~\ref{fact:compatibility}(\ref{item:comp_hecke})). By \cite[Theorem~5.1]{curvenbhd2} applied to $X(\varphi)$, the element $z_e^{P(\varphi)}$ is the minimal representative parameterizing the degree $e$ curve neighborhood of $\{x(1)\}$ in $X(\varphi)$. Moreover, by \cite[Theorem~5.1]{curvenbhd2} and Fact~\ref{fact:compatibility}(\ref{item:comp_schubert}), (\ref{item:comp_hecke}), the degree $e$ curve neighborhood of $X_w$ in $X(\varphi)$ is given by $X_{w\cdot z_e^{P(\varphi)}}$. We call $X_{w\cdot z_e^{P(\varphi)}}$ the local (degree $e$) curve neighborhood of $X_w$ in $X(\varphi)$.

\end{defn}

\begin{rem}

Let $\varphi\in R^+$. Let $u,v,w\in W_{G(\varphi)}$. Similar to Definition~\ref{def:local} of local curve neighborhoods and the element $z_e^{P(\varphi)}$ for a degree $e\in H_2(X(\varphi))$, we can \enquote{localize} other notions. Each time, we define a local notion with respect to the situation attached to $X(\varphi)$ as we defined the original notion with respect to the situation attached to $X$. In particular, we can and will speak about the local distance function, i.e. the sets $\delta_{P(\varphi)}(w)$ and $\delta_{P(\varphi)}(u,v)$, chains from $uW_{P(\varphi)}$ to $vW_{P(\varphi)}$ in $X(\varphi)$, $P(\varphi)$-cosmall and very $P(\varphi)$-cosmall roots. All our previous results carry over to a version relative to $X(\varphi)$ which we will freely use from now on. The reader is invited to write down the precise definitions on his own.

\end{rem}


\begin{fact}
\label{fact:local}

Let $e\in H_2(G/B)$ be a degree. Let $(\alpha_1,\ldots,\alpha_r)$ be a greedy decomposition of $e$. Let $\varphi$ be a positive root such that $\alpha_i\leq\varphi$ for all $1\leq i\leq r$. Then we have $z_e^B=z_e^{B(\varphi)}$. In particular, if $e$ is a connected degree, then we have $z_e^B=z_e^{B(\alpha(e))}$. 

\end{fact}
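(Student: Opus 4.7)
The plan is to identify $(\alpha_1,\ldots,\alpha_r)$ as a greedy decomposition of $e$ both globally (in $R$) and locally (in $R(\varphi)$), so that the defining formulas for $z_e^B$ and $z_e^{B(\varphi)}$ literally coincide term by term.

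First I would show that each $\alpha_i$ belongs to $R(\varphi)^+$. The hypothesis $\alpha_i\leq\varphi$ yields $\Delta(\alpha_i)\subseteq\Delta(\varphi)$, because any simple root $\beta\leq\alpha_i$ also satisfies $\beta\leq\varphi$. Hence $\alpha_i$ lies in $\mathbb{Z}\Delta(\varphi)\cap R=R(\varphi)$, and being positive in $R$ it is positive in $R(\varphi)$. Consequently $e=\sum_{i=1}^r\alpha_i^\vee\in\mathbb{Z}\Delta(\varphi)^\vee=H_2(X(\varphi))$, so it is meaningful to speak about a greedy decomposition of $e$ in $R(\varphi)$.

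Next I would verify greediness in $R(\varphi)$ by induction on $i$. By assumption $\alpha_i$ is a maximal root of $d_i=e-d(\alpha_1)-\cdots-d(\alpha_{i-1})$ inside $R^+$; since the set $\{\alpha\in R(\varphi)^+\mid\alpha^\vee\leq d_i\}$ is a subset of $\{\alpha\in R^+\mid\alpha^\vee\leq d_i\}$ which still contains $\alpha_i$, maximality is inherited. Thus $(\alpha_1,\ldots,\alpha_r)$ is a greedy decomposition of $e$ in $R(\varphi)$ as well.

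With both decompositions identified, Definition~\ref{def:zdp} applied in each setting gives $z_e^B=s_{\alpha_1}\cdot\ldots\cdot s_{\alpha_r}$ and $z_e^{B(\varphi)}=s_{\alpha_1}\cdot\ldots\cdot s_{\alpha_r}$, where the latter Hecke product is a priori taken in $W_{G(\varphi)}$. Here we use that $w_B=w_{B(\varphi)}=1$, so no final $w_P$-factor interferes. Fact~\ref{fact:compatibility}(\ref{item:comp_hecke}) then identifies the two Hecke products, yielding $z_e^B=z_e^{B(\varphi)}$. The \enquote{in particular} clause is immediate from Proposition~\ref{prop:connecteddegree}, which for connected $e$ gives $\alpha_i\leq\alpha_1=\alpha(e)$ for all $i$, so one may set $\varphi=\alpha(e)$. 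The only conceptual point in the whole argument — and thus the potential obstacle — is the preservation of maximality when the ambient root system shrinks; once this is observed, everything else reduces to the compatibilities collected in Fact~\ref{fact:compatibility}.
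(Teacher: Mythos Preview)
Your proof is correct and follows exactly the paper's own approach: show that the greedy decomposition of $e$ in $R$ is also a greedy decomposition of $e$ in $R(\varphi)$, then invoke the compatibility of the Hecke product (Fact~\ref{fact:compatibility}(\ref{item:comp_hecke})), and finally derive the connected case from Proposition~\ref{prop:connecteddegree}. Your write-up simply makes explicit the steps the paper leaves to the reader, in particular the observation that maximality is preserved when passing to the subset $R(\varphi)^+\subseteq R^+$.
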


\begin{proof}

This follows directly from Fact~\ref{fact:compatibility}(\ref{item:comp_hecke}) (compatibility of the Hecke product), since the greedy decomposition $(\alpha_1,\ldots,\alpha_r)$ of $e$ in $R$ is also a greedy decomposition of $e$ in $R(\varphi)$. If $e$ is a connected degree, then we have $\alpha_i\leq\alpha(e)$ for all $1\leq i\leq r$ by Proposition~\ref{prop:connecteddegree}. Therefore, the very last statement is a special case of the first statement.
\end{proof}

\begin{thm}[{\cite[Lemma~6.60]{thesis}}]
\label{thm:compatibility}

For all $\varphi\in R^+$ and all $u\in W_{G(\varphi)}$ we have $\delta_P(u)=\delta_{P(\varphi)}(u)$, in particular $\delta_P(u)\subseteq H_2(X(\varphi))$.

\end{thm}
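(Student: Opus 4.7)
My plan is to reduce both sides of the claimed equality to their chain-theoretic descriptions and then to show that a minimal global chain starting at $\bar u$ can be chosen to live entirely inside the local situation $X(\varphi)$. By Theorem~\ref{thm:descriptionofdelta} applied globally, $\delta_P(u)=\delta_P(u,w_o)$ is the set of minimal degrees of chains in $X$ from $\bar u$ to $\bar w_o$; the local version of the same theorem applied to $X(\varphi)$ gives $\delta_{P(\varphi)}(u)=\delta_{P(\varphi)}(u,w_o(\varphi))$. By Proposition~\ref{prop:equalu} (globally and locally) both sets are realized by chains $\bar u_0,\ldots,\bar u_r$ with $\bar u_0=\bar u$ and $\bar u_r=\bar 1$.

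For the easy inclusion, note that any chain in $X(\varphi)$ from $\bar u$ to $\bar 1$ uses only reflections $s_\alpha$ with $\alpha\in R(\varphi)^+\setminus R(\varphi)_{P(\varphi)}^+\subseteq R^+\setminus R_P^+$. Using Fact~\ref{fact:compatibility}(\ref{item:comp_tfixed})--(\ref{item:comp_bruhat}) to embed $W_{G(\varphi)}/W_{P(\varphi)}$ inside $W/W_P$, the very same sequence of classes becomes a chain in $X$ of the same degree from $\bar u$ to $\bar 1$. Hence for every $d'\in\delta_{P(\varphi)}(u)$ there exists $d\in\delta_P(u)$ with $d\leq d'$, and in particular $d'\in H_2(X(\varphi))$ on the local side.

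The harder direction is the converse. Given $d\in\delta_P(u)$, Proposition~\ref{prop:equalu} supplies a chain $\bar u_0,\ldots,\bar u_r$ in $X$ of degree $d$ with $\bar u_0=\bar u$ and $\bar u_r=\bar 1$. Corollary~\ref{cor:cor6.11} yields the strict decrease $\bar u\succ\bar u_1\succ\cdots\succ\bar u_r$, so $\bar u_i\preceq\bar u$ for every $i$. Since $u\in W_{G(\varphi)}$ satisfies $u\preceq w_o(\varphi)$ by Fact~\ref{fact:compatibility}(\ref{item:comp_bruhat}), we obtain $\bar u_i\preceq\overline{w_o(\varphi)}$ and thus $x(u_i)\in X_{w_o(\varphi)}=X(\varphi)$ by Fact~\ref{fact:compatibility}(\ref{item:comp_schubert}); the $T$-fixed point description together with Fact~\ref{fact:compatibility}(\ref{item:comp_tfixed}) then gives $\bar u_i\in W_{G(\varphi)}/W_{P(\varphi)}$ for all $i$. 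Now Fact~\ref{fact:compatibility}(\ref{item:comp_chain}) applied to each adjacent pair $(\bar u_{i-1},\bar u_i)$ forces every adjacency root $\alpha_i$ to lie in $R(\varphi)^+\setminus R(\varphi)_{P(\varphi)}^+$. Consequently $\bar u_0,\ldots,\bar u_r$ is a chain inside $X(\varphi)$ of degree $d\in H_2(X(\varphi))$ from $\bar u$ to $\bar 1=(\overline{w_o(\varphi)})^*$, producing some $d''\in\delta_{P(\varphi)}(u)$ with $d''\leq d$. Combining the two directions with the incomparability of distinct elements of $\delta_P(u)$ and $\delta_{P(\varphi)}(u)$ (Remark~\ref{rem:uniqueelement} applied on both sides) forces $\delta_P(u)=\delta_{P(\varphi)}(u)$, together with the containment $\delta_P(u)\subseteq H_2(X(\varphi))$.

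The main obstacle is precisely the middle step of the last paragraph: one has to confine the whole chain, not merely its endpoints, to $X(\varphi)$. The two ingredients used are complementary -- Corollary~\ref{cor:cor6.11} prevents the intermediate $\bar u_i$ from ``overshooting'' $\overline{w_o(\varphi)}$ in the Bruhat order, while Fact~\ref{fact:compatibility}(\ref{item:comp_chain}) certifies that the chain data (not just the $T$-fixed points) remain within the subsystem $R(\varphi)$, so that the chain's degree actually lies in $H_2(X(\varphi))$ and qualifies as a local chain in the sense of Definition~\ref{def:local}.
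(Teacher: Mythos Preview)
Your proof is correct and follows essentially the same route as the paper's own argument: both reduce to the chain description via Theorem~\ref{thm:descriptionofdelta}, embed local chains into global ones for the easy direction, and for the converse use Proposition~\ref{prop:equalu} together with the strict decrease from Corollary~\ref{cor:cor6.11} to trap every intermediate $\bar u_i$ below $\overline{w_o(\varphi)}$, then invoke Fact~\ref{fact:compatibility}(\ref{item:comp_schubert}),(\ref{item:comp_chain}) to confine the adjacency roots to $R(\varphi)$. The only cosmetic difference is that the paper phrases the easy direction slightly more tersely and explicitly notes that one may replace each $u_i$ by a representative in $W_{G(\varphi)}$ before applying Fact~\ref{fact:compatibility}(\ref{item:comp_chain}); you implicitly do the same.
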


\begin{proof}

Let $\varphi\in R^+$ and $u\in W_{G(\varphi)}$. By definition and Fact~\ref{fact:compatibility}(\ref{item:comp_tfixed}) every chain from $uW_{P(\varphi)}$ to $w_o(\varphi)W_{P(\varphi)}$ in $X(\varphi)$ is also a chain from $\bar u$ to $\bar w_o$ in $X$ simply by applying the natural inclusion $W_{G(\varphi)}/W_{P(\varphi)}\subseteq W/W_P$ to its members. This shows that $\delta_P(u)$ is the set of all minimal elements of a larger set than $\delta_{P(\varphi)}(u)$. Consequently, for all $d\in\delta_{P(\varphi)}(u)$ there exists a $d'\in\delta_P(u)$ such that $d'\leq d$.

Conversely, let $d\in\delta_P(u)$. By Proposition~\ref{prop:equalu} there exists a chain $\bar u_0,\ldots,\bar u_r$ from $\bar u$ to $\bar w_o$ of degree $d$ such that $\bar u_0=\bar u$ and $\bar u_r=\bar 1$. Let $0\leq i\leq r$. By Corollary~\ref{cor:cor6.11} we have $\bar u_i\preceq\bar u\preceq\bar w_o(\varphi)$. By Fact~\ref{fact:compatibility}(\ref{item:comp_schubert}) this means that $x(u_i)\in X(\varphi)$ and thus $\bar u_i\in W_{G(\varphi)}/W_{P(\varphi)}$. By replacing $u_i$ by another representative of $\bar u_i$, we may assume that $u_i\in W_{G(\varphi)}$ for all $0\leq i\leq r$. By Fact~\ref{fact:compatibility}(\ref{item:comp_chain}) we therefore see that $u_0W_{P(\varphi)},\ldots,u_rW_{P(\varphi)}$ is a chain from $uW_{P(\varphi)}$ to $w_o(\varphi)W_{P(\varphi)}$ in $X(\varphi)$ of degree $d\in H_2(X(\varphi))$. This means that there exists a $d'\in\delta_{P(\varphi)}(u)$ such that $d'\leq d$. All in all, this proves the theorem (cf. Remark~\ref{rem:uniqueelement}).
\end{proof}

\begin{cor}
\label{cor:locallyhigh}

Let $\alpha$ be a locally high root. Then we have $\delta_P(s_\alpha)=d(\alpha)$.

\end{cor}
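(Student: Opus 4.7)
The plan is to reduce the statement to the local situation at the subspace $X(\alpha)$, where the root $\alpha$ becomes the highest root and hence very cosmall. Concretely, since $\alpha$ is locally high, $\alpha$ is by definition the highest root of $R(\alpha)$. Viewing the problem relative to the homogeneous subspace $X(\alpha)=G(\alpha)/P(\alpha)$, Example~\ref{ex:highestroot2} (applied inside the root system $R(\alpha)$) tells us that $\alpha$ is very $P(\alpha)$-cosmall.

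Next I would invoke the local version of Proposition~\ref{prop:delta}(\ref{item:verycosmall}), which gives
\[
\delta_{P(\alpha)}(s_\alpha)=d(\alpha)\in H_2(X(\alpha)),
\]
where the right-hand side is the local degree $\alpha^\vee+\mathbb{Z}\Delta_{P(\alpha)}^\vee$. Under the natural inclusion $H_2(X(\alpha))\subseteq H_2(X)$ this local degree corresponds to $\alpha^\vee+\mathbb{Z}\Delta_P^\vee$, which is precisely the (global) degree $d(\alpha)$.

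Finally, I would apply Theorem~\ref{thm:compatibility} to $\varphi=\alpha$ and $u=s_\alpha\in W_{G(\alpha)}$, obtaining
\[
\delta_P(s_\alpha)=\delta_{P(\alpha)}(s_\alpha)=d(\alpha),
\]
which is the desired equality. There is no real obstacle here: the work has been done in setting up the local machinery (Fact~\ref{fact:compatibility} and Theorem~\ref{thm:compatibility}) and in the characterization of the highest root as the universal very cosmall root (Example~\ref{ex:highestroot2}). The only small point to keep in mind is to verify that the local and global incarnations of $d(\alpha)$ coincide under the identification $H_2(X(\alpha))\subseteq H_2(X)$, which is immediate from the conventions fixed in the section on local curve neighborhoods.
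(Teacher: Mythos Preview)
Your proof is correct and follows essentially the same approach as the paper: observe that a locally high root $\alpha$ is the highest root of $R(\alpha)$ and hence very $P(\alpha)$-cosmall, apply Proposition~\ref{prop:delta}(\ref{item:verycosmall}) locally to get $\delta_{P(\alpha)}(s_\alpha)=d(\alpha)$, and then transfer this to $\delta_P(s_\alpha)$ via Theorem~\ref{thm:compatibility}. Your additional remarks about the identification of the local and global incarnations of $d(\alpha)$ are correct and make explicit what the paper leaves implicit in its conventions.
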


\begin{proof}

Let $\alpha$ be a locally high root. Then $\alpha$ is in particular very $P(\alpha)$-cosmall. Proposition~\ref{prop:delta}(\ref{item:verycosmall}) says that $\delta_{P(\alpha)}(s_\alpha)=d(\alpha)$. Theorem~\ref{thm:compatibility} then implies $\delta_P(s_\alpha)=d(\alpha)$.
\end{proof}

\begin{rem}

We actually gathered all the ingredients for a more refined result than Theorem~\ref{thm:compatibility}: the compatibility of local and global curve neighborhoods. Since this result is of no importance for the exposition, we only mention the statement. The reader finds a complete proof in \cite[Lemma~6.58]{thesis}. Based on the techniques developed so far, it is not more difficult than and similar to the proof of Theorem~\ref{thm:compatibility}.
Let $\varphi\in R^+$, $d\in H_2(X(\varphi))$ be a degree and $w\in W_{G(\varphi)}$. Then it was proved in \cite[Lemma~6.58]{thesis} that we have
$$
X_{w\cdot z_d^{P(\varphi)}}=X_{w\cdot z_d^P}\cap X(\varphi)\text{, in particular }X_{z_d^{P(\varphi)}}=X_{z_d^P}\cap X(\varphi)\,.
$$

\end{rem}

\todo[inline,color=green]{In this section, I have to include the definition of locally high roots. Any locally high root $\varphi$ satisfies $\delta_P(s_\varphi)=d(\varphi)$.}

\todo[inline,color=green]{Equality of curve neighborhoods $z_d^B=z_d^{B(\alpha)}$ for appropriate $\alpha$ covering all elements in the greedy decomposition of $d$. Needed for the reduction in the proof of Theorem~\ref{thm:main}.

I have to say (the least possible) about the compatibility issues, also (not only to establish the previous point) in view of the proof of
$$
d_{G/P_\beta}=\sum_{\alpha\in C_R(\beta)}(\omega_\beta,\alpha^\vee)\,.
$$

It seems that \cite[Lemma~6.58]{thesis} is strictly speaking superfluous to prove $\delta=\delta_\varphi$. We can use \cite[Corollary~6.11]{thesis}.

Compatibility: Hecke, Schubert, $W/W_P$ ($u,v\in W(\varphi)$: $uW_P=vW_P$ if and only if $uW_{P(\varphi)}=vW_{P(\varphi)}$ -- the inclusion $X(\varphi)\subseteq X$ already shows this), $W^P$, length, adjacent, chain (follows from $W/W_P$-inclusion) in $R(\varphi)$ if and only if $\alpha_i\in R(\varphi)$ for all $i$ (pencil note at \cite[Lemma~6.59]{thesis}).}

\todo[inline,color=green]{In a remark I will mention that $\Gamma^\varphi=X(\varphi)\cap\Gamma$ with reference to the thesis, after the proof of $\delta_\varphi=\delta$. But I will not include the proof of this in this paper. For a more refined statement concerning the compatibility of curve neighborhoods in general, we refer to\ldots I cannot add this remark yet, because I am missing the notation for local curve neighborhoods (October 10).

Note that we always identify $H_2(X(\varphi))\subseteq H_2(X)$ and also for $H^2$. Why is this possible?}

\section{The cascade of orthogonal roots}
\label{sec:cascade}

In this section, we introduce the cascade of orthogonal roots $\mathcal{B}_R$. This notion and its basic properties are due to Kostant \cite{kostant}. The set $\mathcal{B}_R$ is a special maximal set of strongly orthogonal roots (\cite[Theorem~1.8]{kostant}). In addition, all roots in $\mathcal{B}_R$ are locally high (\cite[Proposition~1.4]{kostant}). We recall the precise definitions of all required notions in this section. We use them to complete the proof of Theorem~\ref{thm:uniqueness} by proving Theorem~\ref{thm:curve}. Moreover, we give a simple formula which expresses $d_X$ in terms of the cascade of orthogonal roots: Namely, in Corollary~\ref{cor:identity} we will prove that
$$
d_X=\sum_{\alpha\in\mathcal{B}_R}d(\alpha)\,.
$$

\begin{notation}

Let $\theta_1$ be the highest root of $R$. We denote by $R^\circ$ the root subsystem of $R$ which consists of all roots $\alpha\in R$ such that $(\alpha,\theta_1)=0$.\footnote{Note that $R^\circ$ might be reducible or even empty. It is easy to see that $R^\circ=\emptyset$ if and only if $R$ is of type $\mathsf{A}_1$ or $\mathsf{A}_2$ (cf. Table~\ref{table:exceptional}).} We denote by $\Delta^\circ$ the set of simple roots $\beta\in\Delta$ such that $(\beta,\theta_1)=0$.

\end{notation}

\begin{defn}[{\cite[VI, 1, 3]{bourbaki_roots}}]

We say that two roots $\alpha$ and $\alpha'$ are strongly orthogonal if and only if $\alpha\pm\alpha'\notin R\cup\{0\}$.

\end{defn}

\begin{lem}
\label{lem:stronglyorthogonal}

Two roots $\alpha$ and $\alpha'$ are strongly orthogonal if and only if $(\alpha,\alpha')=0$ and $\alpha+\alpha'\notin R$ if and only if $(\alpha,\alpha')=0$ and $\alpha-\alpha'\notin R$.

\end{lem}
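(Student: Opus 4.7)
The plan is to establish the equivalences by a short chain of implications. The core observation is that if $(\alpha,\alpha')=0$ then neither $\alpha+\alpha'$ nor $\alpha-\alpha'$ can equal $0$ (otherwise $\alpha=\pm\alpha'$ would force $(\alpha,\alpha)=\pm(\alpha,\alpha')=0$, contradicting that $\alpha$ is a root), so strong orthogonality reduces in the orthogonal case to checking that $\alpha\pm\alpha'$ are not roots.

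First I would prove that strong orthogonality implies $(\alpha,\alpha')=0$. For this I invoke the standard fact about (rank two) root systems: if $\alpha$ and $\alpha'$ are roots with $\alpha\neq\pm\alpha'$, then $(\alpha,\alpha')>0$ forces $\alpha-\alpha'\in R$ and $(\alpha,\alpha')<0$ forces $\alpha+\alpha'\in R$ (see \cite[VI, 1, Corollaire~1 of Th\'eor\`eme~1]{bourbaki_roots}). Strong orthogonality excludes both possibilities as well as $\alpha=\pm\alpha'$, so $(\alpha,\alpha')=0$ is forced.

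Next I would show that, assuming $(\alpha,\alpha')=0$, the conditions $\alpha+\alpha'\in R$ and $\alpha-\alpha'\in R$ are equivalent. The key computation is that the simple reflection $s_{\alpha'}$ swaps them: since $(\alpha\pm\alpha',\alpha'^\vee)=\pm 2$, we have
$$
s_{\alpha'}(\alpha+\alpha')=(\alpha+\alpha')-2\alpha'=\alpha-\alpha' \quad\text{and}\quad s_{\alpha'}(\alpha-\alpha')=(\alpha-\alpha')+2\alpha'=\alpha+\alpha'.
$$
Since $s_{\alpha'}$ is an automorphism of $R$, this gives $\alpha+\alpha'\in R\iff\alpha-\alpha'\in R$ whenever $(\alpha,\alpha')=0$.

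Combining, if $(\alpha,\alpha')=0$ and $\alpha+\alpha'\notin R$, then also $\alpha-\alpha'\notin R$, and as noted both are nonzero; hence $\alpha\pm\alpha'\notin R\cup\{0\}$, i.e.\ $\alpha$ and $\alpha'$ are strongly orthogonal. The argument with $\alpha-\alpha'\notin R$ is symmetric, completing the chain of equivalences. I do not foresee any real obstacle here; the only subtlety is to remember that when $(\alpha,\alpha')=0$ the combinations $\alpha\pm\alpha'$ cannot degenerate to $0$, so that \enquote{not a root} and \enquote{not in $R\cup\{0\}$} coincide.
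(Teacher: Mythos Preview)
Your proof is correct and follows essentially the same approach as the paper. The paper phrases the implication from strong orthogonality to $(\alpha,\alpha')=0$ via the $\alpha$-string through $\alpha'$ (since $\alpha\pm\alpha'\notin R$, the string has $p=q=0$, whence $(\alpha',\alpha^\vee)=p-q=0$), which is exactly the root-string reformulation of the rank-two fact you cite; for the second implication it uses the identical reflection argument $s_{\alpha'}(\alpha+\alpha')=\alpha-\alpha'$.
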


\begin{proof}

Suppose that $\alpha$ and $\alpha'$ are strongly orthogonal. By assumption, $\alpha$ and $\alpha'$ are non-proportional roots. Therefore we can look at the $\alpha$-string through $\alpha'$.\footnote{For background information on this notion, we refer to \cite[VI, 1, 3]{bourbaki_roots} or to \cite[9.4]{humphreys2}.}
Let $p$ and $q$ be non-negative integers such that $\alpha'-p\alpha,\ldots,\alpha'+q\alpha$ is the $\alpha$-string through $\alpha'$. By assumption, it follows that $p=q=0$ and thus $p-q=(\alpha',\alpha^\vee)=0$. This means $(\alpha,\alpha')=0$. This proves one implication.

Suppose that $\alpha$ and $\alpha'$ are orthogonal. It is clear that $\alpha+\alpha'\notin R$ if and only if $\alpha-\alpha'\notin R$ since $s_{\alpha'}(\alpha+\alpha')=\alpha-\alpha'$. This proves the other implication.
\end{proof}

\begin{defn}[{\cite[Section~1]{kostant}}]

Two subsets of roots $S$ and $S'$ are called totally disjoint if every element of $S$ is strongly orthogonal to every element of $S'$.

\end{defn}

\begin{fact}
\label{fact:disjoint}

Let $\varphi_1,\ldots,\varphi_k$ be locally high roots such that $\Delta(\varphi_1),\ldots,\Delta(\varphi_k)$ are the distinct connected components of $\Delta^\circ$. Then $R(\varphi_1),\ldots,R(\varphi_k)$ are the distinct irreducible components of $R^\circ$. In particular, $\Delta^\circ$ is a set of simple roots of $R^\circ$. Moreover, the set of roots $R(\varphi_1),\ldots,R(\varphi_k)$ are pairwise totally disjoint. 

\end{fact}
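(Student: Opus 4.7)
The plan is to decompose $R^\circ$ along the connected components of $\Delta^\circ$ and then read off both the irreducibility and strong orthogonality statements from the geometry of the Dynkin diagram. Concretely, I would proceed in three steps.

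First, I would establish the identity $R^\circ = R \cap \mathbb{Z}\Delta^\circ$. The inclusion $\supseteq$ is immediate from linearity. For $\subseteq$, I would first note that $\theta_1$ being the highest root forces $(\theta_1,\beta)\geq 0$ for every $\beta\in\Delta$: otherwise $\theta_1+\beta$ would be a root, violating maximality. Writing any $\alpha\in R^\circ$ as $\sum_\beta n_\beta\beta$ with all nonzero $n_\beta$ of a common sign, the equation $0=(\alpha,\theta_1)=\sum_\beta n_\beta(\beta,\theta_1)$ is then a sum of terms of a common sign, which forces $n_\beta=0$ whenever $(\beta,\theta_1)>0$, i.e., whenever $\beta\notin\Delta^\circ$.

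Second, I would invoke the standard fact that the support $\Delta(\alpha)$ of any root $\alpha$ is a connected subset of the Dynkin diagram of $\Delta$. Applied to $\alpha\in R^\circ$, this places $\Delta(\alpha)$ inside a single connected component $\Delta(\varphi_i)$, so $\alpha\in R(\varphi_i)$. This gives the disjoint decomposition $R^\circ=\bigsqcup_{i=1}^k R(\varphi_i)$; each summand is irreducible since $\Delta(\varphi_i)$ is connected by hypothesis, and $\Delta^\circ=\bigsqcup\Delta(\varphi_i)$ is its simple system. The $R(\varphi_i)$ are pairwise distinct as they correspond to distinct components.

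Third, for the pairwise total disjointness I would fix $\alpha\in R(\varphi_i)$ and $\alpha'\in R(\varphi_j)$ with $i\neq j$. Since $\Delta(\varphi_i)$ and $\Delta(\varphi_j)$ are different connected components of $\Delta^\circ$, no simple root in $\Delta(\varphi_i)$ is adjacent in the full Dynkin diagram of $\Delta$ to any simple root in $\Delta(\varphi_j)$: any such edge would itself lie inside $\Delta^\circ$ and would merge the two components. Bilinearity then yields $(\alpha,\alpha')=0$. Moreover, any nonzero element of the form $\alpha\pm\alpha'$ would have support contained in the disconnected subset $\Delta(\varphi_i)\cup\Delta(\varphi_j)$ of the Dynkin diagram, contradicting the connectedness of root supports reused from Step~2; hence $\alpha\pm\alpha'\notin R\cup\{0\}$, which is strong orthogonality by definition.

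The main conceptual point, and the step I expect to need the most care, is the observation in Step~3 that distinct connected components of $\Delta^\circ$ remain non-adjacent in the full Dynkin diagram of $\Delta$, not merely in the induced subdiagram on $\Delta^\circ$; once this is in place, everything else reduces to bookkeeping with supports and simple systems.
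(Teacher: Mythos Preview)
Your proof is correct and follows essentially the same route as the paper: both use that $(\theta_1,\beta)\geq 0$ for every simple root $\beta$ to force $\Delta(\alpha)\subseteq\Delta^\circ$ for $\alpha\in R^\circ$, then invoke connectedness of root supports to place each $\alpha$ in a single $R(\varphi_i)$. The only cosmetic difference is in the strong orthogonality step: the paper argues that $\alpha+\alpha'\in R$ would force $\alpha+\alpha'\in R^\circ$ and hence into a single $R(\varphi_\ell)$ by the decomposition just proved, whereas you argue directly that the support $\Delta(\alpha)\cup\Delta(\alpha')$ is disconnected in the full Dynkin diagram; both are equally valid and equally short.
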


\begin{proof}

It is obvious that for distinct indices $1\leq i\neq j\leq k$ every element of $R(\varphi_i)$ is orthogonal to every element of $R(\varphi_j)$. To prove that $R(\varphi_1),\ldots,R(\varphi_k)$ are the distinct irreducible components of $R^\circ$, it suffices to prove that $R^\circ=\bigcup_{i=1}^k R(\varphi_i)$. The inclusion \enquote{$\supseteq$} is obvious. Let $\alpha\in R^\circ$. Since $\theta_1$ is the highest root, we must have $(\beta,\theta_1)\geq 0$ for all $\beta\in\Delta$. Therefore the orthogonality relation $(\alpha,\theta_1)=0$ leads by bilinearity to the inclusion $\Delta(\alpha)\subseteq\Delta^\circ$. Since $\Delta(\alpha)$ is connected, this means $\Delta(\alpha)\subseteq\Delta(\varphi_j)$ for some $1\leq j\leq k$ and thus $\alpha\in R(\varphi_j)$. It is clear from the decomposition of $R^\circ$ into irreducible components that $\Delta^\circ$ is a set of simple roots of $R^\circ$. Let $\alpha\in R(\varphi_i)$ and $\alpha'\in R(\varphi_j)$ for distinct indices $1\leq i\neq j\leq k$. To see that $\alpha$ and $\alpha'$ are strongly orthogonal, it suffices to prove $\alpha+\alpha'\notin R$ (Lemma~\ref{lem:stronglyorthogonal}). Suppose for a contradiction that $\alpha+\alpha'\in R$. Then we also have $\alpha+\alpha'\in R^\circ$ and thus $\alpha+\alpha'$ must be contained either in $R(\varphi_i)$ or in $R(\varphi_j)$ which is absurd since $\alpha+\alpha'$ contains elements from $\Delta(\varphi_i)$ and $\Delta(\varphi_j)$ with non-zero coefficient in an expression as integral linear combination of simple roots.
\end{proof}

\begin{defn}[{\cite[Section~1]{kostant}}]

Let $\theta_1$ be the highest root. Let $\varphi_1,\ldots,\varphi_k$ be locally high roots such that $\Delta(\varphi_1),\ldots,\Delta(\varphi_k)$ are the distinct connected components of $\Delta^\circ$. Then we define recursively a set of roots $\mathcal{B}_R$ by the assignment 
\begin{equation}
\label{eq:union}
\mathcal{B}_R=\{\theta_1\}\cup\bigcup_{i=1}^k\mathcal{B}_{R(\varphi_i)}=\{\theta_1,\varphi_1,\ldots,\varphi_k,\ldots\}
\end{equation}
This is well-defined since the formula expresses $\mathcal{B}_R$ in terms of strictly smaller root subsystems of $R$. By convention, we set $\mathcal{B}_\emptyset=\emptyset$. We call $\mathcal{B}_R$ the cascade of orthogonal roots. We call $C\subseteq\mathcal{B}_R$ a chain cascade if there exists a positive root $\varphi$ such that
$
C=\{\alpha\in\mathcal{B}_R\mid\alpha\geq\varphi\}
$.
In this case we write $C=C_R(\varphi)$.

\end{defn}

\begin{rem}
\label{rem:disjoint}

Note that the union in Equation~(\ref{eq:union}) is disjoint. Indeed, by 
Fact~\ref{fact:disjoint} the sets $R(\varphi_1),\ldots,R(\varphi_k)$ are totally disjoint, in particular the sets $\mathcal{B}_{R(\varphi_1)},\ldots,\mathcal{B}_{R(\varphi_k)}$ are totally disjoint since $\mathcal{B}_{R(\varphi_i)}\subseteq R(\varphi_i)$ for all $1\leq i\leq k$. Moreover, $\theta_1\notin R(\varphi_i)$, since $\varphi_i\neq\theta_1$, since $(\varphi_i,\theta_1)=0$ for all $1\leq i\leq k$.

\end{rem}

\begin{prop}[{\cite[Section~1]{kostant}}]
\label{prop:cascade}

\leavevmode

\begin{enumerate}

\item
\label{item:totallyordered}

Any chain cascade is totally ordered.

\item 
\label{item:locallyhigh}

All elements of $\mathcal{B}_R$ are locally high.

\item
\label{item:stronglyorthogonal}

Two distinct elements of $\mathcal{B}_R$ are strongly orthogonal.

\item
\label{item:totallydisjoint}

Let $\alpha$ and $\alpha'$ be two elements of $\mathcal{B}_R$ such that there exists no chain cascade which contains both $\alpha$ and $\alpha'$. Then there exists a positive root $\varphi\in\mathcal{B}_R$ such that $\alpha$ and $\alpha'$ belong to different irreducible components of $R(\varphi)^\circ$. In particular, $R(\alpha)$ and $R(\alpha')$ are totally disjoint.

\end{enumerate}

\end{prop}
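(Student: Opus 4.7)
The plan is to prove all four statements simultaneously by induction on the rank $|\Delta|$ of $R$, exploiting the recursive definition $\mathcal{B}_R=\{\theta_1\}\cup\bigcup_{i=1}^k\mathcal{B}_{R(\varphi_i)}$. The base case (rank $\leq 1$) is vacuous, as $\mathcal{B}_R$ then has at most one element. An observation used throughout is that for $\alpha\in\mathcal{B}_{R(\varphi_i)}$ the root subsystem $R(\alpha)$ is the same whether computed inside $R$ or inside $R(\varphi_i)$, because $\Delta(\alpha)\subseteq\Delta(\varphi_i)$ forces $R\cap\mathbb{Z}\Delta(\alpha)=R(\varphi_i)\cap\mathbb{Z}\Delta(\alpha)$. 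Item~(\ref{item:locallyhigh}) then follows at once: $\theta_1$ is the highest root of $R$ (hence also of $R(\theta_1)\subseteq R$), and any other $\alpha\in\mathcal{B}_{R(\varphi_i)}$ is, by the inductive hypothesis applied to $R(\varphi_i)$, the highest root of $R(\alpha)$. For item~(\ref{item:stronglyorthogonal}), if $\alpha=\theta_1$ and $\alpha'\in\mathcal{B}_{R(\varphi_i)}\subseteq R(\varphi_i)\subseteq R^\circ$, then $(\theta_1,\alpha')=0$ and $\theta_1+\alpha'$ cannot be a root since it would exceed the highest root, so Lemma~\ref{lem:stronglyorthogonal} gives strong orthogonality; if neither equals $\theta_1$, the two elements either lie in the same branch (induction) or in different branches, in which case Fact~\ref{fact:disjoint} provides total disjointness.

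For item~(\ref{item:totallyordered}), let $C=C_R(\varphi)=\{\beta\in\mathcal{B}_R:\beta\geq\varphi\}$. Since $\theta_1$ is the highest root it lies in $C$ and is comparable to every other element. Any other $\beta\in C$ lies in some $\mathcal{B}_{R(\varphi_i)}$, and $\beta\geq\varphi$ forces $\Delta(\varphi)\subseteq\Delta(\beta)\subseteq\Delta(\varphi_i)$. As $\Delta(\varphi)\neq\emptyset$ and the sets $\Delta(\varphi_j)$ are pairwise disjoint (being the distinct connected components of $\Delta^\circ$), the index $i$ is uniquely determined by $\Delta(\varphi)$. Hence all remaining elements of $C$ lie in a single branch $\mathcal{B}_{R(\varphi_{i_0})}$ and form the chain cascade $C_{R(\varphi_{i_0})}(\varphi)$, which is totally ordered by induction.

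For item~(\ref{item:totallydisjoint}), the hypothesis forbids $\alpha=\theta_1$ (and symmetrically $\alpha'=\theta_1$), since otherwise $C_R(\alpha')$ would contain both $\theta_1\geq\alpha'$ and $\alpha'$ itself. Hence $\alpha\in\mathcal{B}_{R(\varphi_i)}$ and $\alpha'\in\mathcal{B}_{R(\varphi_j)}$ for some indices. If $i\neq j$, take $\varphi=\theta_1$: by Fact~\ref{fact:disjoint}, $R(\varphi_1),\ldots,R(\varphi_k)$ are the distinct irreducible components of $R(\theta_1)^\circ=R^\circ$, and $\alpha,\alpha'$ sit in different ones. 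If $i=j$, the hypothesis descends to $R(\varphi_i)$: any chain cascade of $R(\varphi_i)$ containing both $\alpha$ and $\alpha'$ would, by adjoining $\theta_1$, yield a chain cascade of $R$ containing both, contradicting the assumption. Induction applied to $R(\varphi_i)$ then produces the required $\varphi\in\mathcal{B}_{R(\varphi_i)}\subseteq\mathcal{B}_R$. The \enquote{in particular} clause follows because $\alpha,\alpha'$ then lie in different irreducible components of $R(\varphi)^\circ$, so $\Delta(\alpha),\Delta(\alpha')$ sit in disjoint connected components of $\Delta(\varphi)^\circ$, and Fact~\ref{fact:disjoint} applied to $R(\varphi)$ yields total disjointness of $R(\alpha)$ and $R(\alpha')$. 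The main subtlety lies in the descent step for item~(\ref{item:totallydisjoint}), but it rests cleanly on the same elementary support argument used in item~(\ref{item:totallyordered}).
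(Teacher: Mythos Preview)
Your inductive argument is correct. The paper itself does not supply a self-contained proof of this proposition; it simply cites Kostant's original article (and the author's thesis) for all four items. Your approach---simultaneous induction on $|\Delta|$ via the recursive definition $\mathcal{B}_R=\{\theta_1\}\cup\bigcup_i\mathcal{B}_{R(\varphi_i)}$, combined with the support argument $\Delta(\varphi)\subseteq\Delta(\beta)\subseteq\Delta(\varphi_i)$ to pin down a unique branch---is exactly the natural strategy and is essentially what one finds in Kostant's paper. One minor point worth making explicit in item~(\ref{item:totallydisjoint}): when you pass from a chain cascade $C_{R(\varphi_i)}(\psi)$ to a chain cascade of $R$, you are using that $\psi\in R(\varphi_i)^+\subseteq R^+$ and that $C_R(\psi)\supseteq C_{R(\varphi_i)}(\psi)$; this is clear from the definitions, but since the descent step is, as you note, the only subtlety, it does no harm to spell it out.
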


\begin{proof}

Item~(\ref{item:totallyordered}), (\ref{item:locallyhigh}), (\ref{item:stronglyorthogonal}) and (\ref{item:totallydisjoint}) correspond respectively to \cite[Remark~1.3, Proposition~1.4, Lemma~1.6, Proposition~1.7]{kostant}. The reader may also consult \cite[Fact~4.5, Fact~4.6]{thesis} for a more detailed account.
\end{proof}

\begin{prop}[{\cite[Proposition~1.10]{kostant}}]
\label{prop:w_o}

For any positive root $\varphi$ we have
$$
w_o(\varphi)=\prod_{\alpha\in\mathcal{B}_{R(\varphi)}}s_\alpha
$$
where the product is well-defined (does not depend on the ordering of $\mathcal{B}_{R(\varphi)}$) since any two distinct elements of $\mathcal{B}_{R(\varphi)}$ are orthogonal (Proposition~\ref{prop:cascade}(\ref{item:stronglyorthogonal})).

\end{prop}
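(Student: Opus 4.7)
The plan is to proceed by induction on the number of positive roots in $R(\varphi)$. The empty case is trivial: both sides equal $1$. For the inductive step, set $\theta=\theta(\varphi)$, the highest root of $R(\varphi)$, and let $\varphi_1,\ldots,\varphi_k$ be the locally high roots whose supports are the distinct connected components of $\Delta(\varphi)^{\circ}$. By Fact~\ref{fact:disjoint} applied inside $R(\varphi)$, the $R(\varphi_i)$ are the totally disjoint irreducible components of $R(\varphi)^{\circ}$, and Remark~\ref{rem:disjoint} gives that $\mathcal{B}_{R(\varphi)}=\{\theta\}\sqcup\bigsqcup_{i=1}^{k}\mathcal{B}_{R(\varphi_i)}$. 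Since the $R(\varphi_i)$ lie in pairwise orthogonal subspaces and each fixes $\theta$, the longest element of the Weyl group of $R(\varphi)^{\circ}$ equals $w_{o}^{\circ}:=\prod_{i=1}^{k} w_o(\varphi_i)$ (the order being immaterial).

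The heart of the argument is the identity
$$
w_o(\varphi)=s_\theta\cdot w_{o}^{\circ}.
$$
To prove it, I would verify that $s_\theta\cdot w_{o}^{\circ}$ sends every positive root of $R(\varphi)$ to a negative one, which by a standard characterisation forces the product to equal $w_o(\varphi)$. Because $\theta$ is the (dominant) highest root of $R(\varphi)$, one has $(\theta,\alpha)\geq 0$ for $\alpha\in R(\varphi)^{+}$ with equality exactly when $\alpha\in R(\varphi)^{\circ}$. Three cases cover every positive root: (i) for $\alpha=\theta$, $w_o^{\circ}$ fixes $\theta$ (as $R(\varphi)^{\circ}\subseteq\theta^{\perp}$) and $s_\theta(\theta)=-\theta<0$; (ii) for $\alpha\in R(\varphi)^{\circ,+}$, $w_{o}^{\circ}(\alpha)\in R(\varphi)^{\circ,-}$ and $s_\theta$ fixes it pointwise, so the image is negative; (iii) for $\alpha\in R(\varphi)^{+}\setminus(R(\varphi)^{\circ}\cup\{\theta\})$, $(\theta,\alpha)>0$, and since $w_{o}^{\circ}$ fixes $\theta$ it preserves the $\theta$-pairing, so $(\theta,w_{o}^{\circ}(\alpha))>0$; this forces $w_{o}^{\circ}(\alpha)\in R(\varphi)^{+}$ (for otherwise $-w_{o}^{\circ}(\alpha)\in R(\varphi)^{+}$ would have non-positive $\theta$-pairing), and then $(\theta,s_\theta w_{o}^{\circ}(\alpha))<0$ makes $s_\theta w_{o}^{\circ}(\alpha)$ negative.

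Once the identity $w_o(\varphi)=s_\theta\cdot w_{o}^{\circ}$ is established, I apply the inductive hypothesis to each $R(\varphi_i)$ (strictly smaller, as $\theta\notin R(\varphi_i)$) to obtain $w_o(\varphi_i)=\prod_{\alpha\in\mathcal{B}_{R(\varphi_i)}}s_\alpha$. Since $\theta$ is orthogonal to every element of every $R(\varphi_i)$ (Fact~\ref{fact:disjoint}), the reflection $s_\theta$ commutes with each $s_\alpha$ occurring on the right, and I conclude
$$
w_o(\varphi)=s_\theta\cdot\prod_{i=1}^{k}\prod_{\alpha\in\mathcal{B}_{R(\varphi_i)}}s_\alpha=\prod_{\alpha\in\mathcal{B}_{R(\varphi)}}s_\alpha.
$$
The main obstacle is the key identity $w_o(\varphi)=s_\theta\cdot w_{o}^{\circ}$; specifically, the subtle point is case (iii), which rests on the simple but non-obvious observation that because $w_{o}^{\circ}$ fixes $\theta$ it preserves the sign of $(\theta,\cdot)$, and that the sign of $(\theta,\alpha)$ together with the highest-root dominance of $\theta$ determines whether a root outside $R(\varphi)^{\circ}$ is positive or negative.
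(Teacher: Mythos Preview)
Your argument is correct. The paper itself does not prove this statement; it simply refers the reader to \cite[Proposition~1.10]{kostant}. So there is no ``paper's approach'' to compare against---you have supplied a complete, self-contained inductive proof where the paper gives none.

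A few minor remarks. First, in this paper the symbol ``$\cdot$'' is reserved for the Hecke product (Section~2), so writing $w_o(\varphi)=s_\theta\cdot w_o^\circ$ is a notational clash; you mean the ordinary group product throughout, and it would be cleaner to write $s_\theta\, w_o^\circ$. Second, your case analysis in (iii) is the only place where something could go wrong, and you handle it correctly: the key fact is that $\theta$, being the highest root of $R(\varphi)$, satisfies $(\theta,\beta)\geq 0$ for every $\beta\in R(\varphi)^+$, so the sign of $(\theta,\gamma)$ for $\gamma\in R(\varphi)\setminus R(\varphi)^\circ$ determines the sign of $\gamma$ itself. Since $w_o^\circ$ fixes $\theta$ and the pairing is $W$-invariant, it preserves $(\theta,\cdot)$, and the conclusion follows. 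This is exactly the standard argument one finds in Kostant's paper, so in spirit your proof coincides with the cited source.
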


\begin{proof}

This corresponds to a part of the statement of \cite[Proposition~1.10]{kostant}. The reader finds a complete proof there.
\end{proof}

\begin{thm}[{\cite[Proposition~8.16]{thesis}}]
\label{thm:thesis_main}

For all simple roots $\beta$ we have the identity
$$
d_{G/P_\beta}=\sum_{\alpha\in C_R(\beta)}(\omega_\beta,\alpha^\vee)\,.
$$

\end{thm}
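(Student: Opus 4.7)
The proof is by induction on the rank of $R$. Since $P_\beta$ is maximal, $\delta_{P_\beta}(w_o)$ is automatically a singleton by Remark~\ref{rem:uniqueelement}, so Corollary~\ref{cor:inductive} is unconditionally available; combined with the identification $d(\theta_1) = (\omega_\beta, \theta_1^\vee)$ in $H_2(G/P_\beta) = \mathbb{Z}$, it gives
$$d_{G/P_\beta} = (\omega_\beta, \theta_1^\vee) + \delta_{P_\beta}(s_{\theta_1}^*).$$
The task therefore reduces to computing $\delta_{P_\beta}(s_{\theta_1}^*)$. Using Proposition~\ref{prop:w_o} applied to $R$ and to each $R(\varphi_i)$, together with the pairwise commutativity of the reflections $s_\alpha$ for $\alpha \in \mathcal{B}_R$ (Proposition~\ref{prop:cascade}(\ref{item:stronglyorthogonal})), I write $s_{\theta_1}^* = w_o s_{\theta_1} = \prod_{i=1}^k w_o(\varphi_i)$, where $\varphi_1, \ldots, \varphi_k$ are the locally high roots parameterizing the irreducible components of $R^\circ$.

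I will then distinguish two cases. If $\beta \notin \Delta^\circ$, then $\Delta^\circ \subseteq \Delta_{P_\beta}$, so each $w_o(\varphi_i)$ lies in $W_{P_\beta}$ and hence $s_{\theta_1}^* \in W_{P_\beta}$; the $(W_{P_\beta}, W_{P_\beta})$-invariance of $\delta_{P_\beta}$ (Proposition~\ref{prop:delta}(\ref{item:invariant})) then forces $\delta_{P_\beta}(s_{\theta_1}^*) = \delta_{P_\beta}(1) = 0$, and since $\Delta(\theta_1) = \Delta$ (standard fact for irreducible $R$) one checks that $C_R(\beta) = \{\theta_1\}$ in this case, so the formula holds. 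If $\beta \in \Delta^\circ$, there is a unique index $j$ with $\beta \in \Delta(\varphi_j)$; for $i \neq j$ the factor $w_o(\varphi_i)$ still lies in $W_{P_\beta}$, so writing $s_{\theta_1}^* = u\, w_o(\varphi_j)$ with $u = \prod_{i \neq j} w_o(\varphi_i) \in W_{P_\beta}$ and invoking the same invariance yields $\delta_{P_\beta}(s_{\theta_1}^*) = \delta_{P_\beta}(w_o(\varphi_j))$. Since $w_o(\varphi_j) \in W_{G(\varphi_j)}$, Theorem~\ref{thm:compatibility} identifies this with the local distance $\delta_{P(\varphi_j)}(w_o(\varphi_j)) = d_{X(\varphi_j)}$, where $P(\varphi_j) = G(\varphi_j) \cap P_\beta$ is the maximal parabolic of $G(\varphi_j)$ corresponding to $\Delta(\varphi_j) \setminus \{\beta\}$. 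The inductive hypothesis applied to $R(\varphi_j)$ then gives $d_{X(\varphi_j)} = \sum_{\alpha \in C_{R(\varphi_j)}(\beta)}(\omega_\beta, \alpha^\vee)$, and the recursive definition of the cascade yields the disjoint-union decomposition $C_R(\beta) = \{\theta_1\} \sqcup C_{R(\varphi_j)}(\beta)$, so adding the $(\omega_\beta, \theta_1^\vee)$ contribution gives the claimed identity.

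The main obstacle is that $s_{\theta_1}^*$ has support in the possibly disconnected set $\Delta^\circ$, and consequently Theorem~\ref{thm:compatibility} cannot be applied to it directly. The key tool that unlocks the argument is the $(W_P, W_P)$-invariance of $\delta_P$: it allows one to discard the totally disjoint factors $w_o(\varphi_i)$ for $i \neq j$ — each of which lies in $W_{P_\beta}$ by Fact~\ref{fact:disjoint} and Proposition~\ref{prop:cascade}(\ref{item:totallydisjoint}) — thereby reducing the computation to the single factor $w_o(\varphi_j)$ inside the connected sub-root system $R(\varphi_j)$, to which Theorem~\ref{thm:compatibility} and the inductive hypothesis apply cleanly. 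An explicit realization of the resulting value via a chain built from the commuting reflections $s_\alpha$, $\alpha \in C_R(\beta)$, is available as well and serves as a sanity check: it produces a chain from $\bar 1$ to $\bar w_o$ in $G/P_\beta$ of degree exactly $\sum_{\alpha \in C_R(\beta)}(\omega_\beta, \alpha^\vee)$, matching the value computed inductively.
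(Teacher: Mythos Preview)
Your proof is correct and follows essentially the same strategy as the paper: both use Corollary~\ref{cor:inductive} (available unconditionally since $P_\beta$ is maximal) to reduce to computing $\delta_{P_\beta}(s_{\theta_1}^*)$, then use Proposition~\ref{prop:w_o} together with the fact that all cascade reflections $s_\alpha$ with $\beta\notin\Delta(\alpha)$ lie in $W_{P_\beta}$ to identify $\delta_{P_\beta}(s_{\theta_1}^*)$ with $\delta_{P_\beta}(w_o(\varphi_j))=d_{X(\varphi_j)}$ via Theorem~\ref{thm:compatibility}, and finish by induction. The only cosmetic difference is that you organize this as a straight induction on the rank of $R$ with an explicit case split on whether $\beta\in\Delta^\circ$, whereas the paper phrases it as a decreasing induction along the chain $C_R(\beta)=\{\theta_1>\theta_2>\cdots>\theta_k\}$ and passes to cosets modulo $W_{P_\beta}$ directly instead of invoking $(W_{P_\beta},W_{P_\beta})$-invariance; since $\theta_2=\varphi_j$ in your notation, the two inductions unwind identically.
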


\begin{proof}\renewcommand{\qedsymbol}{}

Let $\beta$ be a simple root. Let $C_R(\beta)=\{\theta_1,\ldots,\theta_k\}$ where $\theta_1>\theta_2>\cdots>\theta_k$ (cf. Proposition~\ref{prop:cascade}(\ref{item:totallyordered})). Consistently with our usual notation, $\theta_1$ denotes the highest root. Let $X^i=G(\theta_i)/P(\theta_i)_\beta$ for all $1\leq i\leq k$. We now show by decreasing induction on $i$ that
\begin{equation}
\label{eq:theta-sequence}
d_{X^i}=\sum_{\alpha\in C_{R\left(\theta_i\right)}(\beta)}(\omega_\beta,\alpha^\vee)\text{ for all }1\leq i\leq k\,.
\end{equation}
Since Equation~\eqref{eq:theta-sequence} for $i=1$ is equivalent to the statement of the theorem, this will clearly complete the proof.
\end{proof}

\begin{proof}[Induction base]\renewcommand{\qedsymbol}{}

By definition, it is clear that $C_{R(\theta_k)}(\beta)=\{\theta_k\}$. Therefore, by replacing $R(\theta_k)$ and $R$, we may assume that $k=1$ and thus $C_R(\beta)=\{\theta_1\}$. In this case, Equation~(\ref{eq:theta-sequence}) for $i=k=1$ is equivalent to $d_{X^1}=(\omega_\beta,\theta_1)$. By Proposition~\ref{prop:cascade}(\ref{item:stronglyorthogonal}) and Proposition~\ref{prop:w_o} we have $w_oW_{P_\beta}=s_{\theta_1}W_{P_\beta}$. Therefore, by Remark~\ref{rem:uniqueelement}, Convention~\ref{conv:uniqueelement} and Proposition~\ref{prop:delta}(\ref{item:invariant}) the claimed equality is equivalent to $\delta_{P_\beta}(s_{\theta_1})=(\omega_\beta,\theta_1)$. But this follows directly from Proposition~\ref{prop:delta}(\ref{item:verycosmall}).
\end{proof}

\begin{proof}[Induction step]

Let $1\leq i<k$ and assume that Equation~(\ref{eq:theta-sequence}) for $i+1$ holds. By definition, it is clear that $C_{R(\theta_i)}(\beta)=\{\theta_i,\ldots,\theta_k\}$. Therefore, by replacing $R(\theta_i)$ and $R$, we may assume that $i=1$. (Here we use the natural identification $R(\theta_i)(\theta_{i+1})=R(\theta_{i+1})$.) By Proposition~\ref{prop:cascade}(\ref{item:stronglyorthogonal}) and Proposition~\ref{prop:w_o} we have $w_oW_{P_\beta}=s_{\theta_1}\cdots s_{\theta_k}W_{P_\beta}$ and thus $s_{\theta_1}^*W_{P_\beta}=s_{\theta_2}\cdots s_{\theta_k}W_{P_\beta}$. On the other hand, by the same token we have $w_o(\theta_2)W_{P(\theta_2)_\beta}=s_{\theta_2}\cdots s_{\theta_k}W_{P(\theta_2)_\beta}$ since $C_{R(\theta_2)}(\beta)=\{\theta_2,\ldots,\theta_k\}$. Both equations together yield $s_{\theta_1}^*W_{P_\beta}=w_o(\theta_2)W_{P_\beta}$. Using this equation we find
\begin{align*}
\delta_{P_\beta}(s_{\theta_1}^*)&=\delta_{P_\beta}(w_o(\theta_2))&&\text{by Proposition~\ref{prop:delta}(\ref{item:invariant})}\\
&=\delta_{P(\theta_2)_\beta}(w_o(\theta_2))&&\text{by Theorem~\ref{thm:compatibility} since }w_o(\theta_2)\in W_{G(\theta_2)}\\
&=d_{X^2}&&\text{by Remark~\ref{rem:uniqueelement} and Convention~\ref{conv:uniqueelement}}\\
&=(\omega_\beta,\theta_2^\vee)+\cdots+(\omega_\beta,\theta_k^\vee)&&\text{by induction hypothesis.}
\end{align*}
At the same time, we also have $\delta_{P_\beta}(s_{\theta_1}^*)=d_{X^1}-(\omega_\beta,\theta_1^\vee)$ by Corollary~\ref{cor:inductive}. Both facts together give the desired expression for $d_{X^1}$.\footnote{Note that we used Corollary~\ref{cor:inductive} and indirectly Theorem~\ref{thm:uniqueness} only for a maximal parabolic subgroup $P_\beta$ in which case the statement of Theorem~\ref{thm:uniqueness} is obvious (cf. Remark~\ref{rem:circular}).}
\end{proof}

\begin{thm}
\label{thm:curve}

There exists a curve of degree 
$
\sum_{\beta\in\Delta\setminus\Delta_P}d_{G/P_\beta}d(\beta)
$
which passes through $x(1)$ and $x(w_o)$.

\end{thm}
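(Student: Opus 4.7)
The plan is to construct the desired curve as a connected chain of translated $T$-invariant curves indexed by the cascade $\mathcal{B}_R$, and then reconcile its total degree with the claimed expression via Theorem~\ref{thm:thesis_main}.

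First, I would enumerate the elements of $\mathcal{B}_R\cap(R^+\setminus R_P^+)$ in some arbitrary order as $\alpha_1,\ldots,\alpha_m$. By Proposition~\ref{prop:cascade}(\ref{item:stronglyorthogonal}) the reflections $s_\alpha$ for $\alpha\in\mathcal{B}_R$ pairwise commute, so Proposition~\ref{prop:w_o} gives $w_o=\prod_{\alpha\in\mathcal{B}_R}s_\alpha$. The factors with $\alpha\in\mathcal{B}_R\cap R_P$ lie in $W_P$ and drop out modulo $W_P$, so $\overline{s_{\alpha_1}\cdots s_{\alpha_m}}=\bar{w}_o$. Setting $w_0=1$ and $w_i=s_{\alpha_1}\cdots s_{\alpha_i}$, this forces $x(w_m)=x(w_o)$.

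Next, for each $1\leq i\leq m$ I would form the translated curve $w_{i-1}\cdot C_{\alpha_i}\subseteq X$. This is a $T$-invariant $\mathbb{P}^1$ (since $w_{i-1}$ normalizes $T$) connecting $x(w_{i-1})$ and $x(w_{i-1}s_{\alpha_i})=x(w_i)$, and its homology class equals $d(\alpha_i)$ because left-translation by any element of the connected group $G$ acts trivially on $H_2(X)$. The union $\bigcup_{i=1}^m w_{i-1}\cdot C_{\alpha_i}$ is then a connected chain of projective lines (a nodal curve of arithmetic genus $0$) passing through both $x(1)=x(w_0)$ and $x(w_o)=x(w_m)$, with total degree $\sum_{i=1}^m d(\alpha_i)=\sum_{\alpha\in\mathcal{B}_R}d(\alpha)$.

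It remains to verify the identity
$$
\sum_{\alpha\in\mathcal{B}_R}d(\alpha)=\sum_{\beta\in\Delta\setminus\Delta_P}d_{G/P_\beta}\,d(\beta)\quad\text{in }H_2(X).
$$
I would do this by pairing both sides with the basis element $\omega_\beta\in H^2(X)$ for each $\beta\in\Delta\setminus\Delta_P$. By Poincar\'e duality the right-hand side pairs to $d_{G/P_\beta}$. On the left, $(\omega_\beta,\alpha^\vee)$ is a non-zero multiple of the coefficient of $\beta$ in the simple-root expansion of $\alpha$, and therefore vanishes unless $\beta\leq\alpha$, i.e., unless $\alpha\in C_R(\beta)$. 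So the left-hand side pairs to $\sum_{\alpha\in C_R(\beta)}(\omega_\beta,\alpha^\vee)$, which equals $d_{G/P_\beta}$ by Theorem~\ref{thm:thesis_main}. Both sides thus agree on every basis element, proving the identity.

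The only real subtlety is the simultaneous bookkeeping of cascade roots which lie inside $R_P$: they must be excluded when constructing the chain (because $C_\alpha$ is defined only for $\alpha\in R^+\setminus R_P^+$), yet they can be discarded without affecting either the class $\bar{w}_o\in W/W_P$ (they project to elements of $W_P$) or the sum $\sum d(\alpha)$ (since $d(\alpha)=0$ for $\alpha\in R_P$). Once this is handled, the theorem reduces to the combinatorial identity above, which is exactly the content of Theorem~\ref{thm:thesis_main}.
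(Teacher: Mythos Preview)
Your proof is correct and follows essentially the same strategy as the paper: build a chain of $T$-invariant curves indexed by $\mathcal{B}_R\cap(R^+\setminus R_P^+)$ connecting $x(1)$ to $x(w_o)$, and identify its degree with $\sum_{\beta}d_{G/P_\beta}d(\beta)$ via Theorem~\ref{thm:thesis_main}. The only notable difference is that the paper phrases the construction in the chain language of Fulton--Woodward, cites \cite[Theorem~9.1]{fulton} to extract an actual curve of degree $d\leq d_X'$, and then invokes Lemma~\ref{lem:bound} to force $d=d_X'$; your direct construction of the stable map as a glued chain of translates $w_{i-1}\cdot C_{\alpha_i}$ yields the exact degree immediately and sidesteps this last step. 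One small imprecision: the \emph{image} of your chain in $X$ need not itself be a nodal genus-zero curve (extra incidences among the components are possible), but what you actually need---and have---is a genus-zero stable map of the stated degree, namely the obvious map from the abstract chain of $\mathbb{P}^1$'s.
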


\begin{proof}

Let $d_X'=\sum_{\beta\in\Delta\setminus\Delta_P}d_{G/P_\beta}d(\beta)$ for short. Let $\mathcal{B}_R=\{\alpha_1,\ldots,\alpha_r\}$ be an arbitrary ordering of $\mathcal{B}_R$. For all $0\leq i\leq r$ we define $u_i=\prod_{j=i+1}^{r}s_{\alpha_j}$. We obviously have $u_r=1$. By Proposition~\ref{prop:w_o}, we have $u_0=w_o$. For all $1\leq i\leq r$ we have $u_{i-1}s_{\alpha_i}=u_i$ (cf. Proposition~\ref{prop:cascade}(\ref{item:stronglyorthogonal})). In other words, $u_0,\ldots,u_r$ is a chain in $W$ from $w_o$ to $w_o$ of degree $\sum_{\alpha\in\mathcal{B}_R}\alpha^\vee$. We now define a subsequence $1\leq i_1<\cdots<i_s\leq r$ by the requirement that
$$
\{i_1,\ldots,i_s\}=\{1\leq i\leq r\mid\alpha_i\in R^+\setminus R_P^+\}\,.
$$
To simplify notation, we define $i_{j}'=i_{j+1}-1$ for all $0\leq j<s$ and $i_s'=r$ and $i_0=0$. Note that for all $1\leq i\leq r$ we have $\bar u_{i-1}=\bar u_i$ if and only if $\alpha_i\in R_P^+$. Therefore the definition of the subsequence $i_1,\ldots, i_s$ directly leads to the equations
$$
\bar u_{i_j}=\cdots=\bar u_{i_j'}\text{ for all }0\leq j\leq s\,.
$$
Consequently, we have $u_{i_{j-1}'}\bar s_{\alpha_{i_j}}=\bar u_{i_j'}$ for all $1\leq j\leq s$. In other words this means that $\bar u_{i_0'},\ldots,\bar u_{i_s'}$ is a chain in $W/W_P$ from $\bar w_o$ to $\bar w_o$ of degree $\sum_{j=1}^s d(\alpha_{i_j})$.

We claim that the degree of this chain equals $d_X'$. Indeed, by definition the degree of this chain equals
\begin{align*}
\sum_{j=1}^s d(\alpha_{i_j})&=\sum_{\alpha\in\mathcal{B}_R}d(\alpha)&&  \text{since $d(\alpha)=0$ for all $\alpha\in R_P^+$.}\\
&=\sum_{\beta\in\Delta\setminus\Delta_P}\left(\sum_{\alpha\in\mathcal{B}_R}(\omega_\beta,\alpha^\vee)\right)d(\beta)&&\text{by rearranging}\\
&=\sum_{\beta\in\Delta\setminus\Delta_P}\left(\sum_{\alpha\in C_R(\beta)}(\omega_\beta,\alpha^\vee)\right)d(\beta)&&\text{since $(\omega_\beta,\alpha^\vee)=0$ if $\beta\not\leq\alpha$}\\
&=d_X'&&\text{by definition and Theorem~\ref{thm:thesis_main}.}
\end{align*}
By \cite[Theorem~9.1: $(2)\Leftrightarrow(3)$ or $(2)\Leftrightarrow(8)$]{fulton} the chain $\bar u_{i_0'},\ldots,\bar u_{i_s'}$ from $\bar w_o$ to $\bar w_o$ of degree $d_X'$ gives rise to a curve of degree $d\leq d_X'$ which passes through $x(1)$ and $x(w_o)$. Since $d$ definitely satisfies $z_d^P=w_X$, Lemma~\ref{lem:bound} implies that $d_X'\leq d$. Altogether, this means $d=d_X'$. This completes the proof of Theorem~\ref{thm:curve} and also the proof of Theorem~\ref{thm:uniqueness}.
\end{proof}

\begin{cor}
\label{cor:identity}

We have the following identities:
$$
d_{G/B}=\sum_{\alpha\in\mathcal{B}_R}\alpha^\vee\text{ and }(d_{G/B})_P=d_X\,.
$$

\end{cor}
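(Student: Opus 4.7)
The plan is to read both identities off the results just established, without introducing any new ingredient. The inputs are Theorem~\ref{thm:uniqueness}, Theorem~\ref{thm:thesis_main}, and the rearrangement already carried out inside the proof of Theorem~\ref{thm:curve}.

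For the first identity, I would specialize Theorem~\ref{thm:uniqueness} to the case $P = B$. Since $\Delta_B = \emptyset$, we have $d(\beta) = \beta^\vee$ for every $\beta \in \Delta$, so
$$
d_{G/B} = \sum_{\beta \in \Delta} d_{G/P_\beta}\,\beta^\vee.
$$
Substituting Theorem~\ref{thm:thesis_main}, recalling that $C_R(\beta) = \{\alpha \in \mathcal{B}_R : \beta \leq \alpha\}$, and swapping the order of summation gives
$$
d_{G/B} = \sum_{\alpha \in \mathcal{B}_R} \sum_{\beta \in \Delta} (\omega_\beta, \alpha^\vee)\,\beta^\vee,
$$
where the constraint $\beta \leq \alpha$ has been dropped because $(\omega_\beta, \alpha^\vee) = 0$ whenever $\beta \notin \Delta(\alpha)$. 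Since $\{\omega_\beta\}_{\beta \in \Delta}$ and $\{\beta^\vee\}_{\beta \in \Delta}$ are Poincar\'e dual bases of $H^2(G/B)$ and $H_2(G/B)$, the inner sum is precisely the expansion of $\alpha^\vee$ in the basis of simple coroots, and therefore equals $\alpha^\vee$. The first identity follows.

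For the second identity, apply the canonical surjection $H_2(G/B) \to H_2(X)$, $e \mapsto e + \mathbb{Z}\Delta_P^\vee$, to the first identity. By Notation~\ref{not:dalpha}, the image of $\alpha^\vee$ is $d(\alpha)$, so
$$
(d_{G/B})_P = \sum_{\alpha \in \mathcal{B}_R} d(\alpha).
$$
The right-hand side was already shown to equal $d_X$ inside the proof of Theorem~\ref{thm:curve}: expanding each term as $d(\alpha) = \sum_{\beta \in \Delta \setminus \Delta_P} (\omega_\beta, \alpha^\vee)\,d(\beta)$, swapping the order of summation, and invoking Theorem~\ref{thm:thesis_main} together with Theorem~\ref{thm:uniqueness} produces precisely $\sum_{\beta \in \Delta \setminus \Delta_P} d_{G/P_\beta}\,d(\beta) = d_X$.

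No real obstacle is anticipated: the corollary is pure bookkeeping, repackaging information already present in the proofs of Theorem~\ref{thm:thesis_main}, Theorem~\ref{thm:uniqueness}, and Theorem~\ref{thm:curve}. The only point that deserves explicit attention is the dual-basis argument producing $\sum_\beta (\omega_\beta, \alpha^\vee)\beta^\vee = \alpha^\vee$, which is immediate from the identifications made in Subsection~\ref{subsec:degrees}.
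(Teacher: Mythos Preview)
Your proposal is correct and follows essentially the same route as the paper: both identities are read off from the displayed chain of equalities in the proof of Theorem~\ref{thm:curve} (specialized to $P=B$ for the first identity, and combined with Theorem~\ref{thm:uniqueness} for the second). Your write-up is more explicit---in particular the dual-basis remark $\sum_\beta(\omega_\beta,\alpha^\vee)\beta^\vee=\alpha^\vee$ is spelled out---but the argument is the same bookkeeping the paper points to in one sentence.
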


\begin{proof}

In view of Theorem~\ref{thm:uniqueness}, the claimed identities follow from the displayed equations in the proof of Theorem~\ref{thm:curve} by applying them to $G/B$ and $X=G/P$ respectively.
\end{proof}

\begin{cor}
\label{cor:combinatorial}

Let $\varphi$ be a positive root. Then we have the inequality $d_{X(\varphi)}\leq d_X$. This leads to the combinatorially interesting identities
$$
\sum_{\alpha\in C_{R(\varphi)}(\beta)}(\omega_\beta,\alpha^\vee)\leq\sum_{\alpha\in C_R(\beta)}(\omega_\beta,\alpha^\vee)\text{ for all }\beta\in\Delta(\varphi)\,.
$$

\end{cor}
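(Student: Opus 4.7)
My plan is to derive the inequality $d_{X(\varphi)}\leq d_X$ directly from the compatibility of distance functions (Theorem~\ref{thm:compatibility}), and then to obtain the combinatorial inequality by pairing with fundamental weights in the Borel case.

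First, since $w_o(\varphi)\in W_{G(\varphi)}$, Theorem~\ref{thm:compatibility} gives $\delta_P(w_o(\varphi))=\delta_{P(\varphi)}(w_o(\varphi))$. Applying the local version of Theorem~\ref{thm:uniqueness} to the space $X(\varphi)=G(\varphi)/P(\varphi)$, the right-hand side consists of the unique element $d_{X(\varphi)}\in H_2(X(\varphi))\subseteq H_2(X)$. On the other hand, $w_o(\varphi)\preceq w_o$ since $w_o$ is the longest element of $W$, so $w_o(\varphi)W_P\preceq w_o W_P$ by compatibility of the Bruhat order with projection. By Proposition~\ref{prop:delta}(\ref{item:usmallerv}) applied to the unique element $d_X\in\delta_P(w_o)$ provided by Theorem~\ref{thm:uniqueness}, there exists a degree $d'\in\delta_P(w_o(\varphi))$ with $d'\leq d_X$. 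Since $\delta_P(w_o(\varphi))=\{d_{X(\varphi)}\}$, this gives $d_{X(\varphi)}\leq d_X$.

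For the combinatorial identities, I apply the inequality just obtained in the special case $P=B$, giving $d_{G(\varphi)/B(\varphi)}\leq d_{G/B}$ in $H_2(G/B)=\mathbb{Z}\Delta^\vee$. By Corollary~\ref{cor:identity} applied to both $R$ and $R(\varphi)$, these two elements admit the explicit descriptions
$$d_{G/B}=\sum_{\alpha\in\mathcal{B}_R}\alpha^\vee, \qquad d_{G(\varphi)/B(\varphi)}=\sum_{\alpha\in\mathcal{B}_{R(\varphi)}}\alpha^\vee.$$
For each $\beta\in\Delta(\varphi)$, pair both sides of the resulting inequality with $\omega_\beta$. Since $(\omega_\beta,\alpha^\vee)=0$ unless $\beta\in\Delta(\alpha)$, i.e., $\beta\leq\alpha$, the only terms of $\mathcal{B}_R$ (respectively $\mathcal{B}_{R(\varphi)}$) contributing nonzero amounts are those satisfying $\alpha\geq\beta$, which by definition are precisely the elements of the chain cascade $C_R(\beta)$ (respectively $C_{R(\varphi)}(\beta)$). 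This produces the stated inequality.

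The proof is essentially bookkeeping; no real obstacle arises once the distance compatibility and the cascade formula for $d_{G/B}$ are in hand. The only point to verify is that the local analogues of Theorem~\ref{thm:uniqueness} and Corollary~\ref{cor:identity} hold for $G(\varphi)$, but this is automatic since both statements are proved for arbitrary $G/P$ and so apply verbatim to $G(\varphi)/P(\varphi)$ and $G(\varphi)/B(\varphi)$.
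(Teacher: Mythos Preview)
Your proof is correct and follows essentially the same route as the paper: both use Theorem~\ref{thm:compatibility} together with the local version of Theorem~\ref{thm:uniqueness} and Proposition~\ref{prop:delta}(\ref{item:usmallerv}) to obtain $d_{X(\varphi)}\leq d_X$, and then specialize to $P=B$. The only cosmetic difference is in deriving the combinatorial inequality: the paper compares coefficients to get $d_{G(\varphi)/P(\varphi)_\beta}\leq d_{G/P_\beta}$ and then invokes Theorem~\ref{thm:thesis_main}, whereas you use Corollary~\ref{cor:identity} and pair with $\omega_\beta$ directly---these are equivalent since pairing the cascade expression of $d_{G/B}$ with $\omega_\beta$ is exactly what produces the formula in Theorem~\ref{thm:thesis_main}.
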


\begin{proof}

By Theorem~\ref{thm:uniqueness} we have $\delta_P(w_o)=d_X$ and $\delta_{P(\varphi)}(w_o(\varphi))=d_{X(\varphi)}$. By Theorem~\ref{thm:compatibility}, it follows that $\delta_P(w_o(\varphi))=d_{X(\varphi)}$. Since clearly $\bar w_o(\varphi)\preceq\bar w_o$, Proposition~\ref{prop:delta}(\ref{item:usmallerv}) implies that $d_{X(\varphi)}\leq d_X$. In particular, by applying this to the case $P=B$, we obtain $d_{G(\varphi)/B(\varphi)}\leq d_{G/B}$. Comparing the coefficients of simple coroots $\beta^\vee\in\Delta(\varphi)^\vee$, we obtain in view of Theorem~\ref{thm:uniqueness} that $d_{G(\varphi)/P(\varphi)_\beta}\leq d_{G/P_\beta}$ for all $\beta\in\Delta(\varphi)$. This yields the desired combinatorial identities by invoking Theorem~\ref{thm:thesis_main}.
\end{proof}

\begin{thm}

Let $F$ be an arbitrary subset of $\mathcal{B}_R$. Then we have
$$
\delta_P\left(\prod_{\alpha\in F}s_\alpha\right)=\sum_{\alpha\in F}d(\alpha)
$$
where the product is well-defined (does not depend on the ordering of $F$) since any two distinct elements of $F$ are orthogonal (Proposition~\ref{prop:cascade}(\ref{item:stronglyorthogonal})).

\end{thm}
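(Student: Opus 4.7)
My plan is to establish two opposing inequalities: an upper bound producing some $d' \in \delta_P(w_F)$ with $d' \leq d_F$, where $d_F := \sum_{\alpha \in F} d(\alpha)$ and $w_F := \prod_{\alpha \in F} s_\alpha$, and a matching lower bound $d \geq d_F$ for every $d \in \delta_P(w_F)$. Together with the incomparability of minimal elements from Remark~\ref{rem:uniqueelement}, this forces $\delta_P(w_F)$ to consist of the unique element $d_F$. Note first that $w_F$ is well-defined and an involution because the elements of $F \subseteq \mathcal{B}_R$ are pairwise strongly orthogonal by Proposition~\ref{prop:cascade}(\ref{item:stronglyorthogonal}), so their reflections commute.

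For the upper bound I would induct on $|F|$. The case $F = \emptyset$ is trivial. For the inductive step, fix any $\alpha \in F$. By Proposition~\ref{prop:cascade}(\ref{item:locallyhigh}) the root $\alpha$ is locally high, so Corollary~\ref{cor:locallyhigh} yields $\delta_P(s_\alpha) = d(\alpha)$. The inductive hypothesis supplies some $d_1 \in \delta_P(w_{F \setminus \{\alpha\}})$ with $d_1 \leq d_{F \setminus \{\alpha\}}$; the triangle inequality Proposition~\ref{prop:delta}(\ref{item:triangle}) then produces $d_2 \in \delta_P(s_\alpha \cdot w_{F \setminus \{\alpha\}})$ with $d_2 \leq d(\alpha) + d_1 \leq d_F$. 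Since $w_F = s_\alpha w_{F \setminus \{\alpha\}} \preceq s_\alpha \cdot w_{F \setminus \{\alpha\}}$ by Proposition~\ref{prop:hecke}(\ref{item:uvsmaller}), Proposition~\ref{prop:delta}(\ref{item:usmallerv}) then gives $d' \in \delta_P(w_F)$ with $d' \leq d_2 \leq d_F$.

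The lower bound hinges on the complementary set $F^c := \mathcal{B}_R \setminus F$. By Proposition~\ref{prop:w_o} we have $w_o = \prod_{\alpha \in \mathcal{B}_R} s_\alpha = w_F w_{F^c}$ with all factors commuting; combined with the fact that $w_F$ is an involution, this gives $w_F^{\,*} = w_o w_F = w_{F^c}$. Applying the upper bound just established to $F^c$ yields some $d^* \in \delta_P(w_F^{\,*})$ with $d^* \leq d_{F^c}$. Proposition~\ref{prop:geqdx} then forces $d + d^* \geq d_X$ for every $d \in \delta_P(w_F)$, and Corollary~\ref{cor:identity} identifies $d_X = \sum_{\alpha \in \mathcal{B}_R} d(\alpha) = d_F + d_{F^c}$, whence $d \geq d_X - d^* \geq d_F$.

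Combining both bounds, the $d'$ produced above satisfies $d' \leq d_F$ and $d' \geq d_F$, so $d' = d_F$; any other $d \in \delta_P(w_F)$ satisfies $d \geq d_F = d'$, and incomparability of minimal elements forces $d = d'$. The main substantive point, and the only step I expect to require genuine insight, is the recognition that $w_F^{\,*} = w_{F^c}$ is again of the same form; once this $F \leftrightarrow F^c$ symmetry is in hand, the additivity $d_X = d_F + d_{F^c}$ from Corollary~\ref{cor:identity} lets Proposition~\ref{prop:geqdx} deliver the matching lower bound without further work.
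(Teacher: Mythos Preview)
Your proof is correct and follows essentially the same strategy as the paper: establish the upper bound via Corollary~\ref{cor:locallyhigh} and the triangle inequality, then exploit the complementarity $w_F^{\,*}=w_{F^c}$ together with Corollary~\ref{cor:identity} and Proposition~\ref{prop:geqdx} to obtain the matching lower bound. The paper additionally records a second proof of the upper bound via greedy decompositions of $d_{G/B}$ and invokes Corollary~\ref{cor:geqdx} to package the final incomparability step, but these are cosmetic differences.
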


\begin{proof}

Let $u=\prod_{\alpha\in F}s_\alpha$ and $d=\sum_{\alpha\in F}d(\alpha)$ for short. We first want to show that there exists a degree $d'\in\delta_P(u)$ such that $d'\leq d$. Indeed, by Proposition~\ref{prop:cascade}(\ref{item:locallyhigh}) we know that all elements of $F$ are locally high. Therefore, Corollary~\ref{cor:locallyhigh} shows that $\delta_P(s_\alpha)=d(\alpha)$ for all $\alpha\in F$. By repeated application of Proposition~\ref{prop:delta}(\ref{item:triangle}) (and by application of Proposition~\ref{prop:hecke}(\ref{item:uvsmaller} and Proposition~\ref{prop:delta}(\ref{item:usmallerv}) it follows that there exists a degree $d'\in\delta_P(u)$ such that $d'\leq d$.

It is instructive to give a second proof for the existence of a degree $d'\in\delta_P(u)$ such that $d'\leq d$. Indeed, it is easy to see from Corollary~\ref{cor:identity} that any greedy decomposition $(\alpha_1,\ldots,\alpha_r)$ of $d_{G/B}$ satisfies $\mathcal{B}_R=\{\alpha_1,\ldots,\alpha_r\}$.\footnote{This is an important observation which motivates our further approach. It is the starting point for a theory of generalized chain cascades (cf. Theorem~\ref{thm:orthogonality}).} Let $e=\sum_{\alpha\in F}\alpha^\vee$. By repeated application of Proposition~\ref{prop:delta}(\ref{item:reduce}) it follows that $e\in\delta_B(z_e^B)$. From Proposition~\ref{prop:zd}(\ref{item:greedy}) it follows that any greedy decomposition $(\gamma_1,\ldots,\gamma_m)$ of $e$ satisfies $F=\{\gamma_1,\ldots,\gamma_m\}$. Therefore Proposition~\ref{prop:hecke}(\ref{item:uvsmaller}) implies that $u\preceq z_e^B$. By Proposition~\ref{prop:delta}(\ref{item:usmallerv}) there exists a degree $e'\in\delta_B(u)$ such that $e'\leq e$. By Proposition~\ref{prop:delta}(\ref{item:projection2}) there exists a degree $d'\in\delta_P(u)$ such that $d'\leq e_P'\leq e_P=d$ -- as claimed.

We now proceed with the proof. Let $F^*=\mathcal{B}_R\setminus F$ and $d^ *=\sum_{\alpha\in F^*}d(\alpha)$. By Proposition~\ref{prop:w_o} we have $u^*=\prod_{\alpha\in F^*}s_\alpha$. By the previous considerations applied to $F^*$ there exists a degree $d'^*\in\delta_P(u^*)$ such that $d'^*\leq d^*$. By Corollary~\ref{cor:identity} we have $d'+d'^*\leq d+d^*=d_X$. By Proposition~\ref{prop:geqdx} it follows that $d=d'$ and $d^*=d'^*$. Corollary~\ref{cor:geqdx} then implies $\delta_P(u)=d$ (and $\delta_P(u^*)=d^*$). This completes the proof.
\end{proof}

\todo[inline,color=green]{Combinatorially interesting identity from August 19. Any greedy decomposition $(\alpha_1,\ldots,\alpha_r)$ of $d_X$ satisfies $\mathcal{B}=\{\alpha_1,\ldots,\alpha_r\}$. Starting point for the theory of generalized chain cascades. Reference to Theorem~\ref{thm:orthogonality}. The order in the previous proposition (I meant Proposition~\ref{prop:cascade}) should be different, adapted to the needs of the subsequent publication (quotes in order -- properties which are preserved under generalization). I chose to do the order as in the reference \cite{kostant}.

$\delta_P$-Additivity on $\mathcal{B}_R$. Emphasize the equations $d_{G/B}=\sum_{\alpha\in\mathcal{B}_R}\alpha^\vee$ and $(d_{G/B})_P=d_{X}$.}

\subsection{Reduction in type \texorpdfstring{$\mathsf{A}$}{A}}

\begin{defn}

We say that a simple root $\beta$ is a boundary root of $\Delta$ if $\Delta\setminus\{\beta\}$ is a connected subset of the Dynkin diagram.\footnote{Here we allow the empty set as a connected subset of the Dynkin diagram.}

\end{defn}

\begin{ex} 
\label{ex:reduction}

Let $R$ be of type $\mathsf{A}_n$. Let $\varphi=\theta_1$ be the highest root of $R$. Let $\beta$ be a boundary root of $\Delta$. Let $\alpha=\alpha_{\beta,\varphi}$ be the unique positive root such that $\beta\in\Delta(\alpha)$ and such that $\mathrm{card}(\Delta(\alpha))=\lceil n/2\rceil$. If $n\geq 2$, then $\varphi-\beta$ is a positive root and the degree $d_{G(\varphi-\beta)/B(\varphi-\beta)}$ is well-defined. If $n=1$, then we define $d_{G(\varphi-\beta)/B(\varphi-\beta)}=0$. With this notation, we have the following equality:
$$
d_{G/B}=\alpha^\vee+d_{G(\varphi-\beta)/B(\varphi-\beta)}\,.
$$

\end{ex}

\section{Orthogonality relations in greedy decompositions}

In this section we prove orthogonality relations in greedy decompositions of degrees $d$ such that $d\in\delta_P\left(z_d^P\right)$. These relations are important for the proof of the theorem that any minimal degree in a quantum product of two Schubert cycles is bounded by $d_X$ {\color{black} (Theorem~\ref{thm:fulton} and Theorem~\ref{thm:main})}. They help us to reduce a part of the problem to a situation with finitely many cases which can be analyzed type by type. Moreover, orthogonality relations establish the very first step towards a theory of generalized chain cascades which we want to develop in a subsequent publication.
\todo[inline,color=green]{In the introduction (or somewhere else before the proof of Theorem~\ref{thm:main}) I should say that it is obvious that no minimal degree $d$ in a quantum product can satisfy $d>d_X$, and that there exists at least one minimal degree $d$ in any quantum product which satisfies $d\leq d_X$. Non-trivial is only that all minimal degrees in quantum products are bounded by $d_X$. It is also trivial for maximal parabolic subgroups.

I don't go into the details that generalized chain cascades have consequences for quasi-homogeneity of the moduli space of stable maps to $X$. I do so in the subsequent publication.}


\begin{thm}
\label{thm:orthogonality}

Let $d$ be a degree such that $d\in\delta_P(z_d^P)$. Let $(\alpha_1,\ldots,\alpha_r)$ be a greedy decomposition of $d$. For all simple roots $\beta\in\Delta(d-d(\alpha_1))$
the orthogonality relation $(\alpha_1,\beta)=0$ holds.

\end{thm}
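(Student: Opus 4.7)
The plan is to argue by contradiction: suppose there exists a simple root $\beta \in \Delta(d - d(\alpha_1))$ with $(\alpha_1,\beta) > 0$ (the inequality $(\alpha_1,\beta) \geq 0$ is already Proposition~\ref{prop:relation}), and construct a degree $d' < d$ with $z_{d'}^P W_P \succeq z_d^P W_P$, contradicting the minimality $d \in \delta_P(z_d^P)$. Two preliminary observations are useful. First, $\beta \in \Delta(d - d(\alpha_1))$ unpacks to $(\omega_\beta, \sum_{i \geq 2} \alpha_i^\vee) > 0$, so there exists an index $j \geq 2$ with $\beta \in \Delta(\alpha_j)$. Second, $(\alpha_1, \beta) > 0$ together with $\beta$ simple forces $\beta \in \Delta(\alpha_1)$ (otherwise $(\alpha_1, \beta)$ would be a sum of non-positive terms), and a standard computation then shows $s_{\alpha_1}(\beta) = \beta - \langle \beta, \alpha_1^\vee\rangle \alpha_1 < 0$. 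Thus $\beta$ is a right descent of $s_{\alpha_1}$, yielding the Hecke absorption $s_{\alpha_1} \cdot s_\beta = s_{\alpha_1}$.

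The first case to treat is when $j$ can be chosen with $\alpha_j = \beta$. This subsumes the subcase $\alpha_1 = \beta$: if $\alpha_1$ is itself simple, then $\alpha_j \geq \beta = \alpha_1$ in the root order, together with the maximality of $\alpha_1$ in $\{\gamma \in R^+ \setminus R_P^+ : d(\gamma) \leq d\}$, forces $\alpha_j = \alpha_1 = \beta$. Using the Hecke commutation of the $s_{\alpha_i}$'s (Proposition~\ref{prop:zd}(\ref{item:commute})) to reorder so that $s_{\alpha_j} = s_\beta$ sits next to $s_{\alpha_1}$, and then applying the absorption, we compute
\[
z_d^P w_P \;=\; s_{\alpha_1} \cdot s_\beta \cdot \prod_{i \neq 1, j} s_{\alpha_i} \cdot w_P \;=\; s_{\alpha_1} \cdot \prod_{i \neq 1, j} s_{\alpha_i} \cdot w_P \;=\; z_{d - d(\beta)}^P w_P,
\]
the last equality by Proposition~\ref{prop:zd}(\ref{item:greedy}). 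Since $d - d(\beta) < d$ yet $z_{d - d(\beta)}^P W_P = z_d^P W_P$, this contradicts the minimality of $d \in \delta_P(z_d^P)$.

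The remaining case is that no $\alpha_j$ equals $\beta$, so each $\alpha_j$ with $\beta \in \Delta(\alpha_j)$ is non-simple. The same absorption principle drives the argument, but extracting a factor of $s_\beta$ from the non-simple $s_{\alpha_j}$ and propagating it through the Hecke product to meet $s_{\alpha_1}$ is the main obstacle. While $\beta \in \Delta(s_{\alpha_j})$ gives $s_\beta \preceq s_{\alpha_j}$ (Proposition~\ref{prop:support}(\ref{item:supportofs})), whether $s_\beta$ can be written as an outer Hecke factor of $s_{\alpha_j}$ depends on the sign of $(\alpha_j, \beta)$; the delicate subcase is $(\alpha_j, \beta) \leq 0$, where $s_\beta$ is not a descent of $s_{\alpha_j}$. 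My plan for this subcase is to invoke Theorem~\ref{thm:compatibility} to reduce to the local subsystem $G(\varphi)$ for a locally high root $\varphi$ dominating the supports of $\alpha_1$, $\alpha_j$ and $\beta$, and then argue by induction on the rank, with the induction base being the previously settled case $\alpha_j = \beta$. Either route produces a strictly smaller degree $d' < d$ with $z_{d'}^P W_P = z_d^P W_P$, closing the proof by contradiction.
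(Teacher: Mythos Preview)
Your overall strategy is the same as the paper's: use Proposition~\ref{prop:relation} to get $(\alpha_1,\beta)\geq 0$, suppose $(\alpha_1,\beta)>0$, and exploit the Hecke absorption $s_{\alpha_1}\cdot s_\beta=s_{\alpha_1}$ to produce the same $z_d^PW_P$ from a strictly smaller degree, contradicting $d\in\delta_P(z_d^P)$. Your Case~1 ($\alpha_j=\beta$ for some $j$) is correct and is precisely the degenerate instance of the paper's argument.

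The gap is in Case~2. Your proposed ``induction on rank via $R(\varphi)$ for $\varphi$ dominating $\alpha_1,\alpha_j,\beta$'' does not reduce anything: since $\Delta(\alpha_1)=\Delta(e)=\Delta(d)$ in the connected case (and in general $\Delta(\alpha_1)$ can be all of $\Delta$), any such $\varphi$ has $R(\varphi)=R$, so there is no rank drop and no route back to the base case $\alpha_j=\beta$. The paper's mechanism is different and more direct. Write $s_{\alpha_i}=s_{\beta_1}\cdots s_{\beta_l}$ reduced; every $\beta_m\in\Delta(\alpha_i)\subseteq\Delta\!\left(z_{d-d(\alpha_1)}^Pw_P\right)$, so by Proposition~\ref{prop:relation} one has $(\alpha_1,\beta_m)\geq 0$ and hence $s_{\alpha_1}\cdot s_{\beta_m}=s_{\beta_m}\cdot s_{\alpha_1}$ for \emph{all} $m$. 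This lets $s_{\alpha_1}$ sweep through the reduced word, absorbing every $s_\beta$ it meets, giving $s_{\alpha_1}\cdot s_{\alpha_i}=s_{\alpha_1}\cdot\hat{s}_{\alpha_i}$ where $\hat{s}_{\alpha_i}$ is the Hecke product of the $s_{\beta_m}$ with $\beta_m\neq\beta$. No factorisation of $s_{\alpha_i}$ as $s_\beta\cdot(\text{something})$ is needed, which is exactly the obstacle you flagged.

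There is then a second step you have not addressed: one must show that replacing $s_{\alpha_i}$ by $\hat{s}_{\alpha_i}$ actually lowers the degree. The paper does this by splitting $\hat{s}_{\alpha_i}$ into factors $w_1,\ldots,w_k$ supported on the connected components of $\Delta(\alpha_i)\setminus\{\beta\}$, applying Theorem~\ref{thm:compatibility} to get $\delta_P(w_j)\subseteq H_2(X(\varphi_j))$, and using disjointness of the supports (none containing $\beta$) to conclude $\sum_j d_j<d(\alpha_i)$. So your instinct to invoke Theorem~\ref{thm:compatibility} was right, but its role is to bound $\delta_P(\hat{s}_{\alpha_i})$, not to set up a rank induction.
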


\begin{proof}

Let us write $\alpha=\alpha_1$ for short. Let $\beta\in\Delta(d-d(\alpha))$. By Proposition~\ref{prop:support}(\ref{item:supportofz}) we know that
$$
\Delta(d-d(\alpha))=\widetilde\Delta\left(d-d(\alpha)\right)\setminus\Delta_P=\Delta\left(z_{d-d(\alpha)}^Pw_P\right)\setminus\Delta_P\,.
$$
Therefore Proposition~\ref{prop:relation} shows that $(\alpha,\beta)\geq 0$. Let $2\leq i\leq r$ be an index such that $\beta\in\Delta(\alpha_i)\setminus\Delta_P$. Suppose for a contradiction that $(\alpha,\beta)>0$. 

We now define an element $\hat{s}_{\alpha_i}\in W$ with specific properties. Let $s_{\alpha_i}=s_{\beta_1}\cdots s_{\beta_l}$ be a reduced expression of $s_{\alpha_i}$. Then we know that $\{\beta_1,\ldots,\beta_l\}=\Delta(\alpha)$ (cf. Proposition~\ref{prop:support}(\ref{item:supportofs})). Using the definition of the Hecke product, we clearly have $s_{\alpha_i}=s_{\beta_1}\cdot\ldots\cdot s_{\beta_l}$. Let $1\leq i_1<\cdots<i_m\leq l$ be the ordered set of indices such that $\{1\leq j\leq l\mid\beta_j\neq\beta\}=\{i_1,\ldots,i_m\}$. Then we define $\hat{s}_{\alpha_i}=s_{\beta_{i_1}}\cdot\ldots\cdot s_{\beta_{i_m}}$. It is obvious from the definition and Proposition~\ref{prop:hecke}(\ref{item:preceq}) that $\hat{s}_{\alpha_i}\preceq s_{\alpha_i}$. But we want to prove more.

Let $\varphi_1,\ldots,\varphi_k$ be locally high roots such that $\Delta(\varphi_1),\ldots,\Delta(\varphi_k)$ are the distinct connected components of $\{\beta_{i_1},\ldots,\beta_{i_m}\}$. This means that for any two distinct indices $1\leq j\neq j'\leq k$ every root in $\Delta(\varphi_j)$ is (strongly) orthogonal to every root in $\Delta(\varphi_{j'})$. By \cite[Proposition~4.8(a)]{curvenbhd2} this means in particular that for every root $\gamma\in\Delta(\varphi_j)$ and every root $\gamma'\in\Delta(\varphi_{j'})$ we have $s_{\gamma}\cdot s_{\gamma'}=s_{\gamma'}\cdot s_\gamma$. Therefore we can reorder the terms $s_{\beta_{i_1}},\ldots,s_{\beta_{i_m}}$ in the Hecke product $\hat{s}_{\alpha_i}$ in such a way that the first $n_1$ indices $j=i_1,\ldots,i_{n_1}$ satisfy $\beta_{i_j}\in\Delta(\varphi_1)$, that the second $n_2$ indices $j=i_{n_1+1},\ldots,i_{n_1+n_2}$ satisfy $\beta_{i_j}\in\Delta(\varphi_{2})$, etc. Without loss of generality we may assume that the indices $i_1,\ldots,i_m$ are ordered in this way. Now, for all $1\leq j\leq k$ let $w_j$ be the ordered Hecke product of the $n_j$ factors $s_{\beta_{j'}}$ of $\hat{s}_{\alpha_i}$ which satisfy $\beta_{j'}\in\Delta(\varphi_j)$. Here we preserve of course the original order of the factors in $\hat{s}_{\alpha_i}$. Then we clearly have $\hat{s}_{\alpha_i}=w_1\cdot\ldots\cdot w_k$, in particular $w_j\preceq\hat{s}_{\alpha_i}\preceq s_{\alpha_i}$ for all $1\leq j\leq k$ (cf. Proposition~\ref{prop:hecke}(\ref{item:preceq})). 

Since $\alpha_i$ is part of a greedy decomposition of $d$ we know that $\alpha_i$ is $P$-cosmall (Proposition~\ref{prop:zd}(\ref{item:unique})). Therefore Proposition~\ref{prop:delta}(\ref{item:pcosmall}) shows that $d(\alpha_i)\in\delta_P(s_{\alpha_i})$. By Proposition~\ref{prop:delta}(\ref{item:usmallerv}) there exists for each $1\leq j\leq k$ a degree $d_j\in\delta_P(w_j)$ such that $d_j\leq d(\alpha_i)$. It is clear from the definition of $w_j$ as a Hecke product of simple reflections along roots in $\Delta(\varphi_j)$ that $w_j\in W_{G(\varphi_j)}$. By {\color{black} Theorem~\ref{thm:compatibility}} we therefore know that $\delta_P(w_j)=\delta_{P(\varphi_j)}(w_j)\subseteq H_2(X(\varphi_j))$, in particular $d_j\in H_2(X(\varphi_j))$. Here we identify as usual $H_2(X(\varphi_j))$ with a sublattice of $H_2(X)$. This means that the support $\Delta(d_j)$ of $d_j$ is contained in $\Delta(\varphi_j)\setminus\Delta_P$.

\todo[inline,color=green]{I should replace $G(\varphi)/P(\varphi)$ by $X(\varphi)$ everywhere in this proof.}

Since the sets $\Delta(\varphi_1),\ldots,\Delta(\varphi_k)$ are pairwise disjoint (even pairwise totally disjoint) 
\todo[color=green]{The terms \enquote{totally disjoint} and \enquote{strongly orthogonal} need to be explained in footnotes or in Section~\ref{sec:cascade} with a reference to Bourbaki. Terms explained in the section \enquote{The cascade of orthogonal roots}.}
the supports $\Delta(d_1),\ldots,\Delta(d_k)$ must also be disjoint. Therefore we can infer from the inequality $d_j\leq d(\alpha_i)$ for all $1\leq j\leq k$ the inequality $\sum_{j=1}^k d_j\leq d(\alpha_i)$. 

Let us write $d(\alpha_i)'=\sum_{j=1}^k d_j$ for short. We just saw that we have an inequality $d(\alpha_i)'\leq d(\alpha_i)$. We claim that we even have a strict inequality $d(\alpha_i)'<d(\alpha_i)$. Indeed, by our choice of the index $i$ we know that $\beta\in\Delta(\alpha_i)\setminus\Delta_P=\Delta(d(\alpha_i))$ and thus $d(\alpha_i)_\beta>0$. On the other hand, we know by our choice of the indices $i_1,\ldots,i_m$ and our definition of the connected components $\Delta(\varphi_j)$ that
$$
\bigcup_{j=1}^k\Delta(\varphi_j)=\{\beta_{i_1},\ldots,\beta_{i_m}\}=\{\beta_1,\ldots,\beta_l\}\setminus\{\beta\}\,.
$$
Therefore we have $\beta\notin\Delta(\varphi_j)$, in particular $\beta\notin\Delta(d_j)\subseteq\Delta(\varphi_j)\setminus\Delta_P$ for all $1\leq j\leq k$. This shows that $(d_j)_\beta=0$ for all $1\leq j\leq k$ and thus $d(\alpha_i)'_\beta=0$. Therefore we find the desired strict inequality $d(\alpha_i)'<d(\alpha_i)$.

By construction, we have $\hat{s}_{\alpha_i}=w_1\cdot\ldots\cdot w_k$ and $d_j\in\delta_P(w_j)$ for all $1\leq j\leq k$. By repeated application of Proposition~\ref{prop:delta}(\ref{item:triangle}) we can find an element $\hat{d}(\alpha_i)\in\delta_P(\hat{s}_{\alpha_i})$ such that $\hat{d}(\alpha_i)\leq d(\alpha_i)'$. Since $d(\alpha_i)'<d(\alpha_i)$ this element clearly satisfies the strict inequality $\hat{d}(\alpha_i)<d(\alpha_i)$. The construction of a degree $\hat{d}(\alpha_i)\in\delta_P(\hat{s}_{\alpha_i})$ such that $\hat{d}(\alpha_i)<d(\alpha_i)$ was the whole purpose of the last four paragraphs.

We now return to the situation in the second paragraph of the proof. Recall our contradictory assumption that $(\alpha,\beta)>0$. Recall that we constructed an element $\hat{s}_{\alpha_i}$ from $s_{\alpha_i}$ by canceling all simple reflections $s_\beta$ from a reduced expression of $s_{\alpha_i}$ and Hecke multiply them to an element $\hat{s}_{\alpha_i}$ which satisfies $\hat{s}_{\alpha_i}\preceq s_{\alpha_i}$. Using our assumption $(\alpha,\beta)>0$, we now want to prove that $s_\alpha\cdot s_{\alpha_i}=s_{\alpha}\cdot\hat{s}_{\alpha_i}$. Indeed, for all $1\leq j\leq l$ we have $\beta_j\in\Delta(\alpha_i)\subseteq\Delta(z_{d-d(\alpha)}^Pw_P)$. Therefore Proposition~\ref{prop:relation} gives $(\alpha,\beta_j)\geq 0$. By \cite[Proposition~4.8(a)]{curvenbhd2} this means that we have the commutation relation $s_\alpha\cdot s_{\beta_j}=s_{\beta_j}\cdot s_\alpha$. Moreover, by the definition of the Hecke product, the assumption $(\alpha,\beta)>0$ is equivalent to $s_\alpha\cdot s_\beta=s_\beta\cdot s_\alpha=s_\alpha$.
\todo[color=green]{I have to ask Nicolas Perrin why this true. I took it from the proof of \cite[Proposition~4.8(a)]{curvenbhd2}. Important for generalizations (cf. notes on September 16). Explanation in \cite[proof of the exchange condition in 1.7]{humphreys3}.}
Thus, we can commute in the expression $s_\alpha\cdot s_{\alpha_i}$ every simple reflection $s_{\beta_j}$ such that $\beta_j\neq\beta$ with $s_\alpha$ and replace every occurrence of $s_\alpha\cdot s_\beta$ by $s_\alpha$. This gives the desired equality $s_\alpha\cdot s_{\alpha_i}=s_\alpha\cdot\hat{s}_{\alpha_i}$.

Finally, we are able to prove the equality
\begin{equation}
\label{eq:reduction}
s_{\alpha_1}\cdot\ldots\cdot s_{\alpha_r}=s_{\alpha_1}\cdot\ldots\hat{s}_{\alpha_i}\cdot\ldots\cdot s_{\alpha_r}\,.
\end{equation}
Indeed, we know that $(\alpha_1,\ldots,\alpha_r)$ is the greedy decomposition of $d$ and therefore every two factors of $s_{\alpha_1}\cdot\ldots\cdot s_{\alpha_r}$ Hecke commute (Proposition~\ref{prop:zd}(\ref{item:commute})). Using this commutation relation and the equality from the previous paragraph, we find
$$
s_{\alpha_1}\cdot\ldots\cdot s_{\alpha_r}=s_{\alpha_2}\cdot\ldots s_\alpha\cdot s_{\alpha_i}\cdot\ldots\cdot s_{\alpha_r}=s_{\alpha_2}\cdot\ldots s_\alpha\cdot\hat{s}_{\alpha_i}\cdot\ldots\cdot s_{\alpha_r}=s_{\alpha_1}\cdot\ldots\hat{s}_{\alpha_i}\cdot\ldots\cdot s_{\alpha_r}\,.
$$
This proves the desired equality~(\ref{eq:reduction}).

All elements of a greedy decomposition of $d$ are $P$-cosmall (Proposition~\ref{prop:zd}(\ref{item:unique})). Therefore we know that $\alpha_j$ is $P$-cosmall for all $1\leq j\leq r$ and thus $d(\alpha_j)\in\delta_P(s_{\alpha_i})$ (Proposition~\ref{prop:delta}(\ref{item:pcosmall})). By repeated application of Proposition~\ref{prop:delta}(\ref{item:triangle}) to the sequence of degrees $d(\alpha_1),\ldots,\hat{d}(\alpha_i),\ldots,d(\alpha_r)$ we can find a degree 
$
d'\in\delta_P(s_{\alpha_1}\cdot\ldots\cdot\hat{s}_{\alpha_i}\cdot\ldots\cdot s_{\alpha_r})
$
such that $d'\leq d(\alpha_1)+\cdots+\hat{d}(\alpha_i)+\cdots+d(\alpha_r)$. Since $\hat{d}(\alpha_i)<d(\alpha_i)$ it follows that $d'<d$. By Equation~(\ref{eq:reduction}) and the fact that $s_{\alpha_1}\cdot\ldots\cdot s_{\alpha_r}$ and $z_d^P$ belong to the same class modulo $W_P$ (cf. Proposition~\ref{prop:delta}(\ref{item:invariant})) it follows that $d'\in\delta_P(z_d^P)$. But we know that $d'<d$ and by assumption we have $d\in\delta_P(z_d^P)$. This is a contradiction since two distinct elements of $\delta_P(z_d^P)$ must be incomparable (Remark~\ref{rem:uniqueelement}).
\end{proof}

\begin{cor}

Let $d$ be a degree such that $d\in\delta_P(z_d^P)$. Let $(\alpha_1,\ldots,\alpha_r)$ be a greedy decomposition of $d$. Let $\beta\in\Delta(d-d(\alpha_1))$. Then $\alpha_1$ and $\beta$ are strongly orthogonal.

\end{cor}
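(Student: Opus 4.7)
The plan is to invoke the characterization of strong orthogonality from Lemma~\ref{lem:stronglyorthogonal}: it suffices to show both that $(\alpha_1,\beta)=0$ and that $\alpha_1+\beta\notin R$. The first half is exactly the statement of Theorem~\ref{thm:orthogonality}, which applies under the hypothesis $d\in\delta_P(z_d^P)$. So the only new work is to obtain the non-root condition, and this should come for free from Corollary~\ref{cor:stronglyorthogonal2} once the support hypotheses are matched up.

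More precisely, I would first note that Corollary~\ref{cor:stronglyorthogonal2} is an unconditional statement about greedy decompositions: for any degree $d$ with greedy decomposition $(\alpha_1,\ldots,\alpha_r)$ and any simple root $\gamma\in\Delta\bigl(z_{d-d(\alpha_1)}^P w_P\bigr)$, one has $\alpha_1+\gamma\notin R$. The assumption we are given is $\beta\in\Delta(d-d(\alpha_1))$, so I need a containment
\[
\Delta\bigl(d-d(\alpha_1)\bigr)\subseteq\Delta\bigl(z_{d-d(\alpha_1)}^P w_P\bigr).
\]
This is immediate from Proposition~\ref{prop:support}(\ref{item:supportofz}), which computes the right-hand side as $\widetilde\Delta(d-d(\alpha_1))\cup\Delta_P$, whereas by definition $\Delta(d-d(\alpha_1))=\widetilde\Delta(d-d(\alpha_1))\setminus\Delta_P$; the containment is then trivial. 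Consequently Corollary~\ref{cor:stronglyorthogonal2} applies with $\gamma=\beta$, and we conclude $\alpha_1+\beta\notin R$.

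Combining the two inputs, $(\alpha_1,\beta)=0$ from Theorem~\ref{thm:orthogonality} and $\alpha_1+\beta\notin R$ from Corollary~\ref{cor:stronglyorthogonal2}, Lemma~\ref{lem:stronglyorthogonal} delivers the strong orthogonality of $\alpha_1$ and $\beta$. There is essentially no obstacle here: all of the genuine content lies in Theorem~\ref{thm:orthogonality}, whose proof was the substantial part of this section; the corollary is just a packaging of that result with the auxiliary non-root condition, expressed in the standard \enquote{strongly orthogonal} terminology which will presumably be more convenient in the sequel (e.g.\ for the theory of generalized chain cascades alluded to in the introduction to this section).
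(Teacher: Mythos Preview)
Your proof is correct and essentially identical to the paper's: invoke Theorem~\ref{thm:orthogonality} for $(\alpha_1,\beta)=0$, Corollary~\ref{cor:stronglyorthogonal2} for $\alpha_1+\beta\notin R$, and conclude via Lemma~\ref{lem:stronglyorthogonal}. You are slightly more explicit than the paper in spelling out the support containment $\Delta(d-d(\alpha_1))\subseteq\Delta\bigl(z_{d-d(\alpha_1)}^P w_P\bigr)$ needed to apply Corollary~\ref{cor:stronglyorthogonal2}, which the paper leaves implicit.
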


\begin{proof}

Let $\alpha_1$ and $\beta$ be as in the statement. Let $\alpha=\alpha_1$ for short. By Theorem~\ref{thm:orthogonality} we know that $(\alpha,\beta)=0$. By Lemma~\ref{lem:stronglyorthogonal} it suffices to show that $\alpha+\beta\notin R$. But this is accomplished by Corollary~\ref{cor:stronglyorthogonal2}.
\end{proof}

\begin{cor}
\label{cor:stronglyorthogonal}

Let $e$ be a degree such that $e\in\delta_B\left(z_e^B\right)$. Let $\alpha$ and $\alpha'$ be two different entries of a greedy decomposition of $e$. Then $\alpha$ and $\alpha'$ are strongly orthogonal.

\end{cor}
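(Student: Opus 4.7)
The plan is to reduce this statement to Theorem~\ref{thm:orthogonality} (applied with $P=B$), combined with inputs already provided by Corollary~\ref{cor:relation} and Lemma~\ref{lem:stronglyorthogonal}. First I would observe that Corollary~\ref{cor:relation}, applied to the degree $e$ and its two distinct greedy entries $\alpha$ and $\alpha'$, already delivers $(\alpha,\alpha')\geq 0$ and $\alpha+\alpha'\notin R$ — and this part does not even need the hypothesis $e\in\delta_B(z_e^B)$. In view of Lemma~\ref{lem:stronglyorthogonal}, it therefore suffices to strengthen the inequality to the equality $(\alpha,\alpha')=0$; this is precisely where the hypothesis $e\in\delta_B(z_e^B)$ will enter.

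To obtain this orthogonality, I would exploit uniqueness of the greedy decomposition up to reordering (Proposition~\ref{prop:zd}(\ref{item:unique})) and rearrange so that $\alpha=\alpha_1$. Then $\alpha'$ occurs among $\alpha_2,\ldots,\alpha_r$, which by Proposition~\ref{prop:zd}(\ref{item:greedy}) form a greedy decomposition of $e-d(\alpha_1)$. Proposition~\ref{prop:support}(\ref{item:supportofz}) therefore gives $\Delta(\alpha')\subseteq\widetilde\Delta(e-d(\alpha_1))$, and since $\Delta_B=\emptyset$ the extended support collapses to the naive support, yielding $\Delta(\alpha')\subseteq\Delta(e-d(\alpha_1))$.

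At this point Theorem~\ref{thm:orthogonality}, applied to $e$ with $P=B$ (using the hypothesis $e\in\delta_B(z_e^B)$), is available: for every simple root $\beta\in\Delta(e-d(\alpha_1))$ one has $(\alpha_1,\beta)=0$, in particular for every $\beta\in\Delta(\alpha')$. Expanding $\alpha'$ as an integral linear combination of the simple roots in its support $\Delta(\alpha')$ and using bilinearity of $(-,-)$ then yields $(\alpha_1,\alpha')=0$. Together with the two facts already harvested from Corollary~\ref{cor:relation}, Lemma~\ref{lem:stronglyorthogonal} concludes strong orthogonality of $\alpha$ and $\alpha'$. I do not expect any genuine obstacle: this is essentially a bilinearity upgrade of Theorem~\ref{thm:orthogonality} from the simple-root case to the general-root case, made possible exactly by the fact that removing one entry of a greedy decomposition again leaves a greedy decomposition (Proposition~\ref{prop:zd}(\ref{item:greedy})), so that the supports of the remaining entries are controlled by $\Delta(e-d(\alpha_1))$.
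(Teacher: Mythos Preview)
Your argument has a gap at the reordering step. Proposition~\ref{prop:zd}(\ref{item:unique}) asserts that any two greedy decompositions of $e$ differ by a permutation; it does \emph{not} assert that every permutation of a greedy decomposition is again a greedy decomposition. In fact this fails: for a connected degree the first entry is uniquely determined (Proposition~\ref{prop:connecteddegree}), so an arbitrary entry $\alpha$ cannot in general be promoted to the position $\alpha_1$. Concretely, for $e=d_{G/B}$ in type $\mathsf{A}_5$ with greedy decomposition $(\theta_1,\theta_2,\theta_3)$, taking $\alpha=\theta_2$ and $\alpha'=\theta_3$ gives two entries neither of which is a maximal root of $e$. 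Since Theorem~\ref{thm:orthogonality} speaks only about the \emph{first} entry $\alpha_1$, your direct application of it to $e$ is not licensed.

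The paper repairs exactly this point by passing to a sub-degree. Assuming without loss of generality that $\alpha$ precedes $\alpha'$ in the given greedy decomposition, repeated application of Proposition~\ref{prop:zd}(\ref{item:greedy}) shows that $(\alpha,\alpha')$ is a greedy decomposition of $\alpha^\vee+\alpha'^\vee$, and repeated application of Proposition~\ref{prop:delta}(\ref{item:reduce}) transports the hypothesis $e\in\delta_B(z_e^B)$ down to $\alpha^\vee+\alpha'^\vee\in\delta_B(z_{\alpha^\vee+\alpha'^\vee}^B)$. Now $\alpha$ genuinely is the first entry, Theorem~\ref{thm:orthogonality} applies to $\alpha^\vee+\alpha'^\vee$, and your bilinearity step together with Corollary~\ref{cor:relation} and Lemma~\ref{lem:stronglyorthogonal} go through unchanged.
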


\begin{proof}

Let $(\alpha_1,\ldots,\alpha_r)$ be a greedy decomposition of $e$. By the uniqueness of the greedy decomposition up to reordering we know that $\alpha=\alpha_i$ and $\alpha'=\alpha_j$ for some $1\leq i,j\leq r$. By the choice of $\alpha$ and $\alpha'$ we know that $i\neq j$. By replacing $\alpha$ and $\alpha'$ if necessary we may assume that $i<j$. By Proposition~\ref{prop:zd}(\ref{item:greedy}) we know that $(\alpha,\alpha')$ is a greedy decomposition of $\alpha^\vee+\alpha'^\vee$. By Theorem~\ref{thm:orthogonality} applied to the greedy decomposition $(\alpha,\alpha')$ of $\alpha^\vee+\alpha'^\vee$ it follows that for all $\beta\in\Delta(\alpha')$ the orthogonality relation $(\alpha,\beta)=0$ holds. By bilinearity this means $(\alpha,\alpha')=0$. Moreover, by Corollary~\ref{cor:relation} we know that $\alpha+\alpha'\notin R$. Lemma~\ref{lem:stronglyorthogonal} implies that $\alpha$ and $\alpha'$ are strongly orthogonal.
\end{proof}

\begin{rem}

Corollary~\ref{cor:stronglyorthogonal} says in particular that two different entries of a greedy decomposition of $e\in\delta_B\left(z_e^B\right)$ are distinct. There are no repeated entries in a greedy decomposition of $e$.

\end{rem}

\section{Exceptional roots}

In this section we introduce the class of exceptional roots. From the perspective of the proof of Theorem~\ref{thm:main}, these are the roots which cause the most trouble. We have to perform a type by type check on these roots in order to conclude in full generality. While the conclusion of Theorem~\ref{thm:main} is hard to check directly, a specific inequality for degrees of exceptional roots can be verified by going through the list of all exceptional roots (cf. Lemma~\ref{lem:technical}). This inequality is comparatively easy to check and amounts to solve a finite problem which can be done by hand. 

If the root system is of type $\mathsf{A}_n$, $\mathsf{B}_2$, $\mathsf{B}_3$, $\mathsf{C}_p$, $\mathsf{D}_3$, $\mathsf{D}_4$ or $\mathsf{G}_2$, 
\todo[color=green,disable]{I have to expand this list later, when Table~\ref{table:exceptional} is complete.}
then for every positive root $\varphi$ the root system $R(\varphi)$ (in particular $R$ itself) has no exceptional roots. All other types have exceptional roots (cf. Table~\ref{table:exceptional}). The reader who is only interested in these cases, may skip this section and go directly to the next section. It is easy to adapt the proof of Theorem~\ref{thm:main} to the case that there are no exceptional roots. Many of our considerations are then empty and the most technical part is superfluous while the main ideas stay the same.

The definition of exceptional roots is ad hoc -- created for the particular purpose of the proof of Theorem~\ref{thm:main}. 
%
%
We wish to relate exceptional roots to a more geometric and intuitive notion 
%
%
in a subsequent publication. But first, we need to carry out the foundational work. 
%

\todo[inline,color=green]{Note somewhere in the section on chain cascades that $R^\circ$ might be emtpy which happens if and only if $R$ is of type $\mathsf{A}_1$ or $\mathsf{A}_2$.

We want to say that to work out the table of exceptional roots it is convenient to use the tables in \cite{curvenbhd2}. We want to explain the situation in type $\mathsf{A}$ a bit more concretely since it is super simple.

I need an explanation of the notation in Table~\ref{table:exceptional} with reference to Bourbaki. Maybe even in the caption.

It makes sense to introduce the notion of boundary roots already in the section on chain cascade. It will be useful for the the reduction and extension in type $\mathsf{A}$.}

\begin{defn}

Let $\alpha$ be a positive roots. We say that $\alpha$ is an exceptional root if the following properties are satisfied:

\begin{itemize}

\item

The support of $\alpha$ satisfies $\Delta(\alpha)=\Delta$.

\item

There exists a simple root $\beta\in\Delta\setminus\Delta^\circ$ such that $(\alpha,\beta)=0$ and such that $\alpha$ is a maximal root of $\alpha^\vee+\beta^\vee$.

\end{itemize}

\end{defn}

\begin{fact}

Suppose there exists an exceptional root $\alpha$. Then the set $\Delta\setminus\Delta^\circ$ consists of a unique simple root $\beta$. The root $\alpha$ is $B$-cosmall and $\alpha$ is strongly orthogonal to $\beta$.

\end{fact}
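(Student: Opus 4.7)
My plan proceeds in three steps of increasing subtlety.

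\emph{$B$-cosmallness.} Since $\beta^\vee > 0$ in $H_2(G/B)$, we have $\alpha^\vee \leq \alpha^\vee + \beta^\vee$. Any positive root $\gamma$ with $\gamma \geq \alpha$ and $\gamma^\vee \leq \alpha^\vee$ then automatically satisfies $\gamma^\vee \leq \alpha^\vee + \beta^\vee$, so the maximality clause in the definition of exceptional forces $\gamma = \alpha$. This says exactly that $\alpha$ is a maximal root of $d(\alpha) = \alpha^\vee$ (viewed in $H_2(G/B)$), i.e. $\alpha$ is $B$-cosmall.

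\emph{Strong orthogonality of $\alpha$ and $\beta$.} Since $(\alpha,\beta) = 0$ by hypothesis, Lemma~\ref{lem:stronglyorthogonal} reduces the claim to showing $\alpha + \beta \notin R$. Suppose instead that $\alpha + \beta \in R$; being the sum of a positive root and a simple root it is necessarily positive, and Lemma~\ref{lem:roots} (with $(\alpha,\beta) \geq 0$) gives $(\alpha+\beta)^\vee < \alpha^\vee + \beta^\vee$. But then $\alpha + \beta > \alpha$ is a positive root whose coroot lies below $\alpha^\vee + \beta^\vee$, directly contradicting the maximality of $\alpha$ in the exceptional condition.

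\emph{Uniqueness of $\beta$.} I first note that $\alpha \neq \theta_1$: otherwise $(\alpha,\beta) = (\theta_1,\beta) > 0$ by definition of $\Delta \setminus \Delta^\circ$, contradicting $(\alpha,\beta) = 0$. Hence $\alpha < \theta_1$ strictly. Now suppose for contradiction that some $\beta' \in \Delta\setminus\Delta^\circ$ distinct from $\beta$ exists. The existence of two distinct simple roots non-orthogonal to $\theta_1$---equivalently, two vertices of the finite Dynkin diagram adjacent to the affine node $-\theta_1$ in the extended Dynkin diagram---occurs only when $R$ is of type $\mathsf{A}_n$ with $n \geq 2$; but in type $\mathsf{A}_n$ the only positive root with full support is $\theta_1 = \alpha_1 + \cdots + \alpha_n$ itself, which would force $\alpha = \theta_1$ and contradict $\alpha < \theta_1$. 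Therefore $\Delta \setminus \Delta^\circ = \{\beta\}$.

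The hard part will be this last step: it implicitly invokes the classification of irreducible root systems (or equivalently a direct inspection of the highest root in each type via Bourbaki's tables) in order to isolate $\mathsf{A}_n$ as the sole type with $|\Delta\setminus\Delta^\circ| \geq 2$. The first two steps are purely formal consequences of the definition of exceptional together with Lemmas~\ref{lem:roots} and~\ref{lem:stronglyorthogonal}, but I see no case-free argument for the uniqueness assertion.
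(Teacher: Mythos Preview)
Your proof is correct and follows essentially the same strategy as the paper's: both reduce uniqueness of $\beta$ to the classification fact that $|\Delta\setminus\Delta^\circ|\geq 2$ occurs only in type $\mathsf{A}_n$ (where the sole root of full support is $\theta_1$), and both deduce $B$-cosmallness immediately from $\alpha$ being a maximal root of the larger degree $\alpha^\vee+\beta^\vee$. The one minor difference is in the strong-orthogonality step: the paper observes that $(\alpha,\beta)$ is a greedy decomposition of $\alpha^\vee+\beta^\vee$ and then invokes Corollary~\ref{cor:relation} to get $\alpha+\beta\notin R$, whereas you bypass the greedy-decomposition machinery and apply Lemma~\ref{lem:roots} directly---but since Corollary~\ref{cor:relation} is itself proved via Lemma~\ref{lem:roots}, the two arguments are really the same computation packaged differently.
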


\begin{proof}

The set $\Delta\setminus\Delta^\circ$ consists of a unique simple root $\beta$ if and only if $R$ is not of type $\mathsf{A}$ (cf. Table~\ref{table:exceptional}). Let $\alpha$ be an exceptional root. Then $R$ cannot be of type $\mathsf{A}$ since otherwise $\alpha=\theta_1$ where $\theta_1$ is the highest root of $R$, and thus $(\alpha,\beta)\neq 0$ for all $\beta\in\Delta\setminus\Delta^\circ$. An exceptional root $\alpha$ is a maximal root of $\alpha^\vee+\beta^\vee$, in particular $B$-cosmall. Moreover, it is clear that $(\alpha,\beta)$ is a greedy decomposition of $\alpha^\vee+\beta^\vee$. Therefore it follows from Corollary~\ref{cor:relation} that $\alpha+\beta\notin R$. Lemma~\ref{lem:stronglyorthogonal} then implies that $\alpha$ and $\beta$ are strongly orthogonal.
\end{proof}

\begin{rem}

To create Table~\ref{table:exceptional} one basically has to go through the list of all $B$-cosmall roots which have full support and check if they are orthogonal to a root in $\Delta\setminus\Delta^\circ$. The examples in \cite[Example~4.1, 4.2 and 4.3]{curvenbhd2} where the reader can find a complete list of all $B$-cosmall roots in the non-simply laced types are helpful to accomplish this task.

\end{rem}

\todo[inline,color=green]{This (the previous remark on the creation of the table) is supposed to mean that we can replace in the definition of exceptional roots the requirement \enquote{$\alpha$ is a maximal root of $\alpha^\vee+\beta^\vee$} by the requirement \enquote{$\alpha$ is $B$-cosmall}. Before I say something false, I leave it as it is until I find a conceptional proof.}

\begin{table}

\begin{tabular}{lll}

Type & Exceptional roots & $\Delta\setminus\Delta^\circ$\\ \hline\hline
$\mathsf{A}_n$ & None & $\alpha_1\,$, $\alpha_n$ \\
$\mathsf{B}_2\,$, $\mathsf{B}_3$ & None & $\alpha_2$ \\
$\mathsf{B}_\ell\,$, $\ell\geq 4$ & 
$\sum_{1\leq i<j}\alpha_i+2\sum_{j\leq i\leq\ell}\alpha_i$ where $4\leq j\leq\ell$
& $\alpha_2$ \\
$\mathsf{C}_p$ & None & $\alpha_1$ \\
$\mathsf{D}_4$ & None & $\alpha_2$ \\
$\mathsf{D}_p\,$, $p>4$ & $\sum_{1\leq i<j}\alpha_i+2\sum_{j\leq i<p-1}\alpha_i+\alpha_{p-1}+\alpha_p$ where $4\leq j<p$ & $\alpha_2$ \\
$\mathsf{E}_6$ & $111211$, $112211$, $111221$, $112221$ & $\alpha_2$ \\
$\mathsf{E}_7$ & $1122111$, $1122211$, $1122221$, $1123211$, $1123221$, & $\alpha_1$ \\ 
& $1223211$, $1123321$, $1223221$, $1223321$, $1224321$ & \\
$\mathsf{E}_8$ & $11122221$, $11222221$, $11232221$, $12232221$, & $\alpha_8$\\
& $11233221$, $12233221$, $11233321$, $12243221$, & \\ 
& $12233321$, $12343221$, $12243321$, $22343221$, & \\
& $12343321$, $12244321$, $22343321$, $12344321$, & \\
& $12354321$, $22344321$, $13354321$, $22354321$, & \\
& $23354321$, $22454321$, $23454321$, $23464321$, & \\
& $23465321$, $23465421$ & \\
$\mathsf{F}_4$ & $1222$, $1242$ & $\alpha_1$ \\
$\mathsf{G}_2$ & None & $\alpha_2$

\end{tabular}

\caption{List of exceptional roots. We denote by $\Delta=\{\alpha_1,\alpha_2,\alpha_3,\ldots\}$ the set of simple roots with the numbering as in \cite[Plate I-IX]{bourbaki_roots}. We denote by $abc\cdots$ the root $a\alpha_1+b\alpha_2+c\alpha_3+\cdots$.}

\label{table:exceptional}

\end{table}

\begin{rem}

By inspecting Table~\ref{table:exceptional}, we see that all exceptional roots are long. Moreover, we see that if there exists an exceptional root, then there also exists a unique highest exceptional root $\alpha$, in the sense that $\alpha$ is an exceptional root and that $\alpha'\leq\alpha$ for all exceptional roots $\alpha'$.

\end{rem}

\todo[inline,color=green]{Lemma~\ref{lem:technical} has been checked in type $\mathsf{F}_4$. We have $\{\gamma\in\Delta\mid(\alpha,\gamma)=0\}=\{\alpha_1\}$ for all exceptional roots $\alpha$. In type $\mathsf{E}_6,\ldots$

Highest exceptional root exists. All exceptional roots are long. 

If $R$ is simply laced we declare all roots to be long and none to be short. If there is only one root length, then the highest short root does not exist. If we speak about the highest short root, we mean that there are two root lengths. Footnote at the first occurrence of the term \enquote{long}, in the example on $B$-cosmall roots (long roots are $B$-cosmall).}

\begin{lem}
\label{lem:technical}

Let $\alpha$ be an exceptional root. Let $\beta$ be the unique element of $\Delta\setminus\Delta^\circ$. Let $\varphi$ be the unique locally high root such that $\Delta(\varphi)$ is the unique connected component of $\{\gamma\in\Delta\mid(\alpha,\gamma)=0\}$ which contains $\beta$. Then $R(\varphi)$ is a root system of type $\mathsf{A}$ and $\beta$ is a boundary root of $\Delta(\varphi)$. In Example~\ref{ex:reduction} we have associated to a root system of type $\mathsf{A}$ with highest root $\varphi$ and to a boundary root $\beta$ of its set of simple roots a positive root $\alpha_{\beta,\varphi}$. Let $\alpha_{\beta,\varphi}\in R(\varphi)$ be the positive root associated to $\beta$ and $\varphi$ in this way. Then we have the inequality
\begin{equation}
\label{eq:inequality}
\alpha^\vee+\alpha_{\beta,\varphi}^\vee\leq\theta_1^\vee
\end{equation}
where $\theta_1$ denotes as usual the highest root of $R$. By definition of $\alpha_{\beta,\varphi}$ it is clear that $\beta\leq\alpha_{\beta,\varphi}$. Therefore Inequality~(\ref{eq:inequality}) means in particular that
\begin{equation}
\label{eq:inequality2}
\alpha^\vee+\beta^\vee\leq\theta_1^\vee\,.
\end{equation}
Inequality~(\ref{eq:inequality}) and (\ref{eq:inequality2}) are the same if and only if $\alpha_{\beta,\varphi}=\beta$ if and only if $R(\varphi)$ is of type $\mathsf{A}_1$ or $\mathsf{A}_2$.

\end{lem}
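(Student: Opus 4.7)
The plan is to verify the lemma by direct inspection of Table~\ref{table:exceptional}, processing the types that contain exceptional roots ($\mathsf{B}_\ell$ for $\ell \geq 4$; $\mathsf{D}_p$ for $p > 4$; $\mathsf{E}_{6,7,8}$; $\mathsf{F}_4$) one at a time. In each such type the simple root $\beta \in \Delta \setminus \Delta^\circ$ is uniquely determined by the fact stated just after the definition of exceptional roots, and can be read off the third column of the table.

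For a fixed exceptional root $\alpha$, expanded as an explicit integral combination of simple roots, the first step is to compute the pairings $(\alpha, \gamma)$ for all $\gamma \in \Delta$ by means of the Cartan form, and so to identify the set $S = \{\gamma \in \Delta \mid (\alpha, \gamma) = 0\}$. By the defining property of exceptional roots we have $\beta \in S$, so there is a well-defined connected component $\Delta(\varphi) \subseteq S$ containing $\beta$. The two structural claims — that $R(\varphi)$ is of type $\mathsf{A}$ and that $\beta$ is one of the two endpoints of the chain $\Delta(\varphi)$ — are then read off directly from the Dynkin diagram of $R$ together with the positions in $\Delta$ of the simple roots lying in $S$.

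Once $\Delta(\varphi)$ is pinned down, $\varphi$ equals the sum of its simple roots, $R(\varphi) \cong \mathsf{A}_m$ with $m = \mathrm{card}(\Delta(\varphi))$, and by Example~\ref{ex:reduction} the root $\alpha_{\beta,\varphi}$ is the unique positive root of $R(\varphi)$ whose support is the chain of $\lceil m/2\rceil$ consecutive simple roots starting at $\beta$. The coroot inequality $\alpha^\vee + \alpha_{\beta,\varphi}^\vee \leq \theta_1^\vee$ is then checked by writing both sides as explicit $\mathbb{Z}$-linear combinations of simple coroots and comparing coefficients. The coefficients of $\theta_1^\vee$ are tabulated in \cite{bourbaki_roots}, and in non-simply laced types one must take care that $\alpha^\vee = 2\alpha/(\alpha,\alpha)$, so that short roots become long coroots and conversely; this is relevant in $\mathsf{B}_\ell$ and $\mathsf{F}_4$, where $\alpha$ is long (by the preceding remark) but some of the simple roots in its expansion are short.

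For the infinite families $\mathsf{B}_\ell$ and $\mathsf{D}_p$ the verification is uniform in the parameter $j$ indexing the exceptional roots, so a single computation handles the whole family. The exceptional types $\mathsf{E}_6$, $\mathsf{E}_7$ and $\mathsf{F}_4$ involve only a handful of roots and are immediate. The main obstacle is the bookkeeping in $\mathsf{E}_8$, where $26$ exceptional roots have to be checked; I would organize this by indexing the cases according to the location of the connected component $\Delta(\varphi)$ inside the $\mathsf{E}_8$-diagram, using the observation that a single exceptional root of maximal height (namely $23465421$) dominates the behaviour, so that patterns across the list can be exploited to keep the verification compact.
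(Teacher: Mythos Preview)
Your proposal is correct and matches the paper's approach exactly: the paper also states that no conceptual proof is known and that the lemma must be verified by running through Table~\ref{table:exceptional} case by case, with the $\mathsf{B}_\ell$ family worked out explicitly as an illustration. Your description of the methodology (computing $\{\gamma\in\Delta\mid(\alpha,\gamma)=0\}$, locating $\Delta(\varphi)$, identifying $\alpha_{\beta,\varphi}$, and comparing coroot coefficients) is precisely what the paper does in its sample computation, and your observation that the infinite families $\mathsf{B}_\ell$ and $\mathsf{D}_p$ admit a uniform treatment in the parameter $j$ agrees with the paper's handling of $\mathsf{B}_\ell$.
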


\begin{proof}

We do not have a conceptional proof of this lemma. The only possibility we know to prove it is to go through the list of all exceptional roots in Table~\ref{table:exceptional} and check the assertions case by case. Once we know that $R(\varphi)$ is of type $\mathsf{A}_n$, it is clear that Inequality~(\ref{eq:inequality}) and (\ref{eq:inequality2}) are the same if and only if $n\in\{1,2\}$ since the support of $\alpha_{\beta,\varphi}$ has by definition cardinality $\lceil n/2\rceil$ (cf. Example~\ref{ex:reduction}).

To illustrate the techniques, we exemplify the necessary computations in type $\mathsf{B}_\ell$ where $\ell\geq 4$. This example has the feature that the support of $\varphi$ and thus the support of $\alpha_{\beta,\varphi}$ become arbitrary large if $\ell$ is sufficiently large.

Suppose that $R$ is of type $\mathsf{B}_\ell$ where $\ell\geq 4$. Let $\alpha,\beta$ and $\varphi$ be as in the statement. By Table~\ref{table:exceptional} we know that $\beta=\alpha_2$ and that there exists a $4\leq j\leq\ell$ such that
$$
\alpha=\sum_{1\leq i<j}\alpha_i+2\sum_{j\leq i\leq\ell}\alpha_i\,.
$$
A simple computation shows that
$
\varphi=\alpha_2+\cdots+\alpha_{j-2}
$.
Therefore $R(\varphi)$ is of type $\mathsf{A}_{j-3}$ and $\beta$ is indeed a boundary root of $\Delta(\varphi)$. By definition, we must have
$$
\alpha_{\beta,\varphi}=\sum_{i=2}^{\lfloor j/2\rfloor}\alpha_i
$$
and thus
$$
\alpha^\vee+\alpha_{\beta,\varphi}^\vee=\alpha_1^\vee+2\sum_{i=2}^{\lfloor j/2\rfloor}\alpha_i^\vee+\sum_{\lfloor j/2\rfloor<i<j}\alpha_i^\vee+2\sum_{j\leq i<\ell}\alpha_i^\vee+\alpha_\ell^\vee\,.
$$
On the other hand, we have
$$
\theta_1^\vee=\alpha_1^\vee+2\alpha_2^\vee+\cdots+2\alpha_{\ell-1}^\vee+\alpha_\ell^\vee\,.
$$
Therefore, Inequality~(\ref{eq:inequality}) is obviously satisfied. We see that we even have a strict inequality $\alpha^\vee+\alpha_{\beta,\varphi}^\vee<\theta_1^\vee$ (cf. Remark~\ref{rem:strict}).
\end{proof}

\begin{lem}
\label{lem:technical2}

Let $\alpha$ be an exceptional root. Let $\varphi_1,\ldots,\varphi_k$ be locally high roots such that $\Delta(\varphi_1),\ldots,\Delta(\varphi_k)$ are the distinct connected components of $\{\gamma\in\Delta\mid(\alpha,\gamma)=0\}$. Then we have the inequality
\begin{equation}
\label{eq:inequality3}
\alpha^\vee+\sum_{i=1}^k d_{G(\varphi_i)/B(\varphi_i)}\leq d_{G/B}\,.
\end{equation}

\end{lem}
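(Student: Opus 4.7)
The plan is to reduce the global inequality to the coroot inequality of Lemma~\ref{lem:technical} by using the cascade decomposition of $d_{G/B}$ and the monotonicity of degrees under sub-root-systems (Corollary~\ref{cor:combinatorial}). Let $\beta$ denote the unique element of $\Delta\setminus\Delta^\circ$; reordering, I would arrange that $\beta\in\Delta(\varphi_1)$, so that Lemma~\ref{lem:technical} applies and tells us that $R(\varphi_1)$ is of type $\mathsf{A}$, $\beta$ is a boundary root of $\Delta(\varphi_1)$, and
$$\alpha^\vee+\alpha_{\beta,\varphi_1}^\vee\leq\theta_1^\vee.$$
Let $\psi_1,\ldots,\psi_m$ be locally high roots spanning the distinct connected components of $\Delta^\circ$; then by the recursive definition of the cascade and Corollary~\ref{cor:identity},
$$d_{G/B}=\theta_1^\vee+\sum_{j=1}^m d_{G(\psi_j)/B(\psi_j)}.$$
So it suffices to show
$$\sum_{i=1}^k d_{G(\varphi_i)/B(\varphi_i)}\leq \alpha_{\beta,\varphi_1}^\vee+\sum_{j=1}^m d_{G(\psi_j)/B(\psi_j)}.\qquad(\star)$$

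Next I would place the pieces on the left-hand side inside the connected components of $\Delta^\circ$. For $i\geq 2$, the set $\Delta(\varphi_i)\subseteq\Delta\setminus\{\beta\}=\Delta^\circ$ is connected, hence sits in exactly one $\Delta(\psi_{j_i})$. For $i=1$, since $R(\varphi_1)$ is of type $\mathsf{A}$ and $\beta$ is a boundary root, the set $\Delta(\varphi_1)\setminus\{\beta\}$ is connected (possibly empty) and contained in $\Delta^\circ$, hence (when non-empty) in a single $\Delta(\psi_{j_1})$. Applying Example~\ref{ex:reduction} to $(R(\varphi_1),\varphi_1,\beta)$ gives the key identity
$$d_{G(\varphi_1)/B(\varphi_1)}=\alpha_{\beta,\varphi_1}^\vee+d_{G(\varphi_1-\beta)/B(\varphi_1-\beta)},$$
with the convention that the last term is $0$ when $\Delta(\varphi_1)=\{\beta\}$.

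The final step would be a coefficient-wise comparison inside each $\Delta(\psi_j)$. The supports $\Delta(\varphi_1-\beta)$ and $\Delta(\varphi_i)$ ($i\geq 2$) are pairwise disjoint (as the $\Delta(\varphi_i)$ are the components of a set and $\Delta(\varphi_1-\beta)=\Delta(\varphi_1)\setminus\{\beta\}$), so grouping them by the $\Delta(\psi_j)$ in which they sit and invoking Corollary~\ref{cor:combinatorial} on each inclusion gives
$$d_{G(\varphi_1-\beta)/B(\varphi_1-\beta)}+\sum_{i=2}^k d_{G(\varphi_i)/B(\varphi_i)}\leq \sum_{j=1}^m d_{G(\psi_j)/B(\psi_j)}.$$
Adding $\alpha_{\beta,\varphi_1}^\vee$ to both sides and using the Example~\ref{ex:reduction} identity rewrites this as $(\star)$; adding $\alpha^\vee$ and invoking Lemma~\ref{lem:technical} yields the desired inequality.

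The step I expect to require most care is the coefficient-wise bound inside each $\Delta(\psi_j)$, since several distinct $\varphi_i$'s (plus possibly $\varphi_1-\beta$) may land in the same component $\Delta(\psi_j)$; the argument must use the pairwise disjointness of their supports to avoid double-counting, and must also handle the degenerate case $\Delta(\varphi_1)=\{\beta\}$ via the convention of Example~\ref{ex:reduction}. Everything else is a bookkeeping composition of Lemma~\ref{lem:technical}, Corollary~\ref{cor:identity}, Example~\ref{ex:reduction}, and Corollary~\ref{cor:combinatorial}.
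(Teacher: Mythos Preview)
Your proof is correct and follows essentially the same route as the paper's own argument: both reduce the inequality to Lemma~\ref{lem:technical} via the cascade decomposition $d_{G/B}=\theta_1^\vee+\sum_j d_{G(\psi_j)/B(\psi_j)}$ (Corollary~\ref{cor:identity}), use Example~\ref{ex:reduction} to peel $\alpha_{\beta,\varphi_1}^\vee$ off $d_{G(\varphi_1)/B(\varphi_1)}$, and then bound the remaining pieces inside the components $\Delta(\psi_j)$ via Corollary~\ref{cor:combinatorial} together with the pairwise disjointness of the supports $\Delta(\varphi_1-\beta),\Delta(\varphi_2),\ldots,\Delta(\varphi_k)$. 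The paper formalizes the last step by introducing an explicit assignment $f\colon\{1,\ldots,k\}\to\{1,\ldots,k'\}$ and summing over the fibers $f^{-1}(i')$, which is precisely the bookkeeping you flag as the point requiring most care; your treatment of the degenerate case $\Delta(\varphi_1)=\{\beta\}$ via the convention of Example~\ref{ex:reduction} also matches the paper's handling.
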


\begin{proof}

Let $\alpha,\varphi_1,\ldots,\varphi_k$ be as in the statement. Let $\beta$ be the unique element of $\Delta\setminus\Delta^\circ$. Let $1\leq j\leq k$ be the unique index such that $\beta\in\Delta(\varphi_j)$. For all $1\leq i\leq k$ we define $\phi_i=\varphi_i$ if $i\neq j$ and $\phi_j=\varphi_j-\beta$. Let $\psi_1,\ldots,\psi_{k'}$ be locally high roots such that $\Delta(\psi_1),\ldots,\Delta(\psi_{k'})$ are the distinct connected components of $\Delta^\circ$. 

By Lemma~\ref{lem:technical}, we know that $\Delta(\phi_j)=\Delta(\varphi_j)\setminus\{\beta\}$.\footnote{If $\phi_j=0$ we obviously set $\Delta(\phi_j)=\emptyset$.} It follows that $\beta\notin\Delta(\phi_i)$ or equivalently $\Delta(\phi_i)\subseteq\Delta^\circ$ for all $1\leq i\leq k$. Thus, we can find for each $1\leq i\leq k$ a unique $1\leq f(i)\leq k'$ such that $\Delta(\phi_i)\subseteq\Delta(\psi_{f(i)})$. In this way, we define a function $f\colon\{1,\ldots,k\}\to\{1,\ldots,k'\}$. By Corollary~\ref{cor:combinatorial}, we have for all $1\leq i\leq k$ the inequality 
$$
d_{G(\phi_i)/B(\phi_i)}\leq d_{G(\psi_{f(i)})/B(\psi_{f(i)})}\,.\footnote{If $\phi_j=0$ we set $d_{G(\phi_j)/B(\phi_j)}=0$ as in Example~\ref{ex:reduction}. The inequality is then obviously satisfied.}
$$
By definition, $\Delta(\varphi_1),\ldots,\Delta(\varphi_k)$ are pairwise totally disjoint, in particular $\Delta(\phi_1),\ldots,\Delta(\phi_k)$ are pairwise totally disjoint. It is very clear (for example because of Corollary~\ref{cor:identity}) that 
$$
\Delta\left(d_{G(\phi_i)/B(\phi_i)}\right)=\Delta(\phi_i)\text{ for all }1\leq i\leq k\,.
$$
Therefore the disjointness results in the inequality
$$
\sum_{i\in f^{-1}(i')}d_{G(\phi_i)/B(\phi_i)}\leq d_{G(\psi_{i'})/B(\psi_{i'})}\text{ for all }1\leq i'\leq k'\,.
$$
Summation over $1\leq i'\leq k'$ eventually leads to the inequality
\begin{equation}
\label{eq:help}
\sum_{i=1}^k d_{G(\phi_i)/B(\phi_i)}\leq\sum_{i'=1}^{k'}d_{G(\psi_{i'})/B(\psi_{i'})}\,.
\end{equation}
By Corollary~\ref{cor:identity} and Remark~\ref{rem:disjoint} it is clear that
\begin{equation}
\label{eq:help2}
d_{G/B}=\theta_1^\vee+\sum_{i'=1}^{k'}d_{G(\psi_{i'})/B(\psi_{i'})}\,.
\end{equation}
If we plug in Equality~(\ref{eq:help2}) in Inequality~(\ref{eq:help}) and use Inequality~(\ref{eq:inequality}) from Lemma~\ref{lem:technical}, we obtain
$$
\alpha^\vee+\alpha_{\beta,\varphi_j}^\vee+\sum_{i=1}^k d_{G(\phi_i)/B(\phi_i)}\leq d_{G/B}
$$
Finally, we see from Example~\ref{ex:reduction} that
$$
\sum_{i=1}^k d_{G(\varphi_i)/B(\varphi_i)}=\alpha_{\beta,\varphi_j}^\vee+\sum_{i=1}^k d_{G(\phi_i)/B(\phi_i)}\,.
$$
This completes the proof.  
\end{proof}

\todo[inline,color=green]{Note the disjointness of the union after the definition of $\mathcal{B}_R$. Classification of very $P$-cosmall roots. It's always the highest root if $P$ is not maximal. As an example at the relevant position.}

\begin{rem}
\label{rem:strict}

We have formulated Inequality~(\ref{eq:inequality}) in Lemma~\ref{lem:technical} in the way we wanted to use it in the proof of Lemma~\ref{lem:technical2}. But it is by no means optimal. In fact, the reader can convince himself that much more is true. Let $\alpha$ be an exceptional root and let $\varphi$ be associated to $\alpha$ as in the statement of Lemma~\ref{lem:technical}. Then we have $\alpha^\vee+\varphi^\vee<\theta_1^\vee$. This implies in particular that Inequality~(\ref{eq:inequality}) is strict since we obviously have $\alpha_{\beta,\varphi}^\vee\leq\varphi^\vee$. Consequently, Inequality~(\ref{eq:inequality3}) is also strict. 

\end{rem}

\section{Minimal degrees in quantum products: proof of the main theorem}
\label{sec:main}

The final section is devoted to the proof of the main theorem, which says that any minimal degree in a quantum product of two Schubert cycles is bounded by $d_X$ {\color{black} (Theorem~\ref{thm:fulton} and Theorem~\ref{thm:main})}. After all the preliminary work done up to now, the proof of the main theorem remains a pure formality. We just have to put together the statements from the previous sections. The proof proceeds by induction on the cardinality $N$ of $\Delta$. First, we clarify which statements $(\mathrm{H}_n)$ where $1\leq n\leq N$ are to be proved by induction and draw preliminary conclusions. After a series of reductions, we see that is suffices to prove a simpler statement $(\mathrm{A}_N)$ under the assumption of $(\mathrm{H}_{N-1})$. The heart of the induction step can be found in Lemma~\ref{lem:heart}.

Let $N=\mathrm{card}(\Delta)$. For all $1\leq n\leq N$, we define the following statements $(\mathrm{H}_n)$ and $(\mathrm{A}_N)$.
\begin{align*}
(\mathrm{H}_n)\Longleftrightarrow &\left\{
\begin{array}{l}    
\text{For all }\varphi\in R^+\text{ and all }e\in H_2(G(\varphi)/B(\varphi))\text{ such that}\\
\mathrm{card}(\Delta(\varphi))\leq n\text{ and such that }e\in\delta_{B(\varphi)}\left(z_e^{B(\varphi)}\right)\text{ the}\\
\text{inequality }e\leq d_{G(\varphi)/B(\varphi)}\text{ is satisfied.}
\end{array}
\right.\\
(\mathrm{A}_N)\Longleftrightarrow &\left\{
\begin{array}{l}
\text{For all degrees }e\in H_2(G/B)\text{ such that }\Delta(e)=\Delta\text{ and }\\
\text{such that }e\in\delta_B(z_e^B)\text{ the inequality }e\leq d_{G/B}\text{ holds.}
\end{array}
\right.
\end{align*}

\begin{rem}
\label{rem:base}

The statement $(\mathrm{H}_1)$ is obviously satisfied. Indeed, let $\beta$ be a simple root. By Remark~\ref{rem:uniqueelement} and Convention~\ref{conv:uniqueelement} we know that $\delta_{B(\beta)}$ is a function with values in $\mathbb{Z}$. Let $e\in H_2(G(\beta)/B(\beta))$ be a degree such that $\delta_{B(\beta)}(z_e^{B(\beta)})=e$. Since $z_e^{B(\beta)}\preceq w_o(\beta)$ it follows that $e\leq d_{G(\beta)/B(\beta)}$ (cf. Proposition~\ref{prop:delta}(\ref{item:usmallerv})). The statement $(\mathrm{H}_1)$ will serve us as induction base in the proof of Theorem~\ref{thm:main}.

\end{rem}

\begin{lem}
\label{lem:equivalence-2}

Let $N=\mathrm{card}(\Delta)$. Suppose that $(\mathrm{H}_{N-1})$ is satisfied. Let $e\in H_2(G/B)$ be a connected degree such that $\Delta(e)\neq\Delta$ and such that $e\in\delta_B(z_e^B)$. Then we have the inequality $e\leq d_{G(\alpha(e))/B(\alpha(e))}\leq d_{G/B}$.

\end{lem}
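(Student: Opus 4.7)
The plan is to reduce the statement to an application of the induction hypothesis $(\mathrm{H}_{N-1})$ on the smaller root subsystem $R(\alpha(e))$, via the compatibility between global and local quantities established in Section~\ref{sec:local}. Let $\varphi = \alpha(e)$. The first step is to observe that, because $P = B$, the naive and extended support of $e$ coincide, and since $e$ is connected, Proposition~\ref{prop:connecteddegree} together with the definition of $\widetilde{\Delta}(e)$ gives $\Delta(e) = \widetilde{\Delta}(e) = \Delta(\varphi)$. Since $\Delta(e) \neq \Delta$ by assumption, we get $\mathrm{card}(\Delta(\varphi)) \leq N - 1$, so we are in the regime where the induction hypothesis $(\mathrm{H}_{N-1})$ is available for $R(\varphi)$.

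Next I would show that the data $(e, z_e^B)$ actually lives in the local situation attached to $X(\varphi)$. By Proposition~\ref{prop:support}(\ref{item:supportofz}) we have $\Delta(z_e^B) = \widetilde{\Delta}(e) = \Delta(\varphi)$, so $z_e^B \in W_{G(\varphi)}$. Fact~\ref{fact:local}, applied to the connected degree $e$, then gives the identification $z_e^B = z_e^{B(\varphi)}$. Moreover, Theorem~\ref{thm:compatibility} (for the parabolic $B \subseteq G$, using $z_e^B \in W_{G(\varphi)}$) yields
\[
\delta_B\bigl(z_e^B\bigr) = \delta_{B(\varphi)}\bigl(z_e^B\bigr) = \delta_{B(\varphi)}\bigl(z_e^{B(\varphi)}\bigr).
\]
The hypothesis $e \in \delta_B(z_e^B)$ therefore translates into $e \in \delta_{B(\varphi)}(z_e^{B(\varphi)})$, and of course $e \in H_2(G(\varphi)/B(\varphi))$ under the usual identification $H_2(X(\varphi)) \subseteq H_2(X)$.

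The inductive hypothesis $(\mathrm{H}_{N-1})$ now applies to the root $\varphi$ and the degree $e$, delivering the first inequality $e \leq d_{G(\varphi)/B(\varphi)}$. The second inequality $d_{G(\varphi)/B(\varphi)} \leq d_{G/B}$ is precisely the content of Corollary~\ref{cor:combinatorial} specialized to $P = B$ (equivalently, it is the monotonicity $d_{X(\varphi)} \leq d_X$ proved there). Concatenating the two inequalities completes the proof.

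I do not expect any serious obstacle: the only subtleties are the bookkeeping of supports (to locate $e$ and $z_e^B$ inside the sublattice and subgroup attached to $\varphi = \alpha(e)$) and the invocation of $(\mathrm{H}_{N-1})$ in its correctly parametrized form. Both have already been isolated as the clean statements Fact~\ref{fact:local}, Theorem~\ref{thm:compatibility}, and Corollary~\ref{cor:combinatorial}, so the argument really is just a routine assembly of these ingredients.
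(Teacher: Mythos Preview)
Your proof is correct and follows essentially the same route as the paper: use Fact~\ref{fact:local} to identify $z_e^B=z_e^{B(\alpha(e))}$, invoke Theorem~\ref{thm:compatibility} to transfer the membership $e\in\delta_B(z_e^B)$ to the local setting, apply $(\mathrm{H}_{N-1})$, and finish with Corollary~\ref{cor:combinatorial}. You spell out a little more carefully why $z_e^B\in W_{G(\varphi)}$ (via Proposition~\ref{prop:support}(\ref{item:supportofz})) so that Theorem~\ref{thm:compatibility} applies, which is a reasonable clarification.
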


\begin{proof}

Let $e\in H_2(G/B)$ be a degree as in the statement. By Fact~\ref{fact:local} we have $z_e^B=z_e^{B(\alpha(e))}$. Thus, Theorem~\ref{thm:compatibility} implies that $e\in\delta_{B(\alpha(e))}\left(z_e^{B(\alpha(e))}\right)$. Since $\Delta(e)\neq\Delta$ by assumption, the statement $(\mathrm{H}_{N-1})$ applies and implies that $e\leq d_{G(\alpha(e))/B(\alpha(e))}$. The very last inequality $d_{G(\alpha(e))/B(\alpha(e))}\leq d_{G/B}$ follows from Corollary~\ref{cor:combinatorial} applied to $P=B$ and $\varphi=\alpha(e)$.
\end{proof}

\begin{lem}
\label{lem:equivalence-1}

Let $N=\mathrm{card}(\Delta)$. Suppose that $(\mathrm{H}_{N-1})$ is satisfied. Let $e\in H_2(G/B)$ be a disconnected degree such that $e\in\delta_B(z_e^B)$. Then we have the inequality $e\leq d_{G/B}$.

\end{lem}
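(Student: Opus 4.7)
The plan is to decompose $e$ along the connected components of $\widetilde\Delta(e)$, control each piece by the induction hypothesis, and reassemble using the combinatorial inequality of Corollary~\ref{cor:combinatorial}.

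First, I would fix a greedy decomposition $(\alpha_1,\ldots,\alpha_r)$ of $e$, write the disjoint decomposition $\widetilde\Delta(e)=C_1\sqcup\cdots\sqcup C_k$ into connected components (with $k\geq 2$ since $e$ is disconnected), and, for each $1\leq j\leq k$, let $\varphi_j$ be the highest root of the (irreducible) root subsystem with simple roots $C_j$, so that $\varphi_j$ is locally high and $\Delta(\varphi_j)=C_j$. Since each $\Delta(\alpha_i)$ is connected and contained in $\widetilde\Delta(e)$, there is a unique $j(i)$ with $\Delta(\alpha_i)\subseteq C_{j(i)}$; define $e_j=\sum_{j(i)=j}\alpha_i^\vee\in H_2(G(\varphi_j)/B(\varphi_j))$. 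By Proposition~\ref{prop:zd}(\ref{item:greedy}) the sub-tuple $(\alpha_i:j(i)=j)$ is a greedy decomposition of $e_j$, so $\widetilde\Delta(e_j)=C_j$ and in particular $e_j$ is connected with $\Delta(e_j)=C_j\subsetneq\Delta$.

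Next I would establish that $e_j\in\delta_{B(\varphi_j)}\!\left(z_{e_j}^{B(\varphi_j)}\right)$. Starting from the hypothesis $e\in\delta_B(z_e^B)$, repeatedly applying Proposition~\ref{prop:delta}(\ref{item:reduce}) to strip off the roots $\alpha_i$ with $j(i)\neq j$ shows that $e_j\in\delta_B(z_{e_j}^B)$; then Fact~\ref{fact:local} gives $z_{e_j}^B=z_{e_j}^{B(\varphi_j)}$, and Theorem~\ref{thm:compatibility} transports the statement to the local distance function, yielding $e_j\in\delta_{B(\varphi_j)}\!\left(z_{e_j}^{B(\varphi_j)}\right)$. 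Since $\mathrm{card}(\Delta(\varphi_j))=\mathrm{card}(C_j)\leq N-1$ (as $k\geq 2$), the induction hypothesis $(\mathrm{H}_{N-1})$ applies to the pair $(\varphi_j,e_j)$ and delivers $e_j\leq d_{G(\varphi_j)/B(\varphi_j)}$. Summation gives
$$
e=\sum_{j=1}^{k}e_j\leq\sum_{j=1}^{k}d_{G(\varphi_j)/B(\varphi_j)}\,.
$$

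Finally, I would compare the right-hand side to $d_{G/B}$ by inspecting coefficients of the simple coroots. The sets $C_1,\ldots,C_k$ are pairwise disjoint and each $d_{G(\varphi_j)/B(\varphi_j)}\in H_2(G(\varphi_j)/B(\varphi_j))$ is supported on $C_j$, so for any $\beta\in\Delta$ at most one summand contributes to the $\beta^\vee$-coefficient: if $\beta\in C_j$ the contribution equals $d_{G(\varphi_j)/P(\varphi_j)_\beta}$, and if $\beta\notin\bigcup_j C_j$ the contribution is $0$. By Theorem~\ref{thm:thesis_main} applied both to $G/B$ and to $G(\varphi_j)/B(\varphi_j)$, this coefficient-wise comparison is exactly the combinatorial inequality
$$
\sum_{\alpha\in C_{R(\varphi_j)}(\beta)}(\omega_\beta,\alpha^\vee)\leq\sum_{\alpha\in C_R(\beta)}(\omega_\beta,\alpha^\vee)\qquad(\beta\in C_j)\,,
$$
which is the content of Corollary~\ref{cor:combinatorial}. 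Hence $\sum_{j}d_{G(\varphi_j)/B(\varphi_j)}\leq d_{G/B}$, and combining with the previous inequality finishes the proof.

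The technically delicate step is the passage from $e\in\delta_B(z_e^B)$ to $e_j\in\delta_{B(\varphi_j)}\!\left(z_{e_j}^{B(\varphi_j)}\right)$, since it relies on iterating Proposition~\ref{prop:delta}(\ref{item:reduce}) (the order of removal being irrelevant by Proposition~\ref{prop:zd}(\ref{item:unique}),(\ref{item:greedy})) and on the local/global compatibility provided by Fact~\ref{fact:local} and Theorem~\ref{thm:compatibility}. Everything else is bookkeeping.
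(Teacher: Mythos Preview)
Your proof is correct and follows essentially the same route as the paper: decompose $e$ along the connected components of its (extended) support, use Proposition~\ref{prop:zd}(\ref{item:greedy}) and Proposition~\ref{prop:delta}(\ref{item:reduce}) to see that each piece $e_j$ lies in $\delta_B(z_{e_j}^B)$, and then invoke $(\mathrm{H}_{N-1})$ locally together with Corollary~\ref{cor:combinatorial}. The only cosmetic difference is that the paper bundles your localization step (Fact~\ref{fact:local}, Theorem~\ref{thm:compatibility}, $(\mathrm{H}_{N-1})$, and Corollary~\ref{cor:combinatorial}) into a direct appeal to Lemma~\ref{lem:equivalence-2}, obtaining $e_j\leq d_{G/B}$ for each $j$ and then summing over the disjoint supports; you instead first sum the local bounds $e_j\leq d_{G(\varphi_j)/B(\varphi_j)}$ and then compare to $d_{G/B}$ coefficient-wise, which amounts to the same thing.
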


\begin{proof}

Let $e\in H_2(G/B)$ be a degree as in the statement. Let $\varphi_1,\ldots,\varphi_k$ be locally high roots such that $\Delta(\varphi_1),\ldots,\Delta(\varphi_k)$ are the distinct connected components of $\Delta(e)$. By assumption, we have $k>1$ and thus $\Delta(\varphi_i)\neq\Delta$ for all $1\leq i\leq k$. Let $(\alpha_1,\ldots,\alpha_r)$ be a greedy decomposition of $e$. For each $1\leq i\leq k$ we define a degree $e_i\in H_2(G/B)$ by the equation
$$
e_i=\sum_{1\leq j\leq r\colon\Delta(\alpha_j)\subseteq\Delta(\varphi_i)}\alpha_j^\vee\,.
$$
By definition, we clearly have $\Delta(e_i)=\Delta(\varphi_i)$, i.e. $e_i$ is a connected degree for all $1\leq i\leq k$. Moreover, we have $\sum_{i=1}^k e_i=e$. By Proposition~\ref{prop:zd}(\ref{item:greedy}) and Proposition~\ref{prop:delta}(\ref{item:reduce}), it follows that $e_i\in\delta_B(z_{e_i}^B)$. Therefore Lemma~\ref{lem:equivalence-2} implies $e_i\leq d_{G/B}$ for all $1\leq i\leq k$. But $\Delta(e_1),\ldots,\Delta(e_k)$ are pairwise disjoint (even pairwise totally disjoint), so that this implies $e\leq d_{G/B}$ -- as claimed.
\end{proof}

\begin{lem}
\label{lem:equivalence}

Let $N=\mathrm{card}(\Delta)$. The statement $(\mathrm{H}_N)$ holds if and only if the statements $(\mathrm{H}_{N-1})$ and $(\mathrm{A}_N)$ hold.

\end{lem}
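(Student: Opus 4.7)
The plan is to argue both implications by unpacking the quantifiers and relating the hypotheses of $(\mathrm{H}_N)$, $(\mathrm{H}_{N-1})$ and $(\mathrm{A}_N)$ with the help of the two preceding lemmas.

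For the forward direction $(\mathrm{H}_N)\Rightarrow(\mathrm{H}_{N-1})\wedge(\mathrm{A}_N)$, the implication $(\mathrm{H}_N)\Rightarrow(\mathrm{H}_{N-1})$ is immediate, since any $\varphi\in R^+$ with $\mathrm{card}(\Delta(\varphi))\leq N-1$ also satisfies $\mathrm{card}(\Delta(\varphi))\leq N$. For $(\mathrm{H}_N)\Rightarrow(\mathrm{A}_N)$, the plan is to specialize $\varphi$ to the highest root $\theta_1$ (or any positive root with full support). Since $\Delta(\theta_1)=\Delta$ by the very definition of the highest root in an irreducible root system, we have $R(\theta_1)=R$, hence $G(\theta_1)=G$ and $B(\theta_1)=B$. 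Then any degree $e\in H_2(G/B)$ satisfying the hypothesis of $(\mathrm{A}_N)$ falls under the scope of $(\mathrm{H}_N)$ applied to $\varphi=\theta_1$, and the conclusion $e\leq d_{G/B}$ follows.

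For the reverse direction, the plan is to take an arbitrary pair $(\varphi,e)$ satisfying the hypothesis of $(\mathrm{H}_N)$ and to verify the conclusion by case distinction. First, if $\mathrm{card}(\Delta(\varphi))\leq N-1$, then $(\mathrm{H}_{N-1})$ applies verbatim. So we may assume $\mathrm{card}(\Delta(\varphi))=N$, i.e.\ $\Delta(\varphi)=\Delta$ and thus $G(\varphi)=G$, $B(\varphi)=B$; the task becomes to show $e\leq d_{G/B}$ for an arbitrary degree $e\in H_2(G/B)$ with $e\in\delta_B(z_e^B)$. Now distinguish three subcases according to the shape of $\widetilde{\Delta}(e)=\Delta(e)$: either $e$ is disconnected, or $e$ is connected with $\Delta(e)\subsetneq\Delta$, or $e$ is connected with $\Delta(e)=\Delta$.

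The first two subcases are handled exactly by Lemma~\ref{lem:equivalence-1} and Lemma~\ref{lem:equivalence-2} respectively, both of which already assume $(\mathrm{H}_{N-1})$ and produce the desired bound $e\leq d_{G/B}$. The last subcase, where $\Delta(e)=\Delta$, is precisely the hypothesis of $(\mathrm{A}_N)$, so it delivers $e\leq d_{G/B}$ directly. Together, the three subcases cover all possibilities and establish $(\mathrm{H}_N)$.

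I do not expect any serious obstacle: the lemma is essentially a bookkeeping statement which records that the inductive content of $(\mathrm{H}_N)$ decomposes, modulo $(\mathrm{H}_{N-1})$, into the single extra statement $(\mathrm{A}_N)$. The only point worth checking carefully is that the case distinction above is exhaustive and that each case matches the hypotheses of exactly one of Lemma~\ref{lem:equivalence-1}, Lemma~\ref{lem:equivalence-2} or $(\mathrm{A}_N)$; this is immediate once one recalls that $\widetilde{\Delta}(e)=\Delta(e)$ when $P=B$ (as noted just after the definition of the extended support).
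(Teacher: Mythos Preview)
Your proof is correct and follows essentially the same approach as the paper: the forward direction is declared obvious from the definitions, and the reverse direction proceeds by the identical case split (first on $\mathrm{card}(\Delta(\varphi))$, then on whether $e$ is disconnected, connected with $\Delta(e)\subsetneq\Delta$, or connected with $\Delta(e)=\Delta$), invoking Lemma~\ref{lem:equivalence-1}, Lemma~\ref{lem:equivalence-2}, and $(\mathrm{A}_N)$ in exactly the same places. Your forward direction is slightly more explicit in spelling out the specialization to $\varphi=\theta_1$, but otherwise the arguments are the same.
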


\begin{proof}

The implication from left to right is obvious from the definitions. Suppose that the statements $(\mathrm{H}_{N-1})$ and $(\mathrm{A}_N)$ hold. Let $\varphi$ be a positive root and let $e\in H_2(G(\varphi)/B(\varphi))$ be a degree such that $e\in\delta_{B(\varphi)}(z_e^{B(\varphi)})$. We have to show the inequality 
\begin{equation}
\label{eq:10}
e\leq d_{G(\varphi)/B(\varphi)}\,.
\end{equation} 
If $\mathrm{card}(\Delta(\varphi))<N$, then Inequality~(\ref{eq:10}) is implied by $(\mathrm{H}_{N-1})$. We may assume from now on that $\mathrm{card}(\Delta(\varphi))=N$. In other words, we have $G(\varphi)=G$, $B(\varphi)=B$ and $\Delta(\varphi)=\Delta$. If $e$ is a disconnected degree, then Inequality~(\ref{eq:10}) is implied by Lemma~\ref{lem:equivalence-1}. So, we may assume that $e$ is a connected degree. If $\Delta(e)\neq\Delta$, then Inequality~(\ref{eq:10}) is implied by Lemma~\ref{lem:equivalence-2}. So, we may assume that $e$ is a connected degree such that $\Delta(e)=\Delta$. In this final case, Inequality~(\ref{eq:10}) is implied by the statement $(\mathrm{A}_N)$.
\end{proof}

\begin{thm}
\label{thm:main}

For all $u,v\in W$ and all $d\in\delta_P(u,v)$ the inequality $d\leq d_X$ is satisfied.

\end{thm}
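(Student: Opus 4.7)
The plan is to reduce Theorem~\ref{thm:main} to the inductive statement $(\mathrm{H}_N)$ for $N = \mathrm{card}(\Delta)$, and then to establish $(\mathrm{H}_N)$ by induction on $N$. For the reduction: given $d \in \delta_P(u,v)$, Theorem~\ref{thm:delta2} gives $d \in \delta_P(z_d^P)$. I would then apply Theorem~\ref{thm:resandind} (with the roles of $P$ and $Q$ there played by $B$ and $P$ respectively) to lift $d$ to a degree $e \in \delta_B(z_e^B)$ with $e_P = d$. The statement $(\mathrm{H}_N)$ applied to $\varphi = \theta_1$ (so that $G(\varphi) = G$ and $B(\varphi) = B$) then yields $e \leq d_{G/B}$. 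Restricting to $H_2(X)$ and using $(d_{G/B})_P = d_X$ from Corollary~\ref{cor:identity} produces $d = e_P \leq d_X$ as desired.

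To prove $(\mathrm{H}_N)$ by induction on $N$, the base $(\mathrm{H}_1)$ is Remark~\ref{rem:base}, and Lemma~\ref{lem:equivalence} reduces the inductive step to proving $(\mathrm{A}_N)$ assuming $(\mathrm{H}_{N-1})$. The latter is the combinatorial heart of the argument (which I expect to be packaged as Lemma~\ref{lem:heart}): given $e \in \delta_B(z_e^B)$ with $\Delta(e) = \Delta$, show $e \leq d_{G/B}$. Let $(\alpha_1, \ldots, \alpha_r)$ be a greedy decomposition of $e$ and set $\alpha = \alpha_1$. Proposition~\ref{prop:zd}(\ref{item:greedy}) together with Proposition~\ref{prop:delta}(\ref{item:reduce}) shows $e' := e - \alpha^\vee \in \delta_B(z_{e'}^B)$, while Theorem~\ref{thm:orthogonality} confines $\Delta(e')$ to the set $\{\gamma \in \Delta : (\alpha, \gamma) = 0\}$. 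Let $\varphi_1, \ldots, \varphi_k$ be the locally high roots whose supports are the connected components of this orthogonal set; each $\Delta(\varphi_i)$ has cardinality strictly less than $N$, since $(\alpha, \alpha) \neq 0$ excludes at least one simple root from the orthogonal set. Decomposing $e' = \sum_i e_i$ along these components, Fact~\ref{fact:local} combined with Theorem~\ref{thm:compatibility} yields $e_i \in \delta_{B(\varphi_i)}(z_{e_i}^{B(\varphi_i)})$; the induction hypothesis $(\mathrm{H}_{N-1})$ bounds $e_i \leq d_{G(\varphi_i)/B(\varphi_i)}$, and pairwise disjointness of supports sums to $e' \leq \sum_{i=1}^k d_{G(\varphi_i)/B(\varphi_i)}$.

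The argument closes by verifying
$$
\alpha^\vee + \sum_{i=1}^k d_{G(\varphi_i)/B(\varphi_i)} \leq d_{G/B}.
$$
When $\alpha = \theta_1$, the orthogonal set is exactly $\Delta^\circ$, and this becomes an equality via Corollary~\ref{cor:identity} and the disjoint decomposition of $\mathcal{B}_R$ in Remark~\ref{rem:disjoint} (equivalently Equation~(\ref{eq:help2})). Otherwise $\alpha \neq \theta_1$ is a full-support $B$-cosmall root (full support because $\alpha = \alpha(e)$ and Proposition~\ref{prop:connecteddegree} forces $\Delta(\alpha) = \Delta(e) = \Delta$), and must therefore be exceptional; the required inequality is then precisely Lemma~\ref{lem:technical2}. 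I expect the main obstacle to be exactly this exceptional-root case: Lemma~\ref{lem:technical2} rests on Lemma~\ref{lem:technical}, which must be verified type-by-type via Table~\ref{table:exceptional}. This finite but delicate combinatorial check is the whole reason the machinery of exceptional roots was developed, and it is where the bulk of the technical work resides.
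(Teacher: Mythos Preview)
Your overall architecture matches the paper exactly: the reduction via Theorem~\ref{thm:delta2} and Theorem~\ref{thm:resandind} to the Borel case, the inductive scheme $(\mathrm{H}_n)$, and the structure of Lemma~\ref{lem:heart} (peel off $\alpha=\alpha(e)$, apply Theorem~\ref{thm:orthogonality}, and use $(\mathrm{H}_{N-1})$ on the connected components) are all correct and are precisely what the paper does.

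There is, however, a genuine gap in your final case split. You assert that a full-support $B$-cosmall root $\alpha\neq\theta_1$ \emph{must be exceptional}, and then invoke Lemma~\ref{lem:technical2}. This implication is false: in type $\mathsf{B}_3$ the root $\alpha=\alpha_1+\alpha_2+2\alpha_3$ is long (hence $B$-cosmall), has full support, is not $\theta_1$, yet is not exceptional (Table~\ref{table:exceptional} lists no exceptional roots in $\mathsf{B}_3$; concretely $(\alpha,\alpha_2)\neq 0$ for the unique $\alpha_2\in\Delta\setminus\Delta^\circ$). Such an $\alpha$ does occur as $\alpha(e)$, e.g.\ for $e=\alpha^\vee$.

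The paper's dichotomy is not on $\alpha=\theta_1$ versus $\alpha\neq\theta_1$, but on whether $\Delta(e-\alpha^\vee)\subseteq\Delta^\circ$ or not. In the first case one takes the $\varphi_i$ to be the components of $\Delta^\circ$ (not of $\Delta_\alpha^\circ$), uses the identity $d_{G/B}=\theta_1^\vee+\sum_i d_{G(\varphi_i)/B(\varphi_i)}$, and closes with $\alpha^\vee\leq\theta_1^\vee$ (valid for any $B$-cosmall $\alpha\leq\theta_1$). Only when $\Delta(e-\alpha^\vee)\not\subseteq\Delta^\circ$ does one pick $\beta\in\Delta(e-\alpha^\vee)\setminus\Delta^\circ$, observe via Theorem~\ref{thm:orthogonality} that $(\alpha,\beta)=0$ and that $\alpha$ is a maximal root of $\alpha^\vee+\beta^\vee$, conclude $\alpha$ is exceptional, and invoke Lemma~\ref{lem:technical2} with the components of $\Delta_\alpha^\circ$. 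With this corrected case split your argument goes through.
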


\begin{rem}

Note that it is obvious that for all $u,v\in W$ there exists a $d\in\delta_P(u,v)$ such that $d\leq d_X$ (cf. Proposition~\ref{prop:delta2}(\ref{item:usmallerv2}) applied to $u'=v'=w_o$ and Theorem~\ref{thm:descriptionofdelta}). The point of Theorem~\ref{thm:main} is that for all $u,v\in W$ and \emph{for all} $d\in\delta_P(u,v)$ the inequality $d\leq d_X$ holds.

\end{rem}

\begin{proof}

By Theorem~\ref{thm:descriptionofdelta} and Theorem~\ref{thm:delta2} we have we the equality
$$
\bigcup_{u,v\in W}\delta_P(u,v)=\left\{d\text{ a degree such that }d\in\delta_P(z_d^P)\right\}\,.
$$
Therefore Theorem~\ref{thm:main} is equivalent to the following theorem.
\end{proof}

\begin{thm}
\label{thm:main2}

Let $d$ be a degree such that $d\in\delta_P(z_d^P)$. Then the inequality $d\leq d_X$ is satisfied.

\end{thm}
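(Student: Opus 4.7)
The plan is to prove Theorem~\ref{thm:main2} by strong induction on $N=\mathrm{card}(\Delta)$, following the scheme of the statements $(\mathrm{H}_n)$ and $(\mathrm{A}_N)$ set up in this section. By Lemma~\ref{lem:equivalence} and the base case $(\mathrm{H}_1)$ from Remark~\ref{rem:base}, the induction reduces to proving $(\mathrm{A}_N)$ under the hypothesis $(\mathrm{H}_{N-1})$, after which $(\mathrm{H}_N)$ must be lifted from Borel to arbitrary $P$. For the lift, given $d \in \delta_P(z_d^P)$ with greedy decomposition $(\alpha_1,\ldots,\alpha_r)$, I would set $e = \sum_{i=1}^{r}\alpha_i^\vee \in H_2(G/B)$. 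Each $\alpha_i$ is $P$-cosmall hence $B$-cosmall, so a direct check using Proposition~\ref{prop:zd}(\ref{item:greedy}), (\ref{item:zB}) shows that $(\alpha_1,\ldots,\alpha_r)$ is likewise the greedy decomposition of $e$ in $H_2(G/B)$, and that $e \in \delta_B(z_e^B)$: any strictly smaller $e' < e$ with $z_{e'}^B = z_e^B$ would project to a degree $e'_P < d$ with $z_{e'_P}^P = z_d^P$, contradicting the minimality of $d$. Applying $(\mathrm{H}_N)$ for $\varphi = \theta_1$ then gives $e \leq d_{G/B}$, and Corollary~\ref{cor:identity} converts this into $d = e_P \leq (d_{G/B})_P = d_X$.

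The core of the argument is $(\mathrm{A}_N)$ itself. Let $e$ be as in its statement with greedy decomposition $(\alpha_1,\ldots,\alpha_r)$ and $\alpha = \alpha_1$. Since $e$ is connected and $\Delta(e) = \Delta$, Proposition~\ref{prop:connecteddegree} forces $\Delta(\alpha) = \Delta$; Theorem~\ref{thm:orthogonality} then places $\Delta(e - \alpha^\vee)$ inside $\{\gamma \in \Delta : (\alpha,\gamma) = 0\}$. Let $\varphi_1,\ldots,\varphi_k$ be the locally high roots of the connected components of this orthogonal set, and decompose $e - \alpha^\vee = \sum_{i=1}^{k} f_i$ with $\Delta(f_i) \subseteq \Delta(\varphi_i)$. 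Iterated application of Proposition~\ref{prop:delta}(\ref{item:reduce}), together with Fact~\ref{fact:local} and Theorem~\ref{thm:compatibility}, shows that each $f_i$ lies in $\delta_{B(\varphi_i)}\bigl(z_{f_i}^{B(\varphi_i)}\bigr)$; because $\alpha$ has full support $\Delta$, each $\Delta(\varphi_i)$ is a proper subset of $\Delta$, so the inductive hypothesis $(\mathrm{H}_{N-1})$ yields $f_i \leq d_{G(\varphi_i)/B(\varphi_i)}$. Pairwise disjointness of the $\Delta(\varphi_i)$ then gives $e - \alpha^\vee \leq \sum_i d_{G(\varphi_i)/B(\varphi_i)}$.

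The proof of $(\mathrm{A}_N)$ is closed by the combinatorial inequality $\alpha^\vee + \sum_i d_{G(\varphi_i)/B(\varphi_i)} \leq d_{G/B}$, which I would establish by a case analysis on $\alpha$. If $\alpha = \theta_1$, this is an equality by Corollary~\ref{cor:identity} together with the recursive structure of $\mathcal{B}_R$ recalled in Remark~\ref{rem:disjoint}. If $\alpha$ is exceptional, this is precisely Lemma~\ref{lem:technical2}. Otherwise $\alpha$ is $B$-cosmall, full-support, distinct from $\theta_1$, and orthogonal to no simple root in $\Delta \setminus \Delta^\circ$, so each $\Delta(\varphi_i)$ sits inside some component of $\Delta^\circ$ with locally high root $\psi_{f(i)}$; Corollary~\ref{cor:combinatorial} combined with a pointwise comparison (valid by the disjointness of the $\Delta(\varphi_i)$) yields $\sum_i d_{G(\varphi_i)/B(\varphi_i)} \leq \sum_j d_{G(\psi_j)/B(\psi_j)} = d_{G/B} - \theta_1^\vee$ via Corollary~\ref{cor:identity}, while $\alpha$ being $B$-cosmall with full support is forced to be long, whence $\alpha < \theta_1$ upgrades to $\alpha^\vee \leq \theta_1^\vee$. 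The main obstacle is unquestionably the exceptional case: it is the only branch where a uniform conceptual argument seems unavailable, and it rests on the ad hoc Lemma~\ref{lem:technical}, verified by inspection of Table~\ref{table:exceptional}.
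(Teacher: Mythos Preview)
Your overall strategy mirrors the paper's: reduce to $P=B$, run the induction $(\mathrm{H}_n)$, and close $(\mathrm{A}_N)$ by combining Theorem~\ref{thm:orthogonality} with the combinatorial inequality supplied by Lemma~\ref{lem:technical2} in the exceptional case. However, your reduction from $P$ to $B$ has a genuine gap. You set $e=\sum_i\alpha_i^\vee$ and claim $e\in\delta_B(z_e^B)$ because ``any strictly smaller $e'<e$ with $z_{e'}^B=z_e^B$ would project to a degree $e'_P<d$''. This implication is false: $e'<e$ only gives $e'_P\le e_P=d$, and equality holds whenever $e-e'$ is supported on $\Delta_P^\vee$ (which is perfectly possible, since the $\alpha_i^\vee$ typically have nonzero coefficients along $\Delta_P$). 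So your minimality contradiction does not fire. The paper avoids this by invoking Theorem~\ref{thm:resandind} (applied with $B$ in place of $P$ and $P$ in place of $Q$), which manufactures directly a degree $e\in\delta_B(z_e^B)$ with $e_P=d$; that $e$ need not coincide with your $\sum_i\alpha_i^\vee$.

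Two smaller points on your treatment of $(\mathrm{A}_N)$. First, your trichotomy is not airtight as stated: ``$\alpha$ not exceptional'' does not by itself force $(\alpha,\beta)\neq 0$ for every $\beta\in\Delta\setminus\Delta^\circ$, since the definition of exceptional also requires $\alpha$ to be a maximal root of $\alpha^\vee+\beta^\vee$. What is true, and is what the paper uses, is that if in addition $\beta\in\Delta(e-\alpha^\vee)$ then $\alpha$ \emph{is} a maximal root of $\alpha^\vee+\beta^\vee\le e$, whence $\alpha$ is exceptional; so in the non-exceptional branch one has $\Delta(e-\alpha^\vee)\subseteq\Delta^\circ$, and you should take the $\varphi_i$ to be the components of $\Delta^\circ$ rather than of $\Delta_\alpha^\circ$ (this is exactly the paper's case split). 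Second, your justification of $\alpha^\vee\le\theta_1^\vee$ via ``$\alpha$ is forced to be long'' is incomplete: neither assertion (full-support $B$-cosmall $\Rightarrow$ long; long and $\alpha\le\theta_1\Rightarrow\alpha^\vee\le\theta_1^\vee$) is argued. The paper instead appeals directly to \cite[Lemma~4.7(a)]{curvenbhd2}, using only that $\alpha$ and $\theta_1$ are both $B$-cosmall with $\alpha\le\theta_1$.
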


\begin{rem}

Note that Theorem~\ref{thm:main2} (or equivalently Theorem~\ref{thm:main}) is obvious for maximal parabolic subgroups $P$. Moreover, it is obvious that for every degree $d$ such that $\delta_P(z_d^P)=d$ the inequality $d\leq d_X$ is satisfied (cf. Proposition~\ref{prop:delta}(\ref{item:usmallerv})). Also, it is obvious that no degree $d$ such that $d\in\delta_P(z_d^P)$ can satisfy the inequality $d>d_X$. The non-trivial part of Theorem~\ref{thm:main2} is about to show that a degree $d$ such that $d\in\delta_P(z_d^P)$ is comparable to $d_X$ even if $\delta_P(z_d^P)$ does not consist of a unique element.

\end{rem}

\begin{proof}

Let $d$ be as in the statement. By Theorem~\ref{thm:resandind} there exists a degree $e\in\delta_B(z_e^B)$ such that $e_P=d$. If $e\leq d_{G/B}$, then it follows by Corollary~\ref{cor:identity} that $d\leq d_X$. Therefore it suffices to prove the following theorem.
\end{proof}

\begin{thm}

Let $e\in H_2(G/B)$ be a degree such that $e\in\delta_B(z_e^B)$. Then the inequality $e\leq d_{G/B}$ is satisfied.

\end{thm}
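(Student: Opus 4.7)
The plan is to prove the statement by strong induction on $N = \mathrm{card}(\Delta)$, in conjunction with the auxiliary statements $(\mathrm{H}_n)$ and $(\mathrm{A}_N)$ introduced earlier in the section. Applied with $\varphi = \theta_1$ (so that $G(\varphi) = G$ and $B(\varphi) = B$), the statement $(\mathrm{H}_N)$ is exactly the statement I wish to prove. The base case $(\mathrm{H}_1)$ is already handled in Remark~\ref{rem:base}, and Lemma~\ref{lem:equivalence} reduces the induction step from $(\mathrm{H}_{N-1})$ to $(\mathrm{H}_N)$ to proving $(\mathrm{A}_N)$.

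To prove $(\mathrm{A}_N)$, I fix a degree $e \in H_2(G/B)$ with $\Delta(e) = \Delta$ and $e \in \delta_B(z_e^B)$. Such a degree is connected, so Proposition~\ref{prop:connecteddegree} singles out a distinguished first entry $\alpha_1$ of its greedy decomposition $(\alpha_1, \ldots, \alpha_r)$, and $\alpha_1$ is $B$-cosmall by Proposition~\ref{prop:zd}(\ref{item:unique}). Theorem~\ref{thm:orthogonality} gives $(\alpha_1, \beta) = 0$ for every $\beta \in \Delta(e - d(\alpha_1))$. I let $\varphi_1, \ldots, \varphi_k$ be the locally high roots whose supports are the connected components of $\{\gamma \in \Delta : (\alpha_1, \gamma) = 0\}$; each $\Delta(\varphi_i)$ is a proper subset of $\Delta$ (since $\alpha_1 \neq 0$ forces $(\alpha_1, \gamma) \neq 0$ for at least one simple root $\gamma$).

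Next, I would split $e - d(\alpha_1) = \sum_{i=1}^k e'_i$ along the partition of the remaining greedy entries $\alpha_2, \ldots, \alpha_r$ by the supports $\Delta(\varphi_i)$. Iterated application of Proposition~\ref{prop:delta}(\ref{item:reduce}) peels off greedy entries one at a time and yields $e'_i \in \delta_B(z_{e'_i}^B)$; Fact~\ref{fact:local} combined with Theorem~\ref{thm:compatibility} then locates this inside the local subsystem to give $e'_i \in \delta_{B(\varphi_i)}(z_{e'_i}^{B(\varphi_i)})$. Since $\mathrm{card}(\Delta(\varphi_i)) < N$, the induction hypothesis $(\mathrm{H}_{N-1})$ provides $e'_i \leq d_{G(\varphi_i)/B(\varphi_i)}$, and hence
$$
e \;\leq\; \alpha_1^\vee + \sum_{i=1}^k d_{G(\varphi_i)/B(\varphi_i)}.
$$

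The final step is to bound the right-hand side by $d_{G/B}$, and the argument bifurcates on the nature of $\alpha_1$. If $\alpha_1 = \theta_1$, then $\{\gamma : (\theta_1, \gamma) = 0\} = \Delta^\circ$, the $\varphi_i$ are precisely the locally high roots of the connected components of $\Delta^\circ$, and Corollary~\ref{cor:identity} collapses the above inequality into an equality. If $\alpha_1 \neq \theta_1$, a finer case analysis intervenes: when $\Delta(\alpha_1) \subsetneq \Delta$, one localizes $\alpha_1$ inside a proper subsystem and appeals to $(\mathrm{H}_{N-1})$ together with Corollary~\ref{cor:combinatorial} to control $\alpha_1^\vee$; when $\Delta(\alpha_1) = \Delta$, either $\alpha_1$ is exceptional (in which case Lemma~\ref{lem:technical2} supplies exactly the required inequality) or else the nonexistence of any $\beta \in \Delta \setminus \Delta^\circ$ with $(\alpha_1, \beta) = 0$ (as otherwise the strong orthogonality guaranteed by Corollary~\ref{cor:relation} combined with $B$-cosmallness would make $\alpha_1$ exceptional) forces $\{\gamma : (\alpha_1, \gamma) = 0\} \subseteq \Delta^\circ$, so that Corollary~\ref{cor:combinatorial} controls the $\sum_i d_{G(\varphi_i)/B(\varphi_i)}$ piece while $\alpha_1^\vee \leq \theta_1^\vee$ absorbs the remaining summand. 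The main obstacle I expect lies precisely in this last case distinction for $\alpha_1 \neq \theta_1$: the arguments for the three subcases are genuinely different in flavor, and it is only the preparatory type-by-type verifications encapsulated in Table~\ref{table:exceptional} and Lemma~\ref{lem:technical2} that make the exceptional subcase tractable at all.
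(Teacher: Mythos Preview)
Your overall architecture matches the paper exactly: induction on $N=\mathrm{card}(\Delta)$ via the statements $(\mathrm{H}_n)$, base case from Remark~\ref{rem:base}, reduction to $(\mathrm{A}_N)$ via Lemma~\ref{lem:equivalence}, and then the analysis of $(\mathrm{A}_N)$ through the first greedy entry $\alpha=\alpha(e)$, Theorem~\ref{thm:orthogonality}, the splitting of $e-\alpha^\vee$ along orthogonal pieces, and the appeal to $(\mathrm{H}_{N-1})$ on each piece. The first step of your proof of $(\mathrm{A}_N)$ is essentially the paper's ``First step'' in Lemma~\ref{lem:heart}.

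Where your case analysis diverges from the paper's, problems appear. First, the subcase $\Delta(\alpha_1)\subsetneq\Delta$ is vacuous: since $\Delta(e)=\Delta$ and $e$ is connected, Proposition~\ref{prop:connecteddegree} gives $\Delta(\alpha_1)=\widetilde\Delta(e)=\Delta(e)=\Delta$ automatically. More seriously, in the subcase ``$\alpha_1\neq\theta_1$, $\Delta(\alpha_1)=\Delta$, not exceptional'' you assert that no $\beta\in\Delta\setminus\Delta^\circ$ can satisfy $(\alpha_1,\beta)=0$, justifying this via Corollary~\ref{cor:relation}. But Corollary~\ref{cor:relation} concerns two entries of a greedy decomposition, and an arbitrary simple root $\beta$ orthogonal to $\alpha_1$ need not be such an entry. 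What you actually need is that $\alpha_1$ is a maximal root of $\alpha_1^\vee+\beta^\vee$, and this does \emph{not} follow from $B$-cosmallness and orthogonality alone; the paper explicitly leaves this as an unproven remark after Table~\ref{table:exceptional}.

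The paper sidesteps this entirely by splitting not on properties of $\alpha$ but on whether $\Delta(e-\alpha^\vee)\subseteq\Delta^\circ$. If yes, one takes the $\varphi_i$ to be the components of $\Delta^\circ$ (not of $\Delta_\alpha^\circ$), and then Corollary~\ref{cor:identity} plus $\alpha^\vee\leq\theta_1^\vee$ (via \cite[Lemma~4.7(a)]{curvenbhd2}) finishes. If no, one picks $\beta\in\Delta(e-\alpha^\vee)\setminus\Delta^\circ$; then $\alpha^\vee+\beta^\vee\leq e$ and maximality of $\alpha$ for $e$ forces maximality for $\alpha^\vee+\beta^\vee$, so $\alpha$ is exceptional \emph{by definition}, and Lemma~\ref{lem:technical2} closes the argument. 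This dichotomy makes the exceptionality check immediate and avoids the unproven implication you rely on.
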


\begin{proof}

Let $N=\mathrm{card}(\Delta)$. We show by induction that the statement $(\mathrm{H}_n)$ is true for all $1\leq n\leq N$. This clearly implies the theorem by applying $(\mathrm{H}_N)$ to the highest root $\varphi=\theta_1$. The induction base $(\mathrm{H}_1)$ is obviously satisfied (cf. Remark~\ref{rem:base}). Let $2\leq n\leq N$ and assume that $(\mathrm{H}_{n-1})$ is satisfied. Let $\varphi$ be a positive root such that $\mathrm{card}(\Delta(\varphi))\leq n$. We want to show:
\begin{equation}
\label{eq:hvarphi}
\text{For all }e\in H_2(G(\varphi)/B(\varphi))\text{ such that }e\in\delta_{B(\varphi)}\left(z_e^{B(\varphi)}\right)\text{ we have }e\leq d_{G(\varphi)/B(\varphi)}\,.
\end{equation}
If $\mathrm{card}(\Delta(\varphi))<n$, then the we clearly have $$
(\mathrm{H}_{n-1})\implies(\ref{eq:hvarphi})\,.
$$
We may assume from now on that $\mathrm{card}(\Delta(\varphi))=n$. Let us define the statements $(\mathrm{H}_n)_{R(\varphi)}$ and $(\mathrm{H}_{n-1})_{R(\varphi)}$ with respect to $R(\varphi)$ in the same way we defined $(\mathrm{H}_n)$ and $(\mathrm{H}_{n-1})$ with respect to $R$. Then we have two trivial implications
$$
(\mathrm{H}_{n-1})\implies(\mathrm{H}_{n-1})_{R(\varphi)}\text{ and }(\mathrm{H}_n)_{R(\varphi)}\implies(\ref{eq:hvarphi})
$$
These implications follow in view of the natural identifications: For all $\phi\in R(\varphi)^+$ we have $R(\varphi)(\phi)=R(\phi)$ and similarly for $G$, $B$ and $\Delta$. Therefore it suffices to prove $(\mathrm{H}_n)_{R(\varphi)}$ under the assumption of $(\mathrm{H}_{n-1})_{R(\varphi)}$. By replacing $R(\varphi)$ and $R$, we may assume that $n=N$. In other words, it suffices to prove $(\mathrm{H}_N)$ under the assumption of $(\mathrm{H}_{N-1})$. In view of Lemma~\ref{lem:equivalence} it therefore suffices to prove the following lemma.
\end{proof}

\todo[inline,color=green]{I make statements for $G/P$ and $G/B$ concerning $d\in\delta(z_d)$. Plus remark after statement for $G/P$ (both statements) -- obvious when\ldots}

\begin{lem}
\label{lem:heart}

Let $N=\mathrm{card}(\Delta)$. Suppose that $(\mathrm{H}_{N-1})$ is satisfied. Then the statement $(\mathrm{A}_N)$ is also satisfied.

\end{lem}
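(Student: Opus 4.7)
My plan is to fix $e\in H_2(G/B)$ with $\Delta(e)=\Delta$ and $e\in\delta_B(z_e^B)$, take the greedy decomposition $(\alpha_1,\ldots,\alpha_r)$ of $e$, and analyze the first entry $\alpha_1$, which is uniquely determined by Proposition~\ref{prop:connecteddegree} since $e$ is a connected degree. The key input throughout will be Theorem~\ref{thm:orthogonality}: $(\alpha_1,\beta)=0$ for every $\beta\in S:=\Delta(e-d(\alpha_1))$. I will first show $\Delta(\alpha_1)=\Delta$: were $\Delta(\alpha_1)\subsetneq\Delta$, the connectedness of the Dynkin diagram would supply some $\beta\in\Delta\setminus\Delta(\alpha_1)$ adjacent to $\Delta(\alpha_1)$ and hence with $(\alpha_1,\beta)<0$, while $\beta\in\Delta(e)$ together with $(\omega_\beta,\alpha_1^\vee)=0$ would force $\beta\in S$, contradicting orthogonality.

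I will then split into a tame case ($\alpha_1=\theta_1$ or $\alpha_1$ neither $\theta_1$ nor exceptional) and an exceptional case. In the tame case I first establish $S\subseteq\Delta^\circ$: for $\alpha_1=\theta_1$ this is automatic, and otherwise any hypothetical $\beta\in S\cap(\Delta\setminus\Delta^\circ)$ would satisfy $\beta^\vee\leq e-\alpha_1^\vee$, so $\alpha_1^\vee+\beta^\vee\leq e$, and Proposition~\ref{prop:zd}(\ref{item:greedy}) would exhibit $(\alpha_1,\beta)$ as a greedy decomposition of $\alpha_1^\vee+\beta^\vee$, witnessing that $\alpha_1$ is exceptional — a contradiction. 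Letting $\varphi_1,\ldots,\varphi_k$ be the locally high roots indexing the connected components of $\Delta^\circ$, the connectedness of each $\Delta(\alpha_i)$ ($i\geq 2$) places each $\alpha_i$ inside exactly one $R(\varphi_j)$, so $e-\alpha_1^\vee=\sum_j e_j$ with $e_j\in H_2(G(\varphi_j)/B(\varphi_j))$. Iterating Proposition~\ref{prop:delta}(\ref{item:reduce}) and invoking Fact~\ref{fact:local} together with Theorem~\ref{thm:compatibility} yields $e_j\in\delta_{B(\varphi_j)}(z_{e_j}^{B(\varphi_j)})$, so $(\mathrm{H}_{N-1})$ gives $e_j\leq d_{G(\varphi_j)/B(\varphi_j)}$. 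Since $d_{G/B}=\theta_1^\vee+\sum_j d_{G(\varphi_j)/B(\varphi_j)}$ by Corollary~\ref{cor:identity}, the tame case reduces to $\alpha_1^\vee\leq\theta_1^\vee$, which is trivial for $\alpha_1=\theta_1$ and, in the remaining tame situation, follows from the observation that a $B$-cosmall root of full support must be long — immediate in the simply-laced case, and in the non-simply-laced case because a short $B$-cosmall root is necessarily simple and hence cannot have full support — so that $\alpha_1^\vee=\alpha_1\leq\theta_1=\theta_1^\vee$.

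The exceptional case will be handled by the machinery of Section~7. With $\varphi_1,\ldots,\varphi_k$ now the locally high roots indexing the connected components of $\{\gamma\in\Delta:(\alpha_1,\gamma)=0\}$, orthogonality again forces $S\subseteq\bigsqcup_i\Delta(\varphi_i)$, so the same decomposition and induction give $e-\alpha_1^\vee\leq\sum_i d_{G(\varphi_i)/B(\varphi_i)}$, and Lemma~\ref{lem:technical2} supplies the final inequality $\alpha_1^\vee+\sum_i d_{G(\varphi_i)/B(\varphi_i)}\leq d_{G/B}$. I expect the main obstacle to be precisely this exceptional case: the clean coroot inequality $\alpha_1^\vee\leq\theta_1^\vee$ no longer suffices when $S$ extends beyond $\Delta^\circ$, and the finer type-by-type inequality of Lemma~\ref{lem:technical} encoded in Table~\ref{table:exceptional} becomes indispensable.
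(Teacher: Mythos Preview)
Your proof follows essentially the same architecture as the paper's: reduce to $\Delta(\alpha_1)=\Delta$, use Theorem~\ref{thm:orthogonality} to confine $S=\Delta(e-\alpha_1^\vee)$ inside the orthogonal locus of $\alpha_1$, split off the degree into pieces supported on the connected components, apply $(\mathrm{H}_{N-1})$ to each piece, and finish with either $\alpha_1^\vee\leq\theta_1^\vee$ (non-exceptional case) or Lemma~\ref{lem:technical2} (exceptional case). The case distinction you make --- $\alpha_1$ exceptional or not --- is equivalent to the paper's distinction $\Delta(e-\alpha_1^\vee)\subseteq\Delta^\circ$ or not. Two minor points: the fact $\Delta(\alpha_1)=\Delta$ is immediate from $\widetilde\Delta(e)=\Delta(\alpha(e))$ for a connected degree (no separate argument is needed), and your use of Proposition~\ref{prop:zd}(\ref{item:greedy}) to build the greedy decomposition $(\alpha_1,\beta)$ is a slight misattribution, though the underlying reasoning is fine.

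There is, however, one genuine error. Your justification for $\alpha_1^\vee\leq\theta_1^\vee$ in the tame case rests on the claim that \emph{a short $B$-cosmall root is necessarily simple}. This is false: in type~$\mathsf{F}_4$ the root $\alpha_3+\alpha_4$ is short, not simple, and $B$-cosmall (its coroot is $\alpha_3^\vee+\alpha_4^\vee$, and the only roots $\gamma$ with $\gamma^\vee\leq\alpha_3^\vee+\alpha_4^\vee$ are $\alpha_3,\alpha_4,\alpha_3+\alpha_4$). What you actually need --- that a $B$-cosmall root of \emph{full support} is long --- happens to be true, but your reduction to the general simplicity claim does not establish it. The paper sidesteps this entirely: it invokes \cite[Lemma~4.7(a)]{curvenbhd2}, which says that for $B$-cosmall roots $\alpha\leq\gamma$ one has $\alpha^\vee\leq\gamma^\vee$; applied with $\gamma=\theta_1$ this gives $\alpha_1^\vee\leq\theta_1^\vee$ directly, with no need to discuss root lengths. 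Replace your length argument with this citation and the proof is complete.
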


\begin{proof}\renewcommand{\qedsymbol}{}

Let $e\in H_2(G/B)$ be a degree such that $\Delta(e)=\Delta$ and such that $e\in\delta_B(z_e^B)$. Under the assumption of $(\mathrm{H}_{N-1})$, we have to show that $e\leq d_{G/B}$. Note that $e$ is necessarily a connected degree, since $\Delta$ is obviously a connected subset of the Dynkin diagram. Let $\alpha=\alpha(e)$ be the unique first entry of a greedy decomposition of $e$ (cf. Proposition~\ref{prop:connecteddegree}).We know that $\Delta(\alpha)=\Delta(e)=\Delta$. Let $\Delta_\alpha^\circ=\{\gamma\in\Delta\mid(\alpha,\gamma)=0\}$ for short. Let $\varphi_1,\ldots,\varphi_k$ be locally high roots such that
$$
\Delta(\varphi_1),\ldots,\Delta(\varphi_k)\text{ are the distinct connected components of}\begin{cases}
\Delta^\circ & \text{if }\Delta(e-\alpha^\vee)\subseteq\Delta^\circ\\
\Delta_\alpha^\circ &\text{otherwise.}
\end{cases}
$$
\end{proof}

\begin{proof}[First step]\renewcommand{\qedsymbol}{}

We first want to show the inequality
\begin{equation}
\label{eq:firststep}
e-\alpha^\vee\leq\sum_{i=1}^k d_{G(\varphi_i)/B(\varphi_i)}\,.
\end{equation}
To this end, let $(\alpha_1,\ldots,\alpha_r)$ be a greedy decomposition of $e$. As observed before, we necessarily have $\alpha=\alpha_1$. For each $1\leq i\leq k$, we can define a degree $e_i\in H_2(G/B)$ by the equation
$$
e_i=\sum_{2\leq j\leq r\colon\Delta(\alpha_j)\subseteq\Delta(\varphi_i)}\alpha_j^\vee\,.
$$
By Theorem~\ref{thm:orthogonality} applied to $P=B$, we know that $\Delta(e-\alpha^\vee)\subseteq\Delta_\alpha^\circ$ which means that $\Delta(e-\alpha^\vee)\subseteq\bigcup_{i=1}^k\Delta(\varphi_i)$. Thus, for every $2\leq j\leq r$ there exists a unique $1\leq i_j\leq k$ such that $\Delta(\alpha_j)\subseteq\Delta(\varphi_{i_j})$. This means in particular that we have $\sum_{i=1}^k e_i=e-\alpha^\vee$. In order to show Inequality~(\ref{eq:firststep}), it therefore suffices to show $e_i\leq d_{G(\varphi_i)/B(\varphi_i)}$ for all $1\leq i\leq k$. 

Fix some $1\leq i\leq k$. Let $\gamma$ be an entry in a greedy decomposition of $e_i$. By Proposition~\ref{prop:zd}(\ref{item:greedy}), it follows that $\gamma=\alpha_j$ for some $2\leq j\leq r$ such that $i_j=i$. Therefore, it follows that $\Delta(\gamma)\subseteq\Delta(\varphi_i)$ and thus $\gamma\leq\varphi_i$. Since $\gamma$ was an arbitrary entry in a greedy decomposition of $e_i$, it follows from Fact~\ref{fact:local} that $z_{e_i}^B=z_{e_i}^{B(\varphi_i)}$.

By Proposition~\ref{prop:zd}(\ref{item:greedy}) and Proposition~\ref{prop:delta}(\ref{item:reduce}), it follows that $e_i\in\delta_B(z_{e_i}^B)$. The previous paragraph and Theorem~\ref{thm:compatibility} then show that $e_i\in\delta_{B(\varphi)}\left(z_{e_i}^{B(\varphi_i)}\right)$. By definition, it is clear that $\Delta(\varphi_i)\neq\Delta$ (we even have $\Delta(\varphi_1)\cup\cdots\cup\Delta(\varphi_k)\neq\Delta$). Thus, the statement $(\mathrm{H}_{N-1})$ applies and implies that $e_i\leq d_{G(\varphi_i)/B(\varphi_i)}$. This completes the proof of Inequality~(\ref{eq:firststep}).
\end{proof}

\begin{proof}[Second step: the case $\Delta(e-\alpha^\vee)\subseteq\Delta^\circ$]\renewcommand{\qedsymbol}{}

In this step we assume that $\Delta(e-\alpha^\vee)\subseteq\Delta^\circ$ and prove that $e\leq d_{G/B}$. Indeed, by Remark~\ref{rem:disjoint}, Corollary~\ref{cor:identity} and by definition of $\varphi_1,\ldots,\varphi_k$ we clearly have
$$
d_{G/B}=\theta_1^\vee+\sum_{i=1}^k d_{G(\varphi_i)/B(\varphi_i)}\,.
$$
This and Inequality~\eqref{eq:firststep} yield the inequality $e+\theta_1^\vee-\alpha^\vee\leq d_{G/B}$. It is clear that $\alpha$ is $B$-cosmall (Proposition~\ref{prop:zd}(\ref{item:unique})) and that $\theta_1$ is $B$-cosmall (Example~\ref{ex:highestroot2}) and that $\alpha\leq\theta_1$. Thus, \cite[Lemma~4.7(a)]{curvenbhd2} clearly implies that $\alpha^\vee\leq\theta_1^\vee$. The desired inequality follow from this. This completes the proof of this case. 
\end{proof}

\begin{proof}[Third step: the case $\Delta(e-\alpha^\vee)\not\subseteq\Delta^\circ$, i.e. $\alpha$ is an exceptional root]

In this step we assume that $\Delta(e-\alpha^\vee)\not\subseteq\Delta^\circ$, i.e. there exists a simple root $\beta\in\Delta(e-\alpha^\vee)\setminus\Delta^\circ$. As it was said before, by Theorem~\ref{thm:orthogonality} applied to $P=B$, we have $\Delta(e-\alpha^\vee)\subseteq\Delta_\alpha^\circ$, in particular $(\alpha,\beta)=0$. By definition $\alpha$ is a (in fact the unique) maximal root of $e$, in particular $\alpha$ is a (in fact the unique) maximal root of $\alpha^\vee+\beta^\vee\leq e$. All in all, this means by definition that $\alpha$ is an exceptional root. In view of Lemma~\ref{lem:technical2}, Inequality~\eqref{eq:firststep} implies that $e\leq d_{G/B}$ -- as required.
\end{proof}

\begin{cor}

Let $d$ be a degree such that $d\in\delta_P(z_d^P)$. Then we have $d_\beta\in\delta_{P_\beta}(z_{d_\beta}^{P_\beta})$ for all $\beta\in\Delta\setminus\Delta_P$.

\end{cor}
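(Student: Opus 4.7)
The plan is to prove the statement by contradiction. Since $P_\beta$ is maximal and $H_2(G/P_\beta)=\mathbb{Z}$ is totally ordered, the set $\delta_{P_\beta}(z_{d_\beta}^{P_\beta})$ is a singleton (Remark~\ref{rem:uniqueelement}) whose unique element $e$ trivially satisfies $e\leq d_\beta$. The claim reduces to showing $e=d_\beta$, so I will assume $e<d_\beta$ and derive a contradiction.

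First I would analyze the equality case in $W/W_{P_\beta}$. Since $\overline{z_{d_\beta}^{P_\beta}}\preceq \overline{z_e^{P_\beta}}$ by the minimality defining $e$, while the monotonicity \cite[Corollary~4.12(b)]{curvenbhd2} combined with $e\leq d_\beta$ yields the reverse inequality, we obtain $\overline{z_e^{P_\beta}}=\overline{z_{d_\beta}^{P_\beta}}$. Next I would apply Theorem~\ref{thm:equalwx} in the maximal quotient $G/P_\beta$: since $\Delta\setminus\Delta_{P_\beta}=\{\beta\}$ and $d(\beta)=1$ in $H_2(G/P_\beta)=\mathbb{Z}$, the single hypothesis of the theorem is certified by $d'=d_\beta\geq e+1$, and Theorem~\ref{thm:equalwx} forces $z_e^{P_\beta}=w_{G/P_\beta}$, hence $\overline{z_{d_\beta}^{P_\beta}}=\overline{w_o}$ in $W/W_{P_\beta}$. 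By Proposition~\ref{prop:delta}(\ref{item:invariant}) and Theorem~\ref{thm:uniqueness} applied to the maximal parabolic $P_\beta$ (where it is obvious by Remark~\ref{rem:circular}), this gives $e=\delta_{P_\beta}(z_{d_\beta}^{P_\beta})=\delta_{P_\beta}(w_o)=d_{G/P_\beta}$, so $d_\beta>d_{G/P_\beta}$.

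The contradiction now comes from Theorem~\ref{thm:main2}, which applies since $d\in\delta_P(z_d^P)$ and yields $d\leq d_X$. Restriction to $H_2(G/P_\beta)$ is order-preserving, and combining the explicit formula $d_X=\sum_{\beta'\in\Delta\setminus\Delta_P}d_{G/P_{\beta'}}d(\beta')$ from Theorem~\ref{thm:uniqueness} with the identity $(d(\beta'))_\beta=\delta_{\beta,\beta'}$ gives $(d_X)_\beta=d_{G/P_\beta}$. Hence $d_\beta\leq d_{G/P_\beta}$, contradicting the strict inequality from the previous paragraph. This forces $e=d_\beta$, proving the corollary.

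The main obstacle, or rather the only nontrivial step, is recognizing that the situation $\delta_{P_\beta}(z_{d_\beta}^{P_\beta})<d_\beta$ can only occur when the degree $e$ curve neighborhood of a point in $G/P_\beta$ has already saturated to all of $G/P_\beta$, which by the maximality of $P_\beta$ is exactly the hypothesis of Theorem~\ref{thm:equalwx}. Once this observation is in place the numerical comparison $(d_X)_\beta=d_{G/P_\beta}$ makes Theorem~\ref{thm:main2} directly applicable and closes the argument.
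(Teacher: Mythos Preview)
Your proof is correct and follows essentially the same approach as the paper: both arguments rest on Theorem~\ref{thm:equalwx} (to recognize that a strict drop $e<d_\beta$ forces saturation $z_e^{P_\beta}=w_{G/P_\beta}$, hence $d_\beta>d_{G/P_\beta}$) together with Theorem~\ref{thm:main2} and the identity $(d_X)_\beta=d_{G/P_\beta}$ from Theorem~\ref{thm:uniqueness}. The paper simply packages the first ingredient as the interval description $\{e:e\in\delta_{P_\beta}(z_e^{P_\beta})\}=\{0,1,\ldots,d_{G/P_\beta}\}$ and concludes directly, whereas you unwind the same content as a proof by contradiction.
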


\begin{proof}

Let $d$ be as in the statement. Let $\beta\in\Delta\setminus\Delta_P$. By Theorem~\ref{thm:equalwx} we clearly have
\begin{equation}
\label{eq:d_beta}
\bigcup_{u,v\in W}\delta_{P_\beta}(u,v)=\left\{e\in\mathbb{Z}\text{ a degree such that }e\in\delta_{P_\beta}(z_e^{P_\beta})\right\}=\left\{0,1,\ldots,d_{G/P_\beta}\right\}\,.
\end{equation}
By Theorem~\ref{thm:main} and Corollary~\ref{cor:identity} (or Theorem~\ref{thm:uniqueness}) we have $0\leq d_\beta\leq (d_X)_\beta=d_{G/P_\beta}$. Thus, Equation~\eqref{eq:d_beta} implies $d_\beta\in\delta_{P_\beta}(z_{d_\beta}^{P_\beta})$ -- as claimed.
\end{proof}

\todo[inline,color=green]{We actually may have a strict inclusion
$$
\{\text{minimal degrees in quantum products of two Schubert cycles}\}\subsetneq\{0\leq d\leq d_X\}\,.
$$
Compare the example in type $\mathsf{G}_2$ and $d=2\alpha_1^\vee+\alpha_2^\vee$ (notes on September 15). I will write $\bigcup_{u,v\in W}\delta_P(u,v)$.

I should introduce the notion of a disconnected degree as soon as I need it. Notion introduced September 25.}

\begin{ex}[{\cite[Example~5.9]{thesis}}]
\label{ex:5.9}

We give an example of a positive root $\alpha$ and a simple root $\beta$ such that $\delta_{P_\beta}(s_\alpha)<(\omega_\beta,\alpha^\vee)$, showing that Lemma~\ref{lem:5.14} is not necessarily true if $R$ is not simply laced. Indeed, by Theorem~\ref{thm:descriptionofdelta} and Theorem~\ref{thm:main} it suffices to find $\alpha\in R^+$ and $\beta\in\Delta$ such that $d_{G/P_\beta}<(\omega_\beta,\alpha^\vee)$. Let $R$ be of type $\mathsf{G}_2$. Let $\alpha_1$ be the short simple root and let $\alpha_2$ be the long simple root as in \cite[Plate~IX]{bourbaki_roots}. Let $\alpha=\theta_s=2\alpha_1+\alpha_2$ be the highest short root and let $\beta=\alpha_2$. Let $\theta_1=3\alpha_1+2\alpha_2$ be the highest root and let $\theta_2=\alpha_1$. Then we have $\mathcal{B}_{R}=\{\theta_1,\theta_2\}$ and thus, by Corollary~\ref{cor:identity}, that 
$$
d_{G/P_\beta}=(\omega_\beta,\theta_1^\vee)+(\omega_\beta,\theta_2^\vee)=2+0=2\,.
$$
On the other hand, it is clear that $(\omega_\beta,\alpha^\vee)=3$.



\end{ex}

\begin{ex}
\label{ex:5.9'}

We give an example of a positive root $\alpha$ such that $\alpha^\vee\notin\delta_B(s_\alpha)$, showing that Theorem~\ref{thm:simplylaced} is not necessarily true if $R$ is not simply laced even if $P=B$. Indeed, let $R$ be of type $\mathsf{G}_2$. Let $\alpha$ and $\beta$ be as in Example~\ref{ex:5.9}. Suppose for a contradiction that $\alpha^\vee\in\delta_B(s_\alpha)$. By Theorem~\ref{thm:descriptionofdelta} and Theorem~\ref{thm:main} we then have $\alpha^\vee\leq d_{G/B}$ and thus by Corollary~\ref{cor:identity} that $(\omega_\beta,\alpha^\vee)\leq (d_{G/B})_\beta=d_{G/P_\beta}$. But we already saw in Example~\ref{ex:5.9} that $d_{G/P_\beta}<(\omega_\beta,\alpha^\vee)$.

\end{ex}

\begin{ex}
\label{ex:inclusionstrict}

Theorem~\ref{thm:main} shows that we have an inclusion
$$
\bigcup_{u,v\in W}\delta_P(u,v)\subseteq\{0\leq d\leq d_X\}\,.
$$
In this example we show that this inclusion might be strict -- not an equality -- even in the case that $P=B$ by finding a degree $e\in H_2(G/B)$ such that $e\leq d_{G/B}$ but $e\notin\delta_B(u,v)$ for all $u,v\in W$. Indeed, let $R$ be of type $\mathsf{G}_2$. Let $\alpha_1,\alpha_2,\theta_1,\theta_2$ be as in Example~\ref{ex:5.9}. By Corollary~\ref{cor:identity} we have
$$
d_{G/B}=\theta_1^\vee+\theta_2^\vee=2\alpha_1^\vee+2\alpha_2^\vee\,.
$$
Let $e=2\alpha_1^\vee+\alpha_2^\vee$. We clearly have $0\leq e\leq d_{G/B}$. On the other hand, a simple computation shows that
$
(3\alpha_1+\alpha_2,\alpha_1)
$
is a (in this case the unique) greedy decomposition of $e$ and that its two different entries $3\alpha_1+\alpha_2$ and $\alpha_1$ are not orthogonal (we actually have $(3\alpha_1+\alpha_2,\alpha_1^\vee)=3$). Corollary~\ref{cor:stronglyorthogonal} implies that $e\notin\delta_B(z_e^B)$. Thus, Theorem~\ref{thm:delta2} shows that $e\notin\delta_B(u,v)$ for all $u,v\in W$.

\end{ex}

\bibliographystyle{amsplain}
\bibliography{diplom}

\providecommand{\bysame}{\leavevmode\hbox to3em{\hrulefill}\thinspace}
\providecommand{\MR}{\relax\ifhmode\unskip\space\fi MR }
\providecommand{\MRhref}[2]{%
  \href{http://www.ams.org/mathscinet-getitem?mr=#1}{#2}
}
\providecommand{\href}[2]{#2}
\begin{thebibliography}{10}

\bibitem{thesis}
C.~B\"arligea, \emph{Quantum cohomology of homogeneous spaces: curve
  neighborhoods and quantum to classical principles}, Ph.D. thesis,
  Heinrich-Heine-Universit\"at D\"usseldorf, 2016.

\bibitem{bourbaki_roots}
Nicolas Bourbaki, \emph{Lie groups and {L}ie algebras. {C}hapters 4--6},
  Elements of Mathematics (Berlin), Springer-Verlag, Berlin, 2002, Translated
  from the 1968 French original by Andrew Pressley. \MR{1890629}

\bibitem{curvenbhd2}
Anders~S. Buch and Leonardo~C. Mihalcea, \emph{Curve neighborhoods of
  {S}chubert varieties}, J. Differential Geom. \textbf{99} (2015), no.~2,
  255--283. \MR{3302040}

\bibitem{fultonpan}
W.~Fulton and R.~Pandharipande, \emph{Notes on stable maps and quantum
  cohomology}, Algebraic geometry---{S}anta {C}ruz 1995, Proc. Sympos. Pure
  Math., vol.~62, Amer. Math. Soc., Providence, RI, 1997, pp.~45--96.
  \MR{1492534}

\bibitem{fulton}
W.~Fulton and C.~Woodward, \emph{On the quantum product of {S}chubert classes},
  J. Algebraic Geom. \textbf{13} (2004), no.~4, 641--661. \MR{2072765}

\bibitem{humphreys}
James~E. Humphreys, \emph{Linear algebraic groups}, Springer-Verlag, New
  York-Heidelberg, 1975, Graduate Texts in Mathematics, No. 21. \MR{0396773}

\bibitem{humphreys2}
\bysame, \emph{Introduction to {L}ie algebras and representation theory},
  Graduate Texts in Mathematics, vol.~9, Springer-Verlag, New York-Berlin,
  1978, Second printing, revised. \MR{499562}

\bibitem{humphreys3}
\bysame, \emph{Reflection groups and {C}oxeter groups}, Cambridge Studies in
  Advanced Mathematics, vol.~29, Cambridge University Press, Cambridge, 1990.
  \MR{1066460}

\bibitem{pand}
B.~Kim and R.~Pandharipande, \emph{The connectedness of the moduli space of
  maps to homogeneous spaces}, Symplectic geometry and mirror symmetry
  ({S}eoul, 2000), World Sci. Publ., River Edge, NJ, 2001, pp.~187--201.
  \MR{1882330}

\bibitem{kostant}
Bertram Kostant, \emph{The cascade of orthogonal roots and the coadjoint
  structure of the nilradical of a {B}orel subgroup of a semisimple {L}ie
  group}, Mosc. Math. J. \textbf{12} (2012), no.~3, 605--620, 669. \MR{3024825}

\bibitem{dmax}
N.~Perrin and P.-E. Chaput, \emph{On the geometry of rational curves in
  homogeneous spaces with {P}icard number one}, 2009.

\end{thebibliography}


\end{document}